\documentclass[a4paper]{amsproc}
\usepackage{amsmath}
\usepackage{amssymb}
\usepackage{latexsym}
\usepackage{amsmath}
\usepackage{amssymb}
\usepackage{amsthm}
\usepackage{latexsym}
\usepackage{graphicx}
\usepackage{color}
\usepackage{hyperref}
\usepackage{ushort}
\newcommand{\arxiv}[1]{\href{http://www.arXiv.org/abs/#1}{arXiv:#1}}

\renewcommand{\geq}{\geqslant}
\renewcommand{\leq}{\leqslant}
\renewcommand{\preceq}{\preccurlyeq}
\renewcommand{\succeq}{\succcurlyeq}

\makeatletter
\newcommand{\pushright}[1]{\ifmeasuring@#1\else\omit\hfill$\displaystyle#1$\fi\ignorespaces}
\newcommand{\pushleft}[1]{\ifmeasuring@#1\else\omit$\displaystyle#1$\hfill\fi\ignorespaces}
\newcommand{\specialcell}[1]{\ifmeasuring@#1\else\omit$\displaystyle#1$\ignorespaces\fi}
\makeatother

\newenvironment{adding}[1]{\marginpar{{\bf [}\footnotesize #1}\bf }{\marginpar{\bf ]}}
\newenvironment{addingwithoutmargin}{\bf }{}
\newcommand{\signh}{^{\text{sgn}}}
\newcommand{\signed}[1]{#1\signh}
\newcommand{\badd}[1]{\begin{adding}{#1}}
\newcommand{\eadd}{\end{adding}}
\newcommand{\baddw}{\begin{addingwithoutmargin}}
\newcommand{\eaddw}{\end{addingwithoutmargin}}

\catcode`\@=11\def\th@mydefinition{
  \thm@notefont{\bfseries}
}
\catcode`\@=12
\newtheorem{theorem}{Theorem}[section]
\newtheorem{prop}[theorem]{Proposition}
\newtheorem{lemma}[theorem]{Lemma}
\newtheorem{pty}[theorem]{Property}
\newtheorem{propdefinition}[theorem]{Proposition-Definition}
\newtheorem{corollary}[theorem]{Corollary}

\theoremstyle{mydefinition}
\newtheorem{example}[theorem]{Example}
\theoremstyle{definition}
\newtheorem{ex}[theorem]{Example}
\newtheorem{definition}[theorem]{Definition}
\newtheorem{remark}[theorem]{Remark}
\newtheorem{fact}[theorem]{Fact}

\newcommand{\val}{\operatorname{val}}
\newcommand{\cP}{\mathcal{P}}
\newcommand{\cD}{\mathcal{D}}
\newcommand{\Id}{\mathrm{I}}
\newcommand{\we}{\mathrm{w}}
\newcommand{\cof}{\operatorname{cof}}

\newcommand{\set}[2]{\{#1\mid\,#2\}}
\newcommand{\sgn}{\operatorname{sgn}}
\newcommand{\NEW}[1]{{\em #1\/}\index{#1}}
\newcommand{\adj}{^{\mathrm{adj}}}

\newcommand{\balance}{\,\nabla\, }
\newcommand{\bal}{\balance}

\newcommand{\detp}[1]{{\rm det\,}(#1)}

\def\allperm{\mathfrak{S}}

\newcommand{\bala}{\;|\!\nabla\!|\;}

\newcommand{\thin}{thin }
\newcommand{\thinp}{thin}
\newcommand{\constr}{the construction of monotone algorithms}
\newcommand{\conv}{the convergence of monotone algorithms}

\makeatletter
\@addtoreset{equation}{section}

\makeatother
\newcommand{\rmax}{\mathbb{R}_{\max}}

\newcommand{\smax}{\mathbb{S}_{\max}}

\def\SS{{\mathcal S}}

\def\TT{{\mathcal T}}
\newcommand{\GG}{{\mathcal G}}
\ushortCreate(0.65){uline}
\newcommand{\skewproduct}[2]{{#1}{\mathrel{\,\uline{\!\rtimes}}}{#2}}
\newcommand{\skewproductbar}[2]{{#1}{\mathrel{\bar{\rtimes}}}{#2}}
\newcommand{\skewproductstar}[2]{#1{\rtimes}{#2}}

\def\SSp{{\mathcal A}}
\def\MM{{\mathcal M}}
\def\e{\begin{equation}}
\def\ee{\end{equation}}
\def\RR{{\mathcal R}}
\def\M{{\mathcal M}_{n}}
\newcommand{\Z}{{\mathbb Z}}
\newcommand{\B}{{\mathbb B}}

\def\Ti{{\mathbb T}_{\mathrm 2}}
\def\To{{\mathbb T}}
\def\Se{\SS_{\mathrm e}}
\def\Re{{\mathbb R}}
\newcommand{\phase}{\mathsf{Ph}}
\newcommand{\K}{\mathbb{K}}
\def\S{\mathbb{S}_{\max}}

\def\BB{{\mathbb B}_{{\mathrm s}}}

\def\C{{\mathbb C}}
\newcommand{\N}{\mathbb{N}}
\newcommand{\Q}{\mathbb{Q}}

\newcommand{\per}{\operatorname{per}}
\newcommand{\Cext}{\skewproduct{\C}{\rmax}}

\DeclareMathAlphabet{\mathbbold}{U}{bbold}{m}{n}
\newcommand{\zero}{\mathbbold{0}}
\newcommand{\unit}{\mathbbold{1}}
\newcommand{\szero}{\zero}

\newcommand{\oo}{0}
\newcommand{\oi}{1}
\newcommand{\ooo}{\zero}
\newcommand{\ooi}{\unit}

\title{Tropical Cramer Determinants Revisited}
\author{Marianne Akian}
\author{St\'ephane Gaubert}
\author{Alexander Guterman}
\dedicatory{We dedicate this paper to the memory of our friend and colleague Grigory L. Litvinov.}
\date{\today}
\subjclass[2010]{14T05}
\keywords{Tropical algebra, max-plus algebra, tropical hyperplanes, optimal assignment, Cramer systems}
\thanks{The first two authors have been partially supported by the joint RFBR/CNRS grant 11-01-93106, and by the PGMO program of EDF and Fondation Math\'ematique Jacques Hadamard. The third author was partially supported by the invited professors program from INRIA Saclay and \'Ecole Polytechnique and by the grants
MD-2502.2012.1 and RFBR 12-01-00140a.}
\begin{document}
\begin{abstract}
We prove general Cramer type theorems for linear systems over various
extensions of the tropical semiring, in which tropical numbers are
enriched with an information of multiplicity, sign, or argument. We
obtain existence or uniqueness results, which extend or refine earlier
results of Gondran and Minoux (1978), Plus (1990), Gaubert (1992),
Richter-Gebert, Sturmfels and Theobald (2005) and Izhakian and Rowen
(2009). Computational issues are also discussed; in particular, some
of our proofs lead to Jacobi and Gauss-Seidel type algorithms to solve
linear systems in suitably extended tropical semirings.
\end{abstract}
\maketitle
\section{Introduction}
\subsection{Motivations}
\newcommand{\paramh}{a}
The max-plus or tropical semiring $\rmax$ is the set $\Re\cup\{-\infty\}$, equipped with the addition $a\oplus b = \max(a,b)$ and the multiplication $a\odot b= a+b$. We refer the reader for instance to~\cite{BCOQ92,maslovkolokoltsov95,litvinov00,itenberg,RGST} for introductory materials on max-plus or tropical algebra. 

We denote by $\rmax^n$ the $n$th-fold Cartesian product of $\rmax$, which can be thought of as the tropical analogue of a finite dimensional
vector space. A {\em tropical hyperplane} 
is a subset of $\rmax^n$ of the form
\begin{align}
H= \{x\in \rmax^n\mid \max_{i\in[n]} (\paramh_i+x_i) \text{ is attained at least twice}\} \enspace ,\label{e-def-h}
\end{align}
where $\paramh=(\paramh_1,\dots,\paramh_n)$ is a vector of $\rmax^n$, not identically $-\infty$, and $[n]:=\{1,\dots,n\}$. This 
definition is motivated by non-archimedean geometry. 
Indeed, let $\K=\C\{\{t\}\}$ denote the field
of complex Puiseux series in the variable $t$, and let $v$ denote
the valuation which associates
to a series the opposite of its smallest exponent. 
Consider now a hyperplane of $\K^n$,
\[
\mathbf{H} := \{ \mathbf{x} \in \K^n \mid \sum_{i\in[n]} \mathbf{\paramh}_i\mathbf{x}_i =0 \} \enspace ,
\] 
where $\mathbf{\paramh}=(\mathbf{\paramh}_1,\dots,\mathbf{\paramh}_n)$ is a vector of $\K^n$ lifting $\paramh$, meaning that  $v(\mathbf{\paramh}_i)= \paramh_i$, for all $i\in [n]$.
Then it is easily checked
that the image of $\mathbf{H}$ by the map which applies the valuation
$v$ entrywise is precisely the set of vectors of $H$ with rational
coordinates. This is actually a special case of a result of Kapranov characterizing the non-Archimedean amoeba of a hypersurface~\cite{kapranov}. 

In the present work, we will extend or refine a series of basic results
concerning the intersections of tropical hyperplanes, that we now review.
One of these results was established
by Richter-Gebert, Sturmfels and Theobald.
\begin{theorem}[{Tropical Cramer Theorem, {``complex''} version~{\cite{RGST}}}]\label{th-1}
Any $n-1$ vectors of $\Re^n$ in general position
are contained in a unique tropical hyperplane. 
\end{theorem}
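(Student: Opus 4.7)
The plan is to establish the theorem via a tropical analogue of Cramer's rule.

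For \emph{existence}, arrange the given vectors as the rows of an $(n-1)\times n$ matrix $X$, with $X_{k,\cdot}=x^{(k)}$, and define a candidate $a\in\rmax^n$ by the tropical cofactor formula
\[
a_i := \max_{\sigma}\sum_{k=1}^{n-1} X_{k,\sigma(k)} \qquad(i\in[n]),
\]
the maximum being taken over bijections $\sigma\colon[n-1]\to[n]\setminus\{i\}$; equivalently, $a_i$ is the tropical permanent of the submatrix obtained by deleting the $i$-th column of $X$. To verify $x^{(k)}\in H_a$, form the $n\times n$ matrix $Y$ obtained from $X$ by appending $x^{(k)}$ as an extra $n$-th row, and expand the tropical permanent of $Y$ along this last row:
\[
\per(Y)=\max_{j\in[n]}\bigl(x^{(k)}_j+a_j\bigr).
\]
Because rows $k$ and $n$ of $Y$ are identical, any permutation $\tau$ attaining $\per(Y)$ admits a companion permutation $\tau'$, obtained by exchanging the values $\tau(k)$ and $\tau(n)$, that also attains $\per(Y)$. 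Reading off their last-row contributions shows the maximum above is attained both at $j=\tau(n)$ and at $j=\tau(k)\neq\tau(n)$, hence $x^{(k)}\in H_a$.

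For \emph{uniqueness}, I would use the symmetry of the incidence relation---$x\in H_a$ is symmetric in $x$ and $a$---to rephrase the question: the set of admissible $a$ is the intersection of the dual tropical hyperplanes $H_{x^{(1)}},\ldots,H_{x^{(n-1)}}$ in the $a$-space, and the goal becomes showing that this intersection is a single point of the tropical projective space $\rmax^n/\Re$. General position should ensure that each max $\max_j(a_j+x^{(k)}_j)$ at the Cramer vector is attained at exactly two indices $\{j_k,j_k'\}$ (the pair exhibited in the existence argument), with strict inequality at the remaining indices, and that these edges assemble into a spanning tree on $[n]$. The $n-1$ active-pair equalities
\[
a_{j_k}-a_{j_k'} = x^{(k)}_{j_k'}-x^{(k)}_{j_k}\qquad(k=1,\ldots,n-1)
\]
then pin $a$ down up to a single additive constant. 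An alternative admissible $a'$ will in principle produce its own combinatorial pattern of active pairs, and general position must be used to rule out any pattern other than $\{j_k,j_k'\}$---either by a direct case analysis or, more cleanly, by lifting the $x^{(k)}$ to Puiseux-series vectors as in the motivation paragraph, invoking the classical Cramer theorem for the lifted hyperplane, and descending the uniqueness conclusion via the valuation.

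The principal obstacle is this uniqueness step: verifying both that the active-pair graph is a spanning tree under the general position assumption and that no competing combinatorial pattern can yield a second admissible parameter. The existence half is by comparison immediate, relying only on the ``repeated row implies tropical singularity'' phenomenon for the tropical permanent.
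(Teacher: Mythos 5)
Your existence argument is correct and essentially self-contained: appending $x^{(k)}$ as an extra row to the $(n-1)\times n$ matrix $X$ produces a square matrix $Y$ with two identical rows, so any optimal permutation $\tau$ for $\per(Y)$ has a companion $\tau'=\tau\circ(k\,n)$ of the same weight, and the tropical Laplace expansion $\per(Y)=\max_j\bigl(x^{(k)}_j+a_j\bigr)$ is therefore tight at the two distinct indices $\tau(n)$ and $\tau(k)$. The paper obtains the existence half by a different route (the Jacobi-type iteration of Theorem~\ref{theo-jacobi}, specialized in Corollary~\ref{th-new-homogeneous}), which is heavier but works uniformly for affine right-hand sides and general semiring coefficients; for this particular statement your repeated-row argument is shorter.

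The gap is in the uniqueness half, and you correctly identify it but do not close it. Two separate things need proof. First, that at the Cramer vector $a$ and under general position, each row's maximum is attained at \emph{exactly} two indices and the $n-1$ resulting edges on $[n]$ form a tree (this is indeed the Sturmfels--Zelevinsky linkage-tree structure, which the paper revisits in Section~\ref{sec-transport}; your intuition is right but the derivation from the unique-optimal-assignment hypothesis is not given). Second, and this is the crux: a competing admissible $a'$ could a priori satisfy the tangency constraints along a \emph{different} edge set, so proving rigidity of $a$ within its own combinatorial stratum does not yet give global uniqueness. Your Puiseux suggestion also falls short as stated, because Kapranov's theorem gives that a classical hyperplane through lifts of the $x^{(k)}$ tropicalizes to an admissible $a$, while what uniqueness requires is the converse: that \emph{every} admissible tropical $a'$ lifts to a classical hyperplane through a \emph{fixed} choice of lifts $\mathbf{x}^{(1)},\dots,\mathbf{x}^{(n-1)}$, so that two admissible $a$ and $a'$ would contradict classical Cramer. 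That surjectivity-of-lifting step is precisely what needs an argument.

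The paper's proof is structurally different and avoids the combinatorics altogether. It works in the bi-valued tropical semiring $\Ti$ (Section~\ref{sec-ext-izh}), where the tropical hyperplane condition becomes the balance relation $\bigoplus_i a_i x_i\bal\zero$ on thin vectors, and proves a Gaussian-elimination-style lemma (Theorem~\ref{th-cramer}(1), Corollary~\ref{cor-cramer}) that forces every thin solution to satisfy the Cramer relation $(\det A)x\bal A\adj b$. Because in $\Ti$ a balance between thin elements is an equality (Property~\ref{LRRR}), and general position makes the Cramer minors thin and invertible, this immediately gives uniqueness without ever analyzing which edges are active. Theorem~\ref{th-cramer-geom} (primal form with $\SS=\To_2$) then yields Theorem~\ref{th-1}. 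What this buys beyond your approach is that the identical elimination argument delivers the signed version (Theorem~\ref{th-3}) and its phase/supertropical analogues for free, whereas the linkage-tree analysis would have to be redone in each signed setting.
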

This statement has also an equivalent
dual form: {\em the intersection of $n-1$ tropical hyperplanes in general
position contains a unique vector up to an additive constant}.

The parameters $\paramh_i$ of the hyperplane $H$ arising in Theorem~\ref{th-1}
can be obtained by solving the tropical analogue of a square linear
system. Its solution turns out to be determined by the tropical analogues
of Cramer determinants, which in this context are merely the value
of optimal assignment problems. Then
the data are said to be in {\em general position}
if each of the optimal assignment problems arising
in this way has a unique optimal solution. The role
of the general position notion is made clear by the following result.
\begin{theorem}[\cite{RGST}]\label{th-2}
A collection of $n$ vectors of $\Re^n$ is contained in a tropical hyperplane
if and only if the matrix having these vectors as columns is
{\em tropically singular}, meaning that the
assignment problem associated with this matrix has at least two optimal
solutions. 
\end{theorem}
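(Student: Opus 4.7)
The plan is to prove both implications through the LP dual of the assignment problem associated with $M$, extracting two optimal permutations from a bipartite graph of tight edges by an alternating-cycle argument.

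For the direction $(\Rightarrow)$, suppose the columns of $M$ lie in $H(a)$ and set $v_j := \max_i(a_i + M_{ij})$ together with $S_j$ the set of indices $i$ achieving this maximum, so $|S_j| \geq 2$. Row-rescaling $\tilde M_{ij} := a_i + M_{ij}$ shifts $\per$ by $\sum_i a_i$ and preserves the set of optimal permutations, and one has $\per(\tilde M) \leq \sum_j v_j$, with equality precisely when there is a system of distinct representatives (SDR) $\sigma$ with $\sigma(j) \in S_j$ for all $j$. In this SDR case, consider the bipartite graph $G$ with edges $\{(i,j) : i \in S_j\}$: since $|S_j| \geq 2$, starting from $\sigma$ we may jump at any column $j_0$ to a second neighbour in $S_{j_0}$, follow $\sigma^{-1}$ back to a new column, and iterate until a vertex is revisited; swapping along the resulting alternating cycle produces a second SDR $\tau \neq \sigma$, hence two optimal permutations of $\tilde M$ and thus of $M$.

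The non-SDR case (Hall-deficient $J$ with $|\bigcup_{j \in J} S_j| < |J|$) is the main obstacle. My plan is to reduce to the SDR case by modifying $a$ monotonically: for any row $i_0 \notin \bigcup_j S_j$, the threshold $a_{i_0}^{\ast} := \min_j(v_j - M_{i_0, j})$ is strictly larger than $a_{i_0}$, and raising $a_{i_0}$ to $a_{i_0}^{\ast}$ preserves the hyperplane property while enlarging $G$ by adding $i_0$ to at least one $S_j$. Iterating such adjustments---possibly combined with careful reductions when every row is already in some $S_j$ but Hall still fails---eventually restores Hall's condition, after which the alternating-cycle argument applies.

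For the direction $(\Leftarrow)$, let $\sigma, \tau$ be two distinct optimal permutations of $M$ and choose an optimal LP-dual pair $(u, v)$, so $u_i + v_j \geq M_{ij}$ with $\sum_i u_i + \sum_j v_j = \per(M)$. Set $a_i := -u_i$; then $\max_i(a_i + M_{ij}) = v_j$, attained at each tight row of column $j$. Complementary slackness forces both $\sigma(j)$ and $\tau(j)$ to be tight in column $j$, so $|S_j| \geq 2$ automatically whenever $\sigma(j) \neq \tau(j)$. For columns where $\sigma$ and $\tau$ coincide, I would exhibit a second tight row by choosing a different optimal dual on a boundary facet of the optimal-dual polytope: if no such dual existed for some column $j_0$, then its unique tight row $i_0$ would be used by every optimal permutation, so removing row $i_0$ and column $j_0$ would produce a tropically singular $(n-1)\times(n-1)$ matrix, allowing induction on $n$ to finish the argument. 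The crux in both directions is the same structural claim: within the polytopes of valid hyperplane parameters $(\Rightarrow)$ and of optimal LP duals $(\Leftarrow)$, one may always find a configuration whose tight-edge graph has degree at least two at every column; I expect this to follow from the monotone adjustment sketched above, possibly backed up by induction on $n$ after peeling off rigid row-column pairs.
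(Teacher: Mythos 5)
Your approach is genuinely different from the paper's. The paper never proves Theorem~\ref{th-2} directly: it cites~\cite{RGST} and recovers it (transposed) as the specialization to the bi-valued tropical semiring $\Ti$ of the general Theorem~\ref{newTGMa}, whose proof goes through Frobenius--K\"onig, Hungarian scaling (Corollary~\ref{cor-but}), Yoeli's theorem, cycle decompositions, and the Jacobi algorithm (Theorem~\ref{th-jac}) in an abstract semiring. You instead attack both directions with LP duality for the assignment problem and an alternating-cycle argument, which is more elementary. The parts you do complete are correct: the row-rescaling observation, the alternating-cycle construction of a second SDR when all $|S_j|\geq 2$ and an SDR exists, and the complementary-slackness observation that $\sigma^{-1}(j)\neq\tau^{-1}(j)$ forces $|S_j|\geq 2$.

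However, there are two genuine gaps, and you flag them yourself but do not close them. In the $(\Rightarrow)$ direction, the monotone adjustment raises $a_{i_0}$ only for rows $i_0 \notin \bigcup_j S_j$; after all such rows have been absorbed, Hall's condition can still fail. For instance take $n=4$ with $S_1=S_2=S_3=\{1,2\}$ and $S_4=\{3,4\}$: every row is already covered, Hall fails for $J=\{1,2,3\}$, and raising $a_3$ alone destroys the double-attainment at column $4$ because row $3$ was the unique partner of row $4$ there. A coordinated raise of $a_3$ and $a_4$ does repair this example, but your algorithm does not produce it, and ``careful reductions'' is not an argument. In the $(\Leftarrow)$ direction, the induction after peeling off a rigid pair $(i_0,j_0)$ has a hole: the inductive hypothesis on the $(n-1)\times(n-1)$ matrix $M'$ yields a hyperplane $H(a')$ containing the columns of $M'$, but says nothing about the deleted column $j_0$; when you extend $a'$ to $a$ by choosing $a_{i_0}$, you must simultaneously make the maximum at column $j_0$ doubly attained and avoid creating a unique maximum at some other column, and there is no argument that both can be done. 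The paper's route sidesteps both difficulties because the Jacobi algorithm directly constructs a thin non-zero solution $a$ of $A^{\top} a \bal \zero$ with the correct modulus, and the case analysis in the proof of Theorem~\ref{newTGMa} (degenerate determinant, balanced diagonal permutation, pair of opposite-sign optimal permutations) is precisely what organizes the ``bad'' configurations your sketch leaves open.
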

More information on tropical singularity and related rank notions can be found in~\cite{DSS,AGG08,IzhRowen,AGGut10}.

Different but related results were obtained previously
by considering tropical numbers with signs.
Indeed, in the above results, the tropical semiring is essentially thought of as the image of the field of complex Puiseux series by the valuation. Alternatively, a tropical number with sign may be thought of as
the image of a {\em real} Puiseux series, i.e., of an element
of $\Re\{\{t\}\}$. 
Ideas of this nature were indeed essential
in the development by Viro of the patchworking method
(see the references in~\cite{viro}), as well 
as in the construction by Plus~\cite{Plus} 
of the symmetrized tropical (or max-plus)
semiring, $\smax$.

When considering tropical numbers with signs,
the notion of equation has to be replaced
by a notion of {\em balance}~\cite{Plus}: a tropical sum of terms
equipped with signs is said to be balanced if
the maximum of terms with positive signs 
coincides with the maximum of the terms 
with negative signs. 
Some results of~\cite{Plus,gaubert92a}
concerning systems of linear balances can be interpreted geometrically
using the signed variant of tropical hyperplanes,
considered by Joswig in~\cite{joswig04}. 
A {\em signed tropical hyperplane} 
is a subset of $\rmax^n$ of the form
\begin{align}\label{e-def-hsign}
H\signh= \{x\in \rmax^n\mid \max_{i \in I} (\paramh_i+x_i)  = 
\max_{j\in J} (\paramh_j +x_j)\}
\enspace ,
\end{align}
where $[n]= I\cup J$ is a non-trivial partition, and $a\in \rmax^n$ is a vector
non-identically $-\infty$.
Observe that $H\signh\subset H$.
Consider now a hyperplane of $(\Re\{\{t\}\})^n$,
\[
\mathbf{H}\signh := \{ \mathbf{x} \in(\Re\{\{t\}\})^n \mid \sum_{i\in I} \mathbf{\paramh}_i\mathbf{x}_i = \sum_{j\in J} \mathbf{\paramh}_{j}\mathbf{x}_j \} \enspace ,
\] 
where $\mathbf{\paramh}$ is any vector of $(\Re\{\{t\}\})^n$ lifting $\paramh$, meaning now that  $v(\mathbf{\paramh}_i)= \paramh_i$ and that $\mathbf{a}_i$ is nonnegative (recall that a real Puiseux series is nonnegative
if it is zero or if its leading coefficient is positive), 
for all $i\in [n]$. Then it can be easily checked that the 
vectors with rational entries of a signed tropical hyperplane are precisely the images by the valuation of the nonnegative vectors of the associated hyperplane over the field of {\em real} Puiseux series. 

With these observations in mind,  the following result established
by Plus appears to be a ``real'' analogue of Theorem~\ref{th-1}.
\begin{theorem}[Tropical Cramer theorem, ``real'' version, Corollary of~{\cite[Th.~6.1]{Plus}}]\label{th-3}
Any $n-1$ vectors of $\rmax^n$ in sign-general
position are contained in a unique signed tropical hyperplane. 
\end{theorem}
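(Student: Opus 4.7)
The plan is to translate the statement into a homogeneous system of balances over the symmetrized tropical semiring $\smax$ of Plus, and solve it by a signed tropical Cramer rule. Denote the given vectors by $v^{(1)},\dots,v^{(n-1)}\in\rmax^n$, and seek a not-identically $-\infty$ vector $a\in\rmax^n$ together with a non-trivial partition $[n]=I\cup J$. The condition $v^{(k)}\in H\signh$ reads $\max_{i\in I}(a_i+v^{(k)}_i)=\max_{j\in J}(a_j+v^{(k)}_j)$, which in $\smax$ is a signed tropical balance; assigning $\epsilon_i=+1$ for $i\in I$ and $\epsilon_i=-1$ for $i\in J$, we obtain a homogeneous system of $n-1$ balances in $n$ unknowns,
$$\bigoplus_{i=1}^n \epsilon_i\, a_i\, v^{(k)}_i\;\bal\;0\qquad (k=1,\dots,n-1)\enspace .$$

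For existence, I would apply the signed analogue of the Cramer construction behind Theorem~\ref{th-1}. Let $V$ be the $(n-1)\times n$ matrix whose rows are the given vectors, and let $V^{(i)}$ be the $(n-1)\times(n-1)$ submatrix obtained by deleting its $i$-th column. Define the signed tropical number $\epsilon_i a_i$ to be $(-1)^{i-1}\dett{V^{(i)}}$, where the bideterminant over $\smax$ is $\dett{A}=\bigoplus_{\sigma\in\allperm_{n-1}}\sgn(\sigma)\bigotimes_j A[j,\sigma(j)]$. The sign-general position hypothesis is precisely what ensures that each $\dett{V^{(i)}}$ carries an unambiguous sign (the even-optimal and odd-optimal assignments in $V^{(i)}$ are not tied); it further implies that $a$ is not identically $-\infty$ and that both $I$ and $J$ are non-empty.

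To verify the $n-1$ balance identities I would use the classical repeated-row trick in $\smax$. Fix $k\in[n-1]$ and prepend the vector $v^{(k)}$ as a top row to $V$, producing an $n\times n$ matrix $\tilde V^{(k)}$. Expanding $\dett{\tilde V^{(k)}}$ along the first row recovers exactly $\bigoplus_{i=1}^n \epsilon_i a_i v^{(k)}_i$. But rows $1$ and $k+1$ of $\tilde V^{(k)}$ are equal, and the standard transposition argument in $\smax$ shows that a bideterminant with a repeated row always balances $0$. The desired signed balance follows.

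Uniqueness is the step I expect to be the main obstacle, since balances cannot be subtracted freely in $\smax$. The plan is: given any second admissible pair $(a',\epsilon')$, select an index $i_0$ and adjoin $\epsilon'_{i_0}a'_{i_0}$ as a right-hand side, turning the system into a square signed Cramer system. Sign-general position then allows one to upgrade, step by step, each intermediate balance appearing along the Cramer expansion to an equality with specified sign. This forces $\epsilon'_i a'_i$ to be a common tropical multiple of $\epsilon_i a_i$ uniformly in $i$, so that the two partitions coincide and the vectors $a,a'$ agree up to an additive constant, yielding the same signed tropical hyperplane.
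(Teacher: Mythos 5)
Your proposal is correct and follows essentially the same route as the paper: encode the signed hyperplane condition as a homogeneous system of $n-1$ balances in $n$ unknowns over $\smax$, take the vector of (signed) Cramer bideterminants as the candidate parameter vector, use sign-general position to ensure that vector is thin and non-zero, and invoke the balance-elimination mechanism to get uniqueness up to a thin scalar multiple. The paper packages this as the primal part of Theorem~\ref{th-cramer-geom}, itself derived from Corollary~\ref{cor-cramer} and Theorem~\ref{th-cramer}; the repeated-row Laplace argument you use for existence is the same content that the paper delegates to its references, and your step-by-step "upgrade balance to equality" is precisely the strong balance elimination encoded in Properties~\ref{LRRR}--\ref{pty-weaktrans}.
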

By comparison with Theorem~\ref{th-1}, we use here a milder
notion of general position.  Indeed,
the ``real'' tropical analogue of a determinant
consists of the value of an optimal assignment problem, together with the 
information of all the possible signs of optimal permutations.
Then, a tropical determinant
is said to be {\em sign-nonsingular}
if all optimal permutations have the same sign.
Finally, $n-1$ vectors of $\rmax^n$ are said
to be in {\em sign-general position} if all the 
associated tropical Cramer determinants are sign-nonsingular.
The vector $\paramh$ defining the signed hyperplane $H\signh$ of Theorem~\ref{th-3} is determined by the tropical Cramer determinants, the signs of which 
provide the sets $I,J$ in the partition.

Actually, a more general result
was stated in~\cite{Plus} in the language
of systems of balances over the symmetrized tropical semiring $\smax$.
Theorem~\ref{th-3} covers a special case
with a more straightforward geometric interpretation.
The details of the derivation of the latter theorem from Theorem~6.1 of~\cite{Plus} will be given in Section~\ref{sec-geom}, together with the dual result of Theorem~\ref{th-3}, concerning intersections of signed tropical hyperplanes.

A result of Gondran and Minoux, that
we restate as follows in terms of signed tropical hyperplanes, 
may be thought of as a ``real''
analogue of Theorem~\ref{th-2}.

\begin{theorem}[Corollary of \cite{gondran78}]\label{th-4}
A collection of $n$ vectors of $\rmax^n$ is contained in a signed tropical
hyperplane if and only if the matrix having these vectors as columns 
has a sign-singular tropical determinant.
\end{theorem}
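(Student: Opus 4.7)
The proof goes through the symmetrized tropical semiring $\smax$. Given $\paramh\in\rmax^n$, not identically $-\infty$, together with a nontrivial partition $[n]=I\cup J$, encode them as a signed vector $\signed{\paramh}\in\smax^n$ by assigning a positive sign to coordinates in $I$, a negative sign to coordinates in $J$, and leaving $-\infty$ entries as $\szero$. The defining identity \eqref{e-def-hsign} for a column $M_{\cdot j}$ of the matrix $M$ of columns to lie in $H\signh$ is exactly the componentwise balance
\[
(\signed{\paramh})^T M \bal \szero
\]
in $\smax^n$. Nontriviality of $\paramh$ corresponds to $\signed{\paramh}$ being a nonzero signed vector. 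Hence Theorem~\ref{th-4} reduces to: the homogeneous square system $(\signed{\paramh})^T M \bal \szero$ admits a nontrivial signed solution iff $\dett{M}$ is balanced, i.e., iff $M$ is sign-singular.

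\textbf{Implication $(\Rightarrow)$.} Assume a nontrivial signed $\signed{\paramh}$ satisfies $(\signed{\paramh})^T M \bal \szero$. Using the tropical Laplace-type identity $M\cdot M\adj \bal \dett{M}\odot \Id$ available over $\smax$, multiply on the right by $M\adj$ to get $(\signed{\paramh})^T \odot \dett{M} \bal \szero$ entrywise. Pick $i$ with $\signed{\paramh}_i$ signed and distinct from $\szero$. If $\dett{M}$ were sign-nonsingular, it would be signed and nonzero, so $\signed{\paramh}_i\odot \dett{M}$ would also be signed and nonzero, contradicting its balancing $\szero$. Therefore $\dett{M}$ is balanced.

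\textbf{Implication $(\Leftarrow)$.} Assume $\dett{M}$ is balanced, so there exist two optimal permutations of opposite sign in the assignment problem on $M$. Invoke the Cramer-type existence theorem over $\smax$ (Plus, Gaubert) underlying Theorem~\ref{th-3}: a suitable row of $M\adj$ provides a nontrivial signed vector $\signed{\paramh}$ with $(\signed{\paramh})^T M \bal \szero$, the balance of $\dett{M}$ being what ensures that at least one row of $M\adj$ is simultaneously nontrivial and genuinely signed. The partition $(I,J)$ and the vector $\paramh$ are then read off directly from the signs of this row.

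\textbf{Main obstacle.} The subtle step is the converse direction: producing a truly signed (as opposed to merely balanced) nontrivial solution. A priori, some entries of $M\adj$ may be balanced due to cancellations among cofactors, so the naive adjoint construction can fail to yield a signed vector. The fix is either a Gauss-Seidel/Jacobi-style purification that iteratively replaces balanced entries while preserving $(\signed{\paramh})^T M \bal \szero$, or an induction on $n$ that pivots on a well-chosen column and invokes Theorem~\ref{th-3} on the remaining $(n-1)\times(n-1)$ system. Either route is where the combinatorial content of sign-singularity---the coexistence of two opposite-sign optimal permutations---is genuinely exploited.
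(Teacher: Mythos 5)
Your reduction to the symmetrized semiring is exactly the paper's route: encode $\paramh$ with the $I/J$-signs as a thin vector $\signed{\paramh}\in(\smax^\vee)^n$, observe that $H\signh\cap\rmax^n$ is the set of thin vectors $x$ with $(\signed{\paramh})^T x\bal\szero$, and thereby reduce Theorem~\ref{th-4} to the Gondran--Minoux theorem over $\smax$ (Theorem~\ref{TGMa}, specialized to $A\in\MM_n(\rmax)$). So the framing is right. But both directions of your argument, as actually written, have gaps.

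In the forward direction, the step ``multiply $(\signed{\paramh})^T M\bal\szero$ on the right by $M\adj$ and use $M M\adj\bal\det M\cdot\Id$ to conclude $(\signed{\paramh})^T\det M\bal\szero$'' does not follow: the balance relation is not transitive, and the intermediate quantity $(\signed{\paramh})^T M M\adj$ is not thin, so the weak-transitivity property (Property~\ref{pty-weaktrans0}) cannot be applied here. The correct route is the first assertion of Theorem~\ref{th-cramer}, whose proof eliminates variables one at a time using the thinness of $\signed{\paramh}$ (not of some derived vector); that is how the paper gets $(\det M)\,\signed{\paramh}\bal\szero$, after which your ``pick an invertible $\signed{\paramh}_i$'' step is fine. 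Your conclusion is right but your justification would need to be replaced by the elimination argument, or a citation to it.

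In the converse direction there is a genuine missing argument. Your stated mechanism---``a suitable row of $M\adj$ provides a nontrivial signed vector''---is simply false in general: already for $M$ the $3\times3$ all-$\ooi$ matrix, every cofactor is balanced, so every row of $M\adj$ is balanced and none gives a thin solution (yet $(\ooi,\ominus\ooi,\szero)$ works). You acknowledge this in your ``main obstacle'' paragraph, but the fix is not carried out, and it cannot be a small patch: the paper's proof of the relevant statement (Theorem~\ref{newTGMa}) splits into a degenerate case $\det M=\zero$ handled by Frobenius--K\"onig and induction on $n$, a Hungarian-scaling normalization to $|M_{ij}|\leq\ooi$, $|M_{ii}|=\ooi$, and then two further subcases: a single optimal permutation whose weight is itself balanced, versus two opposite-sign optimal permutations, in which one extracts a cycle, constructs a thin partial solution along that cycle, and completes it with the Jacobi existence theorem (Theorem~\ref{th-jac}). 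This case analysis, together with Properties~\ref{p-temp-0}--\ref{p-temp-3}, is where the sign-singularity hypothesis is actually used; none of it is present in your proposal. Pointing at ``Jacobi purification or induction'' names the right toolbox but is not yet a proof.
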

A dual result, concerning the intersection of $n$ signed tropical hyperplanes,
was stated by Plus~\cite{Plus} and proved by Gaubert~\cite{gaubert92a}, we 
shall discuss it in Section~\ref{sec-geom}. 
As pointed out in~\cite{ButGa}, the notion
of sign-nonsingularity arising here is an extension
of the notion
with the same name arising in combinatorial matrix theory,
in particular in the study of the permanent problem of P\'olya,
see~\cite{shader} for more information.

Whereas the results concerning ``balances'' involve the extension of the tropical numbers with signs, 
we note that other extensions have been used more recently.
In particular, the incorporation of ``phase'' (instead of sign) information
in tropical constructions has played an important role in the
arguments of Mikhalkin ~\cite{mikhalkin}. Moreover, Viro introduced
a general notion of {\em hyperfield}~\cite{1006.3034} with a multivalued
addition, which he used in particular to capture the phase information. 
The semirings with symmetry introduced by the 
authors in~\cite{AGG08} provide another way to encode the sign or phase. 
Also, different extensions have been provided by the
``supertropical'' structures of Izhakian and Rowen~\cite{Izhsuper2010}
extending the bi-valued tropical semiring introduced
by Izhakian~\cite{Izhalone}; in the latter, the goal is not to encode ``sign'' or ``phase'', but the
fact that the maximum in an expression is achieved twice at least. 

\subsection{Main results}
Given the analogy between the ``complex'' and ``real'' versions of the tropical Cramer theorem (Theorems~\ref{th-1} and \ref{th-3} above), as well as between
the unsigned and signed notions of tropical singularity of matrices (Theorems~\ref{th-2} and \ref{th-4}), one may ask whether
all these results may be derived from common principles. One may also
ask whether results of this kind are valid
for more general tropical structures, encoding different kinds of sign
or phase information, in the light of recent constructions of extended
tropical semirings by Viro~\cite{1006.3034}, Izhakian and Rowen~\cite{Izhsuper2010}, and the authors~\cite{AGG08}. 

In this paper, we answer these questions and deal
with related algorithmic issues, by developing
a theory of elimination of linear systems over semirings, building
on ideas and results of~\cite{Plus,gaubert92a,AGG08}. This will
allow us to show that the earlier results are indeed special instances of general  Cramer type theorems, 
which apply to various extensions of tropical semirings. 
These theorems are established using axioms allowing one
to perform ``elimination'' of balances, in a way
similar to Gaussian elimination. In this way, we will generalize
and sometimes refine (handling degenerate cases)
earlier results. Also, 
some of our proofs are based on Jacobi or Gauss-Seidel
type iterative schemes, and lead to efficient algorithms.
In passing,
we revisit some of the results of~\cite{Plus,gaubert92a}, giving 
their geometric interpretation in terms of signed tropical
hyperplanes.

An ingredient of our approach is the introduction (in Section~\ref{sec-prelim}) of a rather general notion of {\em extension}
of the tropical semiring, together with
a general ``balance'' relation
$\bal$, which, depending on the details of the
extension, expresses the fact that the maximum is attained at least
twice in an expression, or that the maxima of two collections of terms
coincide.  Our constructions include as a special case
the symmetrized tropical semiring of Plus~\cite{Plus} and the bi-valued tropical semiring of Izhakian~\cite{Izhalone}, but also
a certain ``phase extension''
of the tropical semiring,
which is a variant of the complex tropical hyperfield of Viro.
Our notion also includes certain ``supertropical
semifields'' in the sense of Izhakian and Rowen~\cite{Izhsuper2010}.
Then auxiliary combinatorial results are presented in Section~\ref{sec-comb}.

The general affine Cramer
system reads $Ax \bal b$, whereas the homogeneous system reads
$Ax\bal \zero$, where $A$ is an $n\times n$ matrix 
and $b$ is a vector of dimension $n$, both 
with entries in an extension of the tropical semiring. To be interpreted
geometrically, the vector
$x$ which is searched will be
required to satisfy certain non-degeneracy conditions, typically that the coordinate of $x$ do not belong to the set of balanced non-zero elements of the extension. 

In this way, by developing methods of~\cite{AGG08},
we obtain in Section~\ref{sec-elim}
a general result, Theorem~\ref{th-cramer}, concerning
the unique solvability of non-singular Cramer system, which includes
Theorems~\ref{th-1} and~\ref{th-3} as special cases. 

Then we study in Section~\ref{sec-exists} the existence problem for the solution of
the affine Cramer system $Ax\bal b$, without making the non-singularity
assumptions needed in the previous uniqueness results. Theorem~\ref{theo-jacobi} below
gives a general existence theorem, with a constructive proof
based on the idea of the Jacobi algorithm in~\cite{Plus}. 
Theorem~\ref{theo-gauss-seidel} gives an alternative Gauss-Seidel
type algorithm. These results are valid in a large enough class of semirings,
including not only the symmetrized tropical semiring as in~\cite{Plus}, but 
also the phase extension of the tropical semiring.

In Section~\ref{sec-homogeneous}, we deal with the generalization
of Theorems~\ref{th-2} and~\ref{th-4} which concern singular linear systems
of $n$ equations in $n$ variables. 
In Theorem~\ref{newTGMa}, we characterize
the existence of non-degenerate solutions of 
$Ax \bal \zero$, 
recovering Theorems~\ref{th-2} and~\ref{th-4} as special cases.
This extends to more general semirings
a theorem of Gaubert~\cite{gaubert92a} dealing
with the case of the symmetrized tropical semiring $\smax$.
We note however that by comparison with the Jacobi/Gauss-Seidel type
results of Section~\ref{sec-exists}
the results of this section hold under more restrictive
assumptions on the semiring.

A geometrical interpretation of the previous results, in the
case of the symmetrized tropical semiring, is presented in Section~\ref{sec-geom}.

In Section~\ref{S_AC}, we start to address computational issues.
The $n+1$ Cramer determinants
of the system $Ax \bal b$ correspond to $n+1$ optimal assignment problems.
In Section~\ref{sec-perall}, we show that our approach
based on a Jacobi-type iterative method 
leads to an algorithm to compute the solution, as well as the 
Cramer determinants (up to signs), by solving a single (rather than $n+1$)
optimal assignment problem, followed by a single destination
shortest path problem.
For the sake of comparison, we revisit in Section~\ref{sec-transport}
the approach of Richter-Gebert, Sturmfels and Theobald~\cite{RGST}, building
on results of Sturmfels and Zelevinsky dealing with Minkowski sums of 
Birkhoff polytopes: it gives a reduction to a different transportation problem.
Although the original exposition of~\cite{RGST} is limited to instances
in general position, we show that some of their results remain valid
even without such an assumption.
The computation of the signs of tropical Cramer determinants is finally
briefly discussed in Section~\ref{sec-compute-det}.

\section{Semirings with a symmetry and a modulus}\label{sec-prelim}

\subsection{Definitions and first properties} 

\begin{definition}
A \NEW{semiring} is a set $\SS$ with two binary operations, addition, denoted by $+$,
and multiplication, denoted by $\cdot$ or by concatenation, such 
that:
\begin{itemize}
\item $\SS$ is an abelian monoid under addition (with neutral element denoted by $\oo$ and called zero);
\item $\SS$ is a monoid under multiplication (with neutral element denoted
by $\oi$ and called unit);
\item multiplication is distributive over addition on both sides;
\item $s \oo=\oo s=\oo$ for all $s\in \SS$.
\end{itemize}
\end{definition}
In the sequel, a semiring will mean a non-trivial semiring (different from
$\{\oo\}$).

Briefly, a semiring differs from a ring by the fact that 
an element may not have an additive inverse.
The first examples of semirings which are not rings that come to  mind are non-negative
integers $\N$, non-negative  rationals $\Q_+$ and
non-negative
reals ${\Re}_+$ with the usual addition and multiplication. There are
classical examples of non-numerical semirings as well. Probably the first such
example appeared in the work of Dedekind \cite{Dedekind} in connection
with the algebra of ideals of a commutative ring (one can add and multiply
ideals but it is not possible to subtract them). 

\begin{definition}
A semiring or an abelian monoid $\SS$ is called \NEW{idempotent} if $a+a=a$  for all $a\in\SS$,
 $\SS$ is called \NEW{zero-sum free} or \NEW{antinegative} if $a+b=0$ implies $a=b=0$ for all $a,b\in\SS$, and
$\SS$ is called \NEW{commutative}  if  $a\cdot b=b\cdot a$ for all $a,b\in\SS$.
\end{definition}
An idempotent semiring is necessarily zero-sum free.
We shall always assume that the semiring $\SS$  is commutative.

An interesting example of an idempotent semiring is the max-plus semiring
$$\rmax:=({\Re}\cup \{ -\infty \}, \oplus, \odot),$$
where $a\oplus b= \max\{a,b\}$ and $a\odot b= a+b$.  Here the zero element of the semiring is $-\infty$, denoted by $
\ooo$, and the unit of the semiring is 0, denoted by $\ooi$. 

The usual definition of matrix operations carries over to an arbitrary semiring.
We denote the set of  $m\times n$ matrices over $\SS$ by $\MM_{m,n}(\SS)$.
Also we denote $\MM_{n}(\SS)=\MM_{n,n}(\SS)$ 
and we identify $\SS^n$ with $\MM_{n,1}(\SS)$. Note that $\MM_{n}(\SS)$ is a semiring.

Some of the following notions were introduced in~\cite{AGG08},
where details and additional properties can be found.

\begin{definition}
Let $\SS$ be a semiring.
A map $\tau:\SS\to \SS$ is a symmetry if 
\begin{subequations}\label{sym}
\begin{align}
& \tau(a+b)=\tau(a)+\tau(b) \label{sym1}\\ 
& \tau(0)=0\label{sym2}\\ 
& \tau(a\cdot b)=a\cdot\tau(b)= \tau(a)\cdot b \label{sym3} \\
& \tau(\tau(a))=a .\label{sym4}
\end{align}
\end{subequations}
\end{definition}
\begin{example}
A trivial example of a symmetry is the identity map $\tau(a)=a$.
Of course, in a ring, we may take $\tau(a)=-a$. 
\end{example}

\begin{prop}\label{car-sym} A map $\tau$ is a symmetry of the semiring $\SS$
if and only if there exists $e\in\SS$ such that $e\cdot e=1$, and
$ \tau(a)= e\cdot a=a\cdot e$ for all $a\in\SS$ (hence $e$ commutes 
with all elements of $\SS$).
\end{prop}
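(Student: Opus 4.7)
The plan is to prove both implications by elementary manipulations, the key idea being to identify the ``sign element'' $e$ as $\tau(\unit)$.

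For the forward direction, assume $\tau$ is a symmetry and set $e := \tau(\unit)$. Applying \eqref{sym3} with $b=\unit$ and with $a=\unit$ gives respectively
\[
\tau(a) = \tau(a\cdot\unit) = a\cdot \tau(\unit) = a\cdot e, \qquad
\tau(a) = \tau(\unit\cdot a) = \tau(\unit)\cdot a = e\cdot a,
\]
so $\tau$ is left and right multiplication by $e$, and in particular $e\cdot a = a\cdot e$ for every $a\in\SS$. Applying \eqref{sym4} to $a=\unit$ yields $\tau(e) = \tau(\tau(\unit)) = \unit$; but $\tau(e) = e\cdot e$ by the formula just obtained, so $e\cdot e = \unit$.

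For the converse, suppose $e\in\SS$ satisfies $e\cdot e=\unit$ and $e\cdot a=a\cdot e$ for all $a\in\SS$, and define $\tau(a):=e\cdot a=a\cdot e$. Distributivity of multiplication over addition gives \eqref{sym1}, and the absorbing property of $\oo$ gives \eqref{sym2}. For \eqref{sym3}, associativity combined with the commutation of $e$ yields
\[
\tau(a\cdot b) = e\cdot(a\cdot b) = (e\cdot a)\cdot b = \tau(a)\cdot b
= a\cdot(e\cdot b) = a\cdot\tau(b),
\]
where in the third equality we used $(e\cdot a)\cdot b = (a\cdot e)\cdot b = a\cdot(e\cdot b)$. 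Finally, \eqref{sym4} follows from
\[
\tau(\tau(a)) = e\cdot(e\cdot a) = (e\cdot e)\cdot a = \unit\cdot a = a.
\]

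The argument is essentially forced once one guesses $e=\tau(\unit)$; the only delicate point is the commutation $e\cdot a = a\cdot e$, which in the forward direction is a consequence of the two-sidedness built into \eqref{sym3}, and in the converse direction must be imposed as part of the assumption (it is not automatic since $\SS$ need not be commutative, though in the paper commutativity is assumed globally, in which case this issue disappears).
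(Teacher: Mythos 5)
Your proof is correct and follows the same approach as the paper: set $e = \tau(\unit)$, use~\eqref{sym3} to identify $\tau$ with left and right multiplication by $e$, and use~\eqref{sym4} at $\unit$ to get $e\cdot e = \unit$; the paper simply leaves the routine verification of the converse to the reader. Your closing remark about the commutation of $e$ is a fair observation, though it is moot here since the paper assumes $\SS$ commutative throughout.
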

\begin{proof}
Let  $\tau$ be a symmetry of $\SS$, and denote $e=\tau(1)$.
By~\eqref{sym3}, we get that
$\tau(a)=\tau(1\cdot a)=e \cdot a$, and 
$\tau(a)=\tau(a\cdot 1)=a \cdot e$ for all $a\in\SS$.
By~\eqref{sym4}, we deduce that $\tau(e)=\tau(\tau(1))=1$, and
since $\tau(e)=e\cdot e$, we get $e\cdot e=1$.
Conversely, if $\tau(a)= e\cdot a=a\cdot e$ for all $a\in\SS$, with 
$e\cdot e=1$, then $\tau$ satisfies all the conditions in~\eqref{sym}.
\end{proof}

In the rest of the paper we shall write $-a$ for $\tau(a)$. So, $a-a$ is 
not zero generally speaking, but is a formal sentence meaning $a+\tau(a)$.
Moreover,  for any integer $n\geq 0$, $(-1)^n$ will mean the 
$n$th power of $-1=\tau(1)$, hence the product of $n$ copies of $-1$.
Also $+a$ will mean $a$, in particular in the
formula $\pm a$. If the addition of $\SS$ is denoted by $\oplus$ 
instead of $+$, then $+a$ and $-a$ will be replaced by $\oplus a$ and
$\ominus a$.

\begin{definition}\label{circ}
For any $a\in \SS$, we set $a^\circ:=a-a$,
thus $-a^\circ=a^\circ=(-a)^\circ$, and we denote
\[
\SS^\circ := \{a^\circ \mid a\in \SS\}  \enspace 
\]
The elements of this set will be called \NEW{balanced elements} of $\SS$.
Moreover, we define the \NEW{balance} relation $\balance$ on $\SS$ by 
$a \balance b$ if $a-b\in \SS^\circ $.
\end{definition}
Note that $\SS^\circ$ is an ideal, hence 
the relation $\balance$ is reflexive and symmetric.
It may not be transitive.
Since $\SS^\circ$ is an ideal, it contains an invertible element of $\SS$
if and only if it coincides with $\SS$. 
We shall only consider in the sequel symmetries such that $\SS^\circ\neq \SS$.
This permits the following definition.

\begin{definition}
When $\SS^\circ\neq \SS$, 
we say that a subset $\SS^\vee$ of $\SS$ is \NEW{\thinp} if 
$\SS^\vee \subset (\SS\setminus \SS^\circ )\cup\{\oo\}$
and if it contains $\oo$ and all invertible elements of $\SS$.
When such a set $\SS^\vee$  is fixed, its elements will be 
called \NEW{\thin elements}.
\end{definition}
Note that $0$ is both a balanced and a \thin element. In the sequel,
we shall consider systems of linear ``equations'' (the equality relation will be replaced by balance), and we will require the variables to be \thinp.
By choosing appropriately the set of \thin elements, we shall
see that standard Gauss type elimination algorithms carry
over. Also,
in most applications, only the \thin solutions will have simple
geometrical interpretations.
Note that in~\cite{AGG08}, we only used the notation $\SS^\vee$ for the maximal
possible \thin set, that is $(\SS\setminus \SS^\circ )\cup\{\oo\}$;
however, it will be useful to consider also for instance the smallest
possible \thin set, which is the set of invertible
elements completed with $0$. 

Recall that  $(R,\cdot,\leq)$ is an \NEW{ordered semigroup} if
$(R,\cdot)$ is a semigroup, and $\leq$ is an order on $R$ such that
for all $a,b,c\in R$, $a\leq b$ implies $a\cdot c\leq b\cdot c$
and $c\cdot a\leq c\cdot b$.
An \NEW{ordered monoid} is a monoid that is an ordered semigroup.
An \NEW{ordered semiring} is a semiring $(\SS,+,\oo,\cdot,\oi)$ 
endowed with an order relation $\leq$ such that
$(\SS,+,\leq)$ and $(\SS,\cdot,\leq)$ are ordered semigroups.

\begin{definition} For any subsemiring $\TT$ of $\SS$, one defines the 
relations:
\[ a\preceq^\TT b \iff b\succeq^\TT a\iff b=a+c \text{ for some } c\in \TT
\enspace.\]
We shall write simply $\preceq$ instead of $\preceq^\SS$, and
$\preceq^\circ$ instead of  $\preceq^{\SS^\circ}$.
\end{definition}
These relations are preorders (reflexive and transitive),
compatible with the laws of $\SS$. They may not be 
antisymmetric. 
\begin{definition} The preorder $\preceq$ is
called the  \NEW{natural preorder} on $\SS$.
A semiring $\SS$ is said to be \NEW{naturally ordered} when
$\preceq$ (or equivalently $\succeq$) is an order relation, and 
in that case $\preceq$ is called the \NEW{natural order} on $\SS$, and
$\succeq$ is its opposite order. The notation $\prec$ and $\succ$ will
be used for the corresponding strict relations. 
\end{definition}
When $\preceq$ (or $\succeq$) is an order relation, so are $\preceq^\TT$,
$\preceq^\circ$, $\succeq^\TT$, and $\succeq^\circ$.
An idempotent semiring is necessarily naturally ordered, and 
a naturally ordered semiring is necessarily zero-sum free.
We also have:
\begin{align}\label{succeq-bal}
 a\preceq^\circ b \text{ or } b\preceq^\circ a \Rightarrow a\balance b
\enspace.
\end{align}
The converse is false in general.

\begin{definition}
Let $\SS$ be a semiring and $\RR$ be an idempotent semiring
in which the natural order is total, or for short, a
totally ordered idempotent semiring.
We say that a map $\mu:\SS\to \RR$ is a \NEW{modulus} if it is a 
surjective morphism of semirings.
In this case, we denote $\mu(a)$ by $|a|$ for all $a\in \SS$.
\end{definition}

We shall apply the notations $\balance$,  $\preceq^\TT$, $\succeq^\TT$
(so $\preceq$, $\succeq$,
 $\preceq^\circ$,  and $\succeq^\circ$), $|\cdot|$ to matrices
and vectors, understanding that the relation holds entrywise. 
We shall do the same for the notions of ``balanced'' or ``\thinp'' elements.

\begin{prop}\label{sym-tot}
If $(\RR,+,\ooo,\cdot,\ooi)$ is a totally ordered idempotent semiring, then 
the only symmetry on $\RR$ is the identity.
Moreover, any semiring $(\SS,+,0,\cdot,1)$ with a symmetry and a modulus $| \ |: \SS \to \RR$ satisfies
$|-1|=|1|=\ooi$.
\end{prop}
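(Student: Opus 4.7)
The plan has two parts, mirroring the two assertions of the proposition. The key preparatory observation for the first assertion is Proposition~\ref{car-sym}, which tells us that a symmetry on $\RR$ is exactly multiplication by some element $e\in\RR$ with $e\cdot e=\ooi$. (Commutativity of $\RR$ is fine since the paper assumes all semirings commutative.) So the first assertion reduces to showing that the only solution of $e\cdot e=\ooi$ in a totally ordered idempotent semiring is $e=\ooi$.

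For this, I would recall that in an idempotent semiring the natural preorder $\preceq$ is an order and coincides with $a\preceq b\iff a+b=b$. Since the order is total, either $e\preceq \ooi$ or $\ooi\preceq e$. In the first case, $e+\ooi=\ooi$; multiplying this equation by $e$ (allowed because $\preceq$ is compatible with multiplication, or simply by distributivity) gives $e\cdot e+e=e$, i.e.\ $\ooi+e=e$. Combining $e+\ooi=\ooi$ with $e+\ooi=e$ yields $e=\ooi$. The case $\ooi\preceq e$ is symmetric: multiplying $\ooi+e=e$ by $e$ gives $e+\ooi=\ooi$, and we conclude the same way.

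For the second assertion, I would simply use that a modulus is a semiring morphism, so $|\oi_\SS|=\ooi$ (unit mapped to unit), and that by Proposition~\ref{car-sym}, $(-\oi)\cdot(-\oi)=\oi$ in $\SS$. Applying the morphism gives $|{-}\oi|\cdot|{-}\oi|=|\oi|=\ooi$ in $\RR$. The first part then forces $|{-}\oi|=\ooi$.

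I do not expect a real obstacle: the proof is essentially book-keeping with the axioms, the main content being the short order-theoretic computation that the only square root of $\ooi$ in a totally ordered idempotent semiring is $\ooi$. The only thing to watch is to apply Proposition~\ref{car-sym} cleanly (so as not to re-prove distributivity, etc.), and to invoke the morphism property at exactly the right place to transport the identity $(-\oi)^2=\oi$ from $\SS$ to $\RR$.
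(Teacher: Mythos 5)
Your proof is correct and follows essentially the same route as the paper: reduce via Proposition~\ref{car-sym} to showing the only square root of $\ooi$ in a totally ordered idempotent semiring is $\ooi$, split on the total order, multiply the resulting inequality (or the equivalent additive identity $a+b=b$) by $e$, and conclude by antisymmetry; then transport $(-\unit)^2=\unit$ through the modulus morphism and invoke the first part. The only cosmetic difference is that you phrase the order-theoretic step through the idempotent characterization $a\preceq b\iff a+b=b$ rather than directly applying compatibility of $\leq$ with multiplication, but this is the same computation.
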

\begin{proof}
Let $\tau$ be a symmetry of $\RR$. Then by Proposition~\ref{car-sym},
there exists $e\in\RR$ such that $e\cdot e=\ooi$, and
$ \tau(a)= e\cdot a=a\cdot e$ for all $a\in\RR$.
Since $\RR$ is totally ordered, either $e\leq \ooi$ or $\ooi\leq e$.
Assuming that the inequality  $e\leq \ooi$ holds, then 
multiplying it by $e$, we get
$e\cdot e \leq e\cdot \ooi=e$, and since $e\cdot e=\ooi$ and
$\leq$ is an order relation, we deduce that $e=\ooi$.
The same is true when the inequality $e\geq \ooi$ holds. This shows 
that $e=\ooi$, hence $\tau$ is the identity map.

If now $\SS$ is a semiring with a symmetry and a modulus,
then $|-1|\cdot |-1|=|(-1)\cdot (-1)|=|1|=\ooi$, and by the above arguments,
we get that $|-1|=\ooi$.
\end{proof}

\subsection{Tropical extensions of semirings}
In the sequel, we shall consider semirings with a symmetry and a modulus.
The following construction allows one to obtain easily such semirings.
\begin{propdefinition}[Extension of semirings]\label{extension}
Let $(\SS,+,0,\cdot,1)$ be a semiring and let $(\RR,\oplus,\ooo,\cdot,\ooi)$
be a totally ordered idempotent semiring.
Then the semiring $\skewproductbar{\SS}{\RR}$ is defined as
the set $\SS\times \RR$ endowed with the operations
\[
(a,b)\oplus(a',b')=\begin{cases}
(a+a',b) & \text{if } b=b' \\
(a,b) & \text{if } b\succ b' \\
(a',b') & \text{if } b\prec b' 
\end{cases}
\qquad\text{and}\quad 
(a,b)\odot (a',b')=(a \cdot a',b\cdot  b') \enspace .
\]
Its zero element is $(\oo,\ooo)$ and its unit is $(\oi,\ooi)$.
For any subset $\SSp$ of a semiring $\SS$, we denote \[
\SSp^*:=\SSp\setminus\{0\} \enspace ,
\]
in particular, $\RR^*:=\RR\setminus\{\ooo\}$. 
Then the extension of $\RR$ by a subset $\SSp$ of $\SS$ is defined by
\begin{align}
\skewproduct{\SSp}{\RR}:=&(\SSp\times \RR^*) \cup \{(\oo,\ooo)\}\enspace .
\end{align}
If $\SSp$ is a subsemiring of $\SS$, then $\skewproductbar{\SSp}{\RR}$,
and $\skewproduct{\SSp}{\RR}$ are subsemirings of $\skewproductbar{\SS}{\RR}$.
If in addition $\SSp$ is zero-sum free and without zero divisors, 
then
\begin{align}
\skewproductstar{\SSp}{\RR}:=& \skewproduct{\SSp^*}{\RR}
\end{align}
is a subsemiring of $\skewproduct{\SSp}{\RR}$. \qed
\end{propdefinition}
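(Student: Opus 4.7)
The plan is to verify systematically that $\skewproductbar{\SS}{\RR}$ satisfies all the semiring axioms, then check closure of $\skewproduct{\SSp}{\RR}$ and $\skewproductstar{\SSp}{\RR}$ under the inherited operations. For $\skewproductbar{\SS}{\RR}$, I would first handle commutativity and associativity of $\oplus$ by case analysis on the $\RR$-coordinates: the totality of $\preceq$ on $\RR$ makes the trichotomy (equal / $b\succ b'$ / $b\prec b'$) exhaustive, so for three inputs $(a_i,b_i)$ iterated $\oplus$ returns the maximum of $\{b_1,b_2,b_3\}$ paired with the $\SS$-sum of those $a_i$ that attain this maximum; commutativity of $+$ in $\SS$ resolves ties. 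Neutrality of $(\oo,\ooo)$ for $\oplus$ is immediate because $\ooo$ is the bottom of $\preceq$ on $\RR$ (second rule) and $\oo$ is neutral for $+$ (first rule when $b=b'=\ooo$). Associativity and commutativity of $\odot$, together with the unit $(\oi,\ooi)$, are inherited coordinatewise, and the absorbing property $(\oo,\ooo)\odot(a,b)=(\oo,\ooo)$ follows from absorption of $\oo$ in $\SS$ and of $\ooo$ in $\RR$.

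The main obstacle is distributivity of $\odot$ over $\oplus$. When $b=b'$, both sides of $(c,d)\odot((a,b)\oplus(a',b'))=(c,d)\odot(a,b)\oplus(c,d)\odot(a',b')$ reduce to $(c(a+a'),db)=(ca+ca',db)$ by distributivity in $\SS$. When $b\succ b'$, the left-hand side is $(ca,db)$, while the right-hand side is $(ca,db)\oplus(ca',db')$; here I would invoke compatibility of $\odot$ with $\preceq$ on $\RR$ to obtain $db\succeq db'$, so strict inequality $db\succ db'$ makes both sides equal $(ca,db)$. The delicate sub-case is $db=db'$ with $b\succ b'$, which can occur when $d$ collapses $b$ and $b'$ (typically when $d=\ooo$). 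This is where the construction relies on the convention that pairs $(a,\ooo)$ are treated as the zero $(\oo,\ooo)$ --- equivalently, on the mild standing hypotheses on $\RR$ (no zero divisors and strict monotonicity of $\odot$ on $\RR\setminus\{\ooo\}$) satisfied in all intended examples --- under which both sides agree.

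For the subsemiring assertions, the closure checks are direct. In $\skewproduct{\SSp}{\RR}=(\SSp\times\RR^*)\cup\{(\oo,\ooo)\}$, the sum of two elements of $\SSp\times\RR^*$ either preserves the common $b\in\RR^*$ while adding the $\SSp$-coordinates (staying in $\SSp$ since it is closed under $+$), or returns the summand with the larger $b$; sums with $(\oo,\ooo)$ give back the other summand. For products $(aa',bb')$ one has $aa'\in\SSp$ by closure under $\cdot$, and $bb'$ is either in $\RR^*$ or equals $\ooo$ (in which case the pair falls into the zero class by the convention above). For $\skewproductstar{\SSp}{\RR}=\skewproduct{\SSp^*}{\RR}$, the additional hypotheses are exactly what is needed to exclude the intermediate pairs $(\oo,b)$ with $b\in\RR^*$: zero-sum freeness of $\SSp$ rules out $a+a'=\oo$ with $a,a'\in\SSp^*$, and absence of zero divisors in $\SSp$ rules out $aa'=\oo$ with $a,a'\in\SSp^*$, so $\SSp^*\times\RR^*$ together with $(\oo,\ooo)$ is preserved by both operations.
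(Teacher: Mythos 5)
Your strategy---verifying each semiring axiom by case analysis on the $\RR$-coordinates and then checking closure of the smaller sets---is the natural one, and since the paper offers no proof of this Proposition-Definition, it is the only thing to compare against. You have also correctly isolated the delicate step: distributivity when $b\succ b'$ but $d\cdot b=d\cdot b'$. The gap is in how you try to discharge it. The ``mild standing hypotheses on $\RR$'' you invoke (no zero divisors, strict monotonicity of $\cdot$ on $\RR^*$) only exclude $d\cdot b=d\cdot b'$ when $d\in\RR^*$; they are silent about $d=\ooo$, which is exactly the residual case. And the ``convention'' of identifying every $(a,\ooo)$ with the zero is not part of the definition: the carrier of $\skewproductbar{\SS}{\RR}$ is literally $\SS\times\RR$.

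In fact, taken literally the statement is slightly wrong: $\skewproductbar{\SS}{\RR}$ fails distributivity as soon as $\SS$ has elements $c,a,a'$ with $ca\ne ca+ca'$. For instance with $\SS=\N$ and $\RR=\rmax$ one has $(1,-\infty)\odot\bigl((1,1)\oplus(1,0)\bigr)=(1,-\infty)$, whereas $\bigl((1,-\infty)\odot(1,1)\bigr)\oplus\bigl((1,-\infty)\odot(1,0)\bigr)=(2,-\infty)$. The elements $(c,\ooo)$ with $c\ne\oo$ are the culprits; they are excluded precisely by passing to $\skewproduct{\SS}{\RR}$ and $\skewproductstar{\SS}{\RR}$, which are the semirings the paper actually uses (for $\smax$, $\Ti$, the phase extension, etc.). For those, your closure checks are sound, but you must make explicit the additional hypothesis that $\RR^*$ is multiplicatively closed (no zero divisors in $\RR$)---the paper tacitly grants this when it asserts just after the statement that every admissible $\RR$ has the form $R\cup\{\ooo\}$ for a totally ordered monoid $R$---rather than leaning on the unstated quotient to absorb products landing in $\SSp\times\{\ooo\}$. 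So the right conclusion is: the smaller constructions are genuine semirings under this extra hypothesis on $\RR$, and the blanket claim for $\skewproductbar{\SS}{\RR}$ is an imprecision that your proof should flag rather than smooth over.
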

This construction
bears some similarity with a semidirect product, which motivates
the notations ``$\bar{\rtimes}$'', $\ushort{\rtimes}$ and ``$\rtimes$''. 
We shall denote by $\zero$ and $\unit$, instead of $(\oo,\ooo)$ and
$(\oi,\ooi)$, the zero and unit of  $\skewproductbar{\SS}{\RR}$.

If $(R,\cdot,\ooi,\leq)$ is a totally ordered monoid, 
completing $R$ with a bottom element, denoted by $\ooo$,
we get the totally ordered idempotent semiring 
$(\RR:=R\cup\{\ooo\},\max,\ooo,\cdot,\ooi)$.
All the semirings $\RR$
satisfying the assumptions of Proposition~\ref{extension} are of this form. 
When $R=\Re$ is equipped with its usual order and addition,
we recover the max-plus semiring $\rmax$.
We may take more generally for $R$ any submonoid of $(\mathbb{R},+)$,
or take $\Re^d$ equipped with the lexicographic order
and entrywise addition.

The intuition of the construction of Proposition~\ref{extension}
is best explained by the following
example. 
\begin{example}[Complex extension of the tropical semiring]\label{ex-best}
Let $\C$ denote the field of complex numbers.
Then the semiring $\Cext$ will be called ``complex extension of the tropical semiring''. 
An element $(a,b)\in \Cext$
encodes the asymptotic expansion $a \epsilon^{-b}+o(\epsilon^{-b})$,
when $\epsilon$ goes to $0_+$
(when $(a,b)=\zero$, this is the identically $0$ expansion).
Indeed, the ``lexicographic'' rule
in the addition of $\Cext$ corresponds
precisely to the addition of asymptotic expansions, and the entrywise product
of $\Cext$ corresponds to the product of asymptotic expansions.
By taking the zero-sum free subsemiring $\Re_+\subset \C$ consisting
of the real nonnegative numbers,
we end up with the subsemiring $\skewproductstar{\Re_+}{\rmax}$, which encodes the asymptotic
expansions $a\epsilon^{-b}+o(\epsilon^{-b})$ with $a>0$ and $b\in\Re$,
together with the identically $0$ expansion. The latter semiring was
used in~\cite{Bapat} under the name of semiring of (first order)
jets, to study eigenvalue perturbation problems. 
\end{example}
We next list some simple
facts concerning extensions of semirings.

\begin{fact}\label{remark-extension0}
Let $\B:=\{\ooo,\ooi\}$ be the Boolean semiring, i.e., the idempotent semiring
with two elements, 
let $\SS$ be zero-sum free and without zero divisors. Then
$\skewproductstar{\SS}{\B}$ is isomorphic to $\SS$. 
In general $\SS$ is a subsemiring of $\skewproduct{\SS}{\RR}$ 
since the injective map 
$\jmath:\SS\to \skewproductbar{\SS}{\RR}$ defined by
\[\jmath(a)=\begin{cases}
(a,\ooi)  & \text{ if } a\neq \oo,\\
\zero  & \text{ if } a = \oo, \end{cases} \]
is a morphism of semirings. 
So the semirings $\skewproductbar{\SS}{\RR}$ and $\skewproduct{\SS}{\RR}$ 
 are semiring extensions of $\SS$. The same is true for
$\skewproductstar{\SS}{\RR}$ as soon as $\SS$ is zero-sum free 
without zero divisors.
Note that one can also consider the map
$\gamma:\skewproductbar{\SS}{\RR}\to\SS$, such that $\gamma(a,b)=a$,
for all $(a,b)\in \skewproductbar{\SS}{\RR}$.
This map yields a surjective multiplicative morphism
from $\skewproductbar{\SS}{\RR}$ (or from $\skewproduct{\SS}{\RR}$ or $\skewproductstar{\SS}{\RR}$) to $\SS$,
such that the composition $\gamma\circ \jmath$ equals the identity of
$\SS$.
\end{fact}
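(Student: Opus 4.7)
The plan is to verify each of the four assertions by unwinding Proposition-Definition~\ref{extension} and performing straightforward case analyses; no new idea is needed, but the cases in the additivity (and multiplicativity) of $\jmath$ implicitly rely on the absence of zero-sums (resp.\ zero divisors) in $\SS$, which is why the different flavours of extension need slightly different hypotheses.

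For the first claim, I would introduce the map $\phi\colon \skewproductstar{\SS}{\B}\to \SS$ defined by $\phi(a,\ooi)=a$ on $\SS^*\times\{\ooi\}$ and $\phi(\zero)=\oo$. Bijectivity is immediate from the description $\skewproductstar{\SS}{\B} = (\SS^*\times\{\ooi\})\cup\{\zero\}$. Since in $\B$ one has $\ooi\cdot\ooi=\ooi$ and $\ooi\oplus\ooi=\ooi$, both operations in $\skewproductstar{\SS}{\B}$ reduce to operating on the first coordinate; the hypotheses that $\SS$ is zero-sum free and without zero divisors guarantee that $a+a'$ and $aa'$ remain in $\SS^*$ whenever $a,a'\in\SS^*$, so no element leaves $\skewproductstar{\SS}{\B}$ under a single operation, and $\phi$ becomes a semiring isomorphism.

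For the second claim, I would check that $\jmath\colon\SS\to\skewproductbar{\SS}{\RR}$ is an injective semiring morphism by splitting on whether each argument equals $\oo$. Injectivity is immediate, while $\jmath(\oi)=\unit$ and $\jmath(\oo)=\zero$ follow from the definition (the former uses non-triviality of $\SS$). For additivity, if one argument equals $\oo$ the computation reduces to $\zero\oplus(a,\ooi)=(a,\ooi)$, which holds because $\ooo\prec\ooi$ in $\RR$; when both $a,b\neq\oo$, having identical $\RR$-components gives $(a,\ooi)\oplus(b,\ooi)=(a+b,\ooi)$, equal to $\jmath(a+b)$ provided $a+b\neq\oo$. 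Multiplicativity is analogous: $\jmath(a)\odot\jmath(b)=(ab,\ooi)$ coincides with $\jmath(ab)$ provided $ab\neq\oo$ whenever $a,b\neq\oo$. The image of $\jmath$ is then $(\SS^*\times\{\ooi\})\cup\{\zero\}\subset\skewproduct{\SS}{\RR}$, which shows that both $\skewproduct{\SS}{\RR}$ and $\skewproductbar{\SS}{\RR}$ are extensions of $\SS$; the extra hypothesis that $\SS$ is zero-sum free without zero divisors is precisely what is needed to guarantee that the image sits inside $\skewproductstar{\SS}{\RR}$ and that this subset is itself closed under the two operations.

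Finally, for $\gamma\colon\skewproductbar{\SS}{\RR}\to\SS$, multiplicativity is immediate since $\gamma$ projects the product $(a,b)\odot(a',b')=(aa',bb')$ onto its first coordinate, and similarly $\gamma(\unit)=\oi$ and $\gamma(\zero)=\oo$. Surjectivity is witnessed by $\gamma(a,\ooi)=a$ for any $a\in\SS^*$ together with $\gamma(\zero)=\oo$, and these same equalities yield the identity $\gamma\circ\jmath=\mathrm{id}_\SS$. The only point worth stressing is that $\gamma$ is \emph{not} additive in general: the addition rule in $\skewproductbar{\SS}{\RR}$ exploits the total order on $\RR$ to discard information from the $\SS$-component, which is precisely why the fact claims only multiplicative surjectivity for $\gamma$.
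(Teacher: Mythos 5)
Your verification of the first claim (the isomorphism $\skewproductstar{\SS}{\B}\cong\SS$) and of the assertions about $\gamma$ (multiplicative, surjective, not additive, $\gamma\circ\jmath=\mathrm{id}_\SS$) is correct and complete; the Fact is stated in the paper without proof, and your elementary case analysis is exactly what an unwinding of Proposition-Definition~\ref{extension} would give.

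There is, however, a small inconsistency in your treatment of the middle claim, and you should be aware that it reflects a corresponding imprecision in the paper's wording. In verifying that $\jmath$ is a semiring morphism, you correctly flag the caveats ``provided $a+b\neq\oo$'' and ``provided $ab\neq\oo$.'' These are genuine obstructions: if $a,b\neq\oo$ but $a+b=\oo$, then $\jmath(a)\oplus\jmath(b)=(a+b,\ooi)=(\oo,\ooi)$, whereas $\jmath(a+b)=\jmath(\oo)=\zero=(\oo,\ooo)$, and the two are distinct elements of $\skewproductbar{\SS}{\RR}$ (indeed of $\skewproduct{\SS}{\RR}$, since $\ooi\in\RR^*$). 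So $\jmath$ fails to be additive exactly when $\SS$ has nontrivial zero-sums, and by the analogous computation with $ab=\oo$ it fails to be multiplicative exactly when $\SS$ has zero divisors. Your concluding sentence, by contrast, attributes the need for the ``zero-sum free, no zero divisors'' hypotheses solely to getting the image of $\jmath$ inside $\skewproductstar{\SS}{\RR}$ and to the closure of that subset; this understates the matter, since (i) the image $(\SS^*\times\{\ooi\})\cup\{\zero\}$ already sits inside $\skewproductstar{\SS}{\RR}$ as a set with no hypothesis, and (ii) the hypotheses are required already for $\jmath$ to be a morphism into $\skewproduct{\SS}{\RR}$ or $\skewproductbar{\SS}{\RR}$ at all. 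You should make this explicit in the final paragraph rather than letting the caveats silently drop out of the conclusion. It is worth noting that the paper's phrasing (``In general $\SS$ is a subsemiring of $\skewproduct{\SS}{\RR}$ \dots. The same is true for $\skewproductstar{\SS}{\RR}$ as soon as $\SS$ is zero-sum free without zero divisors'') reads as though the hypothesis is needed only in the third case; the correct reading is that the opening hypothesis (``let $\SS$ be zero-sum free and without zero divisors'') is in force throughout the Fact, and for a general semiring (e.g.\ $\SS=\C$, as in Example~\ref{ex-best}) the map $\jmath$ is only an injective multiplicative map on $\SS^*\cup\{\oo\}$, not a semiring morphism.
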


\begin{fact}\typeout{Mistake here, what is R?}
If $\SS$ and $\RR$ are commutative then so is $\skewproductbar{\SS}{\RR}$.
If $\SS$ is idempotent (resp., naturally ordered) then
so is $\skewproductbar{\SS}{\RR}$, and consequently $\skewproduct{\SS}{\RR}$ and  
$\skewproductstar{\SS}{\RR}$.
The natural preorder on $\skewproductbar{\SS}{\RR}$ is the
lexicographic preorder: $(a,b)\preceq (a',b')$ if and only if either
$b\prec b'$, or $b=b'$ and $a\preceq a'$.
\end{fact}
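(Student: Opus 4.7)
The plan is to verify each assertion by unpacking the definition of the operations on $\skewproductbar{\SS}{\RR}$ given in Proposition-Definition~\ref{extension}, and then transferring the three statements to $\skewproduct{\SS}{\RR}$ and $\skewproductstar{\SS}{\RR}$ via the fact that these are subsemirings.

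For commutativity, I would treat multiplication first: $(a,b)\odot(a',b')=(a\cdot a',b\cdot b')$ is symmetric in its two arguments as soon as both $\SS$ and $\RR$ are commutative. For addition, I would go through the three cases in the definition of $\oplus$: when $b=b'$, commutativity reduces to commutativity of $+$ in $\SS$; the two remaining cases $b\succ b'$ and $b\prec b'$ are swapped when the arguments are exchanged, so they give the same output. For idempotency, the assertion reduces to a one-line check: $(a,b)\oplus(a,b)$ falls in the first case (since $b=b$), so it equals $(a+a,b)$, which is $(a,b)$ whenever $\SS$ is idempotent.

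The main work lies in the preorder statement, so I would do this next. To identify the natural preorder on $\skewproductbar{\SS}{\RR}$, I would analyze when $(a',b')=(a,b)\oplus(c,d)$ has a solution $(c,d)\in\skewproductbar{\SS}{\RR}$, distinguishing the three cases from the definition of $\oplus$. The case $b=d$ forces $b=b'$ and $a+c=a'$, i.e.\ $a\preceq a'$ in $\SS$. The case $b\succ d$ forces $(a,b)=(a',b')$, which is subsumed by the previous case. The case $b\prec d$ forces $d=b'$, hence $b\prec b'$, and $c=a'$ can be taken arbitrary in $\SS$ (with $d=b'$ any $\succ$-majorant of $b$, which exists since $b\prec b'$). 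Combining, $(a,b)\preceq(a',b')$ holds iff either $b\prec b'$, or $b=b'$ and $a\preceq a'$, which is precisely the lexicographic preorder.

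Finally, for naturally ordered, I would use the lexicographic characterization together with antisymmetry of $\preceq$ on $\SS$ and the fact that $\RR$ is totally ordered (so its natural order $\preceq$ is already antisymmetric): if $(a,b)\preceq(a',b')$ and $(a',b')\preceq(a,b)$, then the possibility $b\prec b'$ is excluded by trichotomy, so necessarily $b=b'$, and then antisymmetry of $\preceq$ on $\SS$ gives $a=a'$. The assertions for $\skewproduct{\SS}{\RR}$ and $\skewproductstar{\SS}{\RR}$ follow since commutativity, idempotency and natural-orderedness are inherited by subsemirings. No real obstacle is expected; the only point requiring minor care is the third case of the preorder analysis, where the freedom in choosing $c$ must be handled so that one does not artificially add conditions on $a,a'$.
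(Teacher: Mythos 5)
Your proof is correct, and it is the straightforward unwinding of Proposition-Definition~\ref{extension} that the paper takes for granted (the paper records this as a Fact without supplying any argument). Every step checks out: the case analysis for $\oplus$ is exhaustive because $\RR$ is totally ordered, the preorder analysis correctly identifies the lexicographic preorder, and the transfer to the subsemirings $\skewproduct{\SS}{\RR}$ and $\skewproductstar{\SS}{\RR}$ is valid since the natural preorder on a subsemiring is finer than the restriction of the ambient one, so antisymmetry (and trivially idempotency and commutativity) is inherited.
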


\begin{fact}\label{remark-extension} 
Let $\tau$ be a symmetry of $\SS$. We define the map $\tau'$ from $\skewproductbar{\SS}{\RR}$ 
to itself  by $\tau'((a,b))=(\tau(a),b)$ for all $a\in\SS$ and
$b\in\RR$. Then $\tau'$  is a symmetry of $\skewproductbar{\SS}{\RR}$,
which preserves $\skewproduct{\SS}{\RR}$ and  $\skewproductstar{\SS}{\RR}$.
We shall call $\tau'$ the extension of $\tau$.
Conversely, if $\tau'$ is a symmetry of $\skewproductbar{\SS}{\RR}$, then
it is the extension of some symmetry $\tau$ of $\SS$.

For these symmetries, 
we have $(a,b)^\circ=(a^\circ,b)$ for all $a\in\SS$ and $b\in\RR$, hence 
$(\skewproductbar{\SS}{\RR})^\circ=\skewproductbar{\SS^\circ}{\RR}$,
$(\skewproduct{\SS}{\RR})^\circ=\skewproduct{\SS^\circ}{\RR}=(\skewproductbar{\SS}{\RR})^\circ\cap (\skewproduct{\SS}{\RR})$,
$(\skewproductstar{\SS}{\RR})^\circ=\skewproductstar{\SS^\circ}{\RR}=(\skewproductbar{\SS}{\RR})^\circ\cap (\skewproductstar{\SS}{\RR})$.
If $\SS^\vee$ is a \thin set of $\SS$, then
$((\SS^\vee)^*\times \RR)\cup\{\zero\}$
is a \thin set of $\skewproductbar{\SS}{\RR}$,
and $\skewproductstar{\SS^\vee}{\RR}$ is a \thin set of $\skewproduct{\SS}{\RR}$,
$\skewproductstar{\SS}{\RR}$, and also of $\skewproductbar{\SS}{\RR}$.
\end{fact}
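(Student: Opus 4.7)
The plan is to reduce every assertion to a componentwise statement. The organizing observation is that the map $\tau'$ leaves the second coordinate untouched, so $\tau'(a,b)$ shares that coordinate with $(a,b)$; consequently sums involving $\tau'$ always fall into the first branch of the addition rule of Proposition-Definition~\ref{extension}, and every identity to be checked devolves onto the corresponding identity for $\tau$ in $\SS$ (the $\RR$-component being handled by a trivial equality).

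First, I verify the four axioms~\eqref{sym1}--\eqref{sym4} for $\tau'$. Three of them follow immediately from the componentwise definitions of $\tau'$ and of the product on $\skewproductbar{\SS}{\RR}$. For additivity~\eqref{sym1} I split into the three cases of the addition rule: in each case the second coordinates on both sides fall in the same branch, and the first-coordinate identity reduces to~\eqref{sym1} for $\tau$. Since $\tau$ is a self-inverse bijection of $\SS$ (in particular $\tau(a)=\oo\Leftrightarrow a=\oo$), $\tau'$ sends nonzero elements to nonzero elements and so preserves the conditions defining $\skewproduct{\SS}{\RR}$ and $\skewproductstar{\SS}{\RR}$. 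For the converse, Proposition~\ref{car-sym} gives $\tau'(x)=e'\odot x$ with $e'\odot e'=\unit$; writing $e'=(e_1,e_2)$, the product rule yields $e_1\cdot e_1=\oi$ and $e_2\cdot e_2=\ooi$. In the totally ordered idempotent semiring $\RR$ this forces $e_2=\ooi$, by the same order-comparison argument as in the proof of Proposition~\ref{sym-tot}. Then $\tau(a):=e_1\cdot a$ is, once more by Proposition~\ref{car-sym}, a symmetry of $\SS$ whose extension is the given $\tau'$.

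The identities for the balanced sets all follow from the single computation $(a,b)^\circ=(a,b)+(\tau(a),b)=(a^\circ,b)$, which lies in the first branch of the addition rule. This yields $(\skewproductbar{\SS}{\RR})^\circ=\skewproductbar{\SS^\circ}{\RR}$ directly, and the two subsemiring versions follow by intersection with $\skewproduct{\SS}{\RR}$ and $\skewproductstar{\SS}{\RR}$ respectively. The only subtle point, arising for $\skewproductstar{\SS}{\RR}$, is that $a\neq \oo$ must force $a^\circ\neq \oo$; this uses precisely the hypothesis that $\SS$ is zero-sum free (so $a+\tau(a)=\oo$ implies $a=\oo$). For the thin-set statements I verify the three defining conditions: each proposed set contains $\zero$ by construction; its intersection with the set of balanced elements vanishes because $\SS^\vee\cap\SS^\circ\subseteq\{\oo\}$ by thinness of $\SS^\vee$; and an invertible element $(a,b)$ of the extension must have $a$ invertible in $\SS$ and $b$ invertible in $\RR$, hence $a\in\SS^\vee$, placing $(a,b)$ inside the proposed thin set. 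The main obstacle, if any, is merely organizational: tracking the three nested extensions $\skewproductbar{\SS}{\RR}\supseteq\skewproduct{\SS}{\RR}\supseteq\skewproductstar{\SS}{\RR}$ uniformly while keeping careful account of where the zero-sum freeness hypothesis enters.
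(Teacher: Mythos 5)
Your proposal is correct and follows essentially the same route as the paper: the direct statements are handled componentwise, and for the converse you invoke Proposition~\ref{car-sym} to write $\tau'$ as multiplication by $e'=(e_1,e_2)$, deduce $e_2\cdot e_2=\ooi$ forces $e_2=\ooi$ exactly as in Proposition~\ref{sym-tot}, and recover $\tau$ from $e_1$; this is the argument the paper gives. The paper simply declares the remaining verifications (the four symmetry axioms, the balanced-set identities, and the thin-set claims) ``immediate,'' whereas you spell them out, correctly flagging where zero-sum freeness of $\SS$ is needed for the $\skewproductstar{\SS}{\RR}$ case.
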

\begin{proof}
We only need to show that any symmetry $\tau'$ of $\skewproductbar{\SS}{\RR}$
is the extension of some symmetry of $\SS$, the other properties are immediate.
By Proposition~\ref{car-sym}, $\tau'((a,b))=e'\odot (a,b)= (a,b)\odot e'$,
for all $(a,b)\in\skewproductbar{\SS}{\RR}$,  
for some $e'=(e,f)\in \skewproductbar{\SS}{\RR}$ such that $(e,f)\odot (e,f)=(\oi,\ooi)$.
Denoting $\tau(a)=e\cdot a$ and $\sigma(b)=f\cdot b$,
we get that  $\tau'(a,b)=(\tau(a),\sigma(b))$.
The properties of $(e,f)$ imply that $\tau$ is a symmetry of $\SS$
and $\sigma$ is a symmetry of $\RR$.
Moreover, since $\RR$ is totally ordered, then
by Proposition~\ref{sym-tot}, $\sigma$ is the identity map, or $f=\ooi$.
This shows that $\tau'$ is necessarily the extension of the symmetry $\tau$ of $\SS$.
\end{proof}

\begin{fact} \label{remark-extension3} 
Let $\SS$ and $\RR$ be as in Proposition~\ref{extension}. Then
the map $\mu:\skewproductbar{\SS}{\RR}\to\RR$, $(a,b)\mapsto b$  is a modulus. Also
its restriction to $\skewproduct{\SS}{\RR}$ or $\skewproductstar{\SS}{\RR}$ is a modulus.
If $\SS$ is naturally ordered, then $\mu$ is order preserving.
Moreover, the map $\imath:\RR \to \skewproductbar{\SS}{\RR}$
defined by 
\[ \imath(b)=\begin{cases}(1,b) & \text{ if } b\in \RR^*, \\
 \zero & \text{ if } b=\ooo ,\end{cases}\]
is an injective and  multiplicative morphism, and its image is 
included in $\skewproductstar{\SS}{\RR} \subset\skewproduct{\SS}{\RR}$.
The composition $\mu\circ \imath$ equals the identity on $\RR$,
thus the image of $\imath$ is the set of fixed points of $\imath\circ \mu$.
The map $\imath$ is a semiring morphism if and only if $\SS$ is idempotent.
More generally, if $\SS$ is naturally ordered, then $\imath$
is an order preserving map from $(\RR,\preceq)$ to $\skewproductbar{\SS}{\RR}$ endowed with 
the natural order $\preceq$, and it satisfies: 
\begin{equation}\label{inj-quasi-morph}
\imath(x)\vee \imath(y) \preceq \imath(\max(x,y))\preceq 
\imath(x) \oplus \imath(y)\enspace,
\end{equation}
where $\vee$ is the supremum in the natural order of $\skewproductbar{\SS}{\RR}$.
\end{fact}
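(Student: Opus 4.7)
The plan is to verify each clause separately by direct computation, using the addition and multiplication rules of $\skewproductbar{\SS}{\RR}$ from Proposition~\ref{extension} together with the lexicographic description of the natural preorder established in the preceding Fact. For the modulus claim, first I would observe that $\mu(1,b)=b$ and $\mu(\zero)=\ooo$ give surjectivity onto $\RR$, and onto $\RR$ from the restrictions as well since $(1,b)\in\skewproductstar{\SS}{\RR}\subset\skewproduct{\SS}{\RR}$. Preservation of zero, unit, and product are immediate from the componentwise definition of $\odot$. Additivity requires a three-case analysis on $(a,b)\oplus(a',b')$: if $b=b'$ then the second component is $b=b\oplus b'$ by idempotency of $\RR$; if $b\succ b'$ or $b\prec b'$ then $\mu$ picks out the larger second component, which is $b\oplus b'$ in the totally ordered idempotent semiring $\RR$. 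The order-preserving property is then immediate: under natural order, $(a,b)\preceq(a',b')$ forces $b\preceq b'$ by the lex description.

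Next I would turn to $\imath$. Injectivity is clear from the second coordinate, and multiplicativity is direct since $(1,b)\odot(1,b')=(1,bb')$ on nonzero arguments, with the zero case trivial; the image is contained in $\skewproductstar{\SS}{\RR}$ because $1\in\SS^*$. The equality $\mu\circ\imath=\Id_\RR$ is a one-line check, and $\imath\circ\mu$ sends $(a,b)$ to $(1,b)$ (or $\zero$), so its fixed points are exactly those pairs whose first coordinate equals $1$, namely the image of $\imath$. For the morphism criterion, since $\imath(\ooo)=\zero$ and $\imath(\ooi)=\unit$, being a semiring morphism is equivalent to additivity. Testing $\imath(b\oplus b)=\imath(b)\oplus\imath(b)$ for $b\in\RR^*$ gives $(1,b)=(1+1,b)$, i.e., $1+1=1$ in $\SS$; conversely, if $1+1=1$ then $a+a=a(1+1)=a$ for all $a\in\SS$, so $\SS$ is idempotent. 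Once this holds, the remaining additivity cases (one argument $\ooo$, or $b\succ b'$) follow from the lex addition rule without further work.

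For the last clause, assume $\SS$ naturally ordered. If $x\preceq y$ in $\RR$, with both nonzero, the lex preorder gives $(1,x)\preceq(1,y)$; the case $x=\ooo$ is trivial since $\imath(\ooo)=\zero$ is the bottom. Hence $\imath$ is order-preserving, and because $\RR$ is totally ordered, $\imath(x)$ and $\imath(y)$ are comparable in $\skewproductbar{\SS}{\RR}$, so $\imath(x)\vee\imath(y)$ exists and equals $\imath(\max(x,y))$, yielding the left inequality of~\eqref{inj-quasi-morph} as an equality. For the right inequality, when $x\neq y$ (say $x\succ y$) the lex rule gives $\imath(x)\oplus\imath(y)=\imath(x)=\imath(\max(x,y))$; when $x=y\neq\ooo$, $\imath(x)\oplus\imath(y)=(1+1,x)$ and $(1,x)\preceq(1+1,x)$ follows from $1\preceq 1+1$ in $\SS$ by the definition of $\preceq$. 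The whole argument is routine; the only point demanding care is the systematic case analysis on $\oplus$ in $\skewproductbar{\SS}{\RR}$, and this is not really an obstacle.
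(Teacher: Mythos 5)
Your verification is correct and complete, and since the paper states this as a Fact without proof, the intended argument is exactly this kind of direct case-by-case check of the three branches in the definition of $\oplus$ together with the lexicographic description of the natural preorder. All the key points are in order: the three-case additivity of $\mu$, the reduction of the semiring-morphism criterion to $1+1=1$ (with the correct converse $a+a = a(1+1)$), the identification of the fixed points of $\imath\circ\mu$, and the observation that comparability of $\imath(x)$ and $\imath(y)$ (from total order on $\RR$ and monotonicity of $\imath$) makes the left inequality of \eqref{inj-quasi-morph} an equality while the right one follows from $1\preceq 1+1$ in $\SS$.
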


\subsection{Symmetrized max-plus semiring}
The {\em symmetrized max-plus semiring\/}, which is useful to deal with 
systems \
of linear equations over $\rmax$, was introduced in~\cite{Plus}.
It is also discussed in~\cite{gaubert92a}, \cite{bcoq} and~\cite{AGG08}.
Here we shall adopt a construction of this semiring using the above
extension. The resulting semiring is isomorphic to the one of~\cite{Plus},
as shown in~\cite{AGG08}. We start by defining the symmetrized Boolean
semiring. An alternative construction was given in~\cite[Section 5.1]{AGG08}.

\begin{definition}[Symmetrized Boolean semiring]\label{def-symb} 
Let $\BB$ be a set  with four elements denoted  
$\ooo, \ooi,\ominus \ooi$ and $\ooi^\circ$.
Define the laws $\oplus, \otimes$ on $\BB$ by:
\[ 
\begin{tabular}{|r|rrrr|}
\hline
$\oplus$ & $\ooo$ & $\ooi$ & $\ominus \ooi$ &$ \ooi^\circ$  \\ \hline
$\ooo$ & $\ooo$ & $\ooi$ & $\ominus \ooi$ &$ \ooi^\circ$  \\
$\ooi$ & $\ooi$ & $\ooi$ & $\ooi^\circ$ &$ \ooi^\circ$  \\ 
$\ominus \ooi$ &$\ominus \ooi$ & $\ooi^\circ$ & $\ominus \ooi$ &$ \ooi^\circ$  \\ 
$ \ooi^\circ$  & $ \ooi^\circ$&$ \ooi^\circ$ & $ \ooi^\circ$ &$ \ooi^\circ$  \\ \hline
\end{tabular}
\quad 
\begin{tabular}{|r|rrrr|}
\hline
$\otimes$ & $\ooo$ & $\ooi$ & $\ominus \ooi$ &$ \ooi^\circ$  \\ \hline
$\ooo$ & $\ooo$ & $\ooo$ & $\ooo$ &$\ooo$  \\
$\ooi$ & $\ooo$ & $\ooi$ & $\ominus \ooi$ &$ \ooi^\circ$  \\ 
$\ominus \ooi$ &$\ooo$ & $\ominus\ooi$ & $\ooi$ &$ \ooi^\circ$  \\ 
$ \ooi^\circ$  & $\ooo$ & $\ooi^\circ$ & $\ooi^\circ$ &$ \ooi^\circ$  \\ \hline
\end{tabular}
\]
Then $(\BB,\oplus,\otimes)$ is
an idempotent semiring with zero element $\ooo$ and
unit element $\ooi$,
and the map  $\tau:\BB\to\BB$, such that 
$\tau(a)=(\ominus \ooi)\otimes a$, for all $a\in\BB$,
is a symmetry of $\BB$, such that $\ooi^\circ=\ooi\oplus \tau(\ooi)$.
\end{definition}
Since $\BB$ is idempotent, it is naturally ordered. 
The order of $\BB$ satisfies:
\begin{center}
\setlength{\unitlength}{0.6pt}
\begin{picture}(100,110)
\put(45,102){$\ooi^\circ$}
\put(45,100){\line (-1,-1){38}}
\put(55,100){\line (1,-1){38}}
\put(1,48){$\ooi$}
\put(90,48){$\ominus \ooi$}
\put(5,43){\line (1,-1){38}}
\put(95,43){\line (-1,-1){38}}
\put(45,0){$\ooo$}
\end{picture}
\end{center}
The above properties imply that the notation $\ooi^\circ$
is coherent with the one of Definition~\ref{circ}.
We have $\BB^\circ:=(\BB)^\circ=\{\ooo,\ooi^\circ\}$,
and since $\ooi$ and $\ominus \ooi$ are invertible,
the only possible \thin set of $\BB$ is
$\BB^\vee:=\{\ooo,\ooi,\ominus \ooi\}$.

\begin{definition}[Symmetrized max-plus semiring]\label{def-smax}
The symmetrized max-plus semiring, $\smax$, is defined
to be $\skewproductstar{\BB}{\rmax}$, according to Proposition~\ref{extension}.
\end{definition}
Indeed, since $\BB$ is zero-sum free, with no zero divisors,
the extension $\skewproductstar{\BB}{\rmax}$ 
is a semiring. By Fact~\ref{remark-extension}, the symmetry $\tau$ of 
$\BB$ is extended into the symmetry $\tau'$ of $\skewproductstar{\BB}{\rmax}$.
The definition of the symmetrized max-plus semiring
given in~\cite{Plus} leads to a structure isomorphic
to $\skewproductstar{\BB}{\rmax}$, which was denoted by $\smax$ in this reference.
In the present paper, $\smax$ is directly defined as $\skewproductstar{\BB}{\rmax}$.

By Fact~\ref{remark-extension}, we have 
$\S^\circ:=(\S)^\circ=\skewproductstar{\BB^\circ}{\rmax}=(\{\ooi^\circ\}\times \Re)\cup\{\zero\}$.
Moreover $\S^\vee:=\skewproductstar{\BB^\vee}{\rmax}=(\{\ooi,\ominus \ooi\}\times \Re)\cup\{\zero\}$
is a \thin set of $\S$, and it is indeed the only possible one.
We shall also use the notations
$\S^\oplus:=(\{\ooi\}\times \Re) \cup\{\zero\}$ and 
$\S^\ominus:=(\{\ominus \ooi\}\times \Re)\cup\{\zero\}$,
thus $\S^\vee=\S^\oplus\cup  \S^\ominus$.
By Fact~\ref{remark-extension3}, $\imath$ is a morphism,
so that we can identify $\rmax$ with $\imath(\rmax)=\S^\oplus$.
We have $\S^\ominus=\ominus \S^\oplus$.
In~\cite{Plus}, the elements of $\S^\vee$ are called \NEW{signed},
thus ``signed'' in this particular semiring is equivalent
to ``\thinp''.

\begin{remark}
In the idempotent semiring $(\BB,\oplus,\otimes)$,
the elements of $(\BB^\vee)^*$  are 
not comparable in the natural order,
and $\zero$ and $\ooi^\circ$ are respectively the minimal and 
maximal elements of $\BB$.
\end{remark}

It is natural to extend the tropical semiring by capturing the phase information, rather than the sign. The next construction yields a coarse way to do so.
\begin{example}[Tropical extension of the torus, $\skewproductstar{\bar{\To}}{\rmax}$]
\label{groupext}
Let $(G,\otimes,\ooi)$ be a group,
equip it with the trivial order $\leq$ such that every two elements are 
incomparable (i.e., $a\leq b$ if and only if $a=b$),
and add a minimal and maximal element to $G$, denoted respectively
$\zero$ and $\ooi^\circ$, such that $\zero$ is absorbing for the
multiplication in $\bar{G}:=G\cup \{\zero,\ooi^\circ\}$,
and $\ooi^\circ$ is absorbing for the multiplication
in $G\cup \{\ooi^\circ\}$.
Then $(\bar{G},\vee,\zero,\otimes,\ooi)$ is an idempotent semiring
in which $a\vee b=\ooi^\circ$ for all $a,b\neq \zero$, such that
$a\neq b$.
Assume that there exists $e\in G\setminus\{\ooi\}$, such that
$e\otimes e=\ooi$ and $e$ commutes with all elements of $G$.
Then the map $a\in \bar{G}\mapsto e\otimes a$ is a non trivial symmetry
of $\bar{G}$, and since $a\neq e\otimes a$ for all $a\in G$,
we get that $\bar{G}^\circ=\{\zero,\ooi\}$
and that $\bar{G}^\vee=G\cup\{\zero\}$ is the only thin set of $\bar{G}$.
Since $\bar{G}$ is zero-sum free without zero divisors,
one can then construct $\skewproductstar{\bar{G}}{\rmax}$
with the thin set  $\skewproductstar{\bar{G}^\vee}{\rmax}$.
When $G$ is the group with two elements (of order $2$, so isomorphic
to the additive group $\Z_2$), we recover the semirings $\BB$ and $\S$.
When $G$ is the unit circle $\To$ of $\C$, we obtain a semiring
$\skewproductstar{\bar{\To}}{\rmax}$, with 
only one possible non trivial symmetry obtained with $e=-1$.
\end{example}
A more powerful semiring than $\skewproductstar{\bar{\To}}{\rmax}$ is obtained by the following 
construction which is a variant of the one
of the complex tropical hyperfield that Viro~\cite{1006.3034}
made, using a different set of axioms, see Remark~\ref{viro-remark} below.
\begin{example}[Phase extension of the tropical semiring]
\label{ex-viro}
Let $\phase$ (for ``phases'') denote the set 
of closed convex cones of $\C$ 
seen as a real 2-dimensional space, that is the set 
of angular sectors of $\C$ between two half-lines with angle less or equal to $\pi$ or
equal to $2\pi$, together with the singleton $\{0\}$
(the trivial cone). 
Consider the following laws on $\phase$:
the sum $\Phi+\Phi'$ of two elements $\Phi,\Phi'\in \phase$ is the closed convex hull of
$\Phi\cup \Phi'$, and the product $\Phi\cdot \Phi'$ is the closed convex hull
of the set of complex numbers $a\cdot a'$ with $a\in \Phi$ and $a'\in \Phi'$.
Then $\phase$ is an idempotent semiring. The zero is $\ooo:=\{0\}$,
the unit is the half-line of positive reals, and the invertible elements
are the half-lines. 
Taking $-\Phi$ equal to the set of $-a$ for $a\in \Phi$,
we obtain a symmetry of $\phase$, which is the only symmetry of $\phase$ 
different from identity.
In that case, $\phase^\circ$ is the subset of $\phase$ consisting of $\zero$, 
all lines, 
and the plane, and we can consider for the \thin set $\phase^\vee$ the set
of half-lines and $\zero$.
We can then construct the semiring $\skewproductstar{\phase}{\rmax}$
 equipped with the \thin set 
$\skewproductstar{\phase^\vee}{\rmax}$. We call this semiring the {\em phase
extension} of the tropical semiring.

Similarly to Example~\ref{ex-best}, an element $(\Phi,b)$ of 
$\skewproductstar{\phase}{\rmax}$ may be thought
of as an abstraction of the set of asymptotic expansions of the form
 $a \epsilon^{-b}+o(\epsilon^{-b})$,
when $\epsilon$ goes to $0_+$, where $a$ is required
to belong to the relative interior of $\Phi$, 
denoted by $\operatorname{relint}\Phi$. Recall
that the relative interior of a convex set is the interior
of this set with respect to the topology of the affine space
that it generates. For instance, the relative interior of a closed
half-line is an open half-line.
If $a\in \operatorname{relint}\Phi$,
and if $a'\in \operatorname{relint}{\Phi'}$
for some $\Phi,\Phi'\in \phase^*=\phase\setminus\{\zero\}$,
then, it can readily be checked that $a \epsilon^{-b}+o(\epsilon^{-b})
+ a' \epsilon^{-b'}+o(\epsilon^{-b'})
= a''\epsilon^{-b''} +o(\epsilon^{-b''})$,
where $a''\in \operatorname{relint}\Phi''$ and
$(\Phi'',b''):= (\Phi,b)\oplus (\Phi',b')$. Similarly,
the product of the semiring $\phase$ is consistent with the one of asymptotic
expansions. Note that when the cone $\Phi$ is either a line
or the whole set $\mathbb{C}$, $0$ is in the relative interior 
of $\Phi$. Then the corresponding asymptotic expansion
$a \epsilon^{-b}+o(\epsilon^{-b})$
may reduce to $o(\epsilon^{-b})$, as $a=0$ is allowed.
The elements of $\skewproductstar{\phase^\vee}{\rmax}$ correspond
to  asymptotic expansions with a well defined information on the angle, whereas
an element $(\Phi,b)$ such that $\Phi$ is a pointed cone
(a sector of angle strictly
inferior to $\pi$) correspond to asymptotic expansions 
having their leading term in a given angular sector.
\end{example}

\begin{remark}[Viro's complex tropical hyperfield]
\label{viro-remark}
A related encoding was proposed by Viro in~\cite{1006.3034} in a different setting, with his {\em complex tropical hyperfield} ${\TT}\C$.
A hyperfield is a set endowed with a multivalued addition
and univalued multiplication, that satisfy distributivity and invertibility 
properties similar to those of semifields.
The hyperfield ${\TT}\C$ is the set of complex numbers $\C$ 
endowed with a multivalued addition and the usual multiplication.
This allows one to see a non zero complex number $e^{i\theta+b}$ as an encoding
of asymptotic expansions of the form
 $r e^{i\theta} \epsilon^{-b}+o(\epsilon^{-b})$, when $\epsilon$ goes to $0_+$,
with $r> 0$. 
Hence, the phase extension
of the tropical semiring and the complex tropical hyperfield provide
two abstractions of the arithmetics of asymptotic expansions
(or of Puiseux series). 
The two abstractions differ, however,
in the handling of the element $x-x$. 
Indeed, if $x\in {\TT}\C$, then, $x-x$ is defined to be
$\set{y\in\C}{|y|\leq |x|}$ in ${\TT}\C$.
This set may be thought of as an encoding of all the expansions
in $\epsilon$ that are $O(\epsilon^{-|x|})$. 
If $x=(\Phi,b) \in \skewproductstar{\phase^\vee}{\rmax}$ and
$\theta$ is the angle of the half-line $\Phi$, then $x-x$ 
encodes all the asymptotic expansions 
$r e^{i\theta} \epsilon^{-b}+o(\epsilon^{-b})$ with $r\in\mathbb{R}$ (so we
get an extra bit of information by comparison with $O(\epsilon^{-b})$).
Note that we may also identify $\skewproductstar{\phase^\vee}{\rmax}$ with $\C$
and $\skewproductstar{\phase}{\rmax}$ with subsets of $\C$,
by means of the bijective map
$(\Phi,b)\mapsto e^{i\theta+b}$ where $\theta$ is as above,
and $(\zero,\zero)\mapsto 0$.
Again, in this identification, the multiplication and addition of
${\TT}\C$ and $\skewproductstar{\phase^\vee}{\rmax}$ coincide except 
for $x-x$. Moreover, in this way,
$\skewproductstar{\phase^\vee}{\rmax}$ is not a hyperfield
(since $0\not\in x-x$), and ${\TT}\C$ cannot be put in the form of
$\skewproductstar{\SS^\vee}{\rmax}$ for some semiring $\SS$.
\end{remark}

\subsection{The bi-valued tropical semiring}\label{sec-ext-izh} 
Izhakian introduced in~\cite{Izhalone} an extension of the tropical
semiring, which can be cast in the previous general construction.
We shall also see that some of the supertropical semifields of
Izhakian and Rowen~\cite{Izhsuper2010} can be 
reduced to the previous construction.
The following presentation is a simplified version of~\cite{AGG08}.

\begin{definition}\label{def-Ti}
Let $\N_{2}$ be the semiring which is the quotient of the semiring $\N$ of non-negative integers by the equivalence relation which identifies all numbers greater than or equal to $2$. The \NEW{bi-valued tropical semiring} is 
\[ \Ti:=\skewproductstar{\N_2}{\rmax} \]
in the sense of Proposition~\ref{extension}.
\end{definition}
Indeed, $\N_2$  is zero-sum free and
without zero divisors, thus $\Ti=\skewproductstar{\N_2}{\rmax}$ is a subsemiring of
$\skewproductbar{\N_2}{\rmax}$. Moreover $\Ti$ is isomorphic to the 
extended tropical semiring defined in~\cite{Izhalone}, see~\cite{AGG08} for details. (In the present paper, we prefer to use the term {\em bi-valued}
rather than {\em extended} since
other extensions of the tropical semiring are considered.)
Recall that this algebraic structure encodes whether the maximum
in an expression is attained once or at least twice.
The semiring $\N_2$ is not idempotent so the injection $\imath$ is
not a morphism. However,  $\N_2$ is naturally ordered (by
the usual order of $\N$), so is the semiring $\Ti$, and 
$\imath$ satisfies~\eqref{inj-quasi-morph}.
The only element $e$ of $\N_2$ such that $e\cdot e=1$ is equal to $1$, thus
the only symmetry of $\N_2$ is the identity map.
Since by Fact~\ref{remark-extension}, a symmetry of $\Ti$ is the
 extension of a symmetry of $\N_2$, the only symmetry of $\Ti$
is the identity map.
In $\N_2$, we have $2=1^\circ$, $\N_2^\circ=\{0,1^\circ\}$ and 
the only possible \thin set is $\N_2^\vee:=\{0,1\}$.
Then $\Ti^\circ=(\{1^\circ\}\times \Re)\cup \{\zero\}$ and
$\Ti^\vee=(\{1\}\times \Re)\cup \{\zero\}=\imath(\rmax)$
is a \thin set of  $\Ti$, which is the only possible one.

\begin{remark}[Supertropical semirings as semirings with symmetry]
\label{rem-supertropical}
In~\cite{IzhRowen}, in the particular
context of the semiring $\Ti$, 
the names ``reals'' and ``ghosts'' were given to what we call here
``\thin'' and ``balanced'' elements.
The construction of the bi-valued tropical semiring
has been generalized to the
notion of {\em supertropical} semifield or semiring in~\cite{Izhsuper2010}
and of {\em layered} semiring 
in~\cite{izhrowen2012}.
Supertropical semirings are special cases
of semirings with a symmetry and a modulus.
Indeed, one can show that the triple $(\SS,{\GG}_\zero,\nu)$ is
a {\em supertropical semiring} in the sense of~\cite{Izhsuper2010}
if and only if the following conditions hold.
(i) $\SS$ is a
naturally ordered semiring, endowed with the identity symmetry,
such that $\SS^\circ$ is a totally ordered idempotent semiring, the
map $\mu:\SS\to \SS^\circ, a\mapsto a^\circ $ is a modulus,
and $\SS$ satisfies the additional properties:
$a+b=b$ if $|a|\prec |b|$ and $a+b=|a|$ if $|a|=|b|$.
(ii) ${\GG}_\zero$ is an ideal of $\SS$
containing $\SS^\circ$, and $\nu$ is the map from $\SS$ 
to ${\GG}_\zero$ such that $\nu(a)=\mu(a)$.
Note that the idempotency of $\nu$ follows from the fact that $\mu$ is 
a morphism.
Similarly, a supertropical semiring is a {\em supertropical semifield} 
if the following additional conditions hold.
(iii) $\SS^\sharp:=\SS\setminus\SS^\circ$ is  a multiplicative 
commutative group such that the map $\mu$ is onto from $\SS^\sharp$ 
to $(\SS^\circ)^*$.
The latter properties mean that $\SS^\sharp\cup\{\zero\}$ is 
the unique \thin set $\SS^\vee$ of $\SS$, and that 
any element of $\SS^\circ$ can be written as $a^\circ$ with $a\in \SS^\vee$.
(iv) ${\GG}_\zero=\SS^\circ$ and $\nu=\mu$.
Since Conditions (ii) and (iv) concern only ${\GG}_\zero$ and $\nu$
and do not affect $\SS$,
we shall use in the sequel the name ``supertropical semifield''
for any semiring $\SS$ satisfying the above conditions (i) and (iii).
\end{remark}

\begin{example}\label{ex-supertropical}
The following construction gives the main example of a
supertropical semifield.
Consider a group $G$ and its extension $\bar{G}$ defined
as in Example~\ref{groupext}. Let us consider on $\bar{G}$ the additive
law $\oplus$ such that $a\oplus b=\ooi^\circ$ for all 
$a,b\in G\cup\{\ooi^\circ\}$. Then 
$\bar{G}^\circ=\{\zero,\ooi\}$ and $\skewproductstar{\bar{G}}{\rmax}$
is a supertropical semifield.
More generally, if $\RR$ is a totally ordered idempotent semifield, then
$\skewproductstar{\bar{G}}{\RR}$ is a supertropical semifield.
\end{example}

\section{Combinatorial properties of semirings}\label{sec-comb}
We next recall or establish some properties
of a combinatorial nature, 
which will be useful when studying Cramer systems over extended
tropical semirings. 

\subsection{Determinants in semirings with symmetry}

\begin{definition} \label{de:detS}
Let $(\SS,+,\cdot)$  be a semiring with symmetry and $A=(A_{ij})\in\M(\SS)$.
We define the \NEW{determinant} $\detp{A}$ of $A$ to be the element
of $\SS$ given by the usual formula 
$$ \detp{A}=\sum_{\sigma\in \allperm_n} \sgn(\sigma) A_{1\sigma(1)}\cdots A_{n\sigma(n)} ,$$ 
understanding that $\sgn(\sigma)=\pm 1$ depending on the even or odd parity of $\sigma$. 
\end{definition}

In a general semiring, the determinant is not a multiplicative morphism,
however the following identities hold.
\begin{prop}[\protect{\cite{gaubert92a},\cite[Section 4]{AGG08}}]
Let $\SS$ be a semiring with symmetry, then for all
 $A,B\in\M(\SS)$, we have:
\begin{align}\label{e-mult3}
\detp{AB}\balance \detp{A} \detp{B} \enspace ,
\end{align}
or more precisely:
\begin{align}\label{e-mult4}
\detp{AB}\succeq^\circ \detp{A} \detp{B} \enspace .
\end{align}
\end{prop}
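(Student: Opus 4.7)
The plan is to use the Leibniz expansion of $\detp{AB}$, distribute products over sums, and sort the resulting contributions according to an auxiliary map $\phi:[n]\to[n]$. Starting from
\[
\detp{AB}=\sum_{\sigma\in\allperm_n}\sgn(\sigma)\prod_{i=1}^n(AB)_{i\sigma(i)}=\sum_{\sigma\in\allperm_n}\sgn(\sigma)\prod_{i=1}^n\sum_{j=1}^n A_{ij}B_{j\sigma(i)},
\]
distributivity and commutativity of $\SS$ give $\detp{AB}=\sum_\phi T_\phi$, where $\phi$ ranges over all maps $[n]\to[n]$ and
\[
T_\phi:=\Bigl(\prod_{i} A_{i\phi(i)}\Bigr)\sum_{\sigma\in\allperm_n}\sgn(\sigma)\prod_{i} B_{\phi(i)\sigma(i)}.
\]

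Next, I isolate the contributions coming from bijective $\phi$. For $\phi\in\allperm_n$, substituting $\tau=\phi^{-1}\circ\sigma$ (so that $\sgn(\sigma)=\sgn(\phi)\sgn(\tau)$) and reindexing the $B$-product by $j=\phi(i)$ decouples the two sums and yields
\[
\sum_{\phi\in\allperm_n}T_\phi=\Bigl(\sum_{\phi}\sgn(\phi)\prod_{i} A_{i\phi(i)}\Bigr)\Bigl(\sum_{\tau}\sgn(\tau)\prod_{j} B_{j\tau(j)}\Bigr)=\detp{A}\detp{B}.
\]

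The key step is to show that $T_\phi\in\SS^\circ$ whenever $\phi$ is not a permutation. Such $\phi$ is non-injective, so fix $i_1<i_2$ with $\phi(i_1)=\phi(i_2)=k$ and consider the fixed-point-free involution $\sigma\mapsto\sigma\circ(i_1\,i_2)$ on $\allperm_n$, which is sign-reversing. Because $\phi(i_1)=\phi(i_2)$, going from $\sigma$ to $\sigma\circ(i_1\,i_2)$ in $\prod_i B_{\phi(i)\sigma(i)}$ merely permutes the two factors with first index $k$; by commutativity of $\SS$ the product is unchanged. Writing $t_\sigma:=\prod_i B_{\phi(i)\sigma(i)}$, each orbit thus contributes $\sgn(\sigma)t_\sigma-\sgn(\sigma)t_\sigma=\sgn(\sigma)t_\sigma^{\circ}$, which lies in $\SS^\circ$. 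Summing over orbits and using that $\SS^\circ$ is closed under addition, the inner sum belongs to $\SS^\circ$; multiplying by $\prod_i A_{i\phi(i)}$ and invoking the ideal property of $\SS^\circ$ gives $T_\phi\in\SS^\circ$.

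Combining the two cases,
\[
\detp{AB}=\detp{A}\detp{B}+\sum_{\phi\notin\allperm_n}T_\phi=\detp{A}\detp{B}+c
\]
for some $c\in\SS^\circ$ (again since $\SS^\circ$ is closed under finite sums). By the definition of $\succeq^\circ$, this says exactly that $\detp{AB}\succeq^\circ\detp{A}\detp{B}$, proving \eqref{e-mult4}; the balance relation \eqref{e-mult3} then follows from \eqref{succeq-bal}. The only non-routine point in the argument is the sign-reversing pairing for non-injective $\phi$, which depends essentially on the commutativity of $\SS$ (so that the two $B$-factors with equal first index may be interchanged) and on $\SS^\circ$ being a (two-sided) ideal.
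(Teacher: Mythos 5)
Your argument is correct and is the standard one: the paper itself defers the proof to \cite{gaubert92a} and \cite[Section~4]{AGG08}, and those sources argue exactly as you do, expanding $\detp{AB}$ by Leibniz, factoring the double sum over maps $\phi:[n]\to[n]$, recovering $\detp{A}\detp{B}$ from the bijective $\phi$, and killing each non-injective $\phi$ by the sign-reversing involution $\sigma\mapsto\sigma\circ(i_1\,i_2)$ together with commutativity (which the paper assumes throughout) and the ideal property of $\SS^\circ$. One small bookkeeping slip: to make $\prod_j B_{j\tau(j)}$ the result of reindexing $\prod_i B_{\phi(i)\sigma(i)}$ by $j=\phi(i)$ you should set $\tau=\sigma\circ\phi^{-1}$, not $\tau=\phi^{-1}\circ\sigma$; both choices give $\sgn(\sigma)=\sgn(\phi)\sgn(\tau)$ and range over all of $\allperm_n$, so the conclusion is unaffected.
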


\begin{definition} Let $\SS$ be a semiring.
A matrix $C\in \M(\SS)$ is \NEW{monomial} if it can be written as $C=DP^\sigma$
where $D$ is a diagonal matrix ($D_{ij}=0$ for $i\neq j$),
and $P^\sigma$ is the matrix of the permutation $\sigma\in\allperm_n$
(that is $P^\sigma_{ij}=1$ if $j=\sigma(i)$ and $P^\sigma_{ij}=0$ otherwise).
\end{definition}
Note that, in any semiring $\SS$, a permutation matrix is invertible
since $(P^{\sigma})^{-1}=P^{\sigma^{-1}}$. Hence $C$ can be written
as $C=DP^{\sigma}$ if and only if it can be written as $C=P^\sigma D'$,
by taking $D'=(P^{\sigma})^{-1}DP^{\sigma}$.
This also implies that $C=DP^\sigma$ is invertible in $\M(\SS)$
if and only if all diagonal entries of $D$ are invertible in $\SS$.

The following property is easy to check.
\begin{prop}\label{prop-det-mult}
Let $\SS$ be a semiring with symmetry, then for all
 $A,B\in\M(\SS)$, such that $A$ or $B$ is monomial, we have:
\begin{align}\label{det-mul}
\detp{AB}= \detp{A} \detp{B} \enspace .
\end{align}
\end{prop}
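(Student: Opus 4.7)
The plan is to reduce everything to an explicit expansion, handling the monomial factor via a change of variables on permutations. I would first treat the case where $A$ is monomial, say $A=DP^\sigma$ with $D$ diagonal and $\sigma\in\allperm_n$. Applying Definition~\ref{de:detS} directly to $A$, the only permutation giving a nonzero contribution is $\sigma$ itself (since $A_{i\pi(i)}$ vanishes unless $\pi(i)=\sigma(i)$ for every $i$), so $\detp{A}=\sgn(\sigma)\,D_{11}\cdots D_{nn}$. On the other hand, one checks from $A_{ij}=D_{ii}\,P^\sigma_{ij}$ that $(AB)_{ij}=D_{ii}\,B_{\sigma(i),j}$.

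Next I would plug this into the expansion of $\detp{AB}$ to obtain
\[
\detp{AB}=\sum_{\pi\in\allperm_n}\sgn(\pi)\prod_{i=1}^n D_{ii}\,B_{\sigma(i),\pi(i)} \enspace .
\]
Using commutativity of $\SS$, the factors $D_{ii}$ pull out as $D_{11}\cdots D_{nn}$. The substitution $\pi'=\pi\circ\sigma^{-1}$ is a bijection on $\allperm_n$, gives $\prod_i B_{\sigma(i),\pi(i)}=\prod_j B_{j,\pi'(j)}$ after reindexing by $j=\sigma(i)$, and produces the sign identity $\sgn(\pi)=\sgn(\pi')\sgn(\sigma)$. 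Interpreting $\sgn(\cdot)$ inside $\SS$ via the symmetry, this identity holds as an equality of elements of $\SS$ because $(-1)\cdot(-1)=1$ by Proposition~\ref{car-sym}. Collecting, one obtains $\detp{AB}=\sgn(\sigma)\prod_i D_{ii}\cdot \sum_{\pi'}\sgn(\pi')\prod_j B_{j,\pi'(j)}=\detp{A}\detp{B}$.

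For the case where $B$ is monomial, I would reduce to the previous case rather than redoing the computation. Since $\SS$ is commutative, a monomial matrix $B$ may be written as $B=P^\sigma D$ with $D$ diagonal, and then the same kind of direct expansion (now factoring $D_{jj}$ out of the $j$-th column of $AB$) applies symmetrically; alternatively one may invoke the classical identity $\detp{M^T}=\detp{M}$ (which follows in any commutative semiring directly from Definition~\ref{de:detS} by reindexing the sum over $\allperm_n$ via $\sigma\mapsto \sigma^{-1}$ and using $\sgn(\sigma^{-1})=\sgn(\sigma)$) and apply the first case to $B^T A^T$.

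The computation is essentially bookkeeping, and there is no substantive obstacle. The only point requiring care is that the manipulation of signs takes place inside the semiring $\SS$: the multiplicativity $\sgn(\pi'\circ\sigma)=\sgn(\pi')\sgn(\sigma)$ must be read as an identity among the elements $\pm\unit\in\SS$, which is legitimate because Proposition~\ref{car-sym} guarantees $(-\unit)(-\unit)=\unit$ and because $-\unit$ is central. Everything else is just the usual Leibniz-formula computation transported to the semiring setting.
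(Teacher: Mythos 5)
Your proof is correct. The paper itself does not supply an argument for this proposition (it is stated with the remark that it is ``easy to check''), and your direct Leibniz-expansion computation is precisely the expected verification: the change of variables $\pi\mapsto\pi\circ\sigma^{-1}$ on $\allperm_n$, together with the observation that the integer identity $\sgn(\pi'\circ\sigma)=\sgn(\pi')\sgn(\sigma)$ lifts to an identity among the central elements $\pm\unit\in\SS$ because $(-\unit)(-\unit)=\unit$, is exactly what is needed. The reduction of the case ``$B$ monomial'' to the case ``$A$ monomial'' via $\detp{M^T}=\detp{M}$ and $(AB)^T=B^TA^T$ is also sound, since $\SS$ is assumed commutative throughout the paper.
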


We introduce now the concept of adjugate matrices.
\begin{definition}\label{def-adjugate}
Let $\SS$ be a semiring with symmetry.
If $A\in\M(\SS)$, we denote
by $A(i,j)$ the $(n-1)\times (n-1)$ submatrix in which row $i$ and column $j$
are suppressed. Define the \NEW{$ij$-cofactor} of $A$ to
be $\cof_{ij}(A):=  (-1)^{i+j} \detp{A(i,j)}$ and 
the {\em adjugate matrix} of $A$ to be the $n\times n$ matrix $A\adj$ 
with $(i,j)$-entry:
\[
(A\adj)_{ij}:=\cof_{ji}(A)=  (-1)^{i+j}\detp{A(j,i)} \enspace .
\]
\end{definition}

Using Proposition~\ref{prop-det-mult} we also obtain
the following identities.
\begin{lemma} \label{llem-util07}
Let $\SS$ be a semiring with symmetry, then for all
 $A,B\in\M(\SS)$, such that $A$ or $B$ is monomial, we have:
\begin{equation}
(AB)\adj=B\adj 	A\adj \enspace .\label{lem-util07}
\end{equation}
\end{lemma}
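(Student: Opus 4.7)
The plan is to reduce the matrix identity to an entrywise scalar identity of determinants, and then exploit the monomiality assumption to make the relevant sum collapse to a single term that is handled by Proposition~\ref{prop-det-mult}.

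From Definition~\ref{def-adjugate}, one has $((AB)\adj)_{ij}=(-1)^{i+j}\detp{(AB)(j,i)}$ and
\[
(B\adj A\adj)_{ij}\;=\;\sum_{k=1}^n (-1)^{i+k}\detp{B(k,i)}\cdot(-1)^{k+j}\detp{A(j,k)}.
\]
Since $(-1)^{2k}=1$, the global sign $(-1)^{i+j}$ cancels and the lemma is equivalent to the scalar identity
\[
\detp{(AB)(j,i)} \;=\; \sum_{k=1}^n \detp{A(j,k)}\,\detp{B(k,i)} \qquad (\star)
\]
for all $i,j$. The identity $(M^T)\adj=(M\adj)^T$ is immediate from Definition~\ref{def-adjugate} together with $\detp{M^T}=\detp{M}$, so transposing $(AB)\adj=B\adj A\adj$ swaps the roles of $A$ and $B$ (monomiality being preserved under transposition), and it is enough to prove $(\star)$ under the assumption that $A$ is monomial.

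Write $A=DP^{\sigma}$, so that row $r$ of $A$ has a single non-zero entry $D_{rr}$ located at column $\sigma(r)$. For any $k\neq \sigma(j)$, set $r:=\sigma^{-1}(k)$; then $r\neq j$, and the unique non-zero entry of row $r$ of $A$ sat in the removed column $k$. Hence row $r$ of $A(j,k)$ is identically zero, whence $\detp{A(j,k)}=0$. The right-hand side of $(\star)$ therefore collapses to the single surviving term
\[
\detp{A(j,\sigma(j))}\cdot\detp{B(\sigma(j),i)}.
\]
For the left-hand side, since $(AB)_{rs}=D_{rr}B_{\sigma(r),s}$, one verifies directly, entry by entry, the submatrix factorization
\[
(AB)(j,i)\;=\;A(j,\sigma(j))\cdot B(\sigma(j),i),
\]
taking the common middle index set of the product to be $[n]\setminus\{\sigma(j)\}$. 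The matrix $A(j,\sigma(j))$ is again monomial --- its non-zero pattern is the restriction of $\sigma$ to $[n]\setminus\{j\}$ --- so Proposition~\ref{prop-det-mult}, applied in dimension $n-1$, gives
\[
\detp{(AB)(j,i)}=\detp{A(j,\sigma(j))}\cdot\detp{B(\sigma(j),i)},
\]
which matches the collapsed right-hand side of $(\star)$ and concludes the argument.

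The main obstacle is keeping straight the three distinct index sets of size $n-1$ that arise from the row and column deletions, and checking that the monomial structure of $A$ forces the middle index of the factored product to equal exactly $\sigma(j)$. Once this factorization is in place, all multiplicativity is provided by Proposition~\ref{prop-det-mult} and no additive cancellation --- which would in any case be unavailable in a general semiring with symmetry --- is required.
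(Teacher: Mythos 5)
Your proof is correct, and it fleshes out the argument that the paper merely hints at (the paper only remarks before the lemma that it is "obtained using Proposition~\ref{prop-det-mult}" without giving details). The scalar reduction to the identity $\detp{(AB)(j,i)}=\sum_k\detp{A(j,k)}\detp{B(k,i)}$, the transposition trick $(M^{\mathrm T})\adj=(M\adj)^{\mathrm T}$ to reduce to the case where $A$ is monomial, the vanishing of $\detp{A(j,k)}$ for $k\neq\sigma(j)$, and the submatrix factorization $(AB)(j,i)=A(j,\sigma(j))\cdot B(\sigma(j),i)$ with $A(j,\sigma(j))$ again monomial are all checked correctly, and the final step is exactly an application of Proposition~\ref{prop-det-mult} in size $n-1$. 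One small point worth making explicit is that the identity $\detp{M^{\mathrm T}}=\detp{M}$ underlying the transposition step relies on commutativity of the semiring, which the paper does assume throughout. Your proof is a sound and essentially complete version of what the paper leaves to the reader.
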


\begin{lemma} \label{lhabitude}
Let $\SS$ be a semiring with symmetry,
and let $C\in \MM_n(\SS)$ be an invertible monomial matrix.
Then $\detp{C}$ is invertible and 
\begin{equation}
C^{-1}=(\det C)^{-1}C\adj \enspace .\label{habitude}
\end{equation}
\end{lemma}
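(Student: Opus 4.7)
The plan is to reduce the identity to a separate computation for diagonal and for permutation matrices, exploiting that $\detp{\cdot}$ and $\cdot\adj$ behave multiplicatively when one factor is monomial (Proposition~\ref{prop-det-mult} and Lemma~\ref{llem-util07}). Since $C$ is monomial, I would write $C=DP^\sigma$ with $D$ diagonal and $\sigma\in\allperm_n$. Invertibility of $C$ forces every diagonal entry $D_{ii}$ to be invertible (as noted right before Proposition~\ref{prop-det-mult}), so $\detp{D}=D_{11}\cdots D_{nn}$ is invertible. By Proposition~\ref{prop-det-mult}, $\detp{C}=\detp{D}\detp{P^\sigma}=\sgn(\sigma)\detp{D}$, and this is invertible since $\sgn(\sigma)=\pm\unit$ is invertible by definition of the symmetry. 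This settles the first claim.

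Next, I would verify directly from Definition~\ref{def-adjugate} two base identities. First, for $D$ diagonal, the cofactor at $(i,j)$ with $i\neq j$ vanishes because deleting row $j$ and column $i$ leaves the $i$-th row of $D$ with all its entries wiped out; while the cofactor at $(i,i)$ equals $\prod_{k\neq i}D_{kk}$ since the remaining submatrix is diagonal. Hence $D\adj$ is diagonal with entries $(D\adj)_{ii}=\prod_{k\neq i}D_{kk}$, and $DD\adj=\detp{D}\,\Id$. Second, for a permutation matrix $P^\sigma$, the cofactor at $(i,j)$ vanishes unless $\sigma(j)=i$, in which case the submatrix $P^\sigma(j,i)$ is itself a permutation matrix whose sign, combined with $(-1)^{i+j}$, gives $\sgn(\sigma)$; this is the classical sign computation, which goes through verbatim since each cofactor of a monomial matrix is a single signed monomial in the entries (or zero), so no additive cancellation in $\SS$ is needed. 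Therefore $(P^\sigma)\adj=\sgn(\sigma)\,P^{\sigma^{-1}}$, and consequently $P^\sigma(P^\sigma)\adj=\sgn(\sigma)\,\Id$.

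Combining these ingredients via Lemma~\ref{llem-util07} (applicable because both factors are monomial) gives $C\adj=(DP^\sigma)\adj=(P^\sigma)\adj D\adj$, so that, using commutativity of $\SS$ to pull the scalar $\sgn(\sigma)$ across matrix factors,
\[
C\,C\adj \;=\; D\,P^\sigma\,(P^\sigma)\adj\, D\adj \;=\; \sgn(\sigma)\, D\, D\adj \;=\; \sgn(\sigma)\detp{D}\,\Id \;=\; \detp{C}\,\Id.
\]
Left-multiplying by $C^{-1}$ yields $C\adj=\detp{C}\cdot C^{-1}$, and multiplying by the (central) scalar $(\detp{C})^{-1}$ gives the desired formula $C^{-1}=(\detp{C})^{-1}C\adj$.

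The only non-routine step is the combinatorial verification of $(P^\sigma)\adj=\sgn(\sigma)P^{\sigma^{-1}}$: one must check that the interplay between the factor $(-1)^{i+j}$ in the cofactor and the sign of the induced permutation on $[n]\setminus\{j\}\to[n]\setminus\{i\}$ yields exactly $\sgn(\sigma)$. This is the classical linear-algebra identity, and the key point in the semiring setting is that since only one term survives in the determinant expansion of the relevant submatrix, the argument involves no subtraction and transfers unchanged.
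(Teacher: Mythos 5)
The paper states this lemma without proof, presenting it (together with Lemma~\ref{llem-util07}) as a direct consequence of Proposition~\ref{prop-det-mult}, and your argument is a correct and complete instantiation of exactly that approach. You decompose $C=DP^\sigma$, verify by direct inspection that $D\adj$ is diagonal with $(D\adj)_{ii}=\prod_{k\neq i}D_{kk}$ and that $(P^\sigma)\adj=\sgn(\sigma)P^{\sigma^{-1}}$ (both single-monomial-per-entry computations, so no cancellation is ever invoked in $\SS$), then combine them via $(DP^\sigma)\adj=(P^\sigma)\adj D\adj$ from Lemma~\ref{llem-util07} to get $CC\adj=\detp{C}\,\Id$; the invertibility of $\detp{C}=\sgn(\sigma)\detp{D}$ follows since the $D_{ii}$ are invertible (as the paper notes just before Proposition~\ref{prop-det-mult}) and $\sgn(\sigma)=\pm\unit$ is invertible by Proposition~\ref{car-sym}. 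The argument is sound and uses precisely the tools the paper signals.
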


\subsection{Diagonal scaling of matrices and Yoeli's theorem}
We next recall some properties concerning
the idempotent semiring $\RR$ arising as an ingredient
of the semiring extension. Although we shall only 
apply these properties when $\RR$ is totally ordered,
we state the properties in their full generality
as far as possible.

Let $(\RR,+,0,\cdot,1)$ be an idempotent semiring equipped with the trivial symmetry
and with the natural order.
Then $-1=1$, so  the determinant of a matrix $A$, $\detp{A}$ coincides
with its permanent, denoted by $\per (A)$.
The adjugate matrix $A\adj$ has a useful
interpretation in terms of maximal weights of paths.
To see this, let us first recall the definition and basic
properties of the {\em Kleene star} of a matrix
in an idempotent semiring. 
Recall that to an $n\times n$ matrix $A$ with entries
in a semiring $(\RR,+,0,\cdot,1)$, one associates a digraph $G(A)$
with nodes $1,\dots,n$ and an arc $i\to j$ if $A_{ij}\neq 0$. The
{\em weight} of a path $p=(i_0,\ldots,i_k)$ in $G(A)$ is defined as 
$\we_A(p):=A_{i_0i_1}\dots A_{i_{k-1}i_k}$, and its length $l(p)$ is equal 
to $k$ (the number of arcs).
This definition applies in particular
to {\em circuits}, which are closed paths, meaning that $i_k=i_0$. 
We denote by $I$ the identity matrix (with diagonal elements equal to $1$ and
non-diagonal elements equal to $0$).

\begin{propdefinition}[{Carr\'e~\cite{carre71}, Gondran~\cite{gondran73}, \cite[p. 72, Th.~1]{gondran79}}]\label{propdef}
Let $(\RR,+,0,\cdot,1)$ be any idempotent semiring equipped
with the natural order.
Let $A\in \MM_n(\RR)$ be a matrix such that every circuit of $A$ has a weight
less than or equal to $1$. Then
\begin{align*}
A^*&:=\sum_{i=0}^\infty A^i = I+ A+ A^2+  \dots+ A^k+ \cdots\\
&= I+A+A^2+\dots +A^{n-1}\enspace .
\end{align*}
\end{propdefinition}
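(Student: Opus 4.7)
The plan is to interpret the entries of $A^k$ as sums of path weights in the graph $G(A)$, and then use the circuit hypothesis to dominate all long paths by elementary ones.

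First, I would establish by induction on $k$ the standard identity
\[
(A^k)_{ij} \;=\; \sum_{p} \we_A(p) \enspace ,
\]
where $p$ runs over the paths of length $k$ from $i$ to $j$ in $G(A)$, with the convention that paths of length $0$ are single vertices so that $A^0 = I$ accounts for the $k=0$ case. This uses only distributivity and the definition of matrix product and is valid in any semiring.

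Second, I would prove the key reduction lemma: for any path $p = (i_0,\ldots,i_\ell)$ from $i$ to $j$ in $G(A)$ of length $\ell \geq n$, there exist indices $0 \leq r < s \leq \ell$ with $i_r = i_s$. Writing $c = (i_r,i_{r+1},\ldots,i_s)$ for the extracted circuit and $p' = (i_0,\ldots,i_r,i_{s+1},\ldots,i_\ell)$ for the shortened path from $i$ to $j$, one has the factorization
\[
\we_A(p) \;=\; \bigl(A_{i_0i_1}\cdots A_{i_{r-1}i_r}\bigr)\cdot \we_A(c) \cdot \bigl(A_{i_si_{s+1}}\cdots A_{i_{\ell-1}i_\ell}\bigr) \enspace .
\]
Since $\we_A(c) \leq 1$ by assumption and the natural order is compatible with left and right multiplication by elements of $\RR$, we get $\we_A(p) \preceq \we_A(p')$. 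Iterating, after at most $\ell - (n-1)$ extractions we arrive at an \emph{elementary} path $\tilde p$ from $i$ to $j$ of length at most $n-1$ such that $\we_A(p) \preceq \we_A(\tilde p)$.

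Third, I would combine the two steps. For any $k \geq 0$ and any entry $(i,j)$, each term $\we_A(p)$ appearing in the expansion of $(A^k)_{ij}$ is bounded in the natural order by some $\we_A(\tilde p)$ that already appears as a term in $(I + A + \cdots + A^{n-1})_{ij}$. Because the semiring is idempotent, $a \preceq b$ in the natural order is equivalent to $a+b = b$, so every such term is absorbed. This shows $A^k \preceq I + A + \cdots + A^{n-1}$ entrywise for all $k$, hence the partial sums $\sum_{i=0}^{k}A^i$ stabilize at $I + A + \cdots + A^{n-1}$ once $k \geq n-1$. Consequently the infinite sum is well-defined and equals this finite sum, which establishes both equalities.

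The main obstacle is making the circuit-removal argument precise when $\RR$ is not assumed commutative: the weight of the extracted circuit must be inserted back in the right position in the product, so one must use both left-compatibility and right-compatibility of $\cdot$ with $\leq$ on $\RR$. Once that careful bookkeeping is done, the rest is bookkeeping on paths and an appeal to idempotency for absorption.
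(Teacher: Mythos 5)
The paper does not prove this Proposition-Definition; it cites Carr\'e, Gondran, and Gondran--Minoux. Your argument is the standard path-interpretation proof that one finds in those references, and it is correct: the identification of $(A^k)_{ij}$ with the sum of weights of length-$k$ paths, the pigeonhole extraction of a circuit from any path of length $\geq n$, the use of $\we_A(c)\leq 1$ together with the two-sided compatibility of the natural order with multiplication to obtain $\we_A(p)\preceq \we_A(p')$, and the absorption $a+b=b$ when $a\preceq b$ in an idempotent semiring, which collapses the tail of the series. One tiny remark: to conclude you only need that each length-$k$ path is dominated by some path of length $\leq n-1$; requiring the reduced path to be elementary is more than necessary, though of course iterating circuit extraction does eventually yield one. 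You are also right to flag the non-commutative subtlety — the statement does not assume $\RR$ commutative, so the inserted factor $\we_A(c)$ must be bounded in place using both left and right monotonicity of $\cdot$, which you do correctly.
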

When $\RR$ is idempotent, the sum is equivalent to the supremum 
for the natural order of $\RR$, hence $A^*_{ij}$ represents
the maximal weight of a path from $i$ to $j$.
Moreover, when every circuit of $A$ has a weight less than or equal to $1$,
$A^*_{ij}$ is also equal to the maximal weight of an
elementary path from $i$ to $j$ (when $i=j$, this means
a path with length $0$).

The following result generalizes a theorem of Yoeli~\cite{yoeli61},
which was stated when the idempotent semiring has
a maximal element equal to $1$. 
\begin{theorem}[{Compare with Yoeli~\cite[Theorem 4]{yoeli61}}]\label{th-yoeli}
Let $(\RR,+,0,\cdot,1)$ be an idempotent semiring equipped with the trivial
symmetry and the natural order denoted by $\leq$.
Let $A=(A_{ij})\in \MM_n(\RR)$ be a matrix such that $A_{11}=\dots =A_{nn}=1$ 
and $\per A=1$. Then every circuit of $A$ has a weight
less than or equal to $1$, and 
\[
A\adj  = A^* \enspace .
\]
\end{theorem}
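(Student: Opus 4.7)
The plan is to proceed in two stages. First, I would use the hypothesis $\per A = \ooi$ together with idempotency of $\RR$ to bound all circuit weights by $\ooi$, which both ensures that $A^*$ is well defined and identifies $A^*_{ij}$ with the maximal weight of an elementary path from $i$ to $j$. Then I would expand $(A\adj)_{ij} = \per A(j,i)$ combinatorially and match term by term with this path sum, using the cycle structure of the permutations of the deleted submatrix.

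For the circuit bound, in an idempotent semiring the sum is the supremum in the natural order, so $\per A = \ooi$ means every term $A_{1\sigma(1)}\cdots A_{n\sigma(n)}$ is $\leq \ooi$; the identity permutation indeed contributes $\ooi$ because $A_{ii}=\ooi$ for each $i$. Any elementary circuit $(i_0,\ldots,i_{r-1},i_0)$ of $G(A)$ can be completed, using the diagonal $\ooi$'s on the remaining indices, to a permutation whose weight is exactly $\we_A$ of that circuit, so every elementary circuit is $\leq \ooi$. A non-elementary circuit splits at its first repeated vertex into two shorter closed subpaths whose weights multiply to the original one, so induction on length reduces the general case to the elementary one, once one knows that $a\leq \ooi$ and $b\leq \ooi$ imply $ab\leq \ooi$; this follows from $\ooi = a+c$, which by distributivity gives $b = ab + cb$ and hence $b\geq ab$. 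Proposition-Definition~\ref{propdef} then applies, and $A^*_{ij}$ is the maximal weight of an elementary path from $i$ to $j$, since inserting a circuit into a path multiplies the path's weight by a factor $\leq \ooi$.

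For the identity $A\adj = A^*$, note that the trivial symmetry gives $-\ooi = \ooi$, so $(A\adj)_{ij} = \per A(j,i)$. A bijection $\tau\colon [n]\setminus\{j\}\to[n]\setminus\{i\}$ extends uniquely to a permutation $\sigma$ of $[n]$ by setting $\sigma(j) = i$, and in the cycle decomposition of $\sigma$ the cycle containing $j$ has the form $(j = i_0,\,i_1 = i,\,i_2,\ldots,i_{r-1})$. Dropping the factor $A_{j\sigma(j)}$ leaves
\[
\prod_{k\neq j} A_{k\sigma(k)} \;=\; A_{i_1 i_2}A_{i_2 i_3}\cdots A_{i_{r-1}i_0}\cdot \prod_{\text{other cycles }\gamma}\we_A(\gamma),
\]
namely the weight of the elementary path $i = i_1\to i_2\to\cdots\to i_{r-1}\to j = i_0$ times the weights of cycles disjoint from it, each an elementary circuit and therefore $\leq \ooi$. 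This yields $\per A(j,i)\leq A^*_{ij}$. Conversely, any elementary path from $i$ to $j$ determines such a $\sigma$ by closing the path with the arc $j\to i$ and taking identity fixed points on the remaining indices; the corresponding $\tau$ then contributes exactly the path's weight to $\per A(j,i)$. The case $i = j$ is handled the same way: $j$ becomes a fixed point of $\sigma$, the identity $\tau$ contributes $\ooi = A^*_{jj}$, and every other contribution is a product of elementary-circuit weights, hence $\leq \ooi$.

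No serious obstacle is anticipated. The one delicate step is the passage from elementary to arbitrary circuits, where the multiplicative compatibility $a,b\leq \ooi \Rightarrow ab\leq \ooi$ is invoked; this is precisely the place where the natural-order hypothesis on $\RR$ replaces Yoeli's stronger original assumption that $\ooi$ is the maximal element of the semiring, and is what makes the generalization meaningful.
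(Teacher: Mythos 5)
Your proof is correct and follows essentially the same route as the paper's: embed an elementary circuit, or an elementary path from $i$ to $j$ closed by the arc $j\to i$, into a full permutation using the diagonal of $\ooi$'s to get the lower bounds, and read off cycle decompositions and discard the sub-$\ooi$ factors for the upper bounds. One small point where you are more careful than the paper: you explicitly reduce non-elementary circuits to elementary ones by splitting at the first repeated vertex and invoking $a,b\leq\ooi\Rightarrow ab\leq\ooi$, whereas the paper's construction of a permutation $\sigma$ from a circuit $c$ tacitly assumes $c$ is elementary.
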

\begin{proof}
Let $A=(A_{ij})\in \MM_n(\RR)$ be a matrix such that $A_{11}=\dots =A_{nn}=1$ 
and $\per A=1$. Let $c=(i_0,\ldots,i_k=i_0)$ be a circuit of $G(A)$.
One can construct the permutation $\sigma\in  \allperm_n$
containing this circuit and all the circuits
with one element not in $c$, that is
$\sigma(i_l)=i_{l+1}$ for $l=0,\ldots, k-1$, and
$\sigma(i)=i$ for $i\not\in\{i_0,\ldots, i_{k-1}\}$.
Since $A_{ii}=1$, the weight of this permutation $A_{1\sigma(1)}\cdots A_{n\sigma(n)}$
is equal to the weight $\we_A(c)$ of the circuit $c$. Since
$\per A=1$ is the sum, thus the supremum for the natural order,
of the weights of all permutations, we get that  $\we_A(c)\leq 1$,
which shows that every circuit of $A$ has a weight
less than or equal to $1$

To prove the last assertion of the theorem, we need to show that
$(A\adj)_{ij}=A^*_{ij}$ for all $i,j\in [n]$.
As remarked after Proposition-Definition~\ref{propdef}, 
since all circuits have a weight less than or equal to $1$,
$A^*_{ij}$ is equal to the maximal weight of an
elementary path from $i$ to $j$.
In particular, $A^*_{ii}=1$, and since $A(i,i)$ has the same properties
as $A$ (the diagonal coefficients are equal to $1$ and all circuits have 
a weight less than or equal to $1$), 
we deduce that $(A\adj)_{ii}=\per (A(i,i))=1=A^*_{ii}$.

Assume now that $i\neq j$. We have
\begin{equation}\label{eq-vision}
(A\adj)_{ij}=\per ( A(j,i)) 
= \sum_{\sigma} \prod_{l\in [n]\setminus \{j\}} A_{l\sigma(l)}\enspace,
\end{equation}
where the sum is taken over all bijections
from $[n]\setminus \{j\}$ to $[n]\setminus \{i\}$.
Since a map $\sigma:[n]\setminus \{j\}\to [n]\setminus \{i\}$
is a bijection  if and only if it can be completed into
a permutation of $[n]$ by taking $\sigma(j)=i$,
the above sum can be taken equivalently over all
$\sigma\in  \allperm_n$ such that $\sigma(j)=i$.

We say that a circuit $c=(i_0,\dots ,i_k=i_0)$ of $G(A)$ 
contains the arc $(i,j)$ if there exists $l=0,\ldots, k-1$ such that 
$i=i_l$, $j=i_{l+1}$. 
Then $p=(i_0,\ldots , i_k)$ is an elementary path from $i_0=i$
to $i_k=j$ if and only if $(p,i)=(i_0,\ldots , i_k,i)$ is 
an elementary circuit, containing the arc $(j,i)$.
Let $p$ be such a path and $c=(p,i)$.
Completing $c$ as above to a permutation $\sigma\in  \allperm_n$,
containing all circuits with one element not in $c$,
we get that $\sigma(j)=i$,
and that $\we_A(p)= \we_A(p)\cdot \prod_{l\not \in c} A_{ll}
=\prod_{l\in [n]\setminus \{j\}} A_{l\sigma(l)}$.
Since~\eqref{eq-vision} holds with a sum over all
$\sigma\in  \allperm_n$ such that $\sigma(j)=i$,
we obtain that $\we_A(p)\leq (A\adj)_{ij}$, and since this
holds for all  elementary paths from $i$ to $j$, we
deduce that $A^*_{ij}\leq (A\adj)_{ij}$.

To show the reverse inequality, let $\sigma \in \allperm_n$
be such that $\sigma(j)=i$.
Decomposing $\sigma$ into elementary cycles,
we get in particular a cycle $c$ containing the arc $(j,i)$.
Let $p$ be the elementary path from $i$ to $j$ such that $c=(p,i)$,
we deduce that 
$\prod_{l\in [n]\setminus \{j\}} A_{l\sigma(l)}=
\we_A(p)\cdot \prod_{c'} \we_A(c')$, where the last
product is taken over all cycles $c'$ of $\sigma$ different from $c$.
Since all cycles have  weights  less than or equal to $1$,
we get that $\prod_{l\in [n]\setminus \{j\}} A_{l\sigma(l)}\leq 
\we_A(p)\leq A^*_{ij}$.
By applying~\eqref{eq-vision} with a sum over all $\sigma\in  \allperm_n$
such that $\sigma(j)=i$, we obtain that $(A\adj)_{ij}\leq  A^*_{ij}$,
and so the equality holds. This finishes the proof of the theorem.
\end{proof}

The following proposition gives a semifield version of a
well known duality result concerning
the optimal assignment problem. It will allow convenient normalizations.
\begin{prop}[Hungarian scaling]\label{prop-hungarian}
Let $C$ be an $n\times n$ matrix with entries in a totally ordered idempotent semifield $\RR$, and assume that $\per C\neq 0$. Then there exist two
$n$-dimensional vectors $u,v$ with entries in $\RR\setminus \{0\}$ such that
\[
C_{ij} \leq u_i v_j, \qquad \forall i,j\in [n]
\]
and $C_{ij} = u_i v_j$ for all $(i,j)$ such that $j=\sigma(i)$ for every optimal
permutation $\sigma$, i.e., for every permutation $\sigma$ such that 
\[
\per C = \prod_{i\in [n]} C_{i\sigma (i)} \enspace .
\]
In particular,
\[
\per C = \prod_{i\in [n]} u_i \prod_{j\in [n]} v_j 
\]
\end{prop}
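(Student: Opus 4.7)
The strategy is to normalize $C$ so that an optimal assignment coincides with the identity permutation and has all diagonal entries equal to $\unit$, and then to apply Theorem~\ref{th-yoeli} and a Kleene star construction to produce the potentials $u$ and $v$.

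Pick an optimal permutation $\sigma^*$. Permuting the columns of $C$ (equivalently, right-multiplying by the monomial matrix $P^{(\sigma^*)^{-1}}$ and using Proposition~\ref{prop-det-mult} to track the permanent) reduces to the case $\sigma^*=\mathrm{id}$, so that $\per C=\prod_i C_{ii}$ with every $C_{ii}\neq\zero$. Scaling each row by $C_{ii}^{-1}$ produces $D:=\mathrm{diag}(C_{ii})^{-1}C$, which satisfies $D_{ii}=\unit$ and $\per D=\unit$ (again by Proposition~\ref{prop-det-mult}, since diagonal matrices are monomial). Hence Theorem~\ref{th-yoeli} applies: every circuit of $D$ has weight $\leq\unit$, and by Proposition-Definition~\ref{propdef} the Kleene star $D^{*}$ is well-defined.

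Define $w$ by $w_i:=\sum_j D^{*}_{ij}$. Since $D^{*}_{ii}=\unit$, we have $w_i\geq\unit>\zero$, and the sub-fixed-point property $DD^{*}\leq D^{*}$ gives $Dw\leq w$, which unpacks in the semifield to $D_{ij}\leq w_i w_j^{-1}$ for all $i,j$. Setting $u_i:=C_{ii}w_i$ and $v_j:=w_j^{-1}$ and undoing the column permutation then yields $C_{ij}\leq u_i v_j$ and the telescoping identity $\prod_i u_i\prod_j v_j=\prod_i C_{ii}=\per C$.

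The delicate step, which I expect to be the main obstacle, is the equality on every optimal edge; this is where we must exploit that we work in a totally ordered idempotent semifield. Let $\sigma$ be any optimal permutation for the normalized matrix $D$, so $\prod_i D_{i\sigma(i)}=\unit$. Decomposing $\sigma$ into cycles and using that every circuit of $D$ has weight $\leq\unit$, each cycle must have weight exactly $\unit$. Along such a cycle $i_0\to i_1\to\cdots\to i_k=i_0$, multiplying together the $k$ inequalities $D_{i_l i_{l+1}}w_{i_{l+1}}\leq w_{i_l}$ yields an equality of the two cyclic products; in a totally ordered idempotent semifield, a product of elements $\leq\unit$ evaluating to $\unit$ forces every factor to be $\unit$ (by multiplicative cancellation), so $D_{i_l i_{l+1}}w_{i_{l+1}}=w_{i_l}$ edge by edge. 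Translating back through the scaling gives $C_{i\sigma(i)}=u_i v_{\sigma(i)}$ simultaneously for every optimal $\sigma$, completing the proof.
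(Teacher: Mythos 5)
Your proof is correct, but it takes a genuinely different route from the paper's. The paper simply invokes the termination of the Hungarian algorithm (citing Schrijver) and observes that its row/column scaling steps remain valid over any totally ordered idempotent semifield; that is, the paper's ``proof'' is essentially a pointer to a known combinatorial algorithm, with no explicit construction of $u,v$ given in-text. You instead build the dual potentials explicitly from the Kleene star: after normalizing $D$ with $D_{ii}=\unit$ and $\per D=\unit$, you take $w = D^* \mathbf{\unit}$ and read off $u_i = C_{ii} w_i$, $v_j = w_j^{-1}$. This is the classical ``potentials from shortest-path distances'' construction, and your argument that equality holds on every optimal edge (cycles of an optimal $\sigma$ for $D$ have weight exactly $\unit$, and multiplying the inequalities $D_{i_l i_{l+1}} w_{i_{l+1}} \leq w_{i_l}$ around a cycle forces each to be tight) is sound over a totally ordered idempotent semifield, using exactly the cancellation available there. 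Note that you only need the first half of Theorem~\ref{th-yoeli} (every circuit of $D$ has weight $\leq\unit$) together with Proposition-Definition~\ref{propdef}, neither of which depends on Proposition~\ref{prop-hungarian}, so there is no circularity. What your approach buys: it is self-contained, avoids appealing to the correctness of the Hungarian algorithm over an abstract semifield (which the paper asserts but does not verify), and reuses the Yoeli/Kleene-star machinery the paper already sets up for the Jacobi analysis, making the section more unified; what it costs is that it is slightly less efficient as an algorithmic recipe, since the Kleene star is computed from scratch rather than obtained as a byproduct of a primal-dual method, and one must be a bit careful carrying the potentials back through the initial column permutation (which you correctly flag but leave implicit).
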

\begin{proof}
This is a byproduct of the termination of the Hungarian algorithm.
We refer the reader to~\cite{Schrijver03:CO_A} for more information on 
this algorithm. The latter
is usually stated for matrices with entries in the ordered group $(\Re,+)$
completed by the $-\infty$ element, or equivalently
for matrices with entries in the ordered group $(\Re_{+*},\times)$ 
of strictly positive real numbers, completed by the $0$ element. In the latter context, it allows one to compute $\max_{\sigma} \prod_{i} C_ {i\sigma(i)}$.
Let us call a {\em row} (resp.\ {\em column}) {\em scaling} the operation of multiplying by the inverse of a non-zero number a given row (resp.\ column) of a matrix. The Hungarian algorithm
performs a finite number of row and column scalings, reaching eventually
a matrix $B_{ij} = u_i^{-1}C_{ij}v_j^{-1}$ such that $B_{ij} \leq 1$ for all $i,j$
and $B_{ij}= 1$ for all $(i,j)$ in a collection of couples
of indices among which $n$ are 
{\em independent}, meaning that none of them belong to
the same row or column. Then these independent
$(i,j)$ define an optimal permutation, and all the conclusions of the proposition
are valid. The algorithm
can be readily checked to be valid when the entries of $C$ belong to any totally ordered semifield.
\end{proof}
We note that some generalizations of network flow problems to ordered
algebraic structures were studied in~\cite[Chap.~12]{Zimm81}, the previous result could also be derived from results there.

A variant of the following result has appeared
in the work of Butkovic~\cite{butkovip94}. It allows one to reduce
matrices to a ``normal form'' in which the diagonal consists
of unit elements and all other elements are not greater than the unit.
\begin{corollary}[{\cite[Th.~3.1]{butkovip94}}]\label{cor-but}
Let $C$ be an $n\times n$ matrix with entries in a totally ordered idempotent semifield $\RR$, and assume that $\per C\neq 0$. Then there exist two
diagonal matrices $D$ and $D'$ with invertible diagonal entries, and a permutation matrix $\Sigma$, such that $B=\Sigma DCD'$ satisfies
\[
B_{ij}\leq 1,\qquad \forall i,j\in [n],\qquad B_{ii}=1 ,\qquad \forall i\in [n] \enspace .
\]
\end{corollary}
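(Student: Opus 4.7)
The plan is to reduce everything to Proposition~\ref{prop-hungarian}. Since $\per C\neq \zero$, that proposition furnishes vectors $u,v$ with invertible entries in $\RR\setminus\{\zero\}$ such that $C_{ij}\leq u_i v_j$ for all $i,j\in[n]$, with equality whenever $j=\sigma(i)$ for an optimal permutation $\sigma$. In particular, fixing one optimal permutation $\sigma_0$ (which exists because $\per C\neq\zero$), we have $C_{i\sigma_0(i)}=u_i v_{\sigma_0(i)}$ for every $i$.

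The next step is to scale $C$ by the diagonal matrices \[ D := \operatorname{diag}(u_1^{-1},\dots,u_n^{-1}),\qquad D' := \operatorname{diag}(v_1^{-1},\dots,v_n^{-1}),\] which are well defined because all $u_i,v_j$ are invertible. Then the matrix $B':=DCD'$ has entries $B'_{ij}=u_i^{-1}C_{ij}v_j^{-1}$, so $B'_{ij}\leq \unit$ for all $i,j$ and $B'_{i\sigma_0(i)}=\unit$ for every $i$. Thus $B'$ already meets the upper-bound condition, and the $\unit$'s are placed on the graph of $\sigma_0$, possibly off the diagonal.

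To move the $\unit$'s onto the diagonal it suffices to permute the rows by $\sigma_0^{-1}$. Concretely, take $\Sigma:=P^{\sigma_0^{-1}}$, so that $(\Sigma M)_{ij}=M_{\sigma_0^{-1}(i),j}$ for any matrix $M$. Setting $B:=\Sigma D C D'=\Sigma B'$, one computes \[ B_{ii} = B'_{\sigma_0^{-1}(i),\,i} = B'_{\sigma_0^{-1}(i),\,\sigma_0(\sigma_0^{-1}(i))} = \unit,\] while $B_{ij}=B'_{\sigma_0^{-1}(i),j}\leq \unit$ for all $i,j$. This gives the claimed normal form.

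The argument is essentially bookkeeping on top of the Hungarian duality: the only substantive ingredient is Proposition~\ref{prop-hungarian}. The one subtle point is that $u_i$ and $v_j$ must be invertible (so that $D,D'$ make sense); but this is guaranteed by $\RR$ being a semifield, together with the fact that the vectors produced in Proposition~\ref{prop-hungarian} have entries in $\RR\setminus\{\zero\}$.
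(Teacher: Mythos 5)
Your proof is correct and is exactly the argument the paper intends: apply Proposition~\ref{prop-hungarian} to get $u,v$ with invertible entries, scale by $D=\operatorname{diag}(u_i^{-1})$ and $D'=\operatorname{diag}(v_j^{-1})$, and left-multiply by $\Sigma=P^{\sigma_0^{-1}}$ to move the tight entries of a fixed optimal permutation $\sigma_0$ onto the diagonal. The paper leaves this proof implicit (stating only in the subsequent remark that the corollary lets one normalize to satisfy the hypotheses of Yoeli's theorem), and your bookkeeping — including the verification that $(\Sigma M)_{ij}=M_{\sigma_0^{-1}(i),j}$ under the paper's convention for $P^\sigma$ — is the expected one.
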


\begin{remark}
Corollary~\ref{cor-but}
shows that in the special case of a totally ordered idempotent semifield,
as soon as $\per C\neq \zero$, we may reduce $C$ by diagonal scaling and permutation to a matrix $A$ satisfying the assumptions of Yoeli's theorem.
\end{remark}

\section{Elimination in semirings and Cramer theorem}\label{sec-elim}

\subsection{Elimination in semirings with symmetry}
In the sequel, we shall consider a semiring $\SS$ with symmetry and 
a thin set $\SS^\vee$ satisfying the following properties, which will allow
us to eliminate variables in order to solve tropical
linear systems.

\begin{pty}
\label{LRRR} 
For $x,y\in \SS^\vee$, we have that $x\balance y$ implies $x=y$. 
\end{pty}

\begin{pty} \label{LRRR1}
The set of non-zero thin elements $(\SS^\vee)^*$
is closed under multiplication. So is $\SS^\vee$, a fortiori.
\end{pty}

\begin{pty}[Weak transitivity of systems of balances]\label{pty-weaktrans}
For all $n,p\geq 1$, $a\in \SS^\vee$, $C\in \MM_{n,p}(\SS)$, 
$b\in \SS^p$, and $d\in\SS^n$, we have
\begin{equation}\label{pty-weaktrans2}
(x\in (\SS^\vee)^p, \; ax\balance b \text{ and } Cx \balance d)\implies Cb\balance ad \enspace .
\end{equation}
\end{pty}

\begin{definition}\label{def-tropextension}
If a semiring $\SS$ with a symmetry and a \thin set $\SS^\vee$
satisfies Properties~\ref{LRRR1} and~\ref{pty-weaktrans}, we will say that 
it \NEW{allows weak balance elimination}.
If it satisfies also Property~\ref{LRRR}, then we will say that 
it \NEW{allows strong balance elimination}.
\end{definition}

It was pointed out in~\cite{Plus} (see also~\cite[Section 6]{AGG08})
that the symmetrized max-plus semiring $\smax$ satisfies 
Properties~\ref{LRRR} -- \ref{pty-weaktrans}.  It was also
observed in~\cite{AGG08} that so does the bi-valued tropical semiring $\Ti$.
Note that in the latter reference, Property~\ref{LRRR1} was replaced by the stronger property 
that the set $(\SS^\vee)^*=\SS\setminus \SS^\circ$ is exactly the 
set of all invertible elements in $\SS$, but this stronger property will
not always be needed.
In~\cite{AGG08} a proof of Property~\ref{pty-weaktrans} was given
specially for $\smax$, we shall give now some sufficient conditions
for the above properties to hold, which allow one to 
check them easily for the semirings $\BB$ and $\N_2$
and deduce them for $\smax$ and $\Ti$.
In particular, we shall also consider the following properties:

\begin{pty}[Weak transitivity of balances]\label{pty-weaktrans0}
For all $b,d\in \SS$, we have 
\[ (x\in \SS^\vee, \; b\balance x \text{ and } x \balance d)\implies b\balance d \enspace . \]
\end{pty}

\begin{pty}[Weak transitivity of scalar balances]\label{pty-weaktrans1}
For all $b,c,d\in \SS$, we have 
\begin{equation}\label{elimin-scalaire}
 (x\in \SS^\vee, \; x\balance b \text{ and } cx \balance d)\implies cb\balance d \enspace ,
\end{equation}
which is~\eqref{pty-weaktrans2} for $n=p=1$ and $a=\oi$.
\end{pty}

\begin{pty}\label{pty-thinexact} 
$(\SS^\vee)^*=\SS\setminus \SS^\circ$.
\end{pty}

\begin{pty} \label{LRRR2}
The set $\SS$ is additively generated by $\SS^\vee$, which means that
any element of $\SS$ is the sum of a finite number of
elements of $\SS^\vee$.
\end{pty}

\begin{lemma}\label{lemma-pty-weaktrans1}
 $\SS$ allows weak balance elimination if and only if
Properties~\ref{LRRR1} and~\ref{pty-weaktrans1} hold together.
\end{lemma}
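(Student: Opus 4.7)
The ``only if'' direction is immediate, since Property~\ref{pty-weaktrans1} is the special case $n=p=1$, $a=\oi$ of Property~\ref{pty-weaktrans}. For the converse, my plan is to derive Property~\ref{pty-weaktrans} from Properties~\ref{LRRR1} and~\ref{pty-weaktrans1} via two elementary reductions.

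\emph{First reduction: to the case $a=\oi$.} Given $a\in\SS^\vee$, $C\in \MM_{n,p}(\SS)$, $x\in (\SS^\vee)^p$, $b\in\SS^p$, $d\in\SS^n$ with $ax\balance b$ and $Cx\balance d$, I would introduce the vector $y\in\SS^p$ defined by $y_j := ax_j$. By Property~\ref{LRRR1}, $y\in (\SS^\vee)^p$. Since $\SS^\circ$ is a two-sided ideal, the balance relation is preserved under multiplication by any element, so $Cx\balance d$ yields $Cy = a(Cx)\balance ad$, while $y = ax\balance b$ by hypothesis. Hence, once the special case $a=\oi$ is established, applying it to the data $(C, y, b, ad)$ gives $Cb\balance ad$.

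\emph{Second reduction: proving the case $a=\oi$.} Because the balance $Cx\balance d$ is entrywise, it suffices to treat $n=1$, so $C$ is a $1\times p$ row and $d$ a scalar. I would then induct on $p$. The base $p=1$ is precisely Property~\ref{pty-weaktrans1}. For the inductive step, the relation $\sum_j C_j x_j \balance d$ rearranges as $C_p x_p \balance d - \sum_{j<p} C_j x_j$, using that $\SS^\circ$ is closed under addition, so $\balance$ commutes with translations by arbitrary elements of $\SS$. Combined with $x_p\in\SS^\vee$ and $x_p\balance b_p$, Property~\ref{pty-weaktrans1} produces $C_p b_p \balance d - \sum_{j<p} C_j x_j$, which transposes to $\sum_{j<p} C_j x_j \balance d - C_p b_p$. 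The induction hypothesis, applied with $p-1$ variables and the scalar $d - C_p b_p$, yields $\sum_{j<p} C_j b_j \balance d - C_p b_p$, i.e., $\sum_{j=1}^p C_j b_j\balance d$ after a final rearrangement.

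The one point to stay vigilant about is that one must not attempt to ``divide by $a$'' in order to reduce $ax\balance b$ to $x\balance a^{-1}b$, since $a$ need not be invertible in $\SS$; the first reduction sidesteps this precisely by introducing $y=ax$ as a fresh thin variable and exploiting the closure of $\SS^\vee$ under multiplication. Beyond this mild caveat, every manipulation relies only on the structural facts that $\SS^\circ$ is closed under addition and under multiplication by arbitrary elements of $\SS$, together with Property~\ref{pty-weaktrans1} itself used once per variable being eliminated.
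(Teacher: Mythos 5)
Your proof is correct and essentially identical to the paper's: both reduce first to $a=\oi$ by noting $ax$ is thin (Property~\ref{LRRR1}) and multiplying the second balance by $a$, then reduce to $n=1$ entrywise, and finally eliminate the $x_j$ one at a time via Property~\ref{pty-weaktrans1}. Your explicit induction on $p$ is merely a slightly more formal rendering of the paper's ``doing this inductively on all $x_i$'' step.
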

\begin{proof}
Since Property~\ref{pty-weaktrans} implies in particular 
Property~\ref{pty-weaktrans1}, we get the ``only if'' part of the assertion
of the lemma. For the ``if'' part, let us
assume that Properties~\ref{LRRR1} and~\ref{pty-weaktrans1} hold,
and show that Property~\ref{pty-weaktrans} holds.
Let $n,p\geq 1$, $a\in \SS^\vee$, $C\in \MM_{n,p}(\SS)$, 
$b\in \SS^p$, $d\in\SS^n$, and $x\in (\SS^\vee)^p$ be such that 
$ax\balance b$ and $Cx \balance d$. Let us show that $Cb\balance ad$.
Since,  by Property~\ref{LRRR1}, $\SS^\vee$ is stable 
under product, $a x= (a x_i)_{i\in [p]}\in (\SS^\vee)^p$.
Since $\SS^\circ$ is an ideal of $\SS$, 
multiplying the equation $Cx \balance d$ by $a$, we get that
$Ca x\balance ad$. Then  it remains to show the above
implication for $1$, $ax$ and $ad$ instead of $a$, $x$ and $d$ respectively.
Without loss of generality we can assume that $a=\oi$.
Moreover, since  $Cx \balance d$ is equivalent to $C_{i\cdot} x\balance d_i$
for all $i\in [n]$, where $C_{i\cdot}$ denotes the $i$th row of $C$,
it is sufficient to prove the above implication for each row of $C$ 
instead of $C$. We can thus assume that $n=1$.

Let $C=(c_1,\ldots, c_p)\in \MM_{1,p}(\SS)$, and
assume that $x\balance b$ and $Cx \balance d$.
The relation $Cx\balance d$ is equivalent to $c_1 \cdot x_1\balance
d-c_2\cdot x_2\cdots - c_p\cdot x_p$.
From~\eqref{elimin-scalaire}, which holds by
Property~\ref{pty-weaktrans1}, and $x_1\balance b_1$, we deduce that 
 $c_1 \cdot b_1\balance d-c_2\cdot x_2\cdots - c_p\cdot x_p$.
Now exchanging the sides of $c_1 \cdot b_1$ and $c_2\cdot x_2$,
and applying~\eqref{elimin-scalaire} with $x_2\balance b_2$,
we can replace $x_2$ by $b_2$ in the previous balance equation.
Doing this inductively on all $x_i$, we obtain $Cb\balance d$,
which concludes the proof.
\end{proof}

\begin{lemma}\label{lemma-pty-weaktrans2}
Properties~\ref{LRRR} and~\ref{pty-thinexact} together imply
 Property~\ref{pty-weaktrans0}.
\end{lemma}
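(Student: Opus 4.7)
My plan is a short case analysis preceded by a rigidity observation. First I will establish the auxiliary fact that, under Properties~\ref{LRRR} and~\ref{pty-thinexact}, whenever $z \in \SS^\vee$ and $y \balance z$, one has either $y = z$ or $y \in \SS^\circ$. This is immediate: if $y \notin \SS^\circ$ then Property~\ref{pty-thinexact} forces $y \in (\SS^\vee)^*$, hence both $y$ and $z$ are thin and Property~\ref{LRRR} upgrades $y \balance z$ to $y = z$.

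Applying this rigidity to $b \balance x$ and, by the symmetry of $\balance$, to $d \balance x$, I obtain $b \in \{x\} \cup \SS^\circ$ and $d \in \{x\} \cup \SS^\circ$. The conclusion $b \balance d$ then follows by a four-way case split. If $b = x$, the desired relation is exactly the hypothesis $x \balance d$; symmetrically if $d = x$. If both $b$ and $d$ lie in $\SS^\circ$, I use the elementary identity $-a^\circ = \tau(a) + a = a^\circ$, so that $\tau$ fixes $\SS^\circ$ pointwise; in particular $-d \in \SS^\circ$, and hence $b - d = b + (-d) \in \SS^\circ$ because $\SS^\circ$ is an additive ideal.

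I do not anticipate any real obstacle. The subtlety worth flagging is why the naive approach does not suffice: simply adding the two defining inclusions $b - x \in \SS^\circ$ and $x - d \in \SS^\circ$ produces only $(b - d) + x^\circ \in \SS^\circ$, and because $\SS$ need not be a ring the residual balanced term $x^\circ$ cannot in general be eliminated. The rigidity lemma is what circumvents this: the hypothesis $x \in \SS^\vee$ leaves only two possibilities for a partner in a balance relation, so the troublesome leak $x^\circ$ does not actually arise in any of the cases.
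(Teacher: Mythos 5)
Your proof is correct and takes essentially the same route as the paper: both rely on the dichotomy, extracted from Properties~\ref{LRRR} and~\ref{pty-thinexact}, that a partner of a thin element in a balance relation is either equal to it or lies in $\SS^\circ$, and then close the residual case $b,d\in\SS^\circ$ using that $\SS^\circ$ is an additive ideal fixed by the symmetry. The ``rigidity'' observation you isolate up front is precisely the content of the paper's first two case distinctions, just packaged as a standalone fact.
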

\begin{proof}
Assume Property~\ref{LRRR} holds.
Let $b,d\in \SS$, $x\in \SS^\vee$ be such that $b\balance x$ and $x \balance d$.
We need to show that $b\balance d$.
Since $x\in \SS^\vee$, if $b\in\SS^\vee$ then Property~\ref{LRRR} implies
that $b=x$, so that $b\balance d$. Similarly, $b\balance d$ if $d\in\SS^\vee$.
Otherwise, $b$ and $d\not\in \SS^\vee$, which implies that $b$ and
$d\in \SS^\circ$ by Property~\ref{pty-thinexact}.
Hence $b-d\in\SS^\circ$, which means that $b\balance d$.
\end{proof}

\begin{lemma}\label{lemma-balance}
Properties~\ref{pty-weaktrans0},~\ref{LRRR1},
and~\ref{LRRR2} all together imply Property~\ref{pty-weaktrans1},
hence, Property~\ref{pty-weaktrans} and 
that $\SS$ allows weak balance elimination.
\end{lemma}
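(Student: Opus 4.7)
My aim is to derive Property~\ref{pty-weaktrans1} (weak transitivity of scalar balances) from the three hypotheses; Lemma~\ref{lemma-pty-weaktrans1} will then yield the remaining conclusions. Fix $x\in\SS^\vee$ and $b,c,d\in\SS$ with $x\balance b$ and $cx\balance d$; the goal is $cb\balance d$. The core difficulty is that $\balance$ is not transitive, so one cannot simply chain $cb\balance cx\balance d$. The trick will be to decompose $c$ into thin pieces so that Property~\ref{pty-weaktrans0}, which provides transitivity through a \emph{thin} intermediate, can be applied one piece at a time.

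First, invoke Property~\ref{LRRR2} to write $c=c_1+\cdots+c_k$ with each $c_i\in\SS^\vee$, and set $y_i:=c_i x$. Either $y_i=0\in\SS^\vee$, or both $c_i$ and $x$ lie in $(\SS^\vee)^*$ so that $y_i\in (\SS^\vee)^*$ by Property~\ref{LRRR1}; in any case $y_i\in\SS^\vee$. Next, since $\SS^\circ$ is an ideal, multiplying the balance $x\balance b$ by $c_i$ gives $y_i\balance c_i b$ for every $i$.

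I then prove by induction on $j\in\{0,1,\dots,k\}$ the balance
\[
\sum_{i\le j} c_i b \,+\, \sum_{i>j} c_i x \;\balance\; d \enspace .
\]
The case $j=0$ is the hypothesis. For the inductive step, rewrite the stage-$j$ balance by moving every summand except $c_{j+1}x$ to the right (which is legitimate because $\balance$ is defined via $\SS^\circ$ and the symmetry $\tau$) to obtain
\[
c_{j+1}x \;\balance\; d-\sum_{i\le j} c_i b-\sum_{i>j+1} c_i x \enspace .
\]
Combined with $c_{j+1}b\balance c_{j+1}x=y_{j+1}\in\SS^\vee$, Property~\ref{pty-weaktrans0} yields
\[
c_{j+1}b \;\balance\; d-\sum_{i\le j} c_i b-\sum_{i>j+1} c_i x \enspace ,
\]
which is exactly the stage-$(j+1)$ balance after moving terms back to the left. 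At $j=k$ this reads $cb\balance d$, so Property~\ref{pty-weaktrans1} holds. Lemma~\ref{lemma-pty-weaktrans1}, applied with Property~\ref{LRRR1}, then delivers Property~\ref{pty-weaktrans} and the fact that $\SS$ allows weak balance elimination.

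The main obstacle is precisely the non-transitivity of $\balance$. The whole construction is engineered so that every appeal to transitivity is routed through a thin element, which is why both the decomposition $c=\sum c_i$ into thin parts (via Property~\ref{LRRR2}) and the thinness of each intermediate $y_i=c_i x$ (via Property~\ref{LRRR1}) are indispensable; without them, Property~\ref{pty-weaktrans0} could not be invoked at the crucial step.
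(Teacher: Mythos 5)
Your proof is correct and follows essentially the same route as the paper: decompose $c$ into thin summands via Property~\ref{LRRR2}, observe each $c_i x$ is thin via Property~\ref{LRRR1}, and replace the $c_i x$ by $c_i b$ one at a time, each replacement justified by Property~\ref{pty-weaktrans0} applied through the thin intermediate $c_i x$. The only cosmetic difference is that the paper first treats the case $c\in\SS^\vee$ separately and then iterates that case, whereas you fold everything into one induction with an explicit stage-$j$ invariant.
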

\begin{proof} Assume Properties~\ref{pty-weaktrans0},~\ref{LRRR1}, 
and~\ref{LRRR2} hold. Then~\eqref{pty-weaktrans2}
holds when $n=p=1$ and $a=C=1$.
Let us show that~\eqref{pty-weaktrans2}
also holds when $n=p=1$, $a=1$,  and $C\in\SS$.
This will mean that Property~\ref{pty-weaktrans1} holds and 
will imply by Lemma~\ref{lemma-pty-weaktrans1} that
Property~\ref{pty-weaktrans} holds.
So let $C,b,d \in \SS$, and $x\in \SS^\vee$ be such that 
$x\balance b$ and $Cx \balance d$, and let us show that $Cb\balance d$.
If $C\in\SS^\vee$, then by Property~\ref{LRRR1}, $Cx \in\SS^\vee$.
Multiplying $x\balance b$ by $C$, we get that $Cx \balance C b$,
and since $Cx \balance d$ and $Cx \in \SS^\vee$, 
 Property~\ref{pty-weaktrans0} implies $Cb\balance d$.
This shows that~\eqref{pty-weaktrans2}
holds when $n=p=1$, $a=1$,  and $C\in\SS^\vee$.
Assume now that $C\not \in\SS^\vee$. Then by Property~\ref{LRRR2},
there exist $E_1,\ldots, E_k\in \SS^\vee$ such that
$C=E_1+\cdots + E_k$. 
The relation $Cx\balance d$
is then equivalent to $E_1\cdot x\balance -E_2\cdot x\cdots - E_k\cdot x+d$.
Since $E_1\in\SS^\vee$, $x\in\SS^\vee$ and $x\balance b$, 
applying~\eqref{pty-weaktrans2} with $n=p=1$, $a=1$,
$E_1\in\SS^\vee$ instead of $C$ and $-E_2\cdot x\cdots - E_k\cdot x+d$
 instead of $d$ (the implication~\eqref{pty-weaktrans2} is already known to hold in that case), we get that 
$E_1\cdot b\balance -E_2\cdot x\cdots - E_k\cdot x+d$.
Now exchanging the sides of $E_1\cdot b$ and $E_2\cdot x$,
and applying~\eqref{pty-weaktrans2} with $n=p=1$, $a=1$,
$E_2\in\SS^\vee$ instead of $C$, we can replace $E_2\cdot x $ by $E_2\cdot b$
in the previous balance equation.
Doing this inductively, we obtain $Cb\balance d$,
hence~\eqref{pty-weaktrans2}
holds when $n=p=1$, $a=1$,  and for all $C\in\SS$.
\end{proof}

\begin{corollary}\label{cor-balance}
Properties~\ref{LRRR},~\ref{LRRR1},
~\ref{LRRR2}, together with either Property~\ref{pty-weaktrans0}
or~\ref{pty-thinexact}, imply that 
$\SS$ allows strong balance elimination.
\end{corollary}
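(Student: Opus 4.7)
The plan is to assemble the statement directly from the three lemmas already established in this subsection, so essentially no new argument is needed; I only need to check which hypotheses feed into which lemma. By definition, $\SS$ allows strong balance elimination precisely when Properties~\ref{LRRR1}, \ref{pty-weaktrans}, and~\ref{LRRR} all hold. Since \ref{LRRR} and~\ref{LRRR1} are already among the standing hypotheses of the corollary, the only thing left to prove is Property~\ref{pty-weaktrans}.

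First I would dispose of the case where Property~\ref{pty-weaktrans0} is the additional assumption. In that case Properties~\ref{pty-weaktrans0}, \ref{LRRR1}, and~\ref{LRRR2} are all present, and Lemma~\ref{lemma-balance} immediately yields Property~\ref{pty-weaktrans}, hence weak balance elimination; combining with Property~\ref{LRRR} gives strong balance elimination.

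Next, if instead Property~\ref{pty-thinexact} is the additional assumption, I would first use Lemma~\ref{lemma-pty-weaktrans2}, whose hypotheses are exactly Properties~\ref{LRRR} and~\ref{pty-thinexact}, to conclude that Property~\ref{pty-weaktrans0} holds. Once Property~\ref{pty-weaktrans0} is obtained, I am back in the situation of the previous case and can reapply Lemma~\ref{lemma-balance} together with Properties~\ref{LRRR1} and~\ref{LRRR2} to derive Property~\ref{pty-weaktrans}.

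There is no real obstacle here; the corollary is a bookkeeping consequence of the preceding two lemmas. The only subtlety worth flagging in the write-up is to note that Property~\ref{LRRR2} is indispensable in both cases (it is what allows a general element of $\SS$ to be decomposed as a sum of thin elements in the inductive argument of Lemma~\ref{lemma-balance}), so that neither alternative hypothesis can be dropped without also dropping \ref{LRRR2}.
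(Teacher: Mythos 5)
Your proof is correct and is exactly the intended bookkeeping argument: the paper states this as a corollary of the immediately preceding Lemmas~\ref{lemma-pty-weaktrans2} and~\ref{lemma-balance}, and your two-case reduction (using Lemma~\ref{lemma-pty-weaktrans2} to pass from Property~\ref{pty-thinexact} to~\ref{pty-weaktrans0}, then Lemma~\ref{lemma-balance} to obtain Property~\ref{pty-weaktrans}) is precisely what those lemmas are set up to deliver. Nothing to add.
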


\begin{prop}\label{extension-balance}
Let $(\SS,+,\cdot)$ be a semiring with symmetry,
let $\SS^\vee$ be a \thin set of $\SS$, and let
$(\RR,\oplus,\ooo,\cdot,\ooi)$ be a totally ordered 
idempotent semiring.

Denote by $\Se$ any of the semiring extensions $\skewproductbar{\SS}{\RR}$, 
$\skewproduct{\SS}{\RR}$ or $\skewproductstar{\SS}{\RR}$ defined in Proposition \ref{extension}
endowed with the extension of the symmetry of $\SS$
defined in Fact~\ref{remark-extension}. 
Here, $\Se$ is assumed to be a subsemiring of $\skewproductbar{\SS}{\RR}$.

Consider the \thin set $\Se^\vee=\skewproductstar{\SS^\vee}{\RR}$ 
if $\Se=\skewproduct{\SS}{\RR}$ or $\Se=\skewproductstar{\SS}{\RR}$, 
and $\Se^\vee=((\SS^\vee)^*\times \RR)\cup\{\zero\}$ 
if $\Se=\skewproductbar{\SS}{\RR}$.
Then the following properties hold:
\begin{itemize}
\item[(a)] $\Se$ satisfies Property~\ref{LRRR} (resp.\ \ref{LRRR1},
resp.\ \ref{pty-weaktrans1})
if and only if $\SS$ does.

\item[(b)]  $\Se$ allows weak balance elimination if and only if $\SS$ does.
\item[(c)]  $\Se$ allows strong balance elimination if and only if $\SS$ does.
\end{itemize}
\end{prop}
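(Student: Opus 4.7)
The plan is to establish part~(a) by treating Properties~\ref{LRRR}, \ref{LRRR1} and~\ref{pty-weaktrans1} one at a time, and then to derive parts~(b) and~(c) as formal consequences via Lemma~\ref{lemma-pty-weaktrans1}. The ``only if'' direction in part~(a) is uniform: the canonical embedding $\jmath:\SS\to\Se$ of Fact~\ref{remark-extension0} is an injective semiring morphism which, by Fact~\ref{remark-extension}, intertwines the symmetries of $\SS$ and $\Se$ and satisfies $\jmath(\SS^\vee)\subset \Se^\vee$, $\jmath^{-1}(\Se^\vee)=\SS^\vee$, and $\jmath^{-1}(\Se^\circ)=\SS^\circ$. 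Consequently $a\balance a'$ in $\SS$ iff $\jmath(a)\balance\jmath(a')$ in $\Se$, so each of the properties asserted for $\Se$ restricts along $\jmath$ to the corresponding one on $\SS$.

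For the ``if'' direction in (a), the main tool is the explicit description of balance in $\Se$ obtained from Fact~\ref{remark-extension} together with the definition of addition in the extension: writing $X=(\alpha,\xi)$ and $Y=(\beta,\eta)$ for non-zero elements of $\Se$, $X\balance Y$ holds in exactly one of the mutually exclusive cases (i) $\xi=\eta$ with $\alpha\balance\beta$ in $\SS$; (ii) $\xi\succ\eta$ with $\alpha\in\SS^\circ$; (iii) $\xi\prec\eta$ with $\beta\in\SS^\circ$. Property~\ref{LRRR} for $\Se$ then follows immediately: for non-zero $X,Y\in\Se^\vee$, the $\SS$-components lie in $(\SS^\vee)^*\subset \SS\setminus\SS^\circ$, ruling out (ii) and (iii); and case~(i) combined with Property~\ref{LRRR} for $\SS$ forces $\alpha=\beta$, so $X=Y$. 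Property~\ref{LRRR1} for $\Se$ is a direct product computation using that $(\SS^\vee)^*$ is closed under multiplication and that $\RR^*$ is closed under multiplication, because $\RR$ is a totally ordered semiring without zero divisors.

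Property~\ref{pty-weaktrans1} for $\Se$ is the most involved. Given $X=(\alpha,\xi)\in\Se^\vee$ and $B=(\beta,\eta), C=(\gamma,\zeta), D=(\delta,\theta)$ in $\Se$ with $X\balance B$ and $CX\balance D$, one wants $CB\balance D$. The degenerate cases where any of $X,B,C,D$ equals $\zero$ are treated directly by exploiting that $\Se^\circ$ is an ideal of $\Se$ closed under the symmetry. Otherwise the balance $X\balance B$ puts us in case~(i) or~(iii) above (case~(ii) is excluded because $X\in\Se^\vee\setminus\{\zero\}$ forces $\alpha\notin\SS^\circ$); one then compares $\zeta\xi$ and $\zeta\eta$ with $\theta$ in the totally ordered $\RR$, and in each resulting sub-case the conclusion $CB\balance D$ either follows from the ideal property of $\SS^\circ$ (typically via $\beta\in\SS^\circ\Rightarrow\gamma\beta\in\SS^\circ$) or reduces to an application of Property~\ref{pty-weaktrans1} for $\SS$ to the $\SS$-components $\alpha,\beta,\gamma,\delta$.

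I expect the main obstacle to be the bookkeeping in this last case analysis; in particular, the sub-case $\xi\prec\eta$ with $\zeta\xi=\zeta\eta$ must be ruled out, which relies on strict monotonicity of multiplication on $\RR^*$, itself a consequence of total order and the absence of zero divisors on $\RR^*$. Finally, parts~(b) and~(c) are immediate from~(a) and Lemma~\ref{lemma-pty-weaktrans1}, since weak balance elimination is equivalent to Properties~\ref{LRRR1} and~\ref{pty-weaktrans1} together, and strong balance elimination adds Property~\ref{LRRR}.
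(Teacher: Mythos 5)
Your overall plan (prove (a) property-by-property, the ``only if'' direction via the embedding $\jmath$, the ``if'' direction by case analysis on the $\RR$-components of balance, and then (b), (c) from Lemma~\ref{lemma-pty-weaktrans1}) matches the paper's, and your arguments for Properties~\ref{LRRR}, \ref{LRRR1} and for the ``only if'' direction are correct.

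The gap is in the ``if'' direction for Property~\ref{pty-weaktrans1}. You correctly single out the sub-case $\xi\prec\eta$ with $\zeta\xi=\zeta\eta$, but your reason for excluding it fails: total order on $\RR$ together with $\RR^*$ having no zero divisors does \emph{not} imply that multiplication on $\RR^*$ is strictly monotone. Take $\RR=([0,1],\max,\min)$: it is a totally ordered idempotent semiring, $\RR^*=(0,1]$ is closed under $\min$, yet $1/2\prec 1$ while $1/2\cdot 1/2=1/2\cdot 1$. If you remain in this sub-case, you know only $\gamma\beta\in\SS^\circ$ (from $\beta\in\SS^\circ$) and $\gamma\alpha\balance\delta$, from which $\gamma\beta\balance\delta$ does not follow. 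The paper's proof avoids the issue by a normalization preceding the case split: write $C=(\gamma,\ooi)\odot(\oi,\zeta)$, multiply the balance $X\balance B$ by $(\oi,\zeta)$ (this preserves the thinness of $X$ and the balance, $\Se^\circ$ being an ideal) and replace $C$ by $(\gamma,\ooi)$; since $CX$ and $CB$ are unchanged, and now the $\RR$-component of $C$ is $\ooi$, the relation $\xi\prec\eta$ automatically yields $\zeta\xi\prec\zeta\eta$. You need this reduction, or a valid substitute, to close the proof. (One can in fact show that the standing hypothesis that $\Se$ be a semiring --- in particular distributive --- together with the nontriviality of $\SS$ forces strict monotonicity of multiplication on $\RR^*$; that, and not the absence of zero divisors, is where the cancellation you invoked would have to come from.)
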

\begin{proof} 
Assertions (b) and (c) follow from Assertion (a), 
Lemma~\ref{lemma-pty-weaktrans1}, and the definition of strong
balance elimination.

1. Let us first prove the ``only if'' part of Assertion (a) 
 of the proposition.
Assume that $\SS$, $\SS^\vee$, $\Se$, $\Se^\vee$ are as
in the statement.
 By Fact~\ref{remark-extension0}, $\SS$ is isomorphic by $\jmath$ to
a subsemiring of $\Se$. 
Moreover, by definition, the map $\jmath$ is compatible with 
the symmetries of $\SS$ and $\Se$, hence 
$\jmath(\SS^\circ)=\Se^\circ\cap\jmath(\SS)$ then on $\SS$
the balance relation of $\SS$ coincides with the one of $\Se$.
Also, by definition of $\Se^\vee$,
we have $\Se^\vee\cap \jmath(\SS)=\jmath(\SS^\vee)$,
and $(\Se^\vee)^*\cap \jmath(\SS)=
\jmath((\SS^\vee)^*)$.
From this, we deduce that Property~\ref{LRRR} (resp.\ \ref{LRRR1},
resp.\ \ref{pty-weaktrans1})
for $\Se$ implies the same for $\SS$.

2. Let us now show the ``if'' part of the Assertion (a).
By definition, 
$(\Se^\vee)^*= (\SS^\vee)^*\times
\RR^*$ or $(\SS^\vee)^*\times \RR$.
Since $\RR^*$ and $\RR$ are closed by multiplication, it is clear that
Property~\ref{LRRR1} for $\SS$ implies that the same property is valid
for $\Se$.

Assume now that Property~\ref{LRRR} holds for $\SS$ and let us prove it for $\Se$.
Remark that for any semiring $\SS$ with symmetry,
the assertion of Property~\ref{LRRR} is equivalent to
the same assertion with $\SS^\vee$ replaced by 
$(\SS^\vee)^*$.
Indeed, when $x,y\in\SS^\vee$ with $x=\oo$ or $y=\oo$, the equation 
$x\balance y$ implies that $x,y\in\SS^\circ$. Thus
$x,y\in \SS^\circ\cap \SS^\vee=\{\oo\}$. Hence $x=y=\oo$.
Then it is sufficient to prove the assertion
of  Property~\ref{LRRR} for $(\Se)^*$ instead of
$\Se$.
Let $x=(a,b)$ and $y=(a',b')\in (\Se^\vee)^*$
 be such that $x\balance y$.
Since the equation $x\balance y$ is equivalent to $x-y\in \Se^\circ$
and since $x-y= x$ if $b>b'$,  $x-y=-y$ if $b<b'$, and
$x-y=(a-a',b)$ if $b=b'$, 
we deduce that $b=b'$ and $(a-a',b)\in\Se^\circ$.
Since $\Se^\circ=\Se\cap (\skewproductbar{\SS^\circ}{\RR})$, we deduce that $a\balance a'$.
Since $(\Se^\vee)^*\subset
(\SS^\vee)^*\times \RR$,
we also get that $a,a'\in (\SS^\vee)^*$.
Hence if Property~\ref{LRRR} holds for $\SS$, the above properties 
imply $a=a'$, so $x=y$,
which shows that Property~\ref{LRRR} also holds for $\Se$.

Assume now that Property~\ref{pty-weaktrans1} holds for $\SS$ and
let us prove it for $\Se$.
Let $x=(a,b)\in \Se^\vee$, and
$y=(a',b'), z=(a'',b''), w=(a''',b''')\in \Se$
 be such that $x\balance y$ and $wx\balance z$.
We need to show that $wy\balance z$.
It is easy to show that this holds when $x=\zero$,
since then $y,z\in\Se^\circ$ so that $wy-z\in\Se^\circ$.
Also this holds when $y=\zero$, since then $x\in\Se^\circ$ 
and thus again $x=\zero$ (since $x\in\Se^\vee$),
and when $w=\zero$ since then $wx=wy$.
So we can assume that $x\in (\Se^\vee)^*$, and
$y, w\in (\Se)^*$.
Recall that $(\Se^\vee)^*=(\SS^\vee)^*\times \RR$ when
$\Se=\skewproductbar{\SS}{\RR}$
and $(\Se^\vee)^*=(\SS^\vee)^*\times \RR^*$ otherwise.
It follows that $a\in(\SS^\vee)^*$. Moreover,
we have $b\in\RR^*$ when $\Se=\skewproduct{\SS}{\RR}$
or $\skewproductstar{\SS}{\RR}$. 
Now $w=(a''',\ooi)\odot (1,b''')$.
Since $w\neq \zero$,
and $(\Se)^*\subset \SS\times \RR^*$,
we get that $b'''\neq \zero$
when $\Se=\skewproduct{\SS}{\RR}$
or $\skewproductstar{\SS}{\RR}$. 
Then in all cases $x':=(1,b''')\odot x=(a,b'''\cdot b)\in (\Se^\vee)^*$.
Multiplying both terms of the relation $x\balance y$ by $(1,b''')$,
and replacing $x$ by $x'$, $y$ by $y'=(1,b''')\odot y$ and
$w$ by $w':=(a''',\ooi)$, we are reduced to the case where
$b'''=\ooi$.
Then by the same arguments as above we obtain from $x\balance y$ that 
either $b<b'$ and  $y-x=y\in \Se^\circ $, or 
$b'=b$ and $a'\balance a$.
Similarly, from $wx\balance z$, we obtain that
either $b<b''$ and $z\in \Se^\circ$, or 
$b=b''$ and $a'''a\balance a''$,
or $b>b''$ and $wx\in \Se^\circ$ so that $a'''a\in\SS^\circ$.

When $b<b'$ and $b<b''$, we get that $y,z\in \Se^\circ $,
so $wy-z\in \Se^\circ $ and $wy\balance z$.
When $b<b'$ and $b\geq b''$, we get that $wy-z=wy\in \Se^\circ $,
and again $wy\balance z$.
When $b=b'$ and $b<b''$, we get that $wy-z=-z\in \Se^\circ $,
and again $wy\balance z$.
When $b=b'$ and $b>b''$, we get that $wy-z=wy=(a'''a',b)$.
Since $a\in(\SS^\vee)^*$, $a'\balance a$ and $a'''a\balance 0$,
we deduce from Property~\ref{pty-weaktrans1} for $\SS$, 
that $a'''a'\balance 0$, hence $wy-z\in\Se^\circ$ again.
When $b=b'=b''$, we get that $a'\balance a$ and
$a'''a\balance a''$. Since we also have $a\in  (\SS^\vee)^*$, 
we deduce from Property~\ref{pty-weaktrans0} for $\SS$, that
$a'''a'\balance a''$. This implies again  $wy\balance z$.
Since in all cases $wy\balance z$, this shows 
Property~\ref{pty-weaktrans1} for $\Se$.

\end{proof}

\begin{remark}
Note that one can also prove similar equivalences to the ones of item (a) 
of Proposition~\ref{extension-balance} for Properties 
\ref{pty-weaktrans0} and~\ref{pty-thinexact}.
Also, one can do the same for Property~\ref{LRRR2} if 
$\Se=\skewproductstar{\SS}{\RR}$.
We do not detail these equivalences here, since we are interested 
mostly in 
``balance elimination''.
\end{remark}

We are now able to give examples of semirings allowing balance elimination,
as a consequence of Proposition~\ref{extension-balance},
Lemmas~\ref{lemma-pty-weaktrans2}
and~\ref{lemma-balance}, and Corollary~\ref{cor-balance}.

\begin{fact} The semirings $\SS=\BB$ and $\SS=\N_2$
satisfy Properties~\ref{pty-thinexact},~\ref{LRRR},~\ref{LRRR1} and~\ref{LRRR2}.
Then these semirings allow
strong balance elimination. Hence so do
$\S=\skewproductstar{\BB}{\rmax}$  and $\Ti=\skewproductstar{\N_2}{\rmax}$.
The semiring $\SS=\phase$
of Example~\ref{ex-viro}, i.e., the set of closed convex cones of $\C$,
does not satisfy Property~\ref{pty-thinexact}, but it does
satisfy Properties~\ref{pty-weaktrans0},~\ref{LRRR},~\ref{LRRR1},
 and~\ref{LRRR2}. So, $\phase$ allows strong balance
elimination, and so does $\skewproductstar{\phase}{\rmax}$ (the phase
extension of the tropical semiring).
\end{fact}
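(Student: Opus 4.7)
My plan is to dispatch the finite semirings $\BB$ and $\N_2$ first by direct verification, and then to devote the bulk of the work to the semiring $\phase$, where the arguments become geometric. Throughout I will appeal to Corollary~\ref{cor-balance} for the final passage to strong balance elimination and to Proposition~\ref{extension-balance}(c) to transfer it to the respective extensions by $\rmax$.

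For $\SS=\BB$, the four-element structure of Definition~\ref{def-symb} makes all four properties routine: $\BB^\circ=\{\ooo,\ooi^\circ\}$ and $(\BB^\vee)^*=\{\ooi,\ominus\ooi\}=\BB\setminus\BB^\circ$ gives~\ref{pty-thinexact}; the identity $\ooi^\circ=\ooi\oplus(\ominus\ooi)$ gives~\ref{LRRR2}; closure of $\{\ooi,\ominus\ooi\}$ under multiplication gives~\ref{LRRR1}; and a check of the balance relation restricted to $\BB^\vee$ gives~\ref{LRRR}. The semiring $\N_2=\{0,1,2\}$ with $2=1+1$ is handled in the same manner, using $\N_2^\circ=\{0,2\}$ and $\N_2^\vee=\{0,1\}$. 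For $\SS=\phase$, using the explicit description of $\phase^\circ=\{\zero\}\cup\{\text{lines through }0\}\cup\{\C\}$ from Example~\ref{ex-viro}: any pointed angular sector of opening in $(0,\pi)$ lies in $\phase\setminus\phase^\circ$ yet is not a half-line, witnessing the failure of~\ref{pty-thinexact}; the product rule $R_\alpha\cdot R_\beta=R_{\alpha+\beta}$ gives~\ref{LRRR1}; every element of $\phase$ is the closed convex hull of at most three of its extreme half-lines (three are needed for a half-plane and for $\C$), giving~\ref{LRRR2}; and for~\ref{LRRR} I would compute $R_\theta+R_{\phi+\pi}$ explicitly and observe that it lies in $\phase^\circ$ only when $\phi=\theta$.

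The main obstacle is Property~\ref{pty-weaktrans0} for $\phase$. Given $x\in\phase^\vee$, $b,d\in\phase$ with $b\balance x$ and $x\balance d$, the case $x=\zero$ is immediate because $\phase^\circ$ is an ideal closed under the symmetry $-$, so $b,d\in\phase^\circ$ forces $b-d\in\phase^\circ$. For $x=R_\theta$ a half-line, my plan is to observe that $b\balance R_\theta$ forces $b+R_{\theta+\pi}$, which contains the half-line $R_{\theta+\pi}$, to be either the line $\ell_\theta$ or $\C$; this gives a clean dichotomy, either (i) $b\subseteq\ell_\theta$ (so $b\in\{R_\theta,\ell_\theta\}$), or (ii) $b+R_{\theta+\pi}=\C$, with the analogous dichotomy for $d$. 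When both $b$ and $d$ fall in (i), a direct computation gives $b+(-d)=\ell_\theta\in\phase^\circ$. When exactly one does, monotonicity of convex-hull addition together with the identity $R_\theta+(-d)=-(d+R_{\theta+\pi})=\C$ forces $b+(-d)=\C$. The delicate case is when both $b$ and $d$ satisfy (ii): here I would argue by contradiction, supposing $b+(-d)\neq\C$, so that $b\cup(-d)$ lies in some closed half-plane $H$ through the origin; then $b\subseteq H$ combined with $b+R_{\theta+\pi}=\C$ rules out $R_{\theta+\pi}\subseteq H$, and symmetrically $R_\theta\not\subseteq H$; but any closed half-plane through the origin must contain at least one of the antipodal pair $\{R_\theta,R_{\theta+\pi}\}$, which is the desired contradiction.

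Once these properties are in hand for all four semirings, Corollary~\ref{cor-balance} yields strong balance elimination, invoked with~\ref{pty-thinexact} for $\BB$ and $\N_2$ and with~\ref{pty-weaktrans0} for $\phase$, and a final application of Proposition~\ref{extension-balance}(c) transfers the property to the extensions $\S=\skewproductstar{\BB}{\rmax}$, $\Ti=\skewproductstar{\N_2}{\rmax}$, and $\skewproductstar{\phase}{\rmax}$.
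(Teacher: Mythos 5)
Your proof is correct and follows exactly the route the paper implicitly intends: verify the elementary properties for each ground semiring, invoke Corollary~\ref{cor-balance} (with Property~\ref{pty-thinexact} in the $\BB,\N_2$ cases and Property~\ref{pty-weaktrans0} in the $\phase$ case), and transfer via Proposition~\ref{extension-balance}(c). The geometric argument for Property~\ref{pty-weaktrans0} in $\phase$ --- reducing to the case $x=R_\theta$, splitting on whether $b+R_{\theta+\pi}$ equals $\ell_\theta$ or $\C$, and handling the double case (ii) via the observation that a proper closed convex cone of $\C$ lies in a closed half-plane yet any such half-plane must contain one of an antipodal pair of rays --- is a correct and complete filling-in of the detail the paper leaves unstated.
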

Note that for the same semirings $\SS$, 
the semirings $\skewproductbar{\SS}{\rmax}$ and $\skewproduct{\SS}{\rmax}$ 
also allow strong balance elimination.

\begin{fact}
Let $\SS$ be an integral domain (a ring without zero divisors).
Since $\SS^\circ=\{\oo\}$, the balance relation
reduces to the equality relation. Taking
$\SS^\vee=\SS$, we get that $\SS$ allows
trivially strong balance elimination. 
Hence, the semirings $\skewproductbar{\SS}{\RR}$ and $\skewproduct{\SS}{\RR}$
also allow strong balance elimination.
However $\skewproductstar{\SS}{\RR}$ is not a semiring, since $\SS$ is not zero sum free.
\end{fact}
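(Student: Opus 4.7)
The plan is to break the claim into four pieces and verify each in turn, ultimately invoking Proposition~\ref{extension-balance}(c) to handle the extensions.

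First, I would exhibit the appropriate symmetry on $\SS$. Since $\SS$ is a ring, the element $e := -\oi$ satisfies $e\cdot e = \oi$, so by Proposition~\ref{car-sym} the map $\tau(a)=-a$ is a symmetry of $\SS$. (This is the non-trivial symmetry implicitly intended by the fact; the identity symmetry would only yield $\SS^\circ = \{\oo\}$ in characteristic~$2$.) With this choice, $a^\circ = a+\tau(a) = a - a = \oo$ for every $a \in \SS$, hence $\SS^\circ = \{\oo\}$, and $a \balance b \iff a - b \in \SS^\circ$ collapses to equality.

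Next, I would check that $\SS^\vee := \SS$ is a legitimate \thin set: the containment $\SS^\vee \subset (\SS\setminus\SS^\circ)\cup\{\oo\}$ is automatic since $\SS^\circ=\{\oo\}$, and $\SS$ evidently contains $\oo$ together with all invertible elements. Then I would verify the three properties underlying strong balance elimination. Property~\ref{LRRR} is immediate, as balance is now equality. Property~\ref{LRRR1}, i.e.\ closure of $\SS\setminus\{\oo\}$ under multiplication, is exactly the no-zero-divisors hypothesis. Property~\ref{pty-weaktrans}, in the collapsed form $(ax=b$ and $Cx=d) \Rightarrow Cb = ad$, follows from associativity and commutativity of multiplication in the commutative ring $\SS$: $Cb = C(ax) = a(Cx) = ad$. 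This settles strong balance elimination for $\SS$, and Proposition~\ref{extension-balance}(c) then transfers the property to both $\skewproductbar{\SS}{\RR}$ and $\skewproduct{\SS}{\RR}$.

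For the final sentence, I would observe that in Proposition-Definition~\ref{extension}, $\skewproductstar{\SSp}{\RR}$ is only constructed as a subsemiring of $\skewproduct{\SSp}{\RR}$ when $\SSp$ is both zero-sum free and without zero divisors. An integral domain $\SS$ fails the first requirement outright, since $\oi + (-\oi) = \oo$ while $\oi \neq \oo$, so the construction is simply unavailable. I do not expect any genuine obstacle: the most delicate step is the verification of Property~\ref{pty-weaktrans}, and even that reduces to the single identity $C(ax) = a(Cx)$, which holds thanks to the standing commutativity assumption.
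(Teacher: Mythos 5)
Your proposal is correct and fills in precisely the verifications the paper leaves implicit (the paper states this as a Fact without proof). You identify the intended symmetry $\tau(a)=-a$, check that $\SS^\vee=\SS$ is a valid thin set, verify Properties~\ref{LRRR}, \ref{LRRR1}, \ref{pty-weaktrans} directly for the collapsed balance-as-equality relation, and transfer to the extensions via Proposition~\ref{extension-balance}(c) — exactly the route the paper's surrounding machinery is designed to support; the observation that $\skewproductstar{\SS}{\RR}$ fails to be closed under addition because $\oi\oplus(\ominus\oi)=\oo$ is likewise the correct reason for the last sentence.
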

Recall that when $\SS=\C$ or $\SS=\Re$, an element $(a,b)\in \skewproduct{\SS}{\RR}$
is equivalent to the asymptotic expansion $a \epsilon^{-b}+o(\epsilon^{-b})$,
when $\epsilon$ goes to $0_+$ (Example~\ref{ex-best}).

\begin{fact}
Let $\SS$ be a supertropical semifield (see Remark~\ref{rem-supertropical}).
Then Properties~\ref{pty-weaktrans0},~\ref{LRRR1} and~\ref{LRRR2} hold,
thus $\SS$ allows weak balance elimination, by Lemma~\ref{lemma-balance}.
However, if the map $\mu$ from $\SS^\vee$ to $\SS^\circ$  is not injective, then
Property~\ref{LRRR} does not hold, thus $\SS$ does not
allow strong balance elimination.
\end{fact}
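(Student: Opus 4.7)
The plan is to exploit the structural data of a supertropical semifield as encoded in Remark~\ref{rem-supertropical}. Because the symmetry is the identity, the relation $a\balance b$ reduces to $a+b\in \SS^\circ$, and we shall use throughout the decomposition $\SS=\SS^\sharp \sqcup \SS^\circ$ together with the addition rules: $a+b=b$ if $|a|\prec |b|$ and $a+b=|a|$ if $|a|=|b|$ (and hence $a+b=a$ if $|a|\succ |b|$, by commutativity).

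Property~\ref{LRRR1} is immediate, since $(\SS^\vee)^*=\SS^\sharp$ is a multiplicative group by hypothesis (iii) of Remark~\ref{rem-supertropical}. Property~\ref{LRRR2} follows by writing an arbitrary $a\in \SS^\circ\setminus\{\zero\}$ in the form $a=x+x$: surjectivity of $\mu:\SS^\sharp\to (\SS^\circ)^*$ yields $x\in \SS^\sharp\subset \SS^\vee$ with $|x|=x^\circ =a$; elements of $\SS^\vee$ are already sums of one element of $\SS^\vee$, while $\zero=\zero\in\SS^\vee$.

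The main point is Property~\ref{pty-weaktrans0}. Given $x\in \SS^\vee$ with $b\balance x$ and $x\balance d$, I would do a case analysis according to the positions of $b$, $x$, $d$ in $\SS^\sharp$ versus $\SS^\circ$. The addition rules give a clean characterization: for $y\in\SS^\sharp$, the relation $y\balance x$ with $x\in\SS^\sharp$ forces $|y|=|x|$, while for $y\in\SS^\circ$ it is equivalent to $|y|\succeq |x|$; the case $x=\zero$ is handled by noting that then both $b$ and $d$ must lie in the semiring $\SS^\circ$, so $b+d\in\SS^\circ$. In every case one concludes $b\balance d$ by exploiting that $\SS^\circ$ is totally ordered: the relevant modulus inequalities chain together, and the rules above guarantee $b+d\in \SS^\circ$. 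The transitivity of $\preceq$ on $\SS^\circ$ is the only non-trivial ingredient here, which is why the totally-ordered idempotent semiring structure on $\SS^\circ$ matters. Once Properties~\ref{LRRR1}, \ref{LRRR2}, \ref{pty-weaktrans0} are in hand, Lemma~\ref{lemma-balance} yields weak balance elimination.

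Finally, for the failure of Property~\ref{LRRR} when $\mu|_{\SS^\vee}$ is not injective, one simply picks two distinct elements $x,y\in\SS^\vee$ with $|x|=|y|$. Since $\mu$ vanishes only at $\zero$ on $\SS^\vee$, both $x$ and $y$ must lie in $\SS^\sharp$; the rule $a+b=|a|$ when $|a|=|b|$ then gives $x+y=|x|\in\SS^\circ$, i.e., $x\balance y$, while $x\neq y$ contradicts Property~\ref{LRRR}. I expect no serious obstacle; the case analysis in Property~\ref{pty-weaktrans0} is the only place requiring care, and it reduces to a mechanical verification once the addition rules are applied.
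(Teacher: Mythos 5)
Your proof is correct and takes essentially the same approach as the paper: the paper's own proof dismisses the first assertion with ``can be checked easily'' and gives precisely the argument you give for the failure of Property~\ref{LRRR} (pick distinct $a,b\in\SS^\vee$ with $|a|=|b|$; then $a+b=|a|\in\SS^\circ$, so $a\balance b$). Your case analysis for Properties~\ref{pty-weaktrans0},~\ref{LRRR1} and~\ref{LRRR2}, based on the decomposition $\SS=\SS^\sharp\sqcup\SS^\circ$, the addition rules, and surjectivity of $\mu$ onto $(\SS^\circ)^*$, correctly fills in the details the paper omits.
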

\begin{proof} The first assertion can be checked easily.
For the second one, if $\mu$ is not injective, 
there exist $a,b\in\SS^\vee$, $a\neq b$ such that $|a|=|b|$.
Since $a-b=a+b=|a|\in\SS^\circ$, we get that $a\balance b$,
thus Property~\ref{LRRR} does not hold.
\end{proof}

\subsection{Cramer formul\ae\ in semirings allowing balance elimination}
\label{sec-cramer}

We state here a general Cramer theorem. In the special
case of the symmetrized max-plus semiring, this was established by Plus~\cite{Plus}, see also \cite{gaubert92a} and \cite[Theorem 6.4]{AGG08}.
It is remarked
in the latter reference (see the paragraph before Theorem 6.6 of 
\cite{AGG08}) 
that the proof of \cite[Theorem 6.4]{AGG08} 
is valid in any semiring satisfying
Properties~\ref{LRRR},~\ref{pty-weaktrans}, 
and the property that $(\SS^\vee)^*=\SS\setminus \SS^\circ$
is the set of invertible elements.
However looking at the proof of \cite[Theorem 6.4]{AGG08} 
more carefully, one see that the latter property can be replaced
by Property~\ref{LRRR1}, and that 
the first part does not use Property~\ref{LRRR}.
This leads to the following general result.

\begin{theorem}[{Cramer theorem, compare with~\cite[Theorem 6.1]{Plus},~\cite[Chap.\ III, Theorem 3.2.1 and Proposition 3.4.1]{gaubert92a} and~{\cite[Theorem 6.4]{AGG08}} for $\S$ and~{\cite[Theorem 6.6]{AGG08}} for $\Ti$}]\label{th-cramer} 
Let $\SS$ be a semiring with a symmetry and a \thin set $\SS^\vee$,
allowing weak balance elimination
(Definition~\ref{def-tropextension}).
Let $A\in {\MM}_n(\SS)$ and $b\in \SS^n$, then
\begin{enumerate}
\item \label{cramer1} Every \thin solution $x$ (such
that $x\in (\SS^\vee)^n$) of the linear system
\[
Ax\balance b 
\]
satisfies the relation
$$ (\det A) x \balance A\adj  b \enspace .$$
\item  
Assume also that $\SS$ allows strong balance elimination
(Definition~\ref{def-tropextension}),
that the vector $A\adj  b$ is \thin and that $\det A$ is invertible in 
$\SS$. Then 
$$\hat x:={(\det A)}^{-1} A\adj b$$
is the unique \thin solution of $Ax\balance b$.
\end{enumerate}
\end{theorem}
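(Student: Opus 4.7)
The plan is to mimic the classical derivation of Cramer's rule via the adjugate identity $A\adj A=(\det A)\,I$, replacing equalities by balances where the semiring structure forces it. The key combinatorial observation is that for any matrix $A\in\MM_n(\SS)$, the diagonal entries of $A\adj A$ equal $\det A$ \emph{exactly} --- this is the Laplace expansion along column $i$, which remains a valid equality in any commutative semiring with symmetry --- while the off-diagonal entries lie in $\SS^\circ$. Indeed, $(A\adj A)_{ij}$ with $i\neq j$ is the Laplace expansion, along column $i$, of a matrix whose columns $i$ and $j$ are both equal to column $j$ of $A$; the contributions of permutations $\sigma$ and $\sigma\circ(i\,j)$ then cancel pairwise into terms of the form $s-s\in\SS^\circ$. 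The symmetric statement for $AA\adj$, via row expansion, follows identically.

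For the first assertion, I would start from $Ax\balance b$, that is, $(Ax)_i-b_i\in\SS^\circ$ for each $i$, and multiply componentwise by $A\adj$. Since $\SS^\circ$ is a two-sided ideal, each component $\sum_j(A\adj)_{ij}\bigl((Ax)_j-b_j\bigr)$ still lies in $\SS^\circ$, giving $A\adj A\,x\balance A\adj b$. Extracting the $i$-th row and applying the adjugate identity above,
\[
(\det A)\,x_i\;+\;\sum_{j\neq i}(A\adj A)_{ij}\,x_j\;\balance\;(A\adj b)_i \enspace .
\]
Since each $(A\adj A)_{ij}$ with $j\neq i$ lies in $\SS^\circ$ and the latter is an ideal, the correction sum lies in $\SS^\circ$; subtracting it (and using that $\SS^\circ$ is closed under the symmetry) yields $(\det A)\,x_i-(A\adj b)_i\in\SS^\circ$, i.e.\ $(\det A)\,x_i\balance(A\adj b)_i$.

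For the second assertion, uniqueness is now immediate: any thin solution $x$ satisfies $(\det A)\,x\balance A\adj b$ by the first assertion, and multiplying by the invertible element $(\det A)^{-1}$ gives $x\balance\hat x$. Since $(\det A)^{-1}$ is invertible hence thin, and $A\adj b$ is thin by hypothesis, Property~\ref{LRRR1} shows $\hat x\in(\SS^\vee)^n$; Property~\ref{LRRR} then promotes the balance $x\balance\hat x$ to the equality $x=\hat x$. For existence, I would compute $A\hat x=(\det A)^{-1}AA\adj b$ and invoke the row-version of the adjugate identity to obtain $AA\adj b\balance(\det A)\,b$; multiplication by $(\det A)^{-1}$ then yields $A\hat x\balance b$. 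The main subtlety throughout is that the balance relation is not transitive in general --- which is precisely why the paper axiomatizes ``balance elimination'' --- and my argument sidesteps this by tracking each error term as an explicit element of the ideal $\SS^\circ$ rather than chaining balance relations.
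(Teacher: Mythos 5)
Your Laplace-expansion observations are sound in any commutative semiring with a symmetry: the diagonal entries of $A\adj A$ equal $\det A$ exactly, the off-diagonal entries lie in $\SS^\circ$ via the pairing of $\sigma$ with the transposed permutation, and multiplying $Ax\bal b$ through by $A\adj$ (using that $\SS^\circ$ is an ideal) does give $A\adj A\,x\bal A\adj b$. The gap is in the very next sentence. From
\[(\det A)\,x_i\oplus\bigoplus_{j\neq i}(A\adj A)_{ij}\,x_j\;\bal\;(A\adj b)_i\]
with the sum over $j\neq i$ in $\SS^\circ$, you conclude $(\det A)x_i\bal (A\adj b)_i$ by ``subtracting'' the sum. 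This is cancellation of a balanced element inside a balance, which is not a valid inference in a semiring: from $u\oplus q\bal v$ with $q\in\SS^\circ$ one may not deduce $u\bal v$. Concretely in $\smax$, take $u=5$, $q=10^\circ$, $v=\ominus 7$: then $u\oplus q=10^\circ$ and $10^\circ\ominus v=10^\circ\oplus 7=10^\circ\in\smax^\circ$, yet $u\ominus v=5\oplus 7=7\notin\smax^\circ$. Tracking $q$ as an explicit ideal element does not help, because there is no subtraction with which to remove it afterwards; what you would need is $(u\oplus q)\ominus q=u$, and that fails. Notice also that your item~(1) nowhere uses the thinness of $x$ nor Property~\ref{pty-weaktrans}: if the argument were correct, item~(1) would hold in \emph{every} semiring with a symmetry, which is a strictly stronger statement than the theorem claims and a strong signal that a hypothesis is being silently dropped.

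The paper's own proof (following \cite{AGG08}) is a row-by-row elimination of variables on the system of balances, with each elimination step licensed by Property~\ref{pty-weaktrans}; the whole point of the ``weak balance elimination'' axiom is to replace exactly the naive cancellation your argument attempts. Your treatment of the second item is in better shape: the balance $AA\adj b\bal (\det A)b$ is legitimate (the error term $((\det A)b_i)^\circ\oplus q_i$ lands on one side, where it only needs to be \emph{shown} balanced, not removed), so the existence of $\hat x$ as a solution is established, and the deduction that $\hat x$ is thin via Property~\ref{LRRR1} and that uniqueness then follows from Property~\ref{LRRR} is correct. But the uniqueness half leans on item~(1), so it inherits the gap.
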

\begin{proof}
The proof follows exactly the same lines as the proof of
 \cite[Theorem 6.4]{AGG08}, so we do not reproduce it.
Let us just remark that it relies on an elimination argument, 
in which ``equations'' 
involving balances rather than equalities are considered.
\end{proof}

The above uniqueness part can be reformulated 
equivalently in the following homogeneous form.
For a matrix $A\in \MM_{n,m}(\SS)$ over a semiring $\SS$,
and $k\in[m]$, we denote by $A_{|k)}$ the $n\times (m-1)$ matrix obtained from
$A$ by deleting the $k$th column, and by $A_{\cdot  k}$ the $k$th column of $A$.

\begin{corollary}\label{cor-cramer}
Let $\SS$ be a semiring allowing strong balance elimination.
Let  $A\in \MM_{n,n+1}(\SS)$, and let $\hat{x}\in\SS^{n+1}$
 be such that $\hat{x}_k=(\ominus \unit)^{n-k+1}\det A_{|k)}$,
for all $k\in [n+1]$.
Then if $\hat{x}$ is \thin and has at least one
invertible entry (which is the case when $\hat{x}$ is non-zero and
$(\SS^\vee)^*$ is exactly the set of
invertible elements of $\SS$), then
any \thin  
 solution of $Ax\bal \zero$ 
is a \thin multiple of $\hat{x}$.
\end{corollary}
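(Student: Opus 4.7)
The plan is to reduce the corollary to the uniqueness part of Theorem~\ref{th-cramer}(2) by embedding the rectangular system $Ax\bal\zero$ into a square $(n{+}1)\times(n{+}1)$ Cramer system with a well-chosen extra equation. As a preliminary step I first check that $\hat x$ itself solves $Ax\bal\zero$.

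Fix $i\in[n]$ and form the $(n{+}1)\times(n{+}1)$ matrix $\tilde A_i$ obtained from $A$ by prepending a copy of its $i$th row. Laplace expansion of $\det\tilde A_i$ along the first row, combined with the identity $(\ominus\unit)^{1+k} = (\ominus\unit)^{n}(\ominus\unit)^{n-k+1}$ and the definition of $\hat x_k$, gives
\[
\det \tilde A_i \;=\; \sum_{k=1}^{n+1} (\ominus\unit)^{1+k} A_{ik}\,\det A_{|k)} \;=\; (\ominus\unit)^{n}\,(A\hat x)_i.
\]
Since $\tilde A_i$ has two equal rows (at positions $1$ and $i{+}1$), pairing each $\sigma\in\allperm_{n+1}$ with $\sigma\circ(1,i{+}1)$ groups permutations into pairs of opposite signs and equal products, each pair contributing an element of $\SS^\circ$. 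Hence $\det\tilde A_i\in\SS^\circ$, and multiplying by the invertible scalar $(\ominus\unit)^n$ yields $(A\hat x)_i\bal\zero$ for every $i$.

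Now let $x\in(\SS^\vee)^{n+1}$ satisfy $Ax\bal\zero$ and choose $k_0$ such that $\hat x_{k_0}$ is invertible. Let $\tilde A\in\MM_{n+1}(\SS)$ have the rows of $A$ as its first $n$ rows and $e_{k_0}^T$ (entries $\delta_{k,k_0}$) as its last row, and set $b:=(\zero,\ldots,\zero,\,x_{k_0}\hat x_{k_0})^T$. Expanding $\det\tilde A$ along its last row gives $\det\tilde A=(\ominus\unit)^{n+1+k_0}\det A_{|k_0)}=\hat x_{k_0}$, which is invertible; an analogous computation, applying the formula for $\hat x_i$ again, yields $\tilde A\adj b=(x_{k_0}\hat x_{k_0})\,\hat x$, which is thin by Property~\ref{LRRR1}. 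The hypotheses of Theorem~\ref{th-cramer}(2) are therefore met, so $\tilde Az\bal b$ admits a unique thin solution.

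Both $z':=x_{k_0}\hat x$ and $z'':=\hat x_{k_0} x$ are thin (as entry-wise products of thin elements) and both satisfy $\tilde Az\bal b$: their first $n$ components are $x_{k_0}(A\hat x)$ and $\hat x_{k_0}(Ax)$, which lie in $\SS^\circ$ since $\SS^\circ$ is an ideal, while their last components both equal $b_{n+1}=x_{k_0}\hat x_{k_0}$. Uniqueness forces $z'=z''$, i.e., $x_{k_0}\hat x_k=\hat x_{k_0} x_k$ for every $k\in[n{+}1]$, and multiplying by $\hat x_{k_0}^{-1}$ yields $x=\lambda\hat x$ with $\lambda:=x_{k_0}\hat x_{k_0}^{-1}$ thin. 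I anticipate the delicate ingredient to be the choice of the augmented matrix $\tilde A$: its extra row $e_{k_0}^T$ is precisely what makes $\det\tilde A$ and $\tilde A\adj b$ collapse to $\hat x_{k_0}$ and $(x_{k_0}\hat x_{k_0})\hat x$ respectively, allowing one to invoke uniqueness rather than attempt to chain the two balances obtained from applying Cramer separately to $x$ and $\hat x$ — a chain that would be problematic since $\bal$ is not in general transitive.
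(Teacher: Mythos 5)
Your proof is correct, and it takes a genuinely different route from the paper's. The paper reduces directly to an $n\times n$ affine Cramer system by deleting the column indexed by the invertible coordinate (it sets $M:=A_{|k_0)}$ and moves that deleted column, scaled by $\ominus x_{k_0}$, to the right-hand side); it then observes that $\det M=\hat x_{k_0}$ and $M\adj b=x_{k_0}(\hat x_1,\dots,\widehat{\hat x_{k_0}},\dots,\hat x_{n+1})$ and reads off the conclusion immediately from the explicit formula in Theorem~\ref{th-cramer}(2). You instead \emph{augment} $A$ to an $(n+1)\times(n+1)$ matrix $\tilde A$ by appending the unit row $e_{k_0}^\top$ and use the right-hand side to pin the $k_0$th coordinate; the Laplace expansion then collapses $\det\tilde A$ and $\tilde A\adj b$ to $\hat x_{k_0}$ and $x_{k_0}\hat x_{k_0}\,\hat x$ respectively, so uniqueness forces $x_{k_0}\hat x=\hat x_{k_0}x$. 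Your route requires one extra lemma --- that $A\hat x\bal\zero$, proved via the repeated-rows determinant identity --- which the paper's column-deletion trick avoids, and that preliminary step could in fact be bypassed (once Theorem~\ref{th-cramer}(2) identifies the unique thin solution of $\tilde Az\bal b$ as $(\det\tilde A)^{-1}\tilde A\adj b=x_{k_0}\hat x$, you only need that $\hat x_{k_0}x$ is a solution, which uses only $Ax\bal\zero$). On the other hand, your augmentation keeps all $n+1$ homogeneous coordinates alive, and you correctly anticipate that pinning a coordinate exactly is what avoids having to chain two separate balances --- a real concern given that $\bal$ is not transitive. Both proofs are sound; the paper's is a little more economical, yours a little more symmetric.
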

\begin{proof}
Let $\hat{x}$ be as in the statement of the theorem and 
assume that $\hat{x}$ is \thin and has at least one
invertible entry, for instance $\hat{x}_{n+1}$ is invertible.
Let $x$ be a \thin solution of $A x\bal \zero$.
Then taking $M:=A_{|n+1)}$ and $b=\ominus x_{n+1} A_{\cdot  n+1}$,
we get that $\det M=\hat{x}_{n+1}$ is invertible and
$(M\adj b)_i= x_{n+1} \hat{x}_i$ is \thin for all $i\in [n]$.
Let $y\in \SS^n$ be such that $y_i=x_i$ for $i\in [n]$.
Then $y$ is a \thin solution of $My\bal b$,
and applying the uniqueness part of Theorem~\ref{th-cramer} to this system,
we get that $x_i=y_i=(\det M)^{-1} (M\adj b)_i=
(\hat{x}_{n+1})^{-1}x_{n+1}\hat{x}_i$, for all  $i \in [n]$.
Hence $x=\lambda \hat{x}$,
with $\lambda=(\hat{x}_{n+1})^{-1}x_{n+1}\in \SS^\vee$.
We can show that the same conclusion holds for any $i\in [n+1]$,
such that $\hat{x}_{i}$ is invertible, which implies the corollary.
\end{proof}

\section{Existence of solutions of tropical linear systems}\label{sec-exists}
In  this section we consider a semiring $(\TT,\oplus,\zero,\odot,\unit)$
with a symmetry, a \thin set $\TT^\vee$, and a modulus taking its values
in a totally ordered semiring $\RR$. For instance $\TT=\skewproductstar{\SS}{\RR}$
where $\SS$ is a zero-sum free semiring with a symmetry
and without zero divisors, and $\TT^\vee=\skewproductstar{\SS^\vee}{\RR}$.
We shall study the square affine systems 
\[
Ax\balance b 
\]
where $A$ is an $n\times n$ matrix and $b$ is a vector of dimension $n$,
all with entries in $\TT$.
We shall look for the solutions $x\in \TT^n$ with {\em \thinp} entries.

\subsection{Monotone algorithms in semirings with symmetry}
The existence results that we shall state in the next sections
extend the ones proved in~\cite{Plus} for $\smax$.
As the latter results,
they will be derived as a byproduct of the convergence of 
an iterative Jacobi-type algorithm to solve the system $Ax\balance b$.
Recall that the usual Jacobi algorithm constructs a sequence
which is known to converge to the solution of a linear system
under a strict diagonal dominance property.

Here we shall first transform the initial system 
to meet a diagonal dominance property. 
Then we shall construct a monotone sequence of thin vectors
satisfying balance relations which are similar to the
equations used in the definition of
the usual Jacobi algorithm.
To show that these thin vectors do exist
and that the resulting sequence does converge, 
we shall however need some new definitions
and properties concerning the semiring $\TT$, which are somehow
more technical than the properties used to establish the
Cramer theorem, Theorem~\ref{th-cramer}.

\begin{definition}
We define on $\TT$ the relation $\bala$, which is finer
than the balance relation $\bal$:\index{$\bala$}
\[ x\bala y\quad\Leftrightarrow \quad x\bal y\; \text{and }
|x|=|y| \enspace .\]
\end{definition}

Note that when $\TT$ is a semiring extension as in
Proposition \ref{extension} and $\gamma$ is as in
Fact~\ref{remark-extension0}, we have:
\begin{equation}\label{bala}
 x\bala y\quad\Leftrightarrow\quad |x|=|y|\; \text{and }
\gamma(x)\bal \gamma(y) \; (\text{in } \SS)\enspace .
\end{equation}

\begin{pty} \label{pty_order} 
$\TT$ is naturally ordered and for all $x,y\in \TT$ we have:
\[ (x\in \TT^{\vee}\text{ and } x\preceq y)
\implies\;\exists z\in \TT^{\vee} \text{ such that }
x\preceq z\preceq y \text{ and } z \bala y\enspace.  \]
\end{pty}

\begin{pty}\label{pty_order-finite} $\TT$ is naturally ordered
and for all $a\in \RR$ any totally ordered subset of
$\TT^\vee_a=\set{x\in \TT^\vee}{|x|=a}$ is finite.
\end{pty}

The following property is stronger.
\begin{pty}\label{pty_order-equal} $\TT$ is naturally ordered
and for all $x,y\in \TT$ we have:
\[ (x,y\in \TT^{\vee},\;  x \preceq  y \text{ and } |x|=|y|) \implies x=y
\enspace. \]
\end{pty}

\begin{pty} \label{pty_inv} For all $d\in \TT^\vee$ such that
$|d|$ is invertible in $\RR$, 
there exists $\tilde{d}\in \TT$ such that
for all $x,y\in\TT$,
\[ dx \bala y \Longleftrightarrow x\bala \tilde{d} y\enspace .  \]
\end{pty}

\begin{definition}\label{monotone-def}
Let $\TT$  be a semiring with a symmetry, a \thin set $\TT^\vee$,
and a modulus taking its values in a totally ordered semiring $\RR$.
We shall say that $\TT$ \NEW{allows \constr}
if it satisfies Properties~\ref{pty_order} and~\ref{pty_inv}.
If it satisfies in addition Property~\ref{pty_order-finite},
we shall say that it \NEW{allows \conv}.
\end{definition}

From~\eqref{bala} and the property that $\SS$ can be seen as a subsemiring of
$\skewproductstar{\SS}{\RR}$, we obtain easily the following results.
\begin{lemma} \label{l_balance} 
Let $\SS$ be a naturally ordered semiring with a symmetry
and  without zero-divisors.
Then $\skewproductstar{\SS}{\RR}$ satisfies Property~\ref{pty_order} if and only if
the following holds for all $a,a'\in \SS$
\[ (a\in \SS^{\vee}\text{ and } a\preceq a')
\implies \; \exists a''\in \SS^{\vee} \text{ such that }
a\preceq a''\preceq a'\text{ and } a'' \bal a'\enspace.\] 
Moreover, $\skewproductstar{\SS}{\RR}$ satisfies Property~\ref{pty_order-equal}
if and only if the following holds for all $a,a'\in \SS$
\[  (a,a'\in(\SS^{\vee})^* \text{ and } a \preceq a' ) \implies a= a'
\enspace. \]
Also $\skewproductstar{\SS}{\RR}$ satisfies Property~\ref{pty_order-finite}
if and only if any totally ordered subset of $\SS^\vee$ is finite. \qed
\end{lemma}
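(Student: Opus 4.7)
The plan is to translate each of the three properties on $\TT := \skewproductstar{\SS}{\RR}$ into its counterpart on $\SS$ using the explicit structural description of the extension. Recalling Facts~\ref{remark-extension0}--\ref{remark-extension3}, one has $\TT^\vee = ((\SS^\vee)^* \times \RR^*) \cup \{\zero\}$; on nonzero elements the natural order on $\TT$ is lexicographic, so $(a,b) \preceq (a',b')$ iff $b \prec b'$ or ($b = b'$ and $a \preceq a'$); the modulus is the second projection $(a,b) \mapsto b$; and by~\eqref{bala}, $x \bala y$ holds iff $|x| = |y|$ and $\gamma(x) \bal \gamma(y)$. The canonical embedding $\jmath \colon \SS \to \TT$, sending $a \mapsto (a,\ooi)$ for $a \ne \oo$ and $\oo \mapsto \zero$, preserves the natural order, the balance relation, and the property of being thin.

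For Property~\ref{pty_order}, I would prove necessity by applying the $\TT$-property to the lifts $x := \jmath(a) \in \TT^\vee$ and $y := (a',\ooi) \in \TT$, noting that $x \preceq y$ whenever $a \preceq a'$ in $\SS$ (the case $a' = \oo$ is trivial since then $a = \oo$ by zero-sum-freeness). Because $|y| = \ooi \neq \ooo$, the element $z \in \TT^\vee$ provided with $z \bala y$ must be nonzero, hence of the form $(a'',\ooi)$ with $a'' \in (\SS^\vee)^*$, and the inequalities $\jmath(a) \preceq z \preceq y$ combined with $z \bala y$ unfold to the $\SS$-condition. For sufficiency, given $x \in \TT^\vee$ and $y \in \TT$ with $x \preceq y$ and $y \ne \zero$ (else $z = \zero$ works), write $y = (a',b')$: if $|x| = b'$ then $x = (a,b')$ with $a \in (\SS^\vee)^*$, and the $\SS$-condition applied to $(a,a')$ yields $a''$, automatically nonzero by zero-sum-freeness since $a \preceq a''$ and $a \ne \oo$, so $z := (a'',b') \in \TT^\vee$ has the required properties; if $|x| \prec b'$ (in particular $x = \zero$), I apply the $\SS$-condition with $a = \oo$ and set $z := (a'',b')$.

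Property~\ref{pty_order-equal} is essentially a direct transcription: modulo the trivial case $x = y = \zero$, the premises $x,y \in \TT^\vee$, $|x| = |y|$, $x \preceq y$ translate to $a,a' \in (\SS^\vee)^*$ with $a \preceq a'$, and the conclusion $x = y$ to $a = a'$. Property~\ref{pty_order-finite} rests on the observation that for $b \in \RR^*$ the fiber $\TT^\vee_b = \{(a,b) : a \in (\SS^\vee)^*\}$ is order-isomorphic to $(\SS^\vee)^*$ via the first projection, while $\TT^\vee_\ooo = \{\zero\}$ is trivially a singleton; hence finiteness of all totally ordered subsets of $\TT^\vee_b$ for every $b \in \RR$ is equivalent to finiteness of all totally ordered subsets of $(\SS^\vee)^*$, which agrees with the corresponding finiteness on $\SS^\vee$ since the two sets differ only by the element $\oo$.

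The main technical obstacle I anticipate is the sufficiency of Property~\ref{pty_order} in the case $|x| \prec |y|$ with $a' \in \SS^\circ \setminus \{\oo\}$: the $\SS$-condition applied with $a = \oo$ only guarantees some $a'' \in \SS^\vee$, possibly equal to $\oo$ itself, in which case $z = \zero$ has the wrong modulus. The remedy is to exploit the existential nature of the condition to extract a nonzero witness --- for instance by invoking the $\SS$-condition from a suitable nonzero starting point $a \in (\SS^\vee)^*$ below $a'$, when such exists --- which for the concrete semirings of interest ($\BB$, $\N_2$, $\phase$) can be verified directly, matching the scope of the applications of this lemma in the paper.
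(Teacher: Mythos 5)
Your translation strategy — unfolding the lexicographic order, the modulus $(a,b)\mapsto b$, the thin set $\TT^\vee=((\SS^\vee)^*\times\RR^*)\cup\{\zero\}$, and the description of $\bala$ via~\eqref{bala} — is exactly the right approach, and the paper itself treats the lemma as an ``easy'' consequence of these structural facts (it is stated with a terminal \qed\ and no proof). Your handling of Property~\ref{pty_order-equal} and Property~\ref{pty_order-finite} is correct, and your necessity argument for Property~\ref{pty_order} is also correct; in fact the necessity argument yields $a''\in(\SS^\vee)^*$ whenever $a'\neq\oo$, since the $z$ obtained in $\TT^\vee$ satisfies $|z|=|y|\neq\ooo$ and hence $z\neq\zero$.

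The gap you flag in the sufficiency direction for Property~\ref{pty_order} is, however, genuine, and your proposed remedy does not close it. In the case $|x|\prec|y|=b'$, to build $z=(a'',b')\in\TT^\vee$ you must produce $a''\in(\SS^\vee)^*$ with $a''\preceq a'$ and $a''\bal a'$; but the lemma's $\SS$-condition, applied with $a=\oo$, only yields some $a''\in\SS^\vee$, and $a''=\oo$ is a valid witness exactly when $a'\in\SS^\circ$. Your remedy --- ``invoke the $\SS$-condition from a nonzero starting point $a\in(\SS^\vee)^*$ below $a'$'' --- requires such a starting point to exist. That existence is precisely what is missing: if the only witness for the pair $(\oo,a')$ is $a''=\oo$, then there is no apparent way to manufacture a nonzero thin element below $a'$, and chasing $a'=c^\circ$, $c=d^\circ$, \dots\ leads to an infinite regress rather than a proof. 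Appealing to direct verification for $\BB$, $\N_2$, $\phase$ proves the examples but not the lemma.

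The clean way to close this is to observe that the $\SS$-condition the paper writes down is meant to be read with $a''\in(\SS^\vee)^*$ when $a'\neq\oo$ --- this is literally what your own necessity argument delivers, and it is what the sufficiency argument requires. With that reading, both directions of the first ``iff'' go through exactly as you sketch them (the case $|x|=|y|$ forces $a''\neq\oo$ automatically by zero-sum-freeness, and the case $|x|\prec|y|$ now has the required nonzero witness built into the hypothesis). You should either strengthen the $\SS$-condition accordingly when you state what you are proving, or else add a lemma showing that, under the standing hypotheses, the weak form already implies the strong form; as written, your proposal does neither, and the sufficiency direction remains incomplete.
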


\begin{lemma} \label{l3.6}
Let $\SS$ be as in Lemma~\ref{l_balance}.
Then  $\skewproductstar{\SS}{\RR}$ satisfies Property~\ref{pty_inv} if and only if
for all $d\in (\SS^\vee)^*$
there exists $\tilde{d}\in \SS$ such that
for all $a,a'\in\SS$
\[\hspace{3cm} da \bal a' \Longleftrightarrow a\bal \tilde{d} a'\enspace . 
\hspace{5.75cm}\qed\]
\end{lemma}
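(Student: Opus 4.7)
The plan is to prove both directions by exploiting the canonical embedding $\jmath\colon\SS\hookrightarrow\TT:=\skewproductstar{\SS}{\RR}$, $a\mapsto (a,\ooi)$ from Fact~\ref{remark-extension0}, together with the multiplicative projection $\gamma\colon\TT\to\SS$, $(a,b)\mapsto a$. On the image of $\jmath$ every element has modulus $\ooi$, so the refined balance $\bala$ of $\TT$ restricts there to the usual balance $\bal$ of $\SS$; this is the dictionary that allows one to transfer the two properties.

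For the \emph{if} direction, I would take $d\in\TT^\vee$ with $|d|$ invertible; since $|d|\neq\ooo$, we may write $d=(\delta,|d|)$ with $\delta\in(\SS^\vee)^*$. The $\SS$-hypothesis applied to $\delta$ yields $\tilde\delta\in\SS$, which must be nonzero (otherwise substituting $a=\delta$, $a'=\delta^2$ into the $\SS$-equivalence would give a direct contradiction). I then set $\tilde d:=(\tilde\delta,|d|^{-1})\in\TT^*$. A case analysis according to whether $x$ or $y$ equals $\zero$ reduces the verification of $dx\bala y\iff x\bala\tilde d\,y$ to the generic case $x=(a,\alpha)$, $y=(a',\beta)$, where both sides decompose as a modulus condition together with a balance in $\SS$; the modulus conditions $|d|\alpha=\beta$ and $\alpha=|d|^{-1}\beta$ are equivalent since $|d|$ is invertible in $\RR$, and the $\SS$-balances $\delta a\bal a'$ and $a\bal\tilde\delta a'$ are equivalent by the $\SS$-hypothesis.

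For the \emph{only if} direction, given $d\in(\SS^\vee)^*$, I would apply Property~\ref{pty_inv} to $\jmath(d)\in\TT^\vee$, whose modulus is the invertible element $\ooi$. This yields $\tilde D\in\TT$ witnessing the $\TT$-equivalence. Testing at $x=\jmath(\oi)$, $y=\jmath(d)$ makes the left-hand side $\jmath(d)x\bala y$ trivially true, forcing $\tilde D\neq\zero$, then forcing the modulus of $\tilde D$ to equal $\ooi$, so $\tilde D=(\tilde d,\ooi)$ with $\tilde d\in\SS^*$; moreover, comparing the $\gamma$-components delivers the key relation $\tilde d\,d\bal\oi$. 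For the generic case $a,a'\in\SS^*$, substituting $x=\jmath(a)$, $y=\jmath(a')$ into the $\TT$-equivalence yields directly $da\bal a'\iff a\bal\tilde d a'$, since on both sides the modulus conditions are $\ooi=\ooi$.

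The main obstacle will be the corner cases $a=\oo$ or $a'=\oo$, for which no $\jmath$-based substitution yields useful information (because $\jmath(\oo)=\zero$ and $\bala$-relations with $\zero$ collapse to equality with $\zero$). In these cases the desired $\SS$-equivalence reduces to the assertions $a\in\SS^\circ\iff\tilde d a\in\SS^\circ$ and $a\in\SS^\circ\iff d a\in\SS^\circ$. The forward implications are immediate from the fact that $\SS^\circ$ is an ideal. For the reverse, I would multiply $\tilde d\,d\bal\oi$ by $a$ (resp.\ by $a'$) to get $\tilde d\,d\,a\bal a$; if one of $da$ or $\tilde d a$ lies in $\SS^\circ$, then so does $\tilde d\,d\,a$ by the ideal property, whence $a$ lies in $\SS^\circ+(-\SS^\circ)\subset\SS^\circ$, using that $\SS^\circ$ is closed under addition and satisfies $-\SS^\circ=\SS^\circ$.
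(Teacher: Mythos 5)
Your overall strategy — transporting the problem through the embedding $\jmath\colon\SS\hookrightarrow\skewproductstar{\SS}{\RR}$ and the multiplicative projection $\gamma$, using the characterization $x\bala y\iff|x|=|y|$ and $\gamma(x)\bal\gamma(y)$ — is exactly the mechanism the paper has in mind (the authors prefix the lemma with ``From~\eqref{bala} and the property that $\SS$ can be seen as a subsemiring of $\skewproductstar{\SS}{\RR}$, we obtain easily the following results'' and give no proof), and your ``if'' direction is correct as written: the case analysis over $x=\zero$, $y=\zero$ and the generic case, the non-vanishing of $\tilde\delta$, and the factorization of $\bala$ into a modulus equation plus an $\SS$-balance all go through. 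Note in passing that the generic case only invokes the $\SS$-hypothesis for $a,a'\in\SS^*$, since $\gamma$ of a nonzero element of $\skewproductstar{\SS}{\RR}$ is nonzero.

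The gap is in the ``only if'' direction, in the final step of your corner-case argument. From $\tilde d\,d\,a\bal a$ and $\tilde d\,d\,a\in\SS^\circ$ you conclude ``$a$ lies in $\SS^\circ+(-\SS^\circ)\subset\SS^\circ$'', i.e.\ you isolate $a$ by subtracting. This is not a legitimate move in a semiring: $u\bal a$ means $u+\tau(a)\in\SS^\circ$, and knowing in addition that $u\in\SS^\circ$ does not let you recover $a\in\SS^\circ$, because $\SS^\circ$ need not be a face of $(\SS,+)$. Concretely, in $\SS=\BB$ take $u=\ooi^\circ\in\BB^\circ$ and $a=\ooi\notin\BB^\circ$: then $u\ominus a=\ooi^\circ\oplus\ominus\ooi=\ooi^\circ\in\BB^\circ$, so $u\bal a$ and $u\in\BB^\circ$, yet $a\notin\BB^\circ$. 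So the deduction fails. What you actually obtain from Property~\ref{pty_inv} is the equivalence $da\bal a'\iff a\bal\tilde d\,a'$ for $a,a'\in\SS^*$ together with $\tilde d\,d\bal\oi$; the extension to $a=\oo$ or $a'=\oo$, i.e.\ the implications $da\in\SS^\circ\Rightarrow a\in\SS^\circ$ and $\tilde d\,a'\in\SS^\circ\Rightarrow a'\in\SS^\circ$, does not come for free and your argument does not deliver it. (Since your ``if'' direction only used the $\SS^*$-restricted hypothesis, the lemma would be a clean equivalence if the $\SS$-condition were stated over $\SS^*$; and in the paper's intended applications $(\SS^\vee)^*$ consists of invertible elements, so the corner cases hold trivially with $\tilde d=d^{-1}$. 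But as you have written it, the corner-case step is not a proof.)
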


The conditions of Lemma~\ref{l_balance}  are
easily satisfied when $\SS=\BB$ and $\SS=\N_2$.
For instance, for the first condition of Lemma~\ref{l_balance} 
one can take $a''=a$ if $a\neq \zero$,  
$a''=a'$ if $a=\zero$ and $a'\in \SS^{\vee}$, and any 
$a''\in (\SS^{\vee})^*$  otherwise.
Then by Lemma~\ref{l_balance}, $\TT=\S$ and $\TT=\Ti$
satisfy Properties~\ref{pty_order}~--~\ref{pty_order-equal}.
The same holds for $\phase$ and $\skewproductstar{\phase}{\rmax}$ as in Example~\ref{ex-viro}
(the phase extension of the tropical semiring).

The condition of Lemma~\ref{l3.6} holds as soon as $(\SS^\vee)^*$
is the set of invertible elements of $\SS$, since then
$\tilde{d}=d^{-1}$ is a solution ($\SS^{\circ}$ is an ideal).
This is the case for $\SS=\BB$,  $\SS=\N_2$, and for the semiring
$\phase$ of Example~\ref{ex-viro}.
Then by Lemma~\ref{l3.6} the semirings $\TT=\S$, $\TT=\Ti$, and $\skewproductstar{\phase}{\rmax}$ 
satisfy Property~\ref{pty_inv}.
Note that similarly Property~\ref{pty_inv} holds as
soon as $(\TT^\vee)^*$, or at least the
set 
of elements $d$ of $\TT^\vee$ such that $|d|$ 
is invertible in $\RR$,
is the set of invertible elements of $\TT$.

The above properties also hold for each of the extensions
$\bar{G}$ of a group $G$  defined in
Examples~\ref{groupext} and \ref{ex-supertropical},
(with a nontrivial symmetry in the first case and the identity symmetry in
the second one). Thus for each of these examples,
$\skewproductstar{\bar{G}}{\rmax}$ satisfies Properties~\ref{pty_order}--\ref{pty_inv}.
This is in particular the case for the tropical extension
of the torus $\skewproductstar{\bar{\To}}{\rmax}$
of Example~\ref{groupext}  and the supertropical semifield of
Example~\ref{ex-supertropical}. We can also prove directly 
that Properties~\ref{pty_order}--\ref{pty_inv} hold
for any supertropical semifield.

These examples can be summarized as follows.

\begin{fact}
All the following semirings allow the convergence of monotone algorithms:
the bi-valued tropical semiring $\Ti$, 
the symmetrized max-plus semiring $\S$, 
the phase extension of the tropical semiring
$\skewproductstar{\phase}{\rmax}$ (Example~\ref{ex-viro}),
the tropical extension of the torus $\skewproductstar{\bar{\To}}{\rmax}$
or that
of any group with a non trivial symmetry $\skewproductstar{\bar{G}}{\rmax}$
(Example~\ref{groupext}), and any supertropical semifield
(see Remark~\ref{rem-supertropical}).
\end{fact}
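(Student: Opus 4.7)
The plan is to verify, for each listed semiring $\TT$, the three properties of Definition~\ref{monotone-def}: Properties~\ref{pty_order},~\ref{pty_order-finite}, and~\ref{pty_inv}. A useful preliminary is that Property~\ref{pty_order-equal} implies Property~\ref{pty_order-finite}: under Property~\ref{pty_order-equal}, distinct thin elements of $\TT^\vee_a$ are pairwise incomparable, so every totally ordered subset of $\TT^\vee_a$ is a singleton. It therefore suffices to verify Properties~\ref{pty_order},~\ref{pty_order-equal}, and~\ref{pty_inv}. For each semiring of the form $\TT=\skewproductstar{\SS}{\rmax}$, Lemmas~\ref{l_balance} and~\ref{l3.6} reduce these to corresponding conditions on the base semiring $\SS$; moreover, the condition of Lemma~\ref{l3.6} is granted automatically whenever $(\SS^\vee)^*$ consists of invertible elements, by taking $\tilde d=d^{-1}$.

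For $\SS=\BB$ or $\SS=\N_2$, the set $(\SS^\vee)^*$ has at most two invertible elements, and the lifting and order-equal conditions of Lemma~\ref{l_balance} follow by a short case analysis on the addition and multiplication tables. For $\SS=\phase$, all nonzero thin elements are half-lines, which are invertible; two comparable half-lines $L_1\preceq L_2$ must coincide, since $L_2=\overline{\operatorname{conv}}(L_1\cup c)$ being a half-line forces $L_1\subseteq L_2$ and hence $L_1=L_2$; and the lifting requires, given $a\in\phase^\vee$ with $a\preceq a'$, selecting a half-line $a''\subseteq a'$ whose opposite ray $-a''$ together with $a'$ spans strictly more than $\pi$ on the unit circle, so that $a'+(-a'')$ is either a line through the origin or the whole plane, hence lies in $\phase^\circ$. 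For $\SS=\bar G$ of Examples~\ref{groupext} and~\ref{ex-supertropical}, the rule $a\vee b=\ooi^\circ$ for distinct nonzero $a,b\in G$ makes the natural order on $G$ discrete, from which the order-equal condition and the lifting follow immediately; the elements of $G$ are invertible since $G$ is a group.

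Finally, for a general supertropical semifield $\SS$, I would argue directly from the defining axioms of Remark~\ref{rem-supertropical}. The identity $a+b=|a|$ when $|a|=|b|$ prevents two distinct thin elements of equal modulus from being comparable (their sum lies in $\SS^\circ$, hence outside $\SS^\vee$ unless both are $\zero$), which yields Property~\ref{pty_order-equal}. The surjectivity of $\mu:\SS^\sharp\to(\SS^\circ)^*$ supplies, given $a\in\SS^\vee$ with $a\preceq a'$, a thin $a''$ with $|a''|=|a'|$ and $a''\preceq a'$, giving the lifting required by Property~\ref{pty_order}. And $\SS^\sharp$ being a multiplicative group yields Property~\ref{pty_inv} with $\tilde d=d^{-1}$. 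The main obstacle I anticipate is the case analysis underlying the lifting for $\phase$: one must carefully choose $a''$ so that $a'+(-a'')$ spans enough of the circle to be balanced, since a boundary ray of $a'$ can fail and an interior ray is typically required.
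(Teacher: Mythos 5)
Your flagged concern about the lifting in $\phase$ is real, but the obstacle is deeper than "choose an interior ray carefully": you have dropped the constraint $a\preceq a''$ from the condition in Lemma~\ref{l_balance}. In $\phase$ the natural order is set containment, and both $a$ and $a''$ must lie in $\phase^\vee$, i.e.\ be half-lines (or $\zero$); when $a\neq\zero$ this forces $a''=a$, so there is no freedom to pick an interior ray. Now take $a=\Re_+$ (the positive real half-line) and $a'$ the closed upper half-plane $\{z:\operatorname{Im}z\ge 0\}$. Then $a\in\phase^\vee$, $a\preceq a'$, and the only admissible $a''$ is $a''=a$. But $a''-a'=a+(-a')$, and since $\Re_+$ lies on the boundary of the lower half-plane $-a'$, this closed convex hull is the lower half-plane, which is \emph{not} in $\phase^\circ=\{\zero\}\cup\{\text{lines}\}\cup\{\C\}$. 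So the lifting condition of Lemma~\ref{l_balance}, and hence Property~\ref{pty_order}, fails for $\phase$ and for $\skewproductstar{\phase}{\rmax}$. The paper's own treatment (which just says "the same holds" for $\phase$ after giving the prescription $a''=a$ for $\BB$ and $\N_2$) overlooks exactly this boundary-ray case, so the Fact as stated appears to be incorrect for the phase extension; your proof cannot be completed for it.

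The rest of your plan is sound: the reduction via Lemmas~\ref{l_balance} and \ref{l3.6}, the observation that Property~\ref{pty_order-equal} implies Property~\ref{pty_order-finite}, and the $\BB$, $\N_2$, and $\bar G$ cases all work, matching the paper's approach. Your supertropical argument also omits the requirement $a\preceq a''$, but there the gap is repairable: take $a''=a'$ when $a'\in\SS^\vee$; when $a'\in\SS^\circ$ take $a''=a$ if $a\neq\zero$ (then $a+a'=a'\in\SS^\circ$ since $|a|\le|a'|=a'$), and use the surjectivity of $\mu$ on $\SS^\sharp$ only in the remaining case $a=\zero$, $a'\in(\SS^\circ)^*$. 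You should make these choices explicit.
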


We also have:
\begin{prop}\label{prop-equal}
Let $\TT$ 
satisfy
Properties~\ref{pty_order}  and~\ref{pty_order-equal}.
Then  for all $x,y\in \TT$  we have:
\[ (y\in \TT^{\vee},\;  x \preceq  y \text{ and } |x|=|y|) \implies x=y
\enspace. \]
\end{prop}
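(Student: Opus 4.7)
The plan is to reduce the claim to Property~\ref{pty_order-equal} by producing, starting from the arbitrary $x$, a thin element $z$ sandwiched between $\zero$ and $x$ whose modulus equals $|y|$. Property~\ref{pty_order-equal} already disposes of the case when both elements compared in $\preceq$ are thin, so the only real work is to exhibit such a thin witness and then feed the chain it produces into that property. Once $z$ is in hand, the argument concludes by a short transitivity-and-antisymmetry step.

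The natural attempt would be to apply Property~\ref{pty_order} directly to the pair $(x,y)$, but that property requires its lower argument to be thin, which is exactly the hypothesis we are trying to drop. The key move is instead to apply Property~\ref{pty_order} to the pair $(\zero,x)$: the element $\zero$ always belongs to $\TT^\vee$ by the definition of a thin set, and $\zero\preceq x$ always holds (write $x=\zero+x$). This yields some $z\in\TT^\vee$ with $\zero\preceq z\preceq x$ and $z\bala x$; in particular $|z|=|x|$, which by hypothesis equals $|y|$.

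From here everything is routine. Transitivity of $\preceq$ gives $z\preceq y$, and then $z,y\in\TT^\vee$ together with $z\preceq y$ and $|z|=|y|$ put us squarely in the hypotheses of Property~\ref{pty_order-equal}, yielding $z=y$. Combining $y=z\preceq x$ with the standing assumption $x\preceq y$ and antisymmetry of the natural order (available since Property~\ref{pty_order} already includes the assumption that $\TT$ is naturally ordered) forces $x=y$. I do not anticipate any real obstacle beyond spotting that the useful instance of Property~\ref{pty_order} is the one applied to $(\zero,x)$ rather than the superficially more tempting $(x,y)$.
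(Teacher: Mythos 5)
Your proof is correct and follows essentially the same route as the paper: apply Property~\ref{pty_order} to the pair $(\zero,x)$ to extract a thin $z$ with $z\preceq x$ and $|z|=|x|$, invoke Property~\ref{pty_order-equal} on $z$ and $y$ to get $z=y$, and conclude by antisymmetry from $y=z\preceq x\preceq y$. The only cosmetic difference is notation ($z$ in place of the paper's $x'$).
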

\begin{proof}
Let $x,y\in \TT$ be such that $y\in \TT^{\vee}$,
$x \preceq  y$  and $|x|=|y|$.
Then by Property~\ref{pty_order} applied to $\zero$ and $x$,
there exists $x'\in\TT^{\vee}$, such that $x'\preceq x$, 
$x\balance x'$ and $|x'|=|x|$.
Then $x'\preceq y$ and $|x'|=|y|$. By Property~\ref{pty_order-equal}
we get that $x'=y$. Since $x'\preceq x\preceq y$, we deduce that $x=y$.
\end{proof}

\subsection{Existence theorems}
The following result 
shows that the existence part of Theorem~\ref{th-cramer} does not require
the condition that all the Cramer determinants are \thinp. 

\begin{theorem}[{Compare with~\cite[Th.~6.2]{Plus}}]\label{th-jac}
Let $\TT$ be a semiring allowing \conv, see Definition~\ref{monotone-def},
let $A\in \M(\TT)$, and assume that $|\det A|$ is invertible in $\RR$
(but possibly $\det{A}\balance \szero$). 
Then for every $b\in \TT^n$ there exists a  \thin
solution $x$ of $Ax\bal b$, which can be chosen in such a way that
$|x|=|\det A|^{-1}|A\adj b|$.
\end{theorem}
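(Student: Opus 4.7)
Following the Jacobi strategy sketched above (compare~\cite[Th.~6.2]{Plus}), the plan is to construct a monotone sequence $(x^{(k)})_{k\geq 0}$ of thin vectors satisfying coordinate-wise balances, and to show using Property~\ref{pty_order-finite} that it stabilizes at a thin solution with the prescribed modulus.

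First, I would normalize the problem. Since $|\det A|$ is invertible in $\RR$, the permanent $\per |A|$ is non-zero, so Corollary~\ref{cor-but} yields a permutation matrix $\Sigma$ and diagonal matrices $D,D'$ over $\RR$ with invertible diagonal entries such that $B=\Sigma D|A|D'$ satisfies $B_{ii}=\unit$ and $B_{ij}\preceq \unit$. Lifting $\Sigma$, $D$, $D'$ by the morphism $\imath$ of Fact~\ref{remark-extension3} to monomial matrices over $\TT$, premultiplying $Ax\bal b$ by the lift of $\Sigma D$ and writing $y:=(D')^{-1}x$, Lemma~\ref{llem-util07} and Proposition~\ref{prop-det-mult} show that the transformed system $A'y\bal b'$ has $|A'_{ii}|=\unit$, $|A'_{ij}|\preceq \unit$, and that the determinant and adjugate scale predictably. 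It is therefore sufficient to prove the theorem after such a normalization, the original conclusion following by unwinding the scaling.

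Second, in the normalized situation each $A_{ii}$ is thin with $|A_{ii}|=\unit$ invertible, so Property~\ref{pty_inv} provides $\tilde A_{ii}\in\TT$. Set $x^{(0)}:=\zero$ and, assuming $x^{(k)}\in (\TT^\vee)^n$ has been defined, let
\[
r^{(k)}_i:=\tilde A_{ii}\Bigl(b_i\ominus \sum_{j\neq i}A_{ij}x_j^{(k)}\Bigr).
\]
I would choose $x^{(k+1)}_i\in\TT^\vee$ with $x^{(k)}_i\preceq x^{(k+1)}_i\preceq r^{(k)}_i$ and $x^{(k+1)}_i\bala r^{(k)}_i$, whose existence follows from Property~\ref{pty_order} once one has established inductively that $x^{(k)}_i\preceq r^{(k)}_i$. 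The latter is obtained by comparing $r^{(k)}_i$ with $r^{(k-1)}_i$ using the monotonicity of the natural order under the semiring operations and the fact that the modulus of the diagonal reciprocal $\tilde A_{ii}$ equals $\unit$. By construction, the sequence $(x^{(k)})_k$ is thin and non-decreasing in the natural order, and the coordinate-wise relation $A_{ii}x^{(k+1)}_i\bala b_i\ominus\sum_{j\neq i}A_{ij}x^{(k)}_j$ holds (using the reverse implication in Property~\ref{pty_inv}).

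Third, I would bound $|x^{(k)}|$ uniformly. Yoeli's theorem, Theorem~\ref{th-yoeli}, applied to the matrix $|A|$ (which has unit diagonal and all circuit weights at most $\unit$ after normalization), identifies $|A\adj|$ with the Kleene star of $|A|$. A direct induction then yields the upper bound $|x^{(k)}_i|\preceq (|A\adj||b|)_i=(|A|^{*}|b|)_i$ for every $k$ and $i$. Consequently each sequence $(x^{(k)}_i)_k$ is totally ordered in $\TT^\vee$ with modulus bounded by a fixed element of $\RR$, so Property~\ref{pty_order-finite} forces it to stabilize after finitely many steps. The common stationary value $x$ is thin and satisfies $A_{ii}x_i\bal b_i\ominus\sum_{j\neq i}A_{ij}x_j$, i.e., $Ax\bal b$.

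Finally, for the modulus identity, the bound above combined with the matching equality $|x^{(k+1)}_i|=|r^{(k)}_i|$ at each step (enforced by the choice $x^{(k+1)}_i\bala r^{(k)}_i$) shows that upon stabilization $|x_i|$ equals the corresponding entry of $|A|^{*}|b|$. Using Theorem~\ref{th-yoeli} again and unwinding the Hungarian scaling of the first step, one recovers $|x|=|\det A|^{-1}|A\adj b|$. The main obstacle I anticipate is the third step: producing a uniform modulus bound for the iteration and ensuring that in the limit the modulus identity holds on the nose (not merely as an inequality). This requires a careful combination of Yoeli's theorem for the Kleene-star interpretation of $|A\adj|$ with the scaling relations linking $|\det A|$, $|A\adj|$ and the normalized matrix.
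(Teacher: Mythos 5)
Your plan runs into a genuine gap in the second step. You assert that ``in the normalized situation each $A_{ii}$ is thin with $|A_{ii}|=\unit$ invertible,'' but the Hungarian normalization of Corollary~\ref{cor-but} and the lift by $\imath$ only control the modulus: they give $|A'_{ii}|=\unit$, not that $A'_{ii}\in\TT^\vee$. Since $\TT^\circ$ is an ideal, multiplying by invertible (thin) scaling factors cannot turn a balanced entry into a thin one, so $A'_{ii}$ can perfectly well remain balanced. A concrete instance: take $\TT=\smax$ and $A=(\ooi^\circ)$, a $1\times 1$ matrix; then $|\det A|=\ooi$ is invertible and the scaling is trivial, yet $A_{11}=\ooi^\circ\notin\smax^\vee$, so Property~\ref{pty_inv} (which is stated only for $d\in\TT^\vee$) is not applicable to it. This is not a peripheral technicality: the theorem's whole point is to allow $\det A\bal\szero$, which typically forces some diagonal entries of the optimally permuted matrix to be balanced.

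The paper's proof avoids this by interposing a \emph{Jacobi-decomposition} (Proposition~\ref{exist-jacobi-dec}): given a dominant diagonal, Property~\ref{pty_order} applied to $\zero\preceq A_{ii}$ produces a thin $D_{ii}\preceq A_{ii}$ with $D_{ii}\bala A_{ii}$, yielding $A=D\oplus N$ with $D$ diagonal and thin and $|\det D|=|\det A|$. The iteration is then run on $D$, i.e.\ $Dx^{k+1}\bala\ominus Nx^{k}\oplus b$, using Property~\ref{pty_inv} only on the thin $D_{ii}$'s (this is exactly Theorem~\ref{theo-jacobi}). Your remaining steps match the paper's structure: Item~(1) of Theorem~\ref{theo-jacobi} is your monotone construction via Property~\ref{pty_order}; Item~(2) is your modulus computation via Yoeli's theorem and the Carr\'e--Gondran formula (the paper actually proves equality $|x^k|=M^*|D|^{-1}|b|$ directly rather than bound-plus-attainment, via Lemma~\ref{lem-meta-yoeli}); Item~(3) is your stabilization argument via Property~\ref{pty_order-finite}. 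Also note the paper's normalization (Remark~\ref{rem-dom}) only permutes rows to get the dominant-diagonal condition $|\det A|=|A_{11}\cdots A_{nn}|$; the full Hungarian scaling you invoke is stronger than needed, though harmless. So: keep your overall shape, but insert the extraction of a thin diagonal $D$ from $A$ before applying Property~\ref{pty_inv}, and run the iteration with $D$ in place of the diagonal of $A$.
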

This result will be proved in Section~\ref{subsec-stationn}
as a corollary of Theorem~\ref{theo-jacobi} below,
which builds the solution using a Jacobi-type algorithm.

Applying Theorem~\ref{th-jac} to the tropical extension $\S$
(in which $|x|$ is invertible if and only if $|x|\neq \szero$ 
or equivalently $x\neq \zero$), we 
recover the statement of~\cite[Th.~6.2]{Plus}.
\begin{corollary}[{\cite[Th.~6.2]{Plus}}]\label{th-jac-smax}
Let $A\in \M(\S)$. Assume that $\det A\neq \szero$ (but possibly $\det{A}\balance \szero$). Then for every $b\in \S^n$ there exists a  \thin
solution $x$ of $Ax\bal b$, which can be chosen in such a way that
$|x|=|\det A|^{-1}|A\adj b|$.\qed
\end{corollary}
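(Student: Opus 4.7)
The plan is to derive Corollary~\ref{th-jac-smax} as a direct specialization of Theorem~\ref{th-jac} to the particular extended semiring $\TT=\S=\skewproductstar{\BB}{\rmax}$. So the only real work is to verify that the hypotheses of Theorem~\ref{th-jac} are satisfied in this setting, and to translate its conclusion into the wording of the corollary.

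First I would recall that, as observed in the paragraph following Lemma~\ref{l3.6} and summarized in the Fact preceding Proposition~\ref{prop-equal}, the symmetrized max-plus semiring $\S$ allows the convergence of monotone algorithms in the sense of Definition~\ref{monotone-def}, i.e.\ it satisfies Properties~\ref{pty_order}, \ref{pty_order-finite} and~\ref{pty_inv}. Next I would identify the modulus of $\S$: by Fact~\ref{remark-extension3}, the map $\mu:\skewproductstar{\BB}{\rmax}\to\rmax$, $(a,b)\mapsto b$, is a modulus, and its image is the totally ordered idempotent semiring $\RR=\rmax$. In $\rmax$ the invertible elements are exactly the real numbers, i.e.\ the elements different from the zero $\ooo=-\infty$. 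Consequently, for any element $y\in\S$, $|y|$ is invertible in $\rmax$ if and only if $y\neq\zero$.

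Applying this to $y=\det A$, the hypothesis $\det A\neq\szero=\zero$ of the corollary is exactly equivalent to the invertibility of $|\det A|$ in $\RR=\rmax$, which is the hypothesis of Theorem~\ref{th-jac}. Note that nothing is assumed about whether $\det A$ itself is thin; in particular $\det A$ may be balanced ($\det A\bal\szero$), which is allowed in the theorem. The conclusion of Theorem~\ref{th-jac} then delivers, for every $b\in\S^n$, a thin solution $x$ of $Ax\bal b$ satisfying $|x|=|\det A|^{-1}|A\adj b|$, which is verbatim the statement of the corollary. I expect no obstacle: the entire content is a translation, and the only point worth highlighting is the equivalence between the algebraic condition $\det A\neq\zero$ and the invertibility of its modulus, which is specific to the fact that $\rmax$ is a semifield completed by a bottom element.
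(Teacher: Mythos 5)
Your proposal is correct and follows exactly the paper's own approach: the paper likewise obtains this corollary by applying Theorem~\ref{th-jac} to $\TT=\S$ after observing that, for this particular extension, $|x|$ is invertible in $\rmax$ if and only if $x\neq\zero$. Your more detailed unpacking of why $\S$ allows the convergence of monotone algorithms and why the modulus has the required property is accurate and matches the Facts the paper records after Lemmas~\ref{l_balance} and~\ref{l3.6}.
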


A sketch of the proof of the latter result appeared in~\cite{Plus}; the complete proof appeared in~\cite{gaubert92a}.
The proof of Theorem~\ref{th-jac} that we next give generalizes
the former proof to the present setting.
We also derive as a corollary the following
analogous result in the bi-valued tropical semiring $\Ti$.
\begin{corollary}\label{new-th}
Let $A\in \M(\Ti)$, and assume that $\det A\neq \zero$ (but $\det{A}$ may be balanced).
Then for every $b\in \Ti^n$ the \thin 
vector (``Cramer solution'')
\[
x:=\imath(|\det A|^{-1}|A\adj b|)
\]
satisfies $Ax\bal b$.
\end{corollary}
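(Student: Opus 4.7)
The plan is to deduce this corollary directly from Theorem~\ref{th-jac} by checking that its hypotheses are satisfied in the bi-valued tropical semiring $\Ti$, and then by observing that a thin element of $\Ti$ is uniquely determined by its modulus.

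First, recall from the discussion preceding Theorem~\ref{th-jac} that $\Ti=\skewproductstar{\N_2}{\rmax}$ has been verified (via Lemmas~\ref{l_balance} and~\ref{l3.6}) to allow the convergence of monotone algorithms, i.e.\ it satisfies Properties~\ref{pty_order},~\ref{pty_order-finite} and~\ref{pty_inv}. Moreover, $|\cdot|:\Ti\to\rmax$ is the modulus $(a,b)\mapsto b$, extended by $\zero\mapsto\ooo$, and the target $\rmax$ is a semifield: every nonzero element of $\rmax$ is invertible. Consequently the hypothesis ``$\det A\neq\zero$'' is equivalent to ``$|\det A|$ is invertible in $\rmax$'', exactly as required by Theorem~\ref{th-jac}.

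Next, I would apply Theorem~\ref{th-jac} to the matrix $A\in\M(\Ti)$ and the vector $b\in\Ti^n$: it produces a thin vector $y\in(\Ti^\vee)^n$ satisfying $Ay\bal b$ together with the modulus identity
\[
|y|\;=\;|\det A|^{-1}\,|A\adj b|\enspace.
\]
The remaining step is to identify this $y$ with the Cramer vector $x:=\imath(|\det A|^{-1}|A\adj b|)$ stated in the corollary. Here I use the description $\Ti^\vee=(\{1\}\times\Re)\cup\{\zero\}=\imath(\rmax)$ recalled just after Definition~\ref{def-Ti}: the map $\imath:\rmax\to\Ti^\vee$ is a bijection, so any thin element of $\Ti$ is uniquely determined by its modulus, namely a thin element $t$ equals $\imath(|t|)$. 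Applying this entrywise to $y$, I obtain $y=\imath(|y|)=\imath(|\det A|^{-1}|A\adj b|)=x$, so $x$ itself satisfies $Ax\bal b$.

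There is no real obstacle here; the work has already been done in Theorem~\ref{th-jac} (whose constructive Jacobi-type proof is postponed to Section~\ref{subsec-stationn}) and in the structural description of $\Ti^\vee$. The only point that requires a word of care is the behavior at zero entries: if an entry of $|\det A|^{-1}|A\adj b|$ equals $\ooo$, then the corresponding entry of $y$ must be the unique thin element of modulus $\ooo$, namely $\zero=\imath(\ooo)$, so the identification $y=\imath(|\det A|^{-1}|A\adj b|)$ remains valid componentwise.
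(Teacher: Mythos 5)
Your argument is correct and matches the paper's own proof: both apply Theorem~\ref{th-jac} to $\Ti$ (using that $\det A\neq\zero$ is equivalent to invertibility of $|\det A|$ in $\rmax$) and then use that a thin element $t$ of $\Ti$ satisfies $t=\imath(|t|)$, so the thin solution produced by the theorem must be the stated Cramer vector.
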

\begin{proof}
An element of $\Ti$ is such that $|x|$ is invertible if and only if $|x|\neq \szero$ 
or equivalently $x\neq \zero$.
It is easy to see that an element $x\in \Ti$ is \thin 
if and only if $x=\imath(|x|)$.
Hence, the only possible \thin vector $x$ such that
$|x|=|\det A|^{-1}|A\adj b|$ is given by $x=\imath(|\det A|^{-1}|A\adj b|)$.
Applying Theorem~\ref{th-jac} to the tropical extension $\Ti$
we get the corollary.
\end{proof}

The previous results can be reformulated equivalently in the 
following homogeneous forms.

\begin{theorem}\label{th-jac-homogeneous}
Let $\TT$ be a semiring allowing \conv, see Definition~\ref{monotone-def}, and $A\in \MM_{n,n+1}(\TT)$.
Let $\hat{x}\in\TT^{n+1}$ be such that $\hat{x}_k=
(\ominus \unit)^{n-k+1}\det A_{|k)}$
for all $k\in [n+1]$.
Assume that either $\hat{x}=\zero$, or 
at least
one entry of $|\hat{x}|$ is invertible in $\RR$. 
Then there exists a \thin solution $x$ of
$Ax\bal \zero$ such that $|x|=|\hat{x}|$.
\end{theorem}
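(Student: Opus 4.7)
The plan is to reduce the homogeneous statement to the affine Cramer existence result, Theorem~\ref{th-jac}, much in the same spirit as the proof of Corollary~\ref{cor-cramer}. First I would dispose of the trivial case $\hat{x}=\zero$ by taking $x=\zero$. Otherwise, one of the moduli $|\hat{x}_k|$ is invertible in $\RR$; fix such an index $k\in[n+1]$. The first concrete step is to produce a thin ``pivot'' with the required modulus: since $\TT$ satisfies Property~\ref{pty_order} and is naturally ordered, $\zero\preceq \hat{x}_k$, so one can find $\lambda\in\TT^\vee$ with $\lambda\preceq \hat{x}_k$ and $\lambda\bala \hat{x}_k$; in particular $|\lambda|=|\hat{x}_k|$ is invertible.

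Next I would set $B:=A_{|k)}\in\MM_n(\TT)$ and $b:=\ominus\lambda\, A_{\cdot k}\in\TT^n$, and reduce the homogeneous problem $Ax\bal\zero$ to the square affine Cramer system
\[
By \bal b \enspace ,
\]
by declaring that the solution $x\in\TT^{n+1}$ is obtained by inserting $\lambda$ in position $k$ of $y\in\TT^n$. Indeed, $Ax = By + \lambda A_{\cdot k}$, so $By\bal b$ forces $Ax\bal\zero$. The hypothesis of Theorem~\ref{th-jac} for the system $By\bal b$ is met: $|\det B|=|\det A_{|k)}|=|\hat{x}_k|$ is invertible in $\RR$. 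Hence there exists a thin $y\in(\TT^\vee)^n$ with
\[
|y| = |\det B|^{-1}|B\adj b| \enspace .
\]

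The core computation is to match the moduli $|y_i|$ with $|\hat{x}_i|$ for $i\in[n+1]\setminus\{k\}$. Expanding $\det A_{|i)}$ along the column of $A_{|i)}$ originally coming from $A_{\cdot k}$, and recognizing that the resulting $(n-1)\times(n-1)$ minors are exactly the minors $\det B(j,i)$ appearing in $(B\adj)_{ij}=(\ominus\unit)^{i+j}\det B(j,i)$, one obtains an identity of the form
\[
(B\adj A_{\cdot k})_i = \pm\,\det A_{|i)} \enspace ,
\]
with a sign that, combined with $\ominus\lambda$ and with the sign in the definition $\hat{x}_i=(\ominus\unit)^{n-i+1}\det A_{|i)}$, yields $(B\adj b)_i=\pm\,\lambda\,\hat{x}_i$. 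Taking moduli this gives $|(B\adj b)_i|=|\hat{x}_k||\hat{x}_i|$, so $|y_i|=|\hat{x}_i|$ for every $i\neq k$. Setting $x_k=\lambda$ and $x_i=y_i$ (resp.\ $y_{i-1}$) for $i<k$ (resp.\ $i>k$), the vector $x$ is thin, satisfies $|x|=|\hat{x}|$, and fulfils $Ax\bal\zero$, as required.

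The main obstacle is really only the sign/index bookkeeping in the Laplace expansion, to see that the moduli of the coordinates of the ``Cramer solution'' $|\det B|^{-1}|B\adj b|$ of the auxiliary affine system match $|\hat{x}_i|$; everything else reduces directly to the existence result of Theorem~\ref{th-jac} and to the general-position assumption $|\hat{x}_k|$ invertible together with Property~\ref{pty_order}, which is precisely what guarantees that a thin pivot of modulus $|\hat{x}_k|$ is available.
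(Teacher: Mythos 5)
Your proposal is correct and follows essentially the same route as the paper: dispose of the zero case, fix the index $k$ with $|\hat{x}_k|$ invertible, delete the $k$-th column to obtain a square system, apply the existence Theorem~\ref{th-jac} to that affine Cramer system, and then re-insert a thin scalar of modulus $|\hat{x}_k|$ in position $k$ to assemble the solution. The only two differences are cosmetic. First, the paper produces the thin pivot as $\imath(|\hat{x}_{n+1}|)$, whereas you obtain a thin $\lambda\preceq\hat{x}_k$ with $\lambda\bala\hat{x}_k$ from Property~\ref{pty_order}; both are valid, and yours is slightly more intrinsic since it does not presuppose that $\TT$ is literally a tropical extension $\skewproductstar{\SS}{\RR}$ on which $\imath$ is defined. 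Second, you fold the scalar $\lambda$ into the right-hand side $b=\ominus\lambda A_{\cdot k}$ before invoking Theorem~\ref{th-jac}, so that $|y_i|=|\hat{x}_i|$ comes out directly, while the paper uses $b=\ominus A_{\cdot n+1}$, gets $|y_i|=|\hat{x}_{n+1}|^{-1}|\hat{x}_i|$, and then multiplies the whole $y$ by $x_{n+1}=\imath(|\hat{x}_{n+1}|)$ at the end; both bookkeepings are equivalent. Your appeal to the Laplace/Cramer identity $(B\adj A_{\cdot k})_i=\pm\det A_{|\ell)}$ is legitimate in a semiring with symmetry, since each side is the same signed sum of monomials; the paper uses the identical fact implicitly when it writes $|(M\adj b)_i|=|\hat{x}_i|$.
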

\begin{proof}
Let $\hat{x}$ be as in the statement of the theorem and 
let us show that there exists $x\in (\TT^\vee)^{n+1}$, such that 
$Ax\bal \zero$ and $|x|=|\hat{x}|$.
If all the entries of $\hat{x}$ are $\zero$,
then $x=\zero$ satisfies trivially the above conditions.
Hence, assume without loss of generality that $|\hat{x}_{n+1}|$ is
invertible in $\RR$.
We set $M:=A_{|n+1)}$ and $b=\ominus A_{\cdot  n+1}$.
By applying Theorem~\ref{th-jac} to the system $My\bal  b$, we get
a \thin solution $y\in\TT^n$ of $My\bal b$ such that
$|y_i| = |\det M|^{-1}|(M\adj b)_i|
= |\hat{x}_{n+1}|^{-1} |\hat{x}_i|$ for all $i\in [n]$.
Let $x\in\TT^{n+1}$ be such that 
$x_{n+1}=\imath(|\hat{x}_{n+1}|)$ and
$x_i=x_{n+1}\odot y_i$ for $i\in [n]$.
Then $|x_i|=|\hat{x}_i|$ for all $i\in [n+1]$ and
multiplying the equation $My\bal b$ by $x_{n+1}$, we get that
$A x\bal \zero$.
\end{proof}

Note that when $\RR$ is a semifield, $\hat{x}$ always satisfies the
condition of Theorem~\ref{th-jac-homogeneous}.
The uniqueness of the solution is obtained from
Corollary~\ref{cor-cramer}.

In the particular case of the
bi-valued tropical semiring $\Ti$ we obtain 
more precise result.

\begin{corollary}\label{th-new-homogeneous}
Let $A\in \MM_{n,n+1}(\Ti)$. 
Let $\hat{x}\in\Ti^{n+1}$ be such that $\hat{x}_k= \det A_{|k)}$
for all $k\in [n+1]$.
Then the \thin vector $x=\imath(|\hat{x}|)\in \Ti^{n+1}$
satisfies $Ax\bal \zero$.
Moreover, if  $\hat{x}$ is \thin and non-zero,
then for any \thin vector $x$ which is a solution of $Ax\bal \zero$,
we have that $x$ is a \thin multiple of $\hat{x}$.
\end{corollary}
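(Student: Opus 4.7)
The plan is to deduce this corollary by specializing Theorem~\ref{th-jac-homogeneous} (for existence) and Corollary~\ref{cor-cramer} (for uniqueness) to the bi-valued tropical semiring $\Ti=\skewproductstar{\N_2}{\rmax}$. Two preliminary observations about $\Ti$ set the stage. First, the only symmetry of $\N_2$ is the identity (noted after Definition~\ref{def-Ti}), so by Fact~\ref{remark-extension} its extension to $\Ti$ is also the identity; hence $\ominus\unit=\unit$ and the signs $(\ominus\unit)^{n-k+1}$ appearing in Theorem~\ref{th-jac-homogeneous} trivialize, matching the definition $\hat x_k=\det A_{|k)}$ in the statement. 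Second, $\Ti^\vee=(\{1\}\times\Re)\cup\{\zero\}=\imath(\rmax)$, so every thin element $y\in\Ti^\vee$ satisfies $y=\imath(|y|)$; moreover each non-zero thin element $(1,b)$ is invertible in $\Ti$, with inverse $(1,-b)$.

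For existence, I invoke Theorem~\ref{th-jac-homogeneous} with $\TT=\Ti$. The semiring $\Ti$ allows convergence of monotone algorithms, as recorded in Section~\ref{sec-exists}. Since $\rmax$ is a semifield, every non-zero entry of $|\hat x|$ is automatically invertible, so the hypothesis of Theorem~\ref{th-jac-homogeneous} holds. The theorem thus delivers a thin vector $x'\in(\Ti^\vee)^{n+1}$ with $Ax'\bal\zero$ and $|x'|=|\hat x|$. By the first observation, $x'=\imath(|x'|)=\imath(|\hat x|)=x$, proving the first assertion.

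For uniqueness, assume that $\hat x$ is thin and non-zero. I apply Corollary~\ref{cor-cramer} with $\SS=\Ti$: strong balance elimination for $\Ti$ is recorded in Section~\ref{sec-elim}, and by the second observation every non-zero component of $\hat x\in(\Ti^\vee)^{n+1}$ is invertible, so $\hat x$ has at least one invertible entry. Corollary~\ref{cor-cramer} then yields that any thin solution of $Ax\bal\zero$ is a thin multiple of $\hat x$.

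I expect no serious obstacle: the corollary is essentially a repackaging of the general results once one has translated the ``signed'' formulation into the unsigned one that prevails in $\Ti$. The most delicate bookkeeping is checking that the sign convention in Theorem~\ref{th-jac-homogeneous} collapses correctly, and that the uniqueness of the thin lift $x=\imath(|\hat x|)$ in the existence step is a consequence of the very restricted shape of $\Ti^\vee$, both of which follow immediately from the two observations above.
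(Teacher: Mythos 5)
Your proposal is correct and matches the paper's own proof in strategy: the paper likewise records the two facts that in $\Ti$ a thin element $x$ satisfies $x=\imath(|x|)$ and that $(\Ti^\vee)^*$ is exactly the set of invertible elements, and then invokes Theorem~\ref{th-jac-homogeneous} for the first assertion and Corollary~\ref{cor-cramer} for the second. You spell out a few supporting details that the paper leaves implicit (the identity symmetry killing the $(\ominus\unit)^{n-k+1}$ factors, and $\rmax$ being a semifield to satisfy the invertibility hypothesis of Theorem~\ref{th-jac-homogeneous}), but the argument is the same.
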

\begin{proof}
An element $x\in \Ti$ is \thin if and only if $x=\imath(|x|)$
and the set $(\Ti^\vee)^*$ is exactly the set of
invertible elements of $\Ti$. Then applying Theorem~\ref{th-jac-homogeneous}
to $\Ti$, we get the first assertion.
The second assertion follows similarly from Corollary~\ref{cor-cramer}.
\end{proof}
\begin{remark}
The special case of the existence result, Theorem~\ref{th-jac},
concerning $\SS = \To_2$ or $\smax$ could
be derived alternatively from the existence and uniqueness result
in the Cramer theorem, Theorem ~\ref{th-cramer}. To do this a perturbation argument can be used
since the matrix $A$ and the vector $b$ can always be ``approximated''
by matrices satisfying the condition of Item (2) of the latter theorem.
\end{remark}

\subsection{The tropical Jacobi algorithm}
The  \thin solution $x$ in Theorem~\ref{th-jac} will be established 
constructively by means of the Jacobi algorithm of~\cite{Plus}.
The following notion of diagonal dominance is inspired
by the notion with the same name which is 
classically used in numerical analysis.
A real matrix $A=(A_{ij})$ is classically
said to have a dominant diagonal if $A_{ii}\geq \sum_{j\neq i} |A_{ij}|$ holds
for all $i\in[n]$.
The tropical analogue of this condition is
\begin{align}
A_{ii}\geq \max_{j\neq i} |A_{ij}|, \qquad \forall i\in[n]
\enspace .\label{e-tropdiag}
\end{align}
We shall use the following related condition.
\begin{definition}\label{def-domdiag}
We shall say that $A=(A_{ij})\in \MM_n(\TT)$ has a \NEW{dominant diagonal}
if 
\[
|\det A|=|A_{11}\cdots A_{nn}|\;\text {and is invertible in }\RR\enspace .
\]
\end{definition}
Corollary~\ref{cor-but} implies that, when $\RR$ is a semifield, any 
matrix which has a dominant diagonal in this sense
is diagonally similar to a matrix satisfying 
the tropical analogue~\eqref{e-tropdiag}
of the usual condition of diagonal dominance, in particular,
Definition~\ref{def-domdiag} is weaker than~\eqref{e-tropdiag}.

The following decomposition is
similar to the one used in the classical (relaxed) Jacobi algorithm.
\begin{prop}\label{exist-jacobi-dec}
Let $\TT$ be a semiring satisfying Property~\ref{pty_order}.
Then any matrix $A\in \MM_n(\TT)$ with a dominant diagonal
can be decomposed into the sum
\[
A=D\oplus N
\]
of matrices $D$ and  $N\in \MM_n(\TT)$ such that
$D$ is a diagonal matrix with diagonal entries in
$\TT^\vee$, and $|\det D|=|\det A|$.
The latter decomposition will be called a \NEW{Jacobi-decomposition} of $A$.
\end{prop}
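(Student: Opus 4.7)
The decomposition is essentially forced by the shape required: for the off-diagonal entries we must take $N_{ij}:=A_{ij}$, and for each diagonal index $i$ we need to split $A_{ii}=D_{ii}\oplus N_{ii}$ with $D_{ii}\in\TT^\vee$. Provided such a splitting exists with $|D_{ii}|=|A_{ii}|$, the modulus condition $|\det D|=|\det A|$ will follow at once from the dominant-diagonal hypothesis.

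The plan is therefore to focus on producing, for each $i\in [n]$, a thin element $D_{ii}$ with $D_{ii}\preceq A_{ii}$ and $|D_{ii}|=|A_{ii}|$. This is exactly what Property~\ref{pty_order} is designed to supply: I would apply it with $x=\szero$ (which lies in $\TT^\vee$ because a thin set must contain $\szero$ by definition) and $y=A_{ii}$. Since $\TT$ is naturally ordered by assumption and $\szero$ is the minimum of the natural order, the hypothesis $x\preceq y$ holds. Property~\ref{pty_order} then yields $D_{ii}\in \TT^\vee$ with $\szero\preceq D_{ii}\preceq A_{ii}$ and $D_{ii}\bala A_{ii}$, the latter giving in particular $|D_{ii}|=|A_{ii}|$. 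The relation $D_{ii}\preceq A_{ii}$ means precisely that there exists $N_{ii}\in \TT$ with $A_{ii}=D_{ii}\oplus N_{ii}$, which is the splitting we need.

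Once these $D_{ii}$ and $N_{ii}$ are chosen, I set $D:=\operatorname{diag}(D_{11},\dots,D_{nn})$ and define $N$ by $N_{ii}$ on the diagonal and $N_{ij}=A_{ij}$ off the diagonal, so that $A=D\oplus N$ holds entrywise. For the modulus identity I would use that $\mu:\TT\to \RR$ is a semiring morphism and that the determinant of a diagonal matrix is the product of the diagonal entries, to write
\[
|\det D|=|D_{11}\cdots D_{nn}|=|D_{11}|\cdots |D_{nn}|=|A_{11}|\cdots |A_{nn}|=|A_{11}\cdots A_{nn}|=|\det A|,
\]
where the third equality uses $|D_{ii}|=|A_{ii}|$ and the last one is the dominant-diagonal assumption of Definition~\ref{def-domdiag}.

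The only non-routine step is the construction of the thin diagonal part $D_{ii}$: approximating an arbitrary (possibly non-thin) element $A_{ii}$ from below by a thin element with the same modulus is not automatic, and this is precisely the obstruction that Property~\ref{pty_order} removes. Everything else is a direct computation using that the modulus is a morphism and that the natural order is translation to the additive decomposition.
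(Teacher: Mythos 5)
Your proof is correct and follows essentially the same route as the paper: apply Property~\ref{pty_order} with $x=\szero$ and $y=A_{ii}$ to produce a thin $D_{ii}\preceq A_{ii}$ with $D_{ii}\bala A_{ii}$, split $A_{ii}=D_{ii}\oplus N_{ii}$, keep the off-diagonal entries in $N$, and then use that the modulus is a morphism together with the dominant-diagonal hypothesis to get $|\det D|=|A_{11}\cdots A_{nn}|=|\det A|$. Nothing to add.
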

\begin{proof}
Let $A=(A_{ij})\in \MM_n(\TT)$  be a matrix with a 
dominant diagonal. 
Let  $i\in [n]$. From Property~\ref{pty_order} applied to $x=0$
and $y=A_{ii}$, there exists $\delta_i\in\TT^\vee$
such that $\zero\leq \delta_i\preceq A_{ii}$  and $\delta_i\bala A_{ii}$.
Since $\delta_i\preceq A_{ii}$, there exists $\delta'_i\in \TT$ such that
$\delta_i\oplus \delta'_i= A_{ii}$.
Taking for $D$ the diagonal matrix such that
$D_{ii}=\delta_i$ and for $N$ the matrix such that
$N_{ii}=\delta'_i$ and $N_{ij}=A_{ij}$ for all $i\neq j$, we
get that $A=D\oplus N$.
Moreover, $|D_{ii}|=|A_{ii}|$ for all $i\in [n]$, so $|\det D|=
|A_{11}\cdots A_{nn}|$. Since $A$ has a dominant diagonal,
we obtain $|\det D|=|\det A|$.
\end{proof}

\begin{remark}\label{rem-dom}
We may always reduce the problem $Ax \balance b$ to the case where 
$|\det A|=|A_{11}\cdots A_{nn}|$.
Indeed, since $\mu:x\mapsto |x|$ is a morphism and $|\ominus \unit|=\ooi$.
we have $|\det A|=\per |A|$ for all $A\in\MM_n(\TT)$.
Since $\RR$ is a totally ordered idempotent semiring,
computing the permanent of the matrix $|A|$ is equivalent to solving an 
optimal assignment problem, which furnishes an optimal permutation.
Permuting the rows of the matrix $A$, we can transform
the system into a system $A' x\bal b'$ such that the optimal permutation
of $A'$ is the identity, which implies
$|\det A'|=\per |A'|=|A'_{11}|\cdots |A'_{nn}|= |A'_{11}\cdots A'_{nn}|$.
Since $|\det A'|=|\det A|$, then  $A'$ has a dominant diagonal
as soon as $|\det A|$ is invertible in $\RR$.
Moreover, since $\mu$ is a morphism, 
we get that $| A\adj b |=|A|\adj |b|$ and $|\detp{A} |=\per |A|$.
Using Lemmas~\ref{lhabitude} and~\ref{llem-util07}, we deduce that
proving Theorem~\ref{th-jac} for $Ax \balance b$ is equivalent to
proving it for  $A' x\bal b'$.
\end{remark}

\begin{theorem}[Tropical Jacobi Algorithm, compare with~{\cite[Th.~6.3]{Plus}} and {\cite[III, 6.0.2]{gaubert92a}} for $\S$]\label{theo-jacobi}
Let $\TT$ be a semiring allowing \constr,  see Definition~\ref{monotone-def}.
Let $A\in \MM_n(\TT)$ have a dominant diagonal,
and let $A=D\oplus N$ be a Jacobi-decomposition. Then
\begin{enumerate}
\item\label{jacobi-1} One can construct a sequence $\{x^{k}\}$
of \thin vectors satisfying:
\begin{enumerate}
\item\label{jacobi-i}$\zero=x^{0}\preceq x^{1}\preceq\dots\preceq x^{k}\preceq\dots$;
\item\label{jacobi-ii}$Dx^{k+1}\bala \ominus N x^{k}\oplus b$.
\end{enumerate}
\item \label{jacobi-2} The sequence $|x^k|$ is stationary after at most $n$
iterations, meaning that $|x^{k}|=|x^{n}|$ for all $k\geq n$,
and we have 
\[ |x^{n}|= |\det A|^{-1}|A\adj b|\enspace . \]
\item \label{jacobi-4}  When $\TT$ allows \conv\
 (Definition~\ref{monotone-def}),
the sequence $x^k$ is stationary,
meaning that there exists $m\geq 0$ such that $x^{k}=x^{m}$ for all $k\geq m$.
Moreover, the limit $x^{m}$ is a solution of $Ax\bal b$.
\item\label{jacobi-5} When $\TT$ satisfies also Property~\ref{pty_order-equal},
one can take $m=n$ in the previous assertion.
\end{enumerate}
\end{theorem}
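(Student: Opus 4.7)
My plan is to mimic the classical Jacobi algorithm, producing $\{x^k\}$ in $\TT^n$ step by step and tracking its behaviour via the modulus $|\cdot|$ using the Kleene-star machinery of Section~\ref{sec-comb}. By Remark~\ref{rem-dom} I may assume that $|\det A|=|A_{11}|\cdots|A_{nn}|$, and since this product is invertible in the totally ordered idempotent semiring $\RR$, so is each $|A_{ii}|=|D_{ii}|$.

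\emph{Construction (item 1).} Starting from $x^0=\zero$ and given $x^k\in(\TT^\vee)^n$ with $x^{k-1}\preceq x^k$, set $y^{k+1}:=\ominus Nx^k\oplus b$. Property~\ref{pty_inv} produces $\tilde{D}_{ii}\in\TT$ such that $D_{ii}z\bala w\iff z\bala \tilde{D}_{ii}w$, and Property~\ref{pty_order} applied to the pair $(x^k_i,\tilde{D}_{ii}y^{k+1}_i)$ then yields a thin $x^{k+1}_i$ with $x^k_i\preceq x^{k+1}_i\preceq \tilde{D}_{ii}y^{k+1}_i$ and $x^{k+1}_i\bala \tilde{D}_{ii}y^{k+1}_i$; by Property~\ref{pty_inv} again this last relation is equivalent to the required $D_{ii}x^{k+1}_i\bala y^{k+1}_i$. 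The inequality $x^k_i\preceq \tilde{D}_{ii}y^{k+1}_i$ needed to invoke Property~\ref{pty_order} must be tracked jointly with the modulus analysis below, using the nondecreasingness of the modulus iteration together with the refinement freedom in Property~\ref{pty_order} whenever moduli coincide.

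\emph{Modulus analysis (item 2).} Applying $|\cdot|$ to the balance in (ii), and using $|\ominus\unit|=\ooi$ (Proposition~\ref{sym-tot}) together with the fact that $x\bala y$ implies $|x|=|y|$, yields the exact max-plus Jacobi equation
\[
|D|\,|x^{k+1}|=|N|\,|x^k|\oplus|b|\qquad\text{in }\RR,
\]
equivalently $|x^{k+1}|=P|x^k|\oplus|D|^{-1}|b|$ with $P:=|D|^{-1}|N|$. The dominant-diagonal hypothesis forces every circuit of $P$ to have weight $\preceq\ooi$: given a circuit $(i_0,\dots,i_\ell=i_0)$, extending it to a permutation by the identity off the cycle and comparing the resulting weight to $\per|A|=|\det A|=\prod_j|A_{jj}|$ gives $\prod_t|A_{i_ti_{t+1}}|\preceq\prod_t|A_{i_ti_t}|$, and the bound on the circuit weight in $P$ follows from $|N_{i_ti_{t+1}}|\preceq|A_{i_ti_{t+1}}|$ (including on self-loops, where $N_{ii}=\delta'_i$ satisfies $|\delta'_i|\preceq|A_{ii}|$). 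Proposition-Definition~\ref{propdef} then gives $P^*=\bigoplus_{j=0}^{n-1}P^j$, and a direct induction from $x^0=\zero$ shows $|x^k|=\bigoplus_{j=0}^{k-1}P^j|D|^{-1}|b|$, stationary from $k=n$ onward with value $P^*|D|^{-1}|b|$. To identify this with $|\det A|^{-1}|A\adj b|$, set $M:=|D|^{-1}|A|$: its diagonal is $\ooi$ and $\per M=|\det D|^{-1}|\det A|=\ooi$, so Yoeli's Theorem~\ref{th-yoeli} gives $M\adj=M^*$; idempotency together with $|N_{ii}|\preceq|D_{ii}|$ gives $M^*=P^*$, and Lemma~\ref{llem-util07} applied to $|A|=|D|M$ yields $|A|\adj=M\adj|D|\adj=|\det D|\,P^*|D|^{-1}$. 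Multiplying by $|b|$ and using $|A\adj b|=|A|\adj|b|$ together with $|\det A|=|\det D|$ delivers the claimed formula.

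\emph{Stationarity (items 3 and 4).} For $k\geq n$, each coordinate $x^k_i$ lies in $\TT^\vee_{a_i}$ with $a_i:=|x^n_i|$ fixed; under Property~\ref{pty_order-finite} this set has only finite totally ordered subsets, so the $\preceq$-increasing sequence stabilizes at some $x^m$. Passing to the limit in $Dx^{m+1}\bala \ominus Nx^m\oplus b$ yields $Dx^m\oplus Nx^m\bal b$, i.e.\ $Ax^m\bal b$. Under the stronger Property~\ref{pty_order-equal}, Proposition~\ref{prop-equal} applied coordinatewise to $x^n_i\preceq x^{n+1}_i$ (both thin, same modulus) already forces $x^n=x^{n+1}$, so one may take $m=n$. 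The main obstacle I foresee is the construction step: ensuring the inequality $x^k_i\preceq\tilde{D}_{ii}y^{k+1}_i$ holds throughout the induction so that Property~\ref{pty_order} applies; this couples the $\TT$-valued construction with the $\RR$-valued modulus iteration and requires a careful joint inductive argument rather than the separate tracking a superficial reading might suggest.
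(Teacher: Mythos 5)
Your overall strategy matches the paper's proof, and items (2)--(4) are handled correctly: the modulus equation $|D|\,|x^{k+1}|=|N|\,|x^k|\oplus|b|$, the Carr\'e--Gondran bound, the Yoeli-based identification of $P^*|D|^{-1}$ with $|\det A|^{-1}|A|\adj$, and the use of Properties~\ref{pty_order-finite} and~\ref{pty_order-equal} all agree with what the paper does (you effectively reprove Lemmas~\ref{lem-sec-cir} and~\ref{lem-meta-yoeli} inline, which is fine).

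The one real gap is precisely the one you flag at the end: in item~(1) you invoke Property~\ref{pty_order} on the pair $\bigl(x^k_i,\tilde D_{ii}y^{k+1}_i\bigr)$ without having established the hypothesis $x^k_i\preceq\tilde D_{ii}y^{k+1}_i$. But the remedy you sketch --- ``tracking jointly with the modulus analysis'' and exploiting ``refinement freedom whenever moduli coincide'' --- points in the wrong direction. No coupling with the $\RR$-valued iteration is needed. The fix is purely $\TT$-valued and short: strengthen the induction invariant to include the \emph{other} inequality that Property~\ref{pty_order} already gave you at the previous step, namely $x^{k+1}\preceq\widetilde D(\ominus Nx^k\oplus b)$, where $\widetilde D$ is the diagonal matrix of the $\tilde D_{ii}$. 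Since the Jacobi map $\phi(x):=\widetilde D(\ominus Nx\oplus b)$ is nondecreasing for the natural order (each semiring operation is order-preserving and $\ominus$ is multiplication by $\ominus\unit$), the chain $x^k\preceq x^{k+1}$ gives $\phi(x^k)\preceq\phi(x^{k+1})$, and combined with the carried inequality this yields $x^{k+1}\preceq\phi(x^{k+1})$, which is exactly the hypothesis needed to apply Property~\ref{pty_order} to produce $x^{k+2}$ with the same two output inequalities. Once you make this a standing induction invariant, item~(1) closes cleanly and the rest of your argument goes through unchanged.
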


Applying the previous result to the tropical extension $\S$
(in which $|x|$ is invertible if and only if $|x|\neq \szero$ 
or equivalently $x\neq \zero$), we recover~\cite[Th.~6.3]{Plus}.
Applying the same result to the case of the bi-valued tropical semiring
$\TT=\Ti$, and using the same arguments as for Corollary~\ref{new-th}
of Theorem~\ref{th-jac}, we obtain immediately the following result.

\begin{corollary}[Jacobi Algorithm in the Bi-Valued Tropical Semiring]\label{theo-jacobinew}
Let $A\in \MM_n(\Ti)$ have a dominant diagonal
and let $A=D\oplus N$ be a Jacobi-decomposition. Then
the sequence $\{x^{k}\}$ of 
\thin  vectors defined by $x^0=\zero$,
\[
x^{k+1}= \imath(|D|^{-1} |N x^{k}\oplus b|)
\]
is stationary after at most $n$ iterations,
meaning that $x^{n}=x^{n+1}$. Moreover,
$x^{n}$ is a \thin solution of $Ax\bal b$ and
\[ x^{n}=
\imath(|\det A|^{-1}|A\adj b|) \enspace .\qed
\]
\end{corollary}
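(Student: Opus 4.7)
The plan is to derive this corollary as a direct specialization of Theorem~\ref{theo-jacobi} to $\TT=\Ti$, in the spirit signalled by the paper (``using the same arguments as for Corollary~\ref{new-th}''). First I would invoke Theorem~\ref{theo-jacobi}: it is available because $\Ti$ allows the convergence of monotone algorithms (Properties~\ref{pty_order}, \ref{pty_order-finite}, \ref{pty_inv}) and further satisfies Property~\ref{pty_order-equal}, all recorded earlier as facts about $\Ti$. This provides a sequence of thin vectors $\{x^k\}$ with $x^0=\zero$, non-decreasing in the natural order, satisfying the balance recursion $Dx^{k+1}\bala \ominus Nx^k\oplus b$; item (5) of that theorem ensures stationarity after at most $n$ iterations, item (2) gives $|x^n|=|\det A|^{-1}|A\adj b|$, and item (4) yields that $x^n$ solves $Ax\bal b$.

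The remaining step is to translate the balance recursion into the explicit closed form $x^{k+1}=\imath(|D|^{-1}|Nx^k\oplus b|)$. Because the only symmetry on $\N_2$ (hence on $\Ti$) is the identity, we have $\ominus=\oplus$, so the balance condition simplifies to $Dx^{k+1}\bala Nx^k\oplus b$. By the definition of $\bala$ this implies, component by component, $|D_{ii}|\,|x^{k+1}_i|=|(Nx^k\oplus b)_i|$. Each $|D_{ii}|$ is invertible in $\rmax$, since by the Jacobi-decomposition $|\det A|=|D_{11}|\cdots|D_{nn}|$ and $|\det A|$ is invertible by the dominant-diagonal hypothesis. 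Hence $|x^{k+1}|=|D|^{-1}|Nx^k\oplus b|$.

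To conclude, I would use the characterization $\Ti^\vee=\imath(\rmax)$ established at the end of Section~\ref{sec-ext-izh}: every thin element is the image by $\imath$ of its own modulus. Therefore $x^{k+1}=\imath(|x^{k+1}|)$, which is exactly the stated recursion, and $x^n=\imath(|\det A|^{-1}|A\adj b|)$. Uniqueness of each $x^{k+1}$ given $x^k$ follows from Proposition~\ref{prop-equal} (itself a consequence of Properties~\ref{pty_order} and~\ref{pty_order-equal}), so the sequence constructed abstractly in Theorem~\ref{theo-jacobi} coincides with the one defined explicitly here.

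The only mildly delicate point is verifying that at each step a unique thin $x^{k+1}$ with prescribed modulus exists, since the general Theorem~\ref{theo-jacobi} only guarantees \emph{some} choice; but because $\Ti^\vee=\imath(\rmax)$ is parametrized by the modulus alone and each $D_{ii}$ is invertible, this choice is forced, so the whole argument is a routine transcription with no genuine obstacle beyond that book-keeping.
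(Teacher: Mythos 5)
Your proposal is correct and follows essentially the same route the paper takes: specialize Theorem~\ref{theo-jacobi} to $\TT=\Ti$, and invoke the observation from Corollary~\ref{new-th} that a thin element of $\Ti$ is uniquely $\imath$ of its modulus, so the abstract sequence is forced to be the explicit one. The paper states this as "obtain immediately" without spelling out the translation from the balance recursion $Dx^{k+1}\bala\ominus Nx^k\oplus b$ to the closed form $x^{k+1}=\imath(|D|^{-1}|Nx^k\oplus b|)$, which is exactly the bookkeeping you supply; your appeal to Proposition~\ref{prop-equal} for uniqueness is slightly heavier than needed (the bijection $\imath\colon\rmax\to\Ti^\vee$ already pins down each $x^{k+1}$), but it is not wrong.
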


Before proving Theorem~\ref{theo-jacobi}, we illustrate it by
an example.
\begin{ex}\label{ex-jacobi}
We take $\TT=\smax$, and apply the tropical Jacobi algorithm to the linear system
\begin{equation} \label{eq:ex-jacobi}
\begin{bmatrix} 
5&\ominus 0&3\\
1&3&\ominus 1\\
3&\ominus 2& 1^{\circ}
\end{bmatrix}
\begin{bmatrix} x_{1}\\ x_{2} \\ x_{3}
\end{bmatrix}
\bal 
\begin{bmatrix}
\ominus 1\\ 4^{\circ} \\0
\end{bmatrix}\enspace .
\end{equation}
Denoting by $A$ the matrix of the system and by $b$ its right-hand side,
we get:
\[
  |\det A|^{-1}|A\adj b |=\begin{bmatrix} 0\\ 1\\ 2
\end{bmatrix} \enspace .\]
Let us choose the Jacobi-decomposition $A=D\oplus N$ with:
\begin{equation}\label{ex-jacobi-dec}
D= \begin{bmatrix} 
5& \zero&\zero\\
\zero &3&\zero\\
\zero&\zero& 1\end{bmatrix},\quad 
N= \begin{bmatrix} 
\zero&\ominus 0&3\\
1&\zero&\ominus 1\\
3&\ominus 2& \ominus 1
\end{bmatrix}\enspace.\end{equation}
Applying the Jacobi algorithm, we get the following sequence
starting from $x^0=\zero$:
\[
\left\{\begin{array}{l}
5x^{1}_{1}\bala 0x^{0}_{2}\ominus 3x^{0}_{3}\ominus 1=\ominus 1\\
3x^{1}_{2}\bala \ominus 1x^{0}_{1}\oplus 1x^{0}_{3}\oplus 4^{\circ}=4^{\circ}\\
1x^{1}_{3}\bala \ominus 3x^{0}_{1} \oplus 2x^{0}_{2}\oplus 1x^{0}_{3}\oplus 0=0
\end{array}\right. 
\Rightarrow\; 
\left\{\begin{array}{l}
x^{1}_{1}=\ominus -4\\
x^{1}_{2}=1 \text{ or } \ominus 1,\;\text{we choose } x^{1}_{2}:=1\\
x^{1}_{3}= -1 
\end{array}\right.
\]
\[
\left\{\begin{array}{l}
5x^{2}_{1}\bala 0x^{1}_{2}\ominus 3x^{1}_{3}\ominus 1=\ominus 2\\
3x^{2}_{2}\bala \ominus 1x^{1}_{1}\oplus 1x^{1}_{3}\oplus 4^{\circ}=4^{\circ}\\
1x^{2}_{3}\bala \ominus 3x^{1}_{1} \oplus 2x^{1}_{2}\oplus 1x^{1}_{3}\oplus 0=3
\end{array}\right. 
\;\text{and}\quad
x^{2}_{2}\succeq x^{1}_{2}\Rightarrow
\left\{\begin{array}{l}
x^{2}_{1}=\ominus -3\\
x^{2}_{2}=1\\
x^{2}_{3}=2 
\end{array}\right. 
\]
\[
\left\{\begin{array}{l}
5x^{3}_{1}\bala 0x^{2}_{2}\ominus 3x^{2}_{3}\ominus 1=\ominus 5\\
3x^{3}_{2}\bala \ominus 1x^{2}_{1}\oplus 1x^{2}_{3}\oplus 4^{\circ}=4^{\circ}\\
1x^{3}_{3}\bala \ominus 3x^{2}_{1} \oplus 2x^{2}_{2}\oplus 1x^{2}_{3}\oplus 0=3
\end{array}\right.
\;\text{and}\quad
x^{3}_{2}\succeq x^{2}_{2}\Rightarrow
\left\{
\begin{array}{l}
x^{3}_{1}=\ominus 0\\
x^{3}_{2}=1\\
x^{3}_{3}=2 \enspace .
\end{array}\right. 
\]
Choosing $x^{1}_{2}= \ominus 1$ yields another solution:
$x^{3}=(0,\ominus 1,\ominus 2)^\top$ (where $x^\top$ denotes the transpose of
$x$), whereas taking the opposite sign in
$D_{33}$ and $N_{33}$ would have lead to the other possible solutions 
$x^{3}=(0,1,\ominus 2)^\top$, and $x=(\ominus 0,\ominus 1, 2)^\top$.
\end{ex}

\subsection{Proof of Theorems~\ref{theo-jacobi} and~\ref{th-jac}}
\label{subsec-stationn}
For the proof of Theorem~\ref{theo-jacobi} we shall need 
the following lemmas which are derived from the results of 
Section~\ref{sec-cramer} and from Theorem~\ref{th-yoeli}.

\begin{lemma} \label{lem-dom}  Let $A\in \M(\TT)$ be a matrix with a 
dominant diagonal and let $A=D\oplus N$ be a Jacobi-decomposition. 
Then $|D|$ is an invertible diagonal matrix which coincides with the
diagonal submatrix of $|A|$: $|D_{ii}|=|A_{ii}|$ for all $i\in [n]$.
\end{lemma}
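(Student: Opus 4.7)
The plan is to work entirely in the totally ordered idempotent semiring $\RR$ by pushing the decomposition $A=D\oplus N$ through the modulus morphism $\mu=|\cdot|$, and then exploit the rigidity of totally ordered structures to force termwise equality of the diagonals.

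First I would observe that since $\mu$ is a morphism of semirings, the diagonal identity $A_{ii}=D_{ii}\oplus N_{ii}$ yields $|A_{ii}|=|D_{ii}|\oplus |N_{ii}|$ in $\RR$, and since $\RR$ is totally ordered and idempotent this means $|A_{ii}|=\max(|D_{ii}|,|N_{ii}|)\succeq |D_{ii}|$. Next, because $D$ is diagonal, the sum over permutations collapses and $\det D=D_{11}\cdots D_{nn}$ in any semiring with symmetry, and again by the morphism property $|\det D|=|D_{11}|\cdots|D_{nn}|$. On the other hand, the dominant-diagonal hypothesis gives $|\det A|=|A_{11}\cdots A_{nn}|=|A_{11}|\cdots |A_{nn}|$, and by definition of a Jacobi-decomposition $|\det D|=|\det A|$.

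Combining these I obtain the crucial identity
\[
|D_{11}|\cdots |D_{nn}|=|A_{11}|\cdots |A_{nn}|\quad\text{(invertible in $\RR$)},
\]
together with the inequalities $|D_{ii}|\preceq |A_{ii}|$ for each $i$. Since a product is invertible only when each factor is, every $|A_{ii}|$ is invertible, so one may multiply through by $\prod_i |A_{ii}|^{-1}$ and introduce $r_i:=|D_{ii}|\cdot |A_{ii}|^{-1}\preceq\unit$, satisfying $\prod_i r_i=\unit$. Each $r_i$ is then forced to be invertible (its inverse being the product of the other $r_j$'s), so the $r_i$'s lie in the multiplicative group of units of $\RR$.

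The key step, where I expect the main conceptual point, is to argue that in the totally ordered group of units of $\RR$ the relations $r_i\preceq\unit$ and $\prod_i r_i=\unit$ force $r_i=\unit$ for every $i$. This follows because multiplication by an invertible element is an order-preserving bijection and therefore preserves strict order, so if some $r_{i_0}\prec \unit$ then $\prod_i r_i\prec \prod_{i\neq i_0}r_i\preceq\unit$, contradicting $\prod_i r_i=\unit$. Hence $|D_{ii}|=|A_{ii}|$, which is invertible, for every $i\in[n]$, and $|D|$ is therefore a diagonal matrix of invertible entries, hence invertible in $\MM_n(\RR)$.
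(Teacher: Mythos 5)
Your proof is correct and follows essentially the same route as the paper's: in both, the modulus morphism reduces the problem to $\RR$, the inequalities $|D_{ii}|\preceq|A_{ii}|$ follow from the decomposition, and the equality $\prod_i|D_{ii}|=\prod_i|A_{ii}|$ (invertible, via the Jacobi-decomposition and dominant-diagonal hypotheses) is then used to cancel and force termwise equality. Your normalization by the $r_i$'s and the strict-monotonicity argument in the group of units is a mild repackaging of the paper's direct cancellation step $|A_{11}|\cdots|A_{nn}|\leq|D_{11}||A_{22}|\cdots|A_{nn}|\Rightarrow|A_{11}|\leq|D_{11}|$, but the substance is identical.
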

\begin{proof}
Since $A=D\oplus N$ is a Jacobi-decomposition and
$\mu$ is a morphism, we have $\per |D|=|\det D|=|\det A|$.
This implies that $\per |D|$ is invertible in $\RR$,
since $A$ has a dominant diagonal.
$|D|$ is a diagonal matrix. Thus  it is necessarily a 
monomial matrix with entries in $\RR$.
From $|D_{11}|\cdots |D_{nn}|=\per |D|$ and the property that
$\per |D|$ is invertible in $\RR$, we get that all 
diagonal entries $|D_{ii}|$ of $|D|$ are invertible in $\RR$. Thus
$|D|$ is invertible in $\M(\RR)$.
From $|D|\oplus |N|= |A|$ we deduce that
$|D_{ii}|\leq |A_{ii}|$ for all $i\in [n]$.

Let us show the reverse inequalities, which will imply the equalities.
Since $A$ has a dominant diagonal, we have 
$|D_{11}|\cdots |D_{nn}|=\per |D|=|\det A|=
|A_{11}\cdots A_{nn}|= |A_{11}|\cdots |A_{nn}|$.
Hence all the $|A_{ii}|$ are invertible in $\RR$.
We have for instance $|A_{11}|\cdots |A_{nn}|
=|D_{11}|\cdots |D_{nn}|\leq |D_{11}||A_{22}|\cdots  |A_{nn}|$.
Since $|A_{22}|,\dots,|A_{nn}|$ are invertible, we deduce that
$|A_{11}|\leq |D_{11}|$. The same argument shows
that $|A_{ii}|\leq |D_{ii}|$ holds for all $i\in [n]$.
\end{proof}

\begin{lemma}\label{lem-sec-cir} Let $A\in \M(\TT)$ be a matrix with a 
dominant diagonal and let $A=D\oplus N$ be a Jacobi-decomposition. Then 
$\per (|D|^{-1} |N|) \leq \per (|D|^{-1} |A|)=\ooi$.
Hence every circuit of $|D|^{-1}|N|$ has a weight
less than or equal to $\ooi$.
\end{lemma}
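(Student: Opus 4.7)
The overall strategy is to transfer everything to the totally ordered idempotent semiring $\RR$ via the modulus $\mu$. Because $\mu$ is a semiring morphism, the decomposition $A=D\oplus N$ yields $|A|=|D|\oplus|N|$ entrywise in $\MM_n(\RR)$; in particular $|N|\leq|A|$ entrywise. By Lemma~\ref{lem-dom}, $|D|$ is a diagonal matrix with invertible diagonal entries $|D_{ii}|=|A_{ii}|$, so it is monomial and invertible in $\MM_n(\RR)$, with inverse the diagonal matrix of entries $|A_{ii}|^{-1}$.

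I would first establish $\per(|D|^{-1}|A|)=\ooi$. Since $|D|^{-1}$ is monomial and $\RR$ carries the identity symmetry (so $\detp{\cdot}$ and $\per$ coincide on $\RR$), Proposition~\ref{prop-det-mult} gives
\[
\per(|D|^{-1}|A|)=\per(|D|^{-1})\cdot\per(|A|)=\Bigl(\prod_i|A_{ii}|\Bigr)^{-1}\cdot|\det A|.
\]
The dominant-diagonal hypothesis $|\det A|=|A_{11}\cdots A_{nn}|=\prod_i|A_{ii}|$ collapses this product to $\ooi$. The desired inequality $\per(|D|^{-1}|N|)\leq\per(|D|^{-1}|A|)$ then follows from the entrywise inequality $|D|^{-1}|N|\leq|D|^{-1}|A|$ combined with the componentwise monotonicity of $\per$, which is valid in any ordered semiring.

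For the statement on circuits, I would apply Theorem~\ref{th-yoeli} to $B:=|D|^{-1}|A|$. The key observation is $B=I\oplus|D|^{-1}|N|$, which forces $B_{ii}\geq\ooi$ for every $i$. On the other hand, the identity permutation contributes $\prod_iB_{ii}$ as a single term of $\per B=\ooi$, so $\prod_iB_{ii}\leq\ooi$. In the totally ordered semiring $\RR$, monotonicity of multiplication together with $B_{jj}\geq\ooi$ for $j\neq i$ yields $B_{ii}\leq\prod_jB_{jj}\leq\ooi$, so that $B_{ii}=\ooi$ for every $i$. Theorem~\ref{th-yoeli} now applies to $B$ and delivers that every circuit of $B$ has weight at most $\ooi$; since $|D|^{-1}|N|\leq B$ entrywise, any circuit of $G(|D|^{-1}|N|)$ is also a circuit of $G(B)$ with at most the same weight, and the bound transfers.

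The main subtle point is showing $B_{ii}=\ooi$: it rests on the interplay between the lower bound $B_{ii}\geq\ooi$ coming from the $I$-summand and the upper bound $\prod_iB_{ii}\leq\per B=\ooi$ coming from a single term of the permanent sum, combined with the monotonicity of the ordered monoid $(\RR,\cdot)$. Once this diagonal normalization is in place, Theorem~\ref{th-yoeli} yields the circuit bound directly, and the remaining work is just unpacking the morphism property of $\mu$ together with the fact that dominant diagonality produces the exact cancellation $\bigl(\prod_i|A_{ii}|\bigr)^{-1}\cdot\prod_i|A_{ii}|=\ooi$.
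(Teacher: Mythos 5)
Your proof is correct and follows essentially the same route as the paper: compute $\per(|D|^{-1}|A|)=\ooi$ by multiplicativity of $\per$ against the monomial factor $|D|^{-1}$, obtain $\per(|D|^{-1}|N|)\leq\ooi$ by entrywise monotonicity, and transfer the circuit bound from $|D|^{-1}|A|$ to $|D|^{-1}|N|$ via Theorem~\ref{th-yoeli} and the entrywise inequality $|D|^{-1}|N|\leq|D|^{-1}|A|$. Your extra argument that $B_{ii}=\ooi$ via the two bounds $B_{ii}\geq\ooi$ and $\prod_j B_{jj}\leq\per B=\ooi$ is valid but superfluous, since you already quoted $|D_{ii}|=|A_{ii}|$ from Lemma~\ref{lem-dom}, which gives $(|D|^{-1}|A|)_{ii}=|A_{ii}|^{-1}|A_{ii}|=\ooi$ directly.
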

\begin{proof} 
By Lemma~\ref{lem-dom}, $|D|$ is invertible.
Let us show that $\per (|D|^{-1} |A|)=\ooi$.
Since $|D|$ is a monomial matrix of $\M(\RR)$,
we have  $\per (|D|^{-1} |A|)=(\per|D|)^{-1} (\per |A|)$. \sloppy
Since $\per|D|=|\det A|=\per |A|$,
we obtain that $\per (|D|^{-1} |A|)=\ooi$.

Let us denote $M=|D|^{-1}|N|$. Since $|D|^{-1} |A|= I\oplus M$, we get that
$M\leq |D|^{-1} |A|$ for the natural order
of $\RR$. This implies that $\per M \leq \per (|D|^{-1} |A|)=\ooi$
and that the weight of a circuit for $M$ is 
less than or equal to its  weight for $|D|^{-1} |A|$.
From Theorem~\ref{th-yoeli} we get that
the weight of a circuit of $|D|^{-1} |A|$ is 
less than or equal to $\ooi$, which implies the same for $M$.
\end{proof}

\begin{lemma} \label{lem-meta-yoeli}  Let $A\in \M(\TT)$ be a matrix with a 
dominant diagonal and let $A=D\oplus N$ be a Jacobi-decomposition. 
Then $(|D|^{-1}|N|)^{*}|D|^{-1}=(\per|A|)^{-1} |A|\adj=|\det A|^{-1}|A\adj|$.
\end{lemma}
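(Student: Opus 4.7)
The plan is to reduce the identity to Yoeli's theorem (Theorem~\ref{th-yoeli}) applied to the suitably normalized matrix $P := |D|^{-1}|A|$, and then to transport the adjugate of $P$ back to $A$ using Lemmas~\ref{lhabitude} and~\ref{llem-util07}.

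First, observe that because $\mu$ is a semiring morphism into the idempotent semiring $\RR$ (in which $-\ooi = \ooi$ by Proposition~\ref{sym-tot}), we have $|\det A| = \per |A|$ and $|A\adj| = |A|\adj$ entrywise; this already yields the second equality in the statement, so it remains to prove $M^* |D|^{-1} = (\per |A|)^{-1} |A|\adj$ where $M := |D|^{-1}|N|$. By Lemma~\ref{lem-dom} the diagonal matrix $|D|$ is invertible in $\MM_n(\RR)$ with $|D_{ii}| = |A_{ii}|$, and therefore $P := |D|^{-1}|A|$ satisfies $P_{ii} = \ooi$ for every $i$. Moreover $P = I \oplus M$, and by Lemma~\ref{lem-sec-cir}, $\per P = \ooi$ and every circuit of $M$ has weight $\leq \ooi$, so the Kleene stars $M^*$ and $P^*$ exist (Proposition-Definition~\ref{propdef}) and equal the truncated sums up to order $n-1$.

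Next I would check that $P^* = M^*$. An easy induction on $k$, using idempotency of the addition of $\RR$, gives $P^k = (I \oplus M)^k = I \oplus M \oplus \cdots \oplus M^k$, and summing from $k=0$ to $n-1$ yields $P^* = \bigoplus_{j=0}^{n-1} M^j = M^*$. Since $P$ satisfies the hypotheses of Theorem~\ref{th-yoeli} (diagonal entries equal to $\ooi$ and $\per P = \ooi$), we obtain
\[
M^* = P^* = P\adj = (|D|^{-1}|A|)\adj \enspace .
\]

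Finally, I apply Lemma~\ref{llem-util07} to the product $|D|^{-1}|A|$, the first factor being monomial: $(|D|^{-1}|A|)\adj = |A|\adj \,(|D|^{-1})\adj$. Since $|D|^{-1}$ is diagonal and invertible in $\MM_n(\RR)$, Lemma~\ref{lhabitude} gives $(|D|^{-1})\adj = \det(|D|^{-1})\, (|D|^{-1})^{-1} = (\per |D|)^{-1}|D|$. Multiplying on the right by $|D|^{-1}$, the factor $|D|\cdot|D|^{-1} = I$ cancels, leaving
\[
M^* |D|^{-1} = (\per |D|)^{-1} |A|\adj \enspace .
\]
Since $A$ has dominant diagonal we have $\per |D| = |D_{11}|\cdots|D_{nn}| = |\det A| = \per|A|$, which gives the desired identity. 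The only step with any subtlety is the passage $P^* = M^*$, and it is handled transparently by the idempotency of $\RR$; the rest is a bookkeeping of multiplicativity properties of adjugates for monomial factors.
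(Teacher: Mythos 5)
Your proof is correct and follows essentially the same approach as the paper: normalizing to $Q=|D|^{-1}|A|$, invoking Theorem~\ref{th-yoeli} to get $Q^*=Q\adj$, and unwinding the adjugate of the monomial factor via Lemmas~\ref{llem-util07} and~\ref{lhabitude}. The only difference is that you fill in the detail of $P^*=M^*$ with an explicit induction, which the paper states without proof.
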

\begin{proof} 
Let $M=|D|^{-1}|N|$ and $Q=|D|^{-1}|A|$.
Since $\mu$ is a morphism, we have easily
$(\per|A|)^{-1} |A|\adj=|\det A|^{-1}|A\adj|$.
Since $A=D\oplus N$, we also have $Q= I\oplus M$.
Then $M^*=Q^*$ and it remains to show that
$Q^*|D|^{-1}=(\per|A|)^{-1} |A|\adj$.

By Lemma~\ref{lem-sec-cir} $\per (Q)=\ooi$.
Since by Lemma~\ref{lem-dom}
$|D|$ is the diagonal of $|A|$, the diagonal
entries of $Q$ are equal to $\ooi$. Thus
$Q$ satisfies the 
conditions of Theorem~\ref{th-yoeli}. Hence
$ Q^*=Q\adj$. Since $|D|$ is a monomial matrix,
Lemmas~\ref{llem-util07} and~\ref{lhabitude} imply that 
 $Q\adj=|A|\adj (|D|^{-1})\adj=|A|\adj (\per |D|^{-1}) |D|=
(\per |A|)^{-1} |A|\adj |D|$, thus
$Q^*|D|^{-1}=Q\adj |D|^{-1}= (\per |A|)^{-1} |A|\adj$, which finishes the proof.
\end{proof}

\begin{proof}[Proof of Theorem~\ref{theo-jacobi}]
We first prove Item~\eqref{jacobi-1}.
Since $A=D\oplus N$ is a Jacobi-decomposition, $D$ is a diagonal
matrix with entries in $\TT^\vee$.
By Lemma~\ref{lem-dom}, $|D|$ is an invertible diagonal matrix,
which implies that all its diagonal entries are invertible in $\RR$.
Hence, one can construct from Property~\ref{pty_inv}, 
the diagonal $n\times n$ matrix $\widetilde{D}$ such that
$\widetilde{D}_{ii}=\widetilde{D_{ii}}$.
Then Condition \eqref{jacobi-ii} of Theorem~\ref{theo-jacobi}  is equivalent to:
\begin{equation}\label{jacobi-ii'}
x^{k+1}\bala \widetilde{D}(\ominus N x^{k}\oplus  b  )
\enspace.
\end{equation}

Let us show by induction on $k$ that there exist \thin
$n$-dimensional vectors 
$\zero=x^{0}\preceq x^{1}\preceq\dots\preceq x^{k}\preceq x^{k+1}$
satisfying~\eqref{jacobi-ii'}, together with
\begin{equation}\label{eq-etoui}
x^{k+1}\preceq \widetilde{D}(\ominus N x^{k}\oplus  b  )
\enspace.
\end{equation}
When $k=0$, the above conditions
are equivalent to $x^{1}\in (\TT^{\vee})^{n}$,
$\zero\preceq x^1\preceq  \widetilde{D} b$ and $x^{1}\bala \widetilde{D} b$.
Thus  $x^1$  can be constructed by
applying Property~\ref{pty_order} entrywise. Hence,
the properties of the induction hold for $k=0$.

Assume now that these properties hold for some $k\geq 0$.
Since the map $x\mapsto  \widetilde{D}(\ominus N x\oplus 
 b  )$ is non-decreasing and $x^{k}\preceq x^{k+1}$,
we have from~\eqref{eq-etoui} that 
$x^{k+1} \preceq \widetilde{D}(\ominus N x^{k+1}\oplus  b  )$.
Then applying Property~\ref{pty_order} entrywise,
we construct $x^{k+2}\in (\TT^{\vee})^{n}$ such that
$x^{k+1} \preceq x^{k+2}\preceq  \widetilde{D}(\ominus N x^{k+1}\oplus  b  )$ and
$x^{k+2} \bala \widetilde{D}(\ominus N x^{k+1}\oplus  b  )$.
This shows that the induction hypothesis holds for $k+1$.
Thus Item~\eqref{jacobi-1} of the theorem is proven.

We next prove Item~\eqref{jacobi-2}.
Denote $\hat{x}^{k}:=|x^{k}|$. Since $\mu$ is a morphism,
Condition~\eqref{jacobi-ii}  of Theorem~\ref{theo-jacobi} implies
 that $|D| \hat{x}^{k+1}=|N| \hat{x}^{k}\oplus |b|$.
Since $|D|$ is invertible in $\M(\RR)$, we obtain that 
$ \hat{x}^{k+1}=M\hat{x}^{k}\oplus |D|^{-1}|b|$ for $M=|D|^{-1}|N|$.
Hence, for all $k\geq 0$ we get that
\[\hat{x}^{k+1}=(\Id\oplus M\oplus \dots \oplus M^{k})|D|^{-1}|b|\enspace .\]
Lemma~\ref{lem-sec-cir} allows us to apply the 
theorem of Carr\'e and Gondran (Proposition-Definition~\ref{propdef})
to the matrix $M$.
It follows that
\[
|x^{k}|=\hat{x}^{k}=M^{*}|D|^{-1}|b|\quad \text{ for all } k\geq n \enspace .
\]
The last assertion of Item~\eqref{jacobi-2}
follows from the previous equation and Lemma~\ref{lem-meta-yoeli}.

We now prove Item~\eqref{jacobi-4}.
Since all $x^{k}$ are \thinp, $|x^{k}|=|x^{n}|$ for all
$k\geq n$, and $x^{n}\preceq  x^{n+1}\preceq\cdots$.
Applying  Property~\ref{pty_order-finite} entrywise implies
that $x^n$ is stationary after some finite time $m$.
Then by Condition~\eqref{jacobi-ii} of the theorem we get that
$ Dx^{m}\bal \ominus N x^{m}\oplus b $, which is
equivalent to $Ax^{m}\bal b$. 

We finally prove Item~\eqref{jacobi-5}.
When $\TT$ satisfies also  Property~\ref{pty_order-equal}, 
the above properties imply that $x^n=x^{n+1}$. Thus 
one can take $m=n$ in the previous conclusions.
\end{proof}

\begin{proof}[Proof of Theorem~\ref{th-jac}]
Applying Theorem~\ref{theo-jacobi} after the transformation described in
Remark~\ref{rem-dom} and using Proposition~\ref{exist-jacobi-dec},
we deduce Theorem~\ref{th-jac}.
\end{proof}

\subsection{The tropical Gauss-Seidel algorithm}
We now introduce a Gauss-Seidel type algorithm. 
It  is a variant of the Jacobi algorithm in which
the information is propagated more quickly.

\begin{prop}\label{exist-gauss-seidel-dec}
Let $\TT$ be a semiring satisfying Property~\ref{pty_order}.
Then any matrix $A\in \MM_n(\TT)$ with a dominant diagonal
can be decomposed into the sum
\[
A=D\oplus L\oplus U\enspace,
\]
of matrices $D$, $L$ and $U\in \MM_n(\TT)$ such that
$D$ is a diagonal matrix with diagonal entries in
$\TT^\vee$, $|\det D|=|\det A|$,  all the entries on the main diagonal and above the main diagonal of
$L$ equal to $\zero$, 
and  all the entries below the main diagonal of $U$ equal to $\zero$.
The latter decomposition will be called a \NEW{Gauss-Seidel-decomposition} 
of $A$.
\end{prop}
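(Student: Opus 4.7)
The plan is to reduce the statement directly to the Jacobi decomposition already established in Proposition~\ref{exist-jacobi-dec}. Since $A$ has a dominant diagonal by hypothesis, that proposition furnishes a diagonal matrix $D \in \MM_n(\TT)$ with diagonal entries in $\TT^\vee$ such that $|\det D| = |\det A|$, together with a matrix $N \in \MM_n(\TT)$ satisfying $A = D \oplus N$. This $D$ will be precisely the one appearing in the Gauss-Seidel decomposition; no further work is needed on the diagonal part.

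The remaining step is a purely combinatorial splitting of $N$ into a strictly lower and an upper part. Define $L \in \MM_n(\TT)$ entrywise by $L_{ij} := N_{ij}$ if $i > j$ and $L_{ij} := \zero$ otherwise, and define $U \in \MM_n(\TT)$ by $U_{ij} := N_{ij}$ if $i \leq j$ and $U_{ij} := \zero$ otherwise. For each position $(i,j)$ at least one of $L_{ij}$, $U_{ij}$ equals $\zero$ and the other equals $N_{ij}$, so $L \oplus U = N$ by the definition of the additive law. Substituting yields $A = D \oplus L \oplus U$, and by construction $L$ vanishes on and above the main diagonal while $U$ vanishes strictly below it, as required.

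There is essentially no obstacle: all of the semiring-theoretic content (existence of a suitable diagonal splitting with $\delta_i \in \TT^\vee$ via Property~\ref{pty_order}, and the preservation of $|\det A|$) has already been absorbed into Proposition~\ref{exist-jacobi-dec}; the only new ingredient is the trivial triangular splitting of $N$, which uses no structure of the semiring beyond the existence of $\zero$ and the entrywise nature of matrix addition.
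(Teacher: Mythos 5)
Your proof is correct and follows essentially the same route as the paper: apply Proposition~\ref{exist-jacobi-dec} to obtain $A = D \oplus N$, then split $N$ into its strictly lower-triangular part $L$ and its upper-triangular (diagonal inclusive) part $U$. The paper's proof is exactly this, stated more tersely.
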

\begin{proof}
By Proposition~\ref{exist-jacobi-dec} there exists
a Jacobi-decomposition $A=D\oplus N$.
Taking for $L$ the (strict) lower diagonal part of $N$ and for
$U$ the upper diagonal and diagonal part of $N$, we get the result.
\end{proof}

\begin{theorem}\label{theo-gauss-seidel}\index{Gauss-Seidel (algorithm)}
Let $\TT$ be a semiring allowing \constr,  see Definition~\ref{monotone-def}.
Let $A\in \MM_n(\TT)$ have a dominant diagonal
and let $A=D\oplus L\oplus U$ be a Gauss-Seidel-decomposition. Then
\begin{enumerate}\label{gauss1}
\item One can construct a sequence $\{x^{k}\}$
of \thin vectors satisfying:
\begin{enumerate}
\item \label{gauss11}
$\zero=x^0 \preceq x^1\preceq \dots\preceq x^k\preceq \dots$;
\item  \label{gauss12}$Dx^{k+1}\bala \ominus Lx^{k+1}\ominus U x^{k}\oplus b$;
\item  \label{gauss13}$|x^{k+1}|= |\det A|^{-1}|(D\oplus L)\adj|| \ominus U x^{k}\oplus  b|$.
\end{enumerate}
\item  \label{gauss2}The sequence $|x^k|$ is stationary after at most $n$
iterations, meaning that $|x^{k}|=|x^{n}|$ for all $k\geq n$,
and we have 
\[ |x^{n}|= |\det A|^{-1}|A\adj b| \enspace . 
\]
\item  \label{gauss3} When $\TT$ allows \conv\ (Definition~\ref{monotone-def}),
 the sequence $x^k$ is stationary,
meaning that there exists $m\geq 0$ such that $x^{m}=x^{m+1}$. Moreover,
the limit $x^{m}$ is a solution of $Ax\bal b$.
\item \label{gauss4} When $\TT$ satisfies also Property~\ref{pty_order-equal}
one can choose $m=n$ in the previous assertion.
\end{enumerate}
\end{theorem}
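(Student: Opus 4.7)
The plan is to mirror closely the proof of Theorem~\ref{theo-jacobi} by exploiting the rewriting of equation~(b) as $(D\oplus L)x^{k+1}\bala \ominus Ux^k\oplus b$. The matrix $D\oplus L$ is lower triangular with diagonal $D$, and the Gauss-Seidel decomposition guarantees $|\det(D\oplus L)|=|D_{11}\cdots D_{nn}|=|\det A|$ is invertible in $\RR$; thus $D\oplus L$ itself has a dominant diagonal, with trivial Jacobi decomposition $D\oplus L$ whose strict lower triangular part is $L$.

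For item~(1), I would build $x^{k+1}$ by forward substitution: the $i$th row of~(b) reads $D_{ii}x^{k+1}_i\bala r_i^k$, with $r_i^k:=\ominus\sum_{j<i}L_{ij}x^{k+1}_j\ominus\sum_{j}U_{ij}x^k_j\oplus b_i$ depending only on already-computed quantities. Property~\ref{pty_inv} converts this to $x^{k+1}_i\bala\tilde{D}_{ii}r_i^k$, and Property~\ref{pty_order} applied to the pair $(x^k_i,\tilde{D}_{ii}r_i^k)$ produces a thin $x^{k+1}_i$ with $x^k_i\preceq x^{k+1}_i\preceq \tilde{D}_{ii}r_i^k$ and $x^{k+1}_i\bala\tilde{D}_{ii}r_i^k$. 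Monotonicity $x^k\preceq x^{k+1}$ is preserved inductively thanks to the monotone dependence of $r_i^k$ on its inputs; the inequality $x^k_i\preceq \tilde{D}_{ii}r_i^k$ needed to invoke Property~\ref{pty_order} follows from the previous step's inequalities combined with this monotonicity.

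For items~(1c) and~(2), I apply the modulus $\mu$ entrywise to equation~(b), yielding $(|D|\oplus|L|)|x^{k+1}|=|U|\,|x^k|\oplus|b|$ in $\MM_n(\RR)$. Setting $P:=|D|^{-1}|L|$ (strictly lower triangular, hence $P^n=0$) and $Q:=|D|^{-1}|U|$, this becomes $|x^{k+1}|=P|x^{k+1}|\oplus Q|x^k|\oplus|D|^{-1}|b|$, whose unique solution (obtained by forward substitution and matching the modulus of the construction in item~(1)) is $|x^{k+1}|=P^*(Q|x^k|\oplus|D|^{-1}|b|)$. Lemma~\ref{lem-meta-yoeli} applied to $D\oplus L$ (with its trivial Jacobi decomposition) gives $P^*|D|^{-1}=|\det A|^{-1}|(D\oplus L)\adj|$, yielding formula~(c). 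Writing $K:=P^*Q$ and $d:=P^*|D|^{-1}|b|$, the iteration becomes $|x^{k+1}|=K|x^k|\oplus d$ with unique fixed point $K^*d$; the Kleene-star identity $(P\oplus Q)^*=(P^*Q)^*P^*$ together with Lemma~\ref{lem-meta-yoeli} for the full Jacobi decomposition $A=D\oplus(L\oplus U)$ gives $K^*d=(|D|^{-1}(|L|\oplus|U|))^*|D|^{-1}|b|=|\det A|^{-1}|A\adj b|=:z$. To obtain stabilization within $n$ steps, I would sandwich $|x^k|$ between the Jacobi sequence $|y^k|$ on $A$ and the value $z$: the inequality $|y^k|\leq|x^k|$ follows from an induction using $|x^{k+1}|\succeq|x^k|$ and the recurrence above, while $|x^k|\leq z$ follows from the fact that $z$ is the unique fixed point of $v\mapsto P^*(Qv\oplus|D|^{-1}|b|)$ (by $P^n=0$ applied to the equation $z=Pz\oplus Qz\oplus|D|^{-1}|b|$). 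Since $|y^n|=z$ by Theorem~\ref{theo-jacobi}, the sandwich forces $|x^n|=z$ and stationarity thereafter.

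Items~(3) and~(4) follow from the stationarity of $|x^k|$. Once $|x^k|=z$ for $k\geq n$, the vectors $x^k$ form an entrywise increasing sequence of thin vectors with fixed modulus, which must stabilize in finitely many steps by Property~\ref{pty_order-finite}; at a stationary point $x^m$, equation~(b) becomes $Ax^m\bal b$. Under the stronger Property~\ref{pty_order-equal}, Proposition~\ref{prop-equal} combined with $x^k\preceq x^{k+1}$ and $|x^{k+1}|=|x^k|$ (for $k\geq n$) forces $x^{k+1}=x^k$ coordinate-wise, so one may take $m=n$. I expect the main obstacle to lie in the modulus analysis of item~(2): identifying the Gauss-Seidel limit with $|\det A|^{-1}|A\adj b|$ requires aligning two applications of Lemma~\ref{lem-meta-yoeli} (to $D\oplus L$ and to $A$) through the Kleene-star identity $(P\oplus Q)^*=(P^*Q)^*P^*$, and the sharp $n$-step bound requires the entrywise sandwich with the Jacobi sequence to circumvent the implicit nature of the Gauss-Seidel recurrence.
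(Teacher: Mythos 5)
Your construction of $x^{k+1}$ by forward substitution using Properties~\ref{pty_inv} and~\ref{pty_order} row by row, and the derivation of items~(1c), (3), (4), match the paper's proof. There is, however, a slip in the modulus step: the relation $Dx^{k+1}\bala \ominus Lx^{k+1}\ominus U x^{k}\oplus b$ is \emph{not} equivalent to $(D\oplus L)x^{k+1}\bala \ominus Ux^k\oplus b$, because although the underlying balance relations coincide, the modulus conditions do not (the left side $|Dx^{k+1}|$ versus $|(D\oplus L)x^{k+1}|$ differ in general). Correspondingly, applying $\mu$ to condition~(b) yields $|D||x^{k+1}| = |L||x^{k+1}| \oplus |U||x^{k}| \oplus |b|$, not $(|D|\oplus|L|)|x^{k+1}| = |U||x^k|\oplus|b|$; fortunately the fixed-point form you write next, $|x^{k+1}| = P|x^{k+1}|\oplus Q|x^k|\oplus|D|^{-1}|b|$, does follow from the \emph{correct} modulus equation, so the rest of the argument stands and this is an exposition error only.

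The genuine difference from the paper is in the $n$-step bound for item~(2). The paper derives the iteration $|x^{k+1}| = M|U||x^k|\oplus M|b|$ with $M=(|D|^{-1}|L|)^*|D|^{-1}$, verifies via Lemma~\ref{lem-2-nonpos}(a) that every circuit of $M|U|$ has weight at most $\unit$, and then applies the Carr\'e--Gondran theorem directly to truncate the Kleene star of $M|U|$ at step $n$. You instead sandwich the Gauss--Seidel modulus sequence between the Jacobi modulus sequence $|y^k|$ and the candidate limit $z=|\det A|^{-1}|A\adj b|$: the inequality $|y^k|\leq|x^k|$ by induction, the inequality $|x^k|\leq z$ from $z$ being a fixed point (which you verify using $P^n=\zero$), and $|y^n|=z$ from Theorem~\ref{theo-jacobi}(2), yield $|x^n|=z$ without verifying the circuit condition for $M|U|$. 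This is a valid and arguably lighter route, reusing the already-established Jacobi convergence rather than reproving a circuit bound; the paper's route is more self-contained and also establishes the explicit closed form $|x^k|=(M|U|)^*M|b|$ for $k\geq n$, which the sandwich does not directly give but is not needed for the statement. Both approaches rely on the same Kleene-star identity $(L'\oplus U')^*=((L')^*U')^*(L')^*$ to identify the limit with $|\det A|^{-1}|A\adj b|$ via Lemma~\ref{lem-meta-yoeli}.
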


\begin{ex} We consider the system of Example~\ref{ex-jacobi},
with the Gauss-Seidel decomposition $A=D\oplus L\oplus U$
such that $L\oplus U=N$ with $D$ and $N$ as in~\eqref{ex-jacobi-dec}.
Then starting from $x^0=\zero$ we obtain the following sequence:
\[
\left\{\begin{array}{lcl}
5x^{1}_{1}\bala 0x^{0}_{2}\ominus 3x^{0}_{3}\ominus 1=\ominus 1
&\Rightarrow &x^{1}_{1}=\ominus -4 \\
3x^{1}_{2}\bala \ominus 1x^{1}_{1}\oplus 1x^{0}_{3}\oplus 4^{\circ}=4^{\circ} 
&\Rightarrow 
& x^{1}_{2}=1\text { or } \ominus 1, \text{ we choose } x^{1}_{2}=1 \\
1x^{1}_{3}\bala \ominus 3x^{1}_{1} \oplus 2x^{1}_{2}\oplus 1x^{0}_{3}\oplus 0=3 
& \Rightarrow &x^{1}_{3}=2
\end{array}\right. 
\]
\[
\left\{\begin{array}{lcl}
5x^{2}_{1}\bala 0x^{1}_{2}\ominus 3x^{1}_{3}\ominus 1=\ominus 5
&\!\Rightarrow\! &x^{2}_{1}=\ominus 0 \\
3x^{2}_{2}\bala \ominus 1x^{2}_{1}\oplus 1x^{1}_{3}\oplus 4^{\circ}=4^{\circ}
&\!\Rightarrow\!
& x^{2}_{2}=1\text { or } \ominus 1, \text{ and } x^{2}_{2}\succeq x^{1}_{2}
\!\Rightarrow\!  x^{2}_{2}=1 \\
1x^{2}_{3}\bala \ominus 3x^{2}_{1} \oplus 2x^{2}_{2}\oplus 1x^{1}_{3}\oplus 0=3
& \!\Rightarrow\! &x^{1}_{3}=2\end{array}\right. 
\]
We find the
solution $(\ominus 0,1,2)^\top$ after 2 iterations only,
whereas the Jacobi algorithm required 3 iterations.
\end{ex}

\begin{lemma}\label{lem-star-u}
Let $A\in \M(\TT)$ be a matrix with a 
dominant diagonal and let $A=D\oplus L\oplus U$ be a
Gauss-Seidel-decomposition. 
Let $M=|\det A|^{-1}|(D\oplus L)\adj|\in \M(\TT)$. 
Then $|D|^{-1}|L|$ has no circuit
(all circuits have a zero weight) and we have $M=(|D|^{-1}|L|)^* |D|^{-1}$.
\end{lemma}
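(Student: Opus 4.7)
The plan is to reduce the lemma to Lemma~\ref{lem-meta-yoeli} applied to the lower-triangular matrix $A' := D \oplus L$, viewed as its own Jacobi-decomposition with ``$N$'' replaced by $L$. So the main tasks are: (i) to check that $A'$ has a dominant diagonal and that $D \oplus L$ qualifies as a Jacobi-decomposition of $A'$; (ii) to observe that $A'\adj$ is precisely the object $(D \oplus L)\adj$ appearing in the definition of $M$.

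First I would handle the assertion about circuits. Since $L$ has all entries on and above the main diagonal equal to $\zero$, the matrix $|D|^{-1}|L|$ is likewise strictly lower triangular; so every arc $i \to j$ of $G(|D|^{-1}|L|)$ satisfies $i > j$, and hence $G(|D|^{-1}|L|)$ contains no circuit at all. In particular every circuit (vacuously) has weight $\leq \ooi$, and moreover $(|D|^{-1}|L|)^n = \zero$, so Proposition-Definition~\ref{propdef} applies and the Kleene star $(|D|^{-1}|L|)^*$ is well-defined as the finite sum $I \oplus |D|^{-1}|L| \oplus \cdots \oplus (|D|^{-1}|L|)^{n-1}$.

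Next I would compute $\det A'$. Since $A' = D \oplus L$ is lower triangular with diagonal $D$, the only permutation $\sigma \in \allperm_n$ such that $\prod_i A'_{i\sigma(i)} \neq \zero$ is $\sigma = \mathrm{id}$; indeed $\sigma(i) \leq i$ for all $i$ forces $\sigma = \mathrm{id}$ by a summation argument. Hence $\det A' = D_{11} \cdots D_{nn}$, so by Lemma~\ref{lem-dom}, $|\det A'| = |D_{11} \cdots D_{nn}| = |\det A|$, which is invertible in $\RR$. Thus $A'$ has a dominant diagonal in the sense of Definition~\ref{def-domdiag}, with $A' = D \oplus L$ playing the role of its own Jacobi-decomposition (the diagonal entries of $D$ lie in $\TT^\vee$ because they do in the original Gauss-Seidel-decomposition).

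Finally I would invoke Lemma~\ref{lem-meta-yoeli} for the matrix $A'$ with Jacobi-decomposition $D \oplus L$, yielding
\[
(|D|^{-1}|L|)^* |D|^{-1} = |\det A'|^{-1} |A'\adj| = |\det A|^{-1} |(D \oplus L)\adj| = M \enspace,
\]
which is the desired identity. I don't expect a serious obstacle: the only subtlety is confirming that $D \oplus L$ really satisfies all hypotheses of a Jacobi-decomposition for $A'$, which is immediate from the triangular structure and the dominance of the diagonal of $A$.
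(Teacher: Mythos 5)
Your proposal is correct and follows essentially the same route as the paper's proof: both observe that $|D|^{-1}|L|$ is strictly lower triangular hence has no circuit, take $A' = D\oplus L$ with $D\oplus L$ as its own Jacobi-decomposition, check $|\det A'|=|\det D|=|\det A|$, and invoke Lemma~\ref{lem-meta-yoeli}. The only cosmetic difference is that you compute $\det A'$ from the triangular structure and then pass through Lemma~\ref{lem-dom}, while the paper simply reads off $|\det A'|=|\det D|$ directly from the fact that $A'$ is triangular with diagonal $D$ and then uses the definition of the Gauss-Seidel decomposition; both lead to the same conclusion.
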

\begin{proof} Since the entries of $L$ are $\zero$ on and above
the diagonal, the graph of $|D|^{-1}|L|$ has no circuit.
Let $A'=D\oplus L$. Then 
$|\det A'|=|\det D|= |\det A|$. Taking $D'=D$ and
$N'=L$, we get a Jacobi-decomposition $D'\oplus N'$ 
of $A'$. Applying Lemma~\ref{lem-meta-yoeli}
to it, we get the last assertion of the lemma.
\end{proof}
\begin{lemma}\label{lem-2-nonpos} 
Let $A\in \M(\TT)$ be a matrix with a 
dominant diagonal. Assume $A=D\oplus L\oplus U$ is a
Gauss-Seidel-decomposition. Let
$M=|\det A|^{-1}|(D\oplus L)\adj|\in \M(\TT)$. Then
\begin{itemize}
\item[(a)] Every circuit of $M|U|$ has a weight less than or equal to $\unit$.
\item[(b)] $(M|U|)^* M =|\det A|^{-1}|A\adj|$.
\end{itemize}
\end{lemma}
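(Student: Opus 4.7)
Set $P := |D|^{-1}|L|$ and $Q := |D|^{-1}|U|$, both in $\MM_n(\RR)$. Since $L$ and $U$ have disjoint supports, $|L\oplus U|=|L|\oplus|U|$, so $N := L\oplus U$ yields a Jacobi-decomposition $A = D\oplus N$ with the same $D$. Lemma~\ref{lem-meta-yoeli} therefore gives $(P\oplus Q)^*|D|^{-1} = |\det A|^{-1}|A\adj|$, and Lemma~\ref{lem-star-u} gives $M = P^*|D|^{-1}$ (so that $M|U|=P^*Q$). Thus the lemma reduces to two Kleene-algebra statements in $\MM_n(\RR)$: (a$'$) every circuit of $P^*Q$ has weight $\leq \unit$, and (b$'$) the identity $(P^*Q)^*P^* = (P\oplus Q)^*$.

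\textbf{Main steps.} For (a$'$), my plan is to first bound $P^*Q\leq (P\oplus Q)^*$ entrywise (every $P$-walk followed by a single $Q$-arc is a walk in $P\oplus Q$), and then pass to powers using the idempotency $(P\oplus Q)^*\cdot(P\oplus Q)^* = (P\oplus Q)^*$, obtaining $(P^*Q)^k\leq (P\oplus Q)^*$ for every $k$. Restricting to the diagonal, $(P\oplus Q)^*_{ii}=\unit$: indeed $I\leq(P\oplus Q)^*$ contributes $\unit$, while Lemma~\ref{lem-sec-cir} prevents any closed walk in $P\oplus Q$ from contributing more than $\unit$. This forces $(P^*Q)^k_{ii}\leq \unit$ for every $k$ and $i$, which is exactly (a$'$). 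For (b$'$), Proposition-Definition~\ref{propdef} together with (a$'$) guarantees that $(P^*Q)^*$ is a finite sum, and expanding gives
\[ (P^*Q)^*P^* = \bigoplus_{k\geq 0}\,\bigoplus_{a_0,\ldots,a_k\geq 0} P^{a_0}QP^{a_1}Q\cdots QP^{a_k}. \]
Each word in the free monoid $\{P,Q\}^*$ factors uniquely in this form, so by idempotency of $\oplus$ this sum equals $\bigoplus_{m\geq 0}(P\oplus Q)^m = (P\oplus Q)^*$. Multiplying on the right by $|D|^{-1}$ and using the identifications above then yields (b).

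\textbf{Main obstacle.} The only nontrivial point is (a$'$): the identification $(P\oplus Q)^*_{ii}=\unit$ rests on the dominant-diagonal hypothesis (via Lemma~\ref{lem-sec-cir}) and must be invoked carefully. Once convergence of the various Kleene stars is established, (b$'$) is a standard bookkeeping identity valid in any idempotent semiring, so the whole argument pivots on properly bounding the circuits of $P\oplus Q$.
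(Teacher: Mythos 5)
Your proof is correct and follows the paper's strategy closely: both arguments set $P=|D|^{-1}|L|$, $Q=|D|^{-1}|U|$, reduce via Lemmas~\ref{lem-star-u} and~\ref{lem-meta-yoeli} to the unambiguous rational identity $(P^*Q)^*P^*=(P\oplus Q)^*$, and rely on Lemma~\ref{lem-sec-cir} to control circuits of $P\oplus Q$. The only substantive difference is in part~(a) and the order of the argument. The paper proves~(b) first and then derives~(a) by observing $(M|U|)^*=(P^*Q)^*\preceq (P^*Q)^*P^*=(P\oplus Q)^*$, using an auxiliary device of adjoining a top element to $\RR$ so that Kleene stars are always defined (with ``all circuits have weight $\leq\unit$'' characterized as ``the star has all entries in $\RR$''). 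You instead prove~(a) first and directly, bounding $(P^*Q)^k$ entrywise by the idempotent $(P\oplus Q)^*$ and reading off the diagonal entries; this avoids both the top-element device and any appeal to the identity from~(b), so it is a bit more self-contained, and then~(b) follows by the same word-expansion bookkeeping as in the paper. Both routes rest on the same combinatorial core and are equally sound.
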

\begin{proof}
We  start by proving item (b).
Indeed, adding a top element to $\RR$ allows one to define
the Kleene star $B^*$ of a matrix $B$ with entries in $\RR$. Then
every circuit of $B$ has a weight less than or equal to $\unit$
if and only if $B^*$ has all its entries in $\RR$ (meaning that they are all different from the top element).

Let $N=L\oplus U$. 
It is easy to see that $D\oplus N$ is a Jacobi-decomposition of $A$. Thus
by Lemma~\ref{lem-sec-cir}, every circuit of $|D|^{-1}|N|$
has a weight less than or equal to $\unit$.
By Lemma~\ref{lem-meta-yoeli}, we have 
$(|D|^{-1}|N|)^{*}|D|^{-1}=|\det A|^{-1}|A\adj|$.

Denote $L'=|D|^{-1}|L|$ and $U'=|D|^{-1}|U|$.
Then by Lemma~\ref{lem-star-u}, we have $M=(L')^* |D|^{-1}$, so $M|U|=(L')^* U'$,
and $|D|^{-1}|N|=L'\oplus U'$.
This implies that the assertion of item (b) is equivalent to the equality
\begin{equation} \label{LU}
((L')^* U')^* (L')^* 
=(L'\oplus U')^*,
\end{equation}
 which is indeed a well known 
{\em unambiguous rational identity}
(by expanding the Kleene star and products in
both expressions, we arrive at the sum of all words in the letters
$L'$ and $U'$).

\noindent 
(a) Let us note that the identity \eqref{LU} also shows 
that $(M|U|)^*=((L')^* U')^* \leq (L'\oplus U')^*=(|D|^{-1}|N|)^*$
and since every circuit of $|D|^{-1}|N|$
has a weight less than or equal to $\unit$,
the latter expression has all its entries in $\RR$,
so has $(M|U|)^*$.
This implies that every circuit of $M|U|$ has a weight less
than or equal to $\unit$.
\end{proof}

\begin{proof}[Proof of Theorem~\ref{theo-gauss-seidel}]
The proof follows the same lines as the one of Theorem~\ref{theo-jacobi}.
In particular, constructing the same matrix $\widetilde{D}$
as in this proof, we get that Condition \eqref{gauss12} of
Theorem~\ref{theo-gauss-seidel} is equivalent to
$x^{k+1}\bala \widetilde{D}(\ominus Lx^{k+1}\ominus U x^{k}\oplus b)$,
which corresponds to the system:
\begin{align*}
x^{k+1}_1&\bala\widetilde{D}_{11}(\ominus U_{11} x^{k}_1\cdots\ominus 
U_{1n} x^k_n \oplus b_1)\\
&\;\;\vdots\\
x^{k+1}_i&\bala \widetilde{D}_{ii}(\ominus L_{i1} x^{k+1}_1\cdots\ominus 
L_{i,i-1} x^{k+1}_{i-1}\ominus  U_{ii} x^{k}_i\cdots\ominus 
U_{in} x^k_n \oplus b_i)\\
&\;\;\vdots  \\
x^{k+1}_n&\bala \widetilde{D}_{nn}(\ominus L_{n1} x^{k+1}_1\cdots\ominus 
L_{n,n-1} x^{k+1}_{n-1}\ominus U_{nn} x^{k}_n \oplus b_n)\enspace.\\
\end{align*}
Using Property~\ref{pty_order}  for each $k$ and each $i\in[n]$ one chooses
 $x^{k+1}_i$ 
such that it satisfies the $i$th equation of this system, 
together with the conditions $x^{k}_i\preceq x^{k+1}_i$ and 
\[ x^{k+1}_i\preceq \widetilde{D}_{ii}(\ominus L_{i1} x^{k+1}_1\cdots\ominus 
L_{i,i-1} x^{k+1}_{i-1}\ominus  U_{ii} x^{k}_i\cdots\ominus 
U_{in} x^k_n \oplus b_i)\enspace .\]
Then the sequence satisfies Conditions~\eqref{gauss11} and~\eqref{gauss12}
of the theorem.
In particular it satisfies
$|x^{k+1}|=|D|^{-1}|L||x^{k+1}|\oplus |D|^{-1}|\ominus U x^{k}\oplus b|$.
Since $|D|^{-1}|L|$ has no circuit, the theorem of Carr\'e and Gondran
(Proposition-Definition~\ref{propdef}) implies that
$|x^{k+1}|=(|D|^{-1}|L|)^*|D|^{-1}|\ominus U x^{k}\oplus b|$,
which by Lemma~\ref{lem-star-u} is equivalent to Condition~\eqref{gauss13}
of the theorem.

Again the latter condition implies
that $|x^{k+1}|=M |U| |x^{k}|\oplus M |b|$ with $M$ as in 
Lemma~\ref{lem-star-u}. Then 
by Lemma~\ref{lem-2-nonpos} and the theorem of Carr\'e and Gondran,
we get that $|x^{k}|=(M|U|)^* M |b|$  for all $k\geq n$.
Moreover from the second assertion of Lemma~\ref{lem-2-nonpos}, we have 
$|x^{n}|=|\det A|^{-1}|A\adj| |b|=|\det A|^{-1}|A\adj b|$
which shows Item~\eqref{gauss2} of the theorem.

Items~\eqref{gauss3} and~\eqref{gauss4} of the theorem are 
obtained by the same arguments as for Theorem~\ref{theo-jacobi}.
\end{proof}

The particular case of Theorem~\ref{theo-gauss-seidel}
concerning the tropical extension $\S$ was obtained in~\cite{gaubert92a}.
One can also apply the same result to the case of the bi-valued tropical semiring
$\TT=\Ti$. This leads to the same solution
$x=\imath(|\det A|^{-1}|A\adj b|)$ as in Corollary~\ref{theo-jacobinew},
by using the Gauss-Seidel algorithm instead of the Jacobi algorithm.

\section{Homogeneous systems: the generalized Gondran-Minoux theorem}\label{sec-homogeneous}
The following result was stated in~\cite{Plus}.

\begin{theorem}[{\cite[Th.~6.5]{Plus}}]
\label{TGMa}
Let $A \in\M(\smax)$. Then there exists $x\in (\smax^\vee)^n\setminus \{\zero\}$ such that $Ax\balance \zero$ if and only if $\det A \balance \ooo$.
\end{theorem}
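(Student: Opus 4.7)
The plan is to prove the two implications separately. The forward direction is essentially a direct application of the Cramer theorem; the substantive content lies in the reverse direction, which I would combine with the existence machinery of Section~\ref{sec-exists}.

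For the forward direction, suppose $x \in (\smax^\vee)^n \setminus \{\zero\}$ satisfies $Ax \bal \zero$. I would invoke Theorem~\ref{th-cramer}(1) to obtain $(\det A)\,x \bal A\adj \zero = \zero$ entrywise. Choosing any index $i$ with $x_i \neq \zero$ (which exists since $x \neq \zero$), the invertibility of the nonzero thin element $x_i$ together with the fact that $\smax^\circ$ is an ideal immediately yields $\det A \in \smax^\circ$, that is, $\det A \bal \zero$.

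For the converse, assume $\det A \bal \zero$. My first attempt would be the columns of the adjugate: Laplace expansion combined with the observation that any matrix $B$ with two equal rows satisfies $\det B = -\det B$, hence $\det B \in \smax^\circ$, produces the semiring Cramer identity $AA\adj \bal (\det A)\,I$. Thus each column $(A\adj)_{\cdot i}$ is already a balance solution, and if any such column is thin and nonzero we are done. The residual difficulty is that a cofactor may lie in $\smax^\circ \setminus \{\zero\}$, so such a column need not be thin. To repair this I would delete a row $i_0$ to obtain the $(n-1)\times n$ submatrix $\tilde A$; provided at least one $|\det A(i_0,k)|$ is invertible in $\rmax$, Theorem~\ref{th-jac-homogeneous} furnishes a thin nonzero $y \in (\smax^\vee)^n$ with $\tilde A y \bal \zero$ and $|y_k| = |\det A(i_0,k)|$ for every $k$. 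It remains to verify the single equation $(Ay)_{i_0} \bal \zero$. Picking $k_0$ with $\det A(i_0,k_0)$ thin and nonzero (hence invertible) and applying Theorem~\ref{th-cramer}(1) to the $(n-1)\times(n-1)$ Cramer subsystem obtained from $\tilde A y \bal \zero$ by treating $y_{k_0}$ as a parameter gives, for each $j \neq k_0$,
\[
\det A(i_0,k_0)\cdot y_j \bal \pm\det A(i_0,j)\cdot y_{k_0}
\]
with the Laplace sign. Multiplying the $j$-th relation by $A_{i_0 j}$, summing over $j\in[n]$ (the $j=k_0$ term being tautological), and invoking Laplace expansion along row $i_0$ of $A$ yields
\[
\det A(i_0,k_0)\cdot (Ay)_{i_0} \bal y_{k_0}\cdot \det A \bal \zero,
\]
and invertibility of $\det A(i_0,k_0)$ then gives $(Ay)_{i_0} \bal \zero$, as required.

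The main obstacle will be the fully degenerate regime in which no cofactor $\det A(i,k)$ is both thin and nonzero, so no invertible minor is available to play the role of $\det A(i_0,k_0)$. I would handle this case by induction on $n$: the base $n = 1$ is immediate, since $\det A = a \in \smax^\circ$ forces $ax \in \smax^\circ$ for any thin $x$, and the subcase where some column of $A$ is $\zero$ admits a basis-vector solution directly. Otherwise I would select a balanced nonzero minor $\det A(i,k) \bal \zero$, invoke the induction hypothesis on $A(i,k)$ to produce a thin nonzero kernel vector $y' \in (\smax^\vee)^{n-1}$ with $A(i,k)\,y' \bal \zero$, and extend by $\zero$ in coordinate $k$. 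The genuinely subtle step, which is where the proof has to work hardest, is showing that the single omitted equation (the $i$-th row of the extended system) becomes automatically balanced; I expect this to require combining the assumption $\det A \bal \zero$ with the full degeneracy of the cofactor matrix, perhaps reapplying the cofactor-column identity from the previous paragraph to a suitable row-index substitute.
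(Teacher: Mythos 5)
Your forward direction is correct and matches the paper. The converse, however, has a genuine gap in the step that is supposed to handle the deleted row, and it is a gap that cannot be repaired by a small patch because it rests on an invalid inference.

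You derive the balance
\[
\det A(i_0,k_0)\cdot (Ay)_{i_0} \;\balance\; y_{k_0}\cdot\det A
\]
and observe (correctly) that $y_{k_0}\cdot\det A \in \smax^\circ$ since $\smax^\circ$ is an ideal and $\det A\balance\zero$. But you then conclude $\det A(i_0,k_0)\cdot(Ay)_{i_0}\balance\zero$ as though balance were transitive, or as though $u\balance v$ with $v\in\smax^\circ$ forced $u\in\smax^\circ$. Neither holds: balance is not transitive in $\smax$, and weak transitivity (Property~\ref{pty-weaktrans0}) only lets you pass through a \emph{thin} middle term, while $y_{k_0}\det A$ is a non-zero balanced element, hence not thin. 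A concrete failure of the needed implication: with $u=\oplus 1$ and $v=1^\circ$ in $\smax$ one has $u\balance v$ and $v\balance\zero$ but $u\not\balance\zero$. Worse, the balance you derived is actually vacuous: one checks $|(Ay)_{i_0}|=|\det A|$ (Laplace expansion in moduli), so both sides have the same modulus, and over the sign-carrier $\BB$ the relation reduces to $a\balance\ooi^\circ$, which is satisfied by every $a$. So this intermediate balance carries no sign information about $(Ay)_{i_0}$ at all. The underlying reason is that the vector $y$ supplied by Theorem~\ref{th-jac-homogeneous} for the truncated system $\tilde A y\balance\zero$ knows nothing about row $i_0$, and for indices $j$ where the cofactor $\det A(i_0,j)$ is balanced, the sign of $y_j$ is simply unconstrained by the Cramer relation you invoke. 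Your ``fully degenerate'' fallback is also acknowledged as incomplete, but the main case already fails for this reason.

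The paper's proof of the more general Theorem~\ref{newTGMa} avoids this trap precisely by controlling which coordinate is deleted: after a Hungarian scaling normalization $|A_{ij}|\leq\unit$, $|A_{ii}|=\unit$, it arranges for the ``excess'' of the deleted row to reside in a term that is already balanced \emph{and} of maximal modulus $\unit$, so that Property~\ref{p-temp-3} (a balanced element absorbs anything of no larger modulus) closes the gap. This requires the case analysis of Subcases~2.1 and~2.2 (single balanced-weight permutation on the diagonal versus two sign-opposed optimal permutations, together with cycle decomposition), none of which appears in your argument. If you want to pursue an adjugate-based route, you would still need to pick $i_0$ and the free signs in $y$ so that the maximizing term of $(Ay)_{i_0}$ is either balanced or cancelled by an equal-modulus opposite-sign term, which is essentially what the paper's cycle argument accomplishes; the raw Cramer balance plus a transitivity appeal will not get you there.
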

The special case in which $A\in \M(\rmax)$ is equivalent to the theorem of Gondran and Minoux quoted in the introduction (Theorem~\ref{th-4}).

The ``only if'' part is obtained by taking $b=\zero$ in the first 
assertion of Theorem~\ref{th-cramer}. 
The ``if'' part was proved in~\cite{gaubert92a}. An analogous
result was proved by Izhakian and Rowen~\cite{IzhRowen}, when the 
symmetrized tropical semiring $\smax$ is replaced by the bi-valued tropical semiring $\Ti$. We next provide a general result which includes
Theorem~\ref{TGMa} as well as the result of~\cite{IzhRowen} as special cases.

Let $\TT$  be a semiring with a symmetry, a \thin set $\TT^\vee$,
and a modulus taking its values in a totally ordered semiring $\RR$.
We shall need the following additional properties.

\begin{pty}\label{p-temp-0}
$\RR$ is an idempotent semifield and the \thin set $\TT^\vee$ is such that
the set of invertible elements of $\TT$ is $(\TT^\vee)^*$ 
and that it coincides with $\TT\setminus \TT^\circ$.
\end{pty}

This property is satisfied when $\TT=\skewproductstar{\SS}{\RR}$, 
with the thin set $\TT^\vee=\skewproductstar{\SS^\vee}{\RR}$,
$\RR$ is an idempotent semifield and $(\SS^\vee)^*=\SS\setminus\SS^\circ$
is the set of invertible elements.

\begin{pty}\label{p-temp-5}
For all $x\in \TT$  we have $x=\zero\Leftrightarrow |x|=\zero$.
\end{pty}
This property is satisfied when $\TT=\skewproductstar{\SS}{\RR}$.

\begin{pty}\label{p-temp-2}
For all $x_1,\dots,x_k\in \TT$ such that
$x:=x_1\oplus \dots \oplus x_k\in \TT^\circ$,
either there exists a single index $i\in [k]$ such that
$x_i\in \TT^\circ$ and $|x_i|=|x|$,
or there exist two different indices $i,j\in [k]$ 
such that $x_i \oplus x_j \in \TT^\circ$
and $|x_i|=|x_j|=|x|$.
\end{pty}
This property is satisfied when $\TT=\skewproductstar{\SS}{\RR}$, 
with the thin set $\TT^\vee=\skewproductstar{\SS^\vee}{\RR}$, and 
for all $a_1,\dots,a_k\in \SS$ such that
$a:=a_1+ \dots + a_k\in \SS^\circ$,
either there exists a single index $i\in [k]$ such that
$a_i\in \SS^\circ$, or there exist two different indices $i,j\in [k]$ 
such that $a_i + a_j \in \SS^\circ$.

\begin{pty}\label{p-temp-3}
If $x\in \TT^\circ$ and $|y|\leq |x|$ then $x \oplus y\in \TT^\circ$.
\end{pty}
This  property is satisfied when Property~\ref{p-temp-0} and
the result of Proposition~\ref{prop-equal} hold. We can then obtain
the following assertion.
\begin{fact} 
All the following semirings satisfy Properties~\ref{p-temp-0}--\ref{p-temp-3}:
the symmetrized max-plus semiring $\S$, 
the bi-valued tropical semiring $\Ti$, 
the tropical extension of the torus $\skewproductstar{\bar{\To}}{\rmax}$
or that
of any group with a non trivial symmetry $\skewproductstar{\bar{G}}{\rmax}$
(Example~\ref{groupext}), and any supertropical semifield
(see Remark~\ref{rem-supertropical}).

However the phase extension of the tropical semiring
$\skewproductstar{\phase}{\rmax}$ (Example~\ref{ex-viro})
does not satisfy  Properties~\ref{p-temp-0}, \ref{p-temp-2}, nor
\ref{p-temp-3}.
\end{fact}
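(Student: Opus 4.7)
The fact splits into four positive checks (Properties~\ref{p-temp-0}, \ref{p-temp-5}, \ref{p-temp-2}, \ref{p-temp-3}) for $\S$, $\Ti$, $\skewproductstar{\bar G}{\rmax}$, and every supertropical semifield, together with three negative checks for $\skewproductstar{\phase}{\rmax}$. The plan is to exploit the tropical extension construction systematically: all the positive semirings except the supertropical ones have the form $\TT=\skewproductstar{\SS}{\rmax}$ with $\SS\in\{\BB,\N_2,\bar G\}$, each easily handled by inspection. The lex addition rule of the extension shows that the sum of elements $(a_i,b_i)\in\TT$ at level $b:=\max_i b_i$ reduces to the sum of the $\SS$-components $a_i$ restricted to indices with $b_i=b$, so balance and modulus-comparison statements in $\TT$ at matching levels descend to the corresponding statements on $\SS$.

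Under this reduction, Property~\ref{p-temp-0} becomes the assertion that in $\SS$ one has $(\SS^\vee)^*=\SS\setminus\SS^\circ$, and that these are exactly the invertibles of $\SS$; this I would check directly in $\BB$ (invertibles $\{\ooi,\ominus\ooi\}$, $\SS^\circ=\{\zero,\ooi^\circ\}$), in $\N_2$ (invertible $\{1\}$, $\SS^\circ=\{0,2\}$) and in $\bar G$ (invertibles $G$, $\SS^\circ=\{\zero,\ooi^\circ\}$). Property~\ref{p-temp-5} is automatic because $\mu$ is second-coordinate projection. Property~\ref{p-temp-2} transfers to the statement that in $\SS$ a balanced sum either contains a single explicitly balanced summand or two summands that add to a balanced element, which I would verify by a case split: in $\BB$ a sum with no $\ooi^\circ$-summand that lands in $\BB^\circ$ must involve both $\ooi$ and $\ominus\ooi$, whose pair sum is $\ooi^\circ$; in $\N_2$ a sum with no $2$-summand that lands in $\N_2^\circ$ must contain two copies of $1$, whose pair sum is $2$; in $\bar G$ a sum with no $\ooi^\circ$-summand that lands in $\bar G^\circ$ must involve two distinct group elements, which by the defining rule of $\bar G$ sum to $\ooi^\circ$. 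Property~\ref{p-temp-3} I would deduce formally: with $z:=x\oplus y\succeq x$ and $|z|=|x|$, the case $x=\zero$ forces $y=\zero$ by \ref{p-temp-5} and so $z=\zero\in\TT^\circ$, while if $x\in\TT^\circ\setminus\{\zero\}$ and $z\notin\TT^\circ$, Property~\ref{p-temp-0} gives $z\in\TT^\vee$ and Proposition~\ref{prop-equal} forces $z=x$, contradicting $\TT^\vee\cap\TT^\circ=\{\zero\}$. For supertropical semifields I would verify the four properties directly from Remark~\ref{rem-supertropical}: \ref{p-temp-0} is essentially condition~(iii); \ref{p-temp-5} follows since $\mu$ maps $\SS^\sharp$ onto $(\SS^\circ)^*$; \ref{p-temp-2} is immediate from the defining axiom $a+b=|a|$ whenever $|a|=|b|$ (take any two summands of maximal modulus); and \ref{p-temp-3} is obtained exactly as in the previous argument.

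For the phase extension $\skewproductstar{\phase}{\rmax}$, I would exhibit explicit cones. For \ref{p-temp-0}, any pointed angular sector of opening strictly between $0$ and $\pi$ lies in $\phase\setminus(\phase^\circ\cup\phase^\vee)$ and is not invertible in $\phase$, so the equality $(\TT^\vee)^*=\TT\setminus\TT^\circ$ fails. For \ref{p-temp-2}, the three half-lines $\Phi_1,\Phi_2,\Phi_3$ at arguments $0,2\pi/3,4\pi/3$ satisfy $1+e^{2i\pi/3}+e^{4i\pi/3}=0$, so the closed convex hull of $\Phi_1\cup\Phi_2\cup\Phi_3$ contains the origin in its relative interior and equals $\C$, while each pairwise convex hull is a sector of opening $2\pi/3<\pi$; consequently $(\Phi_1,b)\oplus(\Phi_2,b)\oplus(\Phi_3,b)$ is balanced in $\TT$ but no one- or two-element subsum is. For \ref{p-temp-3}, set $x=(\Phi,b)$ with $\Phi$ the real line and $y=(\Psi,b)$ with $\Psi$ the positive imaginary half-line: then $x\in\TT^\circ$, $|y|=|x|$, while $x\oplus y$ has first coordinate equal to the closed upper half-plane, which is neither a line, nor $\{0\}$, nor $\C$, hence is not balanced. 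The main obstacle is keeping straight the description of $\phase^\circ$ and of invertibility in $\phase$ while manipulating closed convex cones; once those are pinned down, every verification above is a direct check.
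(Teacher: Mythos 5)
Your proposal is correct and follows essentially the same route as the paper's (implicit) argument: the paper establishes, just before the Fact, that each of Properties~\ref{p-temp-0}, \ref{p-temp-5}, \ref{p-temp-2} reduces for $\TT=\skewproductstar{\SS}{\RR}$ to a condition on the base semiring $\SS$, and that Property~\ref{p-temp-3} follows from Property~\ref{p-temp-0} together with Proposition~\ref{prop-equal}; you then verify these base conditions in $\BB$, $\N_2$, $\bar G$ and in a supertropical semifield, and exhibit counterexamples for $\skewproductstar{\phase}{\rmax}$, which is exactly what the paper leaves to the reader. The only cosmetic point is that your invocation of Property~\ref{p-temp-5} in the $x=\zero$ case of Property~\ref{p-temp-3} is unnecessary (Proposition~\ref{prop-equal} already handles that case), matching the paper's statement that Property~\ref{p-temp-0} and Proposition~\ref{prop-equal} suffice; and your counterexamples for the phase extension are spelled out more concretely than the paper's assertion, which is a welcome addition.
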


We note the following consequences, the first one being easy.

\begin{prop}\label{p-temp-1}
Let $\TT$ satisfy Property~\ref{p-temp-0}. For all $x,y\in \TT$ if 
$xy\in \TT^\circ$ then $x\in \TT^\circ$ or $y\in \TT^\circ$. \qed
\end{prop}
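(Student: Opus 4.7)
The plan is to argue by contraposition, using Property~\ref{p-temp-0} essentially twice. Suppose $x\notin \TT^\circ$ and $y\notin \TT^\circ$. Since $\zero\in \TT^\circ$ (it is a balanced element), we have $x,y\neq \zero$, so both $x$ and $y$ lie in $(\TT\setminus \TT^\circ)=(\TT^\vee)^*$. By Property~\ref{p-temp-0}, the set $(\TT^\vee)^*$ coincides with the set of invertible elements of $\TT$, hence both $x$ and $y$ are invertible.

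The key step is to observe that the product $xy$ is then invertible as well (with inverse $y^{-1}x^{-1}$). Applying Property~\ref{p-temp-0} in the other direction, any invertible element must belong to $\TT\setminus \TT^\circ$, so $xy\notin \TT^\circ$. This contradicts the hypothesis $xy\in \TT^\circ$, completing the contrapositive.

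There is no real obstacle here: the statement is an immediate structural consequence of the identification, built into Property~\ref{p-temp-0}, between the non-balanced (nonzero) thin elements and the invertible elements. The only thing to be careful about is that the case where $x$ or $y$ equals $\zero$ is handled correctly, but this is trivial since $\zero\in \TT^\circ$ gives the conclusion directly.
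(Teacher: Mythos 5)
Your proof is correct, and since the paper marks this proposition with \qed and no written argument (calling it "easy"), the contrapositive you give—both factors nonbalanced forces both invertible by Property~\ref{p-temp-0}, hence the product is invertible and thus nonbalanced—is exactly the intended reasoning. The side remark that $x,y\neq\zero$ is technically redundant (it already follows from $x,y\notin\TT^\circ$ because $\zero\in\TT^\circ$), but it does no harm.
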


\begin{prop}\label{p-temp-4}
Let $\TT$ satisfy Properties~\ref{p-temp-0} and~\ref{p-temp-3}. 
If $\unit \oplus xy\in \TT^\circ$ where  $|x|\leq \unit$ and $|y|\leq \unit$,
then $\unit \oplus x \in \TT^\circ$ or $\unit \oplus y\in \TT^\circ$.
\end{prop}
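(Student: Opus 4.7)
The plan is to argue by contradiction, combining the algebraic identity
\[
(\unit \oplus x)(\unit \oplus y) = \unit \oplus xy \oplus x \oplus y
\]
with the assumption $\unit \oplus xy \in \TT^\circ$, by using Properties~\ref{p-temp-0} and~\ref{p-temp-3} to detect that the right-hand side is balanced while the left-hand side is forced to be invertible.

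First I would compute moduli to make sure Property~\ref{p-temp-3} applies. Since $\RR$ is totally ordered idempotent and $\mu$ is a morphism, $|\unit \oplus xy| = \unit \oplus |xy|$ and $|xy|=|x||y|\leq \unit$, so $|\unit \oplus xy|=\unit$. Similarly $|x\oplus y| = |x|\oplus |y|\leq \unit$. Hence, taking $z := \unit \oplus xy \in \TT^\circ$ and $w:=x\oplus y$, the inequality $|w|\leq |z|$ holds, so Property~\ref{p-temp-3} gives $z \oplus w = (\unit \oplus x)(\unit \oplus y) \in \TT^\circ$.

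Then I would assume for contradiction that both $\unit \oplus x \notin \TT^\circ$ and $\unit \oplus y \notin \TT^\circ$. Property~\ref{p-temp-0} identifies $\TT \setminus \TT^\circ$ with the set of invertible elements $(\TT^\vee)^*$; hence each of $\unit \oplus x$ and $\unit \oplus y$ is invertible in $\TT$, and their product $(\unit \oplus x)(\unit \oplus y)$ is invertible as well. By Property~\ref{p-temp-0} again, an invertible element cannot lie in $\TT^\circ$, contradicting the conclusion reached in the previous step.

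There is essentially no obstacle here: the crux is spotting the factorization $\unit \oplus xy \oplus x \oplus y = (\unit\oplus x)(\unit\oplus y)$ and realizing that the ``extra'' summand $x\oplus y$ introduced by expanding the product is dominated by $\unit$ in modulus, so that Property~\ref{p-temp-3} absorbs it into the ideal $\TT^\circ$ without changing the status of being balanced. The rest is just the standard observation that products of invertibles are invertible, and Property~\ref{p-temp-0}'s precise description of $\TT^\circ$ as the complement of the invertibles among nonzero elements.
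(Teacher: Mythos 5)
Your proof is correct and takes essentially the same approach as the paper: the paper likewise expands $(\unit\oplus x)(\unit\oplus y)=\unit\oplus xy\oplus x\oplus y$, invokes Property~\ref{p-temp-3} to place this in $\TT^\circ$, and concludes via Proposition~\ref{p-temp-1} (which you re-derive inline from Property~\ref{p-temp-0} by the ``product of invertibles is invertible'' argument). The only difference is that you spell out the modulus computation justifying Property~\ref{p-temp-3}, which the paper leaves implicit.
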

\begin{proof}
We have $(\unit \oplus x)(\unit \oplus y) = \unit \oplus xy \oplus x \oplus y
\in \TT^\circ$ by Property~\ref{p-temp-3}. By Proposition~\ref{p-temp-1}
we must have $\unit \oplus x\in \TT^\circ$ or $\unit \oplus y \in \TT^\circ$.
\end{proof}

\begin{theorem}[Homogeneous balances]
\label{newTGMa}
Let $\TT$ be a semiring allowing weak balance elimination 
(Definition~\ref{def-tropextension}) and
\conv\ (Definition~\ref{monotone-def}),
 and satisfying Properties~\ref{p-temp-0}--\ref{p-temp-3}.
Let $A \in\M(\TT)$. Then there exists $x\in (\TT^\vee)^n\setminus\{\zero\}$
such that $Ax\balance \zero$ if and only if $\det A \balance \ooo$.
\end{theorem}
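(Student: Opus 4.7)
\emph{Only-if direction.} Applying the first assertion of Theorem~\ref{th-cramer} to $Ax\balance \zero$ gives $(\det A)x\balance A\adj\zero=\zero$. Choosing $i_0$ with $x_{i_0}\neq\zero$, Property~\ref{p-temp-0} ensures $x_{i_0}\in(\TT^\vee)^*=\TT\setminus\TT^\circ$, so Proposition~\ref{p-temp-1} applied to $(\det A)x_{i_0}\in\TT^\circ$ forces $\det A\in\TT^\circ$, that is, $\det A\balance\zero$.

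\emph{Key algebraic identity for the converse.} For the converse I first establish the identity $AA\adj\balance(\det A)\,\Id$ entrywise. The diagonal entries of $AA\adj$ equal $\det A$ by Laplace expansion, while for $i\neq j$ the entry $(AA\adj)_{ij}$ equals the determinant of the matrix $B$ obtained from $A$ by replacing row $j$ with row $i$; since the $i$th and $j$th rows of $B$ coincide, pairing each permutation $\sigma$ with $\sigma'=(\sigma(i)\,\sigma(j))\circ\sigma$ (opposite sign, same monomial weight) writes $\det B$ as a sum of multiples of $\unit\oplus\ominus\unit\in\TT^\circ$. Hence, whenever $\det A\in\TT^\circ$, every column of $A\adj$ solves $Ax\balance\zero$, and the whole task reduces to exhibiting a thin nonzero representative.

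\emph{Main construction.} I proceed by strong induction on $n$, the case $n=1$ being immediate. For $n\geq 2$, if $|\det A|=\zero$ then Property~\ref{p-temp-5} gives $\det A=\zero$ and $\per|A|=\zero$, so König's theorem applied to $|A|$ produces index sets $I,J\subseteq[n]$ with $|I|+|J|>n$ and $A_{ij}=\zero$ for $i\in I$, $j\in J$; a thin vector supported in $J$ annihilates rows in $I$, and the remaining $n-|I|<|J|$ equations form an over-determined subsystem handled non-trivially by Theorem~\ref{th-jac-homogeneous} after restricting to $n-|I|+1$ columns of $J$, recursing into König if all Cramer sub-determinants degenerate. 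Otherwise $|\det A|$ is invertible; after permuting rows one may assume $|\det A|=|A_{11}\cdots A_{nn}|$, so a pair $(i_0,j_0)$ with $|\det A(i_0,j_0)|$ invertible exists. Set $M:=A(i_0,j_0)$, $b:=\ominus(A_{lj_0})_{l\neq i_0}$, and apply the Jacobi algorithm of Theorem~\ref{theo-jacobi} to $My'\balance b$ to obtain a thin $y'\in(\TT^\vee)^{n-1}$; extend to $y\in(\TT^\vee)^n$ by $y_{j_0}:=\unit$ and $y_k:=y'_k$ otherwise. The balance $My'\balance b$ immediately gives $(Ay)_i\in\TT^\circ$ for $i\neq i_0$, while a cofactor computation of $(M\adj b)_k=(\ominus\unit)^{k+j_0}\det A(i_0,k)$, combined with the Cramer relation $(\det M)y'\balance M\adj b$ (first assertion of Theorem~\ref{th-cramer}) and the Laplace expansion of $\det A$ along row $i_0$, yields $(\det M)(Ay)_{i_0}\balance(\ominus\unit)^{i_0+j_0}\det A\in\TT^\circ$. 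When $(i_0,j_0)$ can be chosen with $\det M\notin\TT^\circ$, Property~\ref{p-temp-0} makes $\det M$ invertible, and since $\TT^\circ$ is an ideal, multiplying by $(\det M)^{-1}$ yields $(Ay)_{i_0}\in\TT^\circ$.

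\emph{Main obstacle.} The delicate subcase is the degenerate regime in which every minor $A(i_0,j_0)$ of invertible modulus satisfies $\det A(i_0,j_0)\in\TT^\circ$, so $\det M$ cannot be inverted and the Cramer--Jacobi step stalls. Here I would invoke the inductive hypothesis on such a balanced $M=A(i_0,j_0)$ to obtain a thin nonzero kernel $z\in(\TT^\vee)^{n-1}$ of $M$, then exploit Property~\ref{p-temp-2} on the row-$i_0$ Laplace expansion of $\det A$: either a single cofactor carries the balance (in which case $z$ lifts to $\TT^n$ by padding a $\zero$ entry at position $j_0$, the sub-dominant cofactor terms being absorbed via Property~\ref{p-temp-3}), or two cofactors of equal maximal modulus have their weighted sum in $\TT^\circ$ (yielding a two-piece combination of inductively constructed kernels whose signs match to balance the $i_0$-th equation). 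Carrying out this combinatorial case analysis while preserving thinness of the lifted vector is the technical crux of the argument.
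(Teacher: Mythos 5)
Your only-if direction, your reduction via Frobenius--K\"onig when $\det A=\zero$, and your Jacobi/Cramer argument in the case where one can find an $(i_0,j_0)$ with $\det A(i_0,j_0)\notin\TT^\circ$ all match the paper's proof in substance. The identity $AA\adj\bal(\det A)\Id$ is correct but is a red herring here: the columns of $A\adj$ are generically not thin, so it does not ``reduce the whole task to exhibiting a thin representative'' in any direct way, and you never actually use it.

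The genuine gap is exactly where you flag it. When $|\det A|$ is invertible but \emph{every} $(n-1)\times(n-1)$ minor $A(i_0,j_0)$ of invertible modulus is itself balanced, the Cramer--Jacobi step stalls: from $(\det M)(Ay)_{i_0}\bal\pm\det A$ with $\det M\in\TT^\circ$ you learn nothing about $(Ay)_{i_0}$, since the left side is automatically in the ideal $\TT^\circ$. Your proposed remedy does not close this. Applying the inductive hypothesis to a balanced minor $M=A(i_0,j_0)$ gives a thin nonzero $z$ with $Mz\bal\zero$, and padding with a $\zero$ at column $j_0$ handles rows $i\neq i_0$; but row $i_0$ of $Ax$ is $\sum_{k\neq j_0}A_{i_0k}z_k$, and there is no reason for this to lie in $\TT^\circ$: the vector $z$ was produced without any reference to row $i_0$, and Property~\ref{p-temp-3} lets you absorb terms of \emph{strictly smaller modulus} into a balanced element, not conjure a balanced element out of nothing. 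The ``two cofactors balance'' alternative supplied by Property~\ref{p-temp-2} has the same defect: you would need the two inductively built kernels to agree up to a thin scalar, which is not guaranteed. So the ``technical crux'' you point at is exactly what is missing, and the sketch does not survive scrutiny.

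The paper resolves this differently and more sharply. After normalizing so that $|A_{ij}|\leq\unit$, $|A_{ii}|=\unit$, it splits according to whether some optimal permutation $\sigma$ has $\bigodot_i A_{i\sigma(i)}\in\TT^\circ$. If yes, one diagonal entry is itself balanced (Proposition~\ref{p-temp-1}), and one fixes $x_1=\unit$ and solves the lower block by Theorem~\ref{th-jac}. If no, then since $\det A$ is balanced, Property~\ref{p-temp-2} produces two distinct optimal permutations of equal modulus with balanced signed sum; the factorization of Proposition~\ref{p-temp-4} singles out one cycle $c^m$ of the second permutation with $\unit\oplus(\ominus\unit)^{p_m-1}\bigodot_{i\in I_m}A_{i,c^m(i)}\bal\zero$, and one builds $z$ \emph{explicitly along that cycle} via $z_{i-1}=\ominus A_{i-1,i}z_i$ (possible because those entries are invertible). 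This gives $(Gz)_i\bal\zero$ on the cycle block, and Jacobi finishes the remaining rows with $|y|\leq\unit$, after which Property~\ref{p-temp-3} closes all the balances. No induction on minors is involved, and the cycle construction is what guarantees the one recalcitrant row is actually balanced. That idea --- using Proposition~\ref{p-temp-4} to isolate a single cycle and then running a chain of balances around it --- is precisely what is absent from your proposal.
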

\begin{proof}
The necessity of the condition
$\det A \balance \ooo$ follows by taking $b=\zero$ in the
first assertion of Theorem~\ref{th-cramer}.
Indeed, $Ax\bal \zero$ and  $x\in (\TT^\vee)^n$
imply $(\det A) x\bal \zero$. At least one entry of $x$ 
belongs to $(\TT^\vee)^*$. By Property~\ref{p-temp-0}, this entry is
invertible and so $\det A \bal \zero$. 

Let us prove that  $\det A \bal \zero$ implies the existence of $x\in (\TT^\vee)^n\setminus\{\zero\}$
such that $Ax\balance \zero$. 

{\it Case 1}: We first deal with the degenerate case in which $\det A = \zero$.
Then $\per|A|=|\det A|=\zero$, so applying
the Frobenius-K\"onig's theorem to $|A|$, and using 
Property~\ref{p-temp-5},
there exists a reordering of rows and columns such that the matrix
$A$ has the following form: 
\[
A=\begin{bmatrix} \zero_{p\times q}& B\\ C& D\end{bmatrix}
\enspace ,
\]
where $p+q=n+1$, $C\in \TT^{(n-p)\times q}$, and $\zero_{p\times q}$ denotes
the $p\times q$ zero matrix.
It suffices to look for a solution $x$
such that $x_{i}=\zero$ for all $q+1\leq i\leq n$. Denoting
by $y$ the vector with entries $x_1,\dots,x_q$, it remains
to solve the system $Cy\bal\zero$ which has $q$ unknowns 
and $n-p=q-1$ equations.
Thus if this system has a non-zero $(q-1)\times (q-1)$-minor, 
an application of Theorem~\ref{th-jac-homogeneous}
provides a non-zero \thin solution of the system $Cy\bal\zero$. 
Otherwise, we may assume by induction
that the sufficiency in the theorem (or at least its restriction 
to the case $\det A=\zero$) is already proved for systems of lower dimension.
Then we apply the induction to a square subsystem
$C'y'\bal\zero$ obtained by setting to zero one coordinate of $y$.
This completes the treatment of the degenerate case.

{\it Case 2}: We now assume that $\det A \neq \zero$, so $\per |A|=|\det A|\neq \zero$.
By Corollary~\ref{cor-but} applied to the matrix $C=|A|$, there exist
two diagonal matrices $D$ and $D'$ with invertible diagonal entries
in $\RR$
 and a permutation matrix $\Sigma$, such that $C'=\Sigma DCD'$ satisfies
$C'_{ij}\leq 1$ and $C'_{ii}=1$  for all $i,j\in [n]$.
Then applying the injection $\imath$ to the matrices $D$, $D'$ and $\Sigma$,
and using the fact that $\imath$ is a multiplicative morphism, we obtain
a matrix $A'=\imath(\Sigma) \imath(D) A \imath(D')$
such that $|A'|=C'$ and so satisfies the above properties.
Since a diagonal scaling of $A$ does not change the balanced character
of the determinant, nor the existence of a \thin solution of $Ax\bal \zero$,
we may always assume that $A=A'$. Thus $A$ satisfies:
\begin{equation}\label{anorm}
|A_{ij}|\leq 1,\qquad |A_{ii}|=1,\qquad \forall i,j\in [n] \enspace .
\end{equation}

{\it Subcase 2.1}: We shall first consider the subcase in which
 there is a permutation $\sigma$ such that 
\begin{align}\label{e-sat-perm}
 \bigodot_{i\in [n]} A_{i\sigma(i)} \bal \zero \text{ and }
\per |A|= |\det A| = |\bigodot_{i\in [n]} A_{i\sigma(i)}|
=\prod_{i\in [n]} |A_{i\sigma(i)}| \enspace .
\end{align}
Assume, possibly after permuting the rows of $A$, that $\sigma$ is the
identity permutation (this does not change Property~\eqref{e-sat-perm}).
Since $ \bigodot_{i\in [n]} A_{ii} \bal \zero $,
by Proposition~\ref{p-temp-1}, we must have $A_{jj}\bal \zero$ for some
$j\in [n]$ and we may always assume that $j=1$. Then $A$ can be written
in block form as 
\[
A= \begin{pmatrix} A_{11} & c \\ b & F \end{pmatrix} \enspace .
\]
We set $x_1:=\unit$ and define $y:=(x_2,\dots,x_n)^\top$ to be
a \thin solution of $b \oplus F y \bal \zero$ provided by Theorem~\ref{th-jac}.
Thus $|y|= |F\adj b|= |F|^*b$, and so, $|y_j|\leq \unit$ for
all $j$. Since $A_{11}\bal\zero$ and $|A_{11}| = \unit$,
it follows from Property~\ref{p-temp-3} and~\eqref{e-sat-perm}
that $A_{11} \oplus  c y \bal \zero$.
Hence, $x:=(x_1,y_1,\dots,y_{n-1})^\top$ is a non-zero \thin solution of $Ax\bal \zero$.

{\it Subcase 2.2}: It remains to consider the subcase in which no permutation $\sigma$ satisfies~\eqref{e-sat-perm}. 
Since $\det A \bal \zero$, by Property~\ref{p-temp-2},
there must exist two distinct permutations $\sigma$ and $\pi$ such that
\[
|\det A |= 
 |\bigodot_{i\in [n]} A_{i\sigma(i)} | = |\bigodot_{i\in [n]} A_{i\pi(i)} | \enspace,
\]
and
\[
\operatorname{sgn}(\sigma)\bigodot_{i\in [n]} A_{i\sigma(i)}
\oplus \operatorname{\sgn}(\pi)\bigodot_{i\in [n]} A_{i\pi(i)}  \bal \zero.
\]
We may always assume, possibly after permuting the rows of $A$,
that $\sigma$ is the identity permutation 
(this does not change Property~\eqref{e-sat-perm}).
Moreover, since the identity permutation $\sigma$ does not
 satisfy~\eqref{e-sat-perm},
but satisfies $|\bigodot_{i\in [n]} A_{ii} |=|\det A|$,
we deduce that $\bigodot_{i\in [n]} A_{ii} \not\in \TT^\circ$.
Hence by Property~\ref{p-temp-0} all  $A_{ii}$ are invertible.
Multiplying the system by the inverse of the diagonal submatrix of $A$,
we get a new matrix $A$ with all the above properties and
such that $A_{ii}=\unit$ for all $i\in [n]$. Thus
\[
\unit \oplus \operatorname{\sgn}(\pi)\bigodot_{i\in [n]} A_{i\pi(i)}  \bal \zero.
\]
Let us decompose $\pi$ as a product of disjoint cycles $c^1,\dots , c^k$, 
with supports $I_1,\dots,I_k$ of cardinalities $p_1,\dots,p_k$, respectively.
Then 
\[
\unit \oplus \bigodot_{m\in[k]} (\ominus \unit)^{p_m-1} \bigodot_{i\in I_m} A_{i, c^m(i)}  \bal \zero.
\]
It follows from~\eqref{anorm} and Proposition~\ref{p-temp-4} 
that there exists
a cycle $c^m$ such that
\begin{align}\label{e-compat}
\unit \oplus (\ominus \unit)^{p_m-1} \bigodot_{i\in I_m} A_{i, c^m(i)}  \bal \zero.
\end{align}
We may assume, without loss of generality, that $I_m= \{1,\ldots,p_m\}$,
with $c^m(1)=2,\dots,c^m(p_m-1)=p_m,c^m(p_m)=1$. Then we define 
inductively the entries $z_{p_m},\dots,z_1$ of the
vector $z\in (\TT^\vee)^{p_m}$ by
\[
z_{p_m}=\unit, \qquad  z_{p_{m}-1} \oplus A_{p_m -1 ,p_m} z_{p_m} \bal \zero,
\qquad 
\dots \qquad 
z_{1} \oplus A_{1 2 } z_{2} \bal \zero . 
\]
Since the permutation $\pi$ does not  satisfy~\eqref{e-sat-perm},
but satisfies $|\bigodot_{i\in [n]} A_{i\pi(i)} |=|\det A|$,
the entries $A_{p_m-1,p_m}$, \dots, $A_{12}$ are all invertible
in $\TT$. Hence, the former relations define the vector $z$ uniquely. Actually,
\begin{equation}\label{z_p_m}
z_{p_m}=\unit, \qquad  z_{p_{m}-1}=\ominus A_{p_m -1 ,p_m} z_{p_m} ,
\qquad 
\dots  , \qquad 
z_{1} =\ominus A_{1 2 } z_{2} 
\end{equation}
and we observe that $|z_i| = 1$ for all $i\in[p_m]$.
Moreover, from~\eqref{e-compat} and~\eqref{z_p_m} we deduce that
\begin{equation} \label{z_p_m_1}
z_{p_{m}} \oplus A_{p_m ,1} z_{1} \bal \zero.
\end{equation}
Let $G$ denote the $p_m \times p_m$ top-left submatrix of $A$. 
It follows from Formulas~\eqref{z_p_m} and~\eqref{z_p_m_1} and from
Property~\ref{p-temp-3} that
$(Gz)_i\bal \zero$ holds for all $i\in[p_m]$.

Let us now write $A$ in the block form
\[
A= \begin{pmatrix} G & * \\ V & F \end{pmatrix}
\]
where $V$ and $F$ are of sizes $q\times p_m$ and $q\times q$, 
respectively, with $q:=n-p_m$,
and look for a solution of the form $x= (z_1,\dots,z_{p_m}, y_1,\dots, y_q)^\top$.
Then we may choose for $y=(y_1,\ldots, y_q)^\top$ a \thin vector solution of $Vz\oplus Fy \bal \zero$ given by Theorem~\ref{th-jac}. Thus  $|y|= |F\adj Vz|= |F|^*|Vz|$. Hence, $|y_j|\leq \unit$ for
all $j\in [q]$. It follows from
Property~\ref{p-temp-3} that $A x \bal \zero$.
\end{proof}

\begin{corollary}\label{cor-dominant-unique}
Let $\TT$ be a semiring allowing strong balance elimination 
(Definition~\ref{def-tropextension}) and
\conv\ (Definition~\ref{monotone-def}), and satisfying Properties~\ref{p-temp-0}--\ref{p-temp-3}.
Let $A \in\M(\TT)$ such that $\det A$ is invertible and let $b\in \TT^n$. 
Then the system $Ax\bal b$ has a unique \thin
solution if and only if $A\adj b$ is \thinp.
\end{corollary}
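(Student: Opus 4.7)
The plan is to handle the two implications separately. The ``if'' direction is immediate from the second assertion of Theorem~\ref{th-cramer}: when $A\adj b$ is thin, $\det A$ is invertible, and $\TT$ allows strong balance elimination, that assertion provides the unique thin solution $\hat x := (\det A)^{-1} A\adj b$.

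For the ``only if'' direction, I would argue by contrapositive: assuming some coordinate $(A\adj b)_i \in \TT^\circ \setminus \{\zero\}$, I exhibit two distinct thin solutions of $Ax \bal b$. Theorem~\ref{th-jac} (applicable because $|\det A|$ is invertible in $\RR$) yields a first thin solution $x$ with $|x| = |\det A|^{-1} |A\adj b|$; in particular $|x_i| \neq \zero$, so $x_i \in (\TT^\vee)^*$ is invertible by Property~\ref{p-temp-0}. To build a second solution, I would revisit the Jacobi iteration of Theorem~\ref{theo-jacobi} after the standard dominant-diagonal reduction of Remark~\ref{rem-dom}. At each step, $x^{k+1}_j$ is chosen via Property~\ref{pty_order} from the set of thin $z$ satisfying $D_{jj} z \bala (\ominus N x^k \oplus b)_j$ together with the sandwich $x^k_j \preceq z \preceq \widetilde{D}_{jj}(\ominus N x^k \oplus b)_j$. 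Because $\hat x_i$ is balanced with non-zero invertible modulus, this choice set admits at least two elements at some step of the iteration: in $\smax$-like semirings via a second thin element sharing the common non-zero modulus, and in $\Ti$-like semirings via a strictly smaller admissible modulus whose validity is verified using the ideal-absorption Property~\ref{p-temp-3}. Selecting this alternative element and completing the iteration via Theorem~\ref{theo-jacobi} would yield a second thin solution $x'\neq x$.

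The principal obstacle I foresee is carrying out this ``alternative-choice'' step uniformly across all semirings satisfying Properties~\ref{p-temp-0}--\ref{p-temp-3}. Two points must be addressed: that the alternative thin element genuinely satisfies the Jacobi constraints at the relevant step (combining Property~\ref{pty_order} with Property~\ref{p-temp-3} to absorb the perturbation into the already-balanced right-hand side), and that the completed alternative iteration converges to an $x'$ really distinct from $x$ rather than re-synchronizing with it. Tracking the perturbation through the remaining iterations, using the monotonicity of the Jacobi sequence together with the Kleene-star expression of Lemma~\ref{lem-meta-yoeli}, should suffice to close the argument; a natural fall-back is to pass through the augmented system $[A\mid\ominus b]\tilde x\bal\zero$ and invoke Corollary~\ref{cor-cramer}, which gives uniqueness-up-to-scalar precisely when its Cramer vector is thin, i.e.\ precisely when $A\adj b$ is thin.
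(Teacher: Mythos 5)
The ``if'' direction of your proposal is correct and coincides with the paper's: $A\adj b$ thin together with $\det A$ invertible and strong balance elimination gives unique thin solvability by the second assertion of Theorem~\ref{th-cramer}.

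For the ``only if'' direction there is a genuine gap. Your primary plan — tracing a bifurcation inside the Jacobi iteration of Theorem~\ref{theo-jacobi} — is the hard part that you yourself identify as unresolved: you never establish, uniformly across the semirings covered by Properties~\ref{p-temp-0}--\ref{p-temp-3}, that the Property~\ref{pty_order} choice set actually contains a second admissible thin element at some step, nor that the perturbation propagates to a final vector genuinely distinct from the original. (It needn't even be the case that the second solution is reachable by the Jacobi scheme with this fixed Jacobi decomposition.) Your proposed fall-back also does not close it: Corollary~\ref{cor-cramer} shows that \emph{when} the Cramer vector is thin and has an invertible entry, every thin solution is a thin multiple of it — it is a uniqueness statement, not a non-uniqueness statement, so it says nothing about the existence of two distinct solutions when the Cramer vector fails to be thin. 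The ingredient you are missing is Theorem~\ref{newTGMa} (the homogeneous balance theorem, i.e.\ the generalized Gondran--Minoux theorem). The paper's argument is: assume WLOG $(A\adj b)_n\in\TT^\circ\setminus\{\zero\}$; Theorem~\ref{th-jac} gives one thin solution $x'$ with $|x'_n|=|(A\adj b)_n|\neq\zero$, hence $x'_n\neq\zero$; then set $B=(A_{|n)}\ \ominus b)\in\M(\TT)$, note $\det B=(A\adj b)_n\bal\zero$, and invoke Theorem~\ref{newTGMa} to get a non-zero thin $y$ with $By\bal\zero$. One checks $y_n\neq\zero$ (else $(y_1,\dots,y_{n-1},\zero)^\top$ would be a non-zero thin solution of $Az\bal\zero$, contradicting invertibility of $\det A$), normalizes $y_n=\unit$, and then $x''=(y_1,\dots,y_{n-1},\zero)^\top$ is a second thin solution of $Ax\bal b$, distinct from $x'$ since $x''_n=\zero\neq x'_n$. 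This bypasses the bifurcation analysis entirely and is where the hypothesis that $\TT$ allows \conv{} and satisfies Properties~\ref{p-temp-0}--\ref{p-temp-3} is actually used.
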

\begin{proof}
The sufficient condition follows from the second
assertion of Theorem~\ref{th-cramer}.
For the necessary condition, let us assume that $A\adj b$ is not
\thinp, and prove that there exist at least two different solutions.
Since $A\adj b$ is not \thinp, there exists $i\in [n]$ such that
$(A\adj b)_i\in \TT^\circ\setminus\{\zero\}$.
Assume without loss of generality that $i=n$.

From Theorem~\ref{th-jac}, since $\det A\neq \zero$, 
 there exists a solution $x'$ of $Ax\bal b$ such that
$|x'|=|A\adj b|$, thus $|x'_n|=|(A\adj b)_n|\neq 0$ hence $x'_n\neq \zero$.
Now let us construct a solution $x''\neq \zero$ such that $x''_n=\zero$.
Let $B$ be the block matrix $B=(A_{|n)} \ominus b)$.
We have $\det B=(A\adj b)_n\bal \zero$, so by Theorem~\ref{newTGMa}, there 
exists $y\in (\TT^\vee)^n\setminus\{\zero\}$ such that $B y\balance \zero$.
Let us show that $y_n\neq \zero$. 
Indeed, if $y_n=\zero$ then $y$ yields a thin solution of $Ax \bal \zero$,
and since $\det A$ is invertible, this implies that $y=\zero$, a contradiction.
So $y_n\neq \zero$ and since $y_n\in \TT^\vee$, $y_n$ is invertible. So
multiplying $y$ by $y_n^{-1}$ we obtain a solution
of $B y\balance \zero$ such that $y_n=\unit$.
Then the vector $x''=(y_1,\ldots, y_{n-1}, \zero)^\top$ is a solution
of $A x\bal b$ such that $x''_n=\zero$.
This shows that the system  $A x\bal b$ has at least two non-zero solutions.
\end{proof}

\begin{remark}
Proposition~8.8 of~\cite{AGG08} gives a $6 \times 7$ matrix with entries
in $\rmax$, such that there is no signed no-zero row vector $x$ such that $xA\bal \zero$, but all maximal determinants taken from $A$ are balanced. This shows
that Theorem~\ref{newTGMa} cannot be extended to the rectangular case.
\end{remark}

\section{Systems of balances and intersections of signed hyperplanes}\label{sec-geom}
We now give a geometrical interpretation of the previous results.
We first consider, as in Section~\ref{sec-elim},
a semiring  $\SS$ with symmetry and a thin set
$\SS^{\vee}$. 
We call {\em hyperplane} of $\SS^{n}$ a set of the form
\begin{align}\label{e-def-genh}
H= \{x\in (\SS^\vee)^n\mid \bigoplus_{i\in [n]} \paramh_i x_i \balance \zero\}
\end{align}
where $\paramh\in (\SS^\vee)^n$ is a non-zero vector.

\begin{example}\label{ex-htrop}
When $\SS$ is the bi-valued tropical semiring $\To_2$,
$\SS^\vee$
can be identified to $\rmax$, then
$H\cap \Re^n$ coincides with
the tropical hyperplane~\eqref{e-def-h}.
\end{example}

\begin{example}\label{ex-hsign}
Assume now that $\SS$ is the symmetrized tropical semiring $\smax$, so that
$\paramh \in (\smax^\vee)^n$, with $\smax^\vee = \smax^\oplus \cup \smax^\ominus$. Identifying $\smax^\oplus$ with $\rmax$, and
setting $I:= \{i\in[n]\mid \paramh_i \in \smax^\oplus\}$ and $J:=[n]\setminus I$,
it is readily seen that $H \cap \rmax^n  = H\signh$ is the signed
tropical hyperplane defined in~\eqref{e-def-hsign}. 
\end{example}

The following result is a simple consequence of Theorem~\ref{th-cramer}.
We say that $n-1$ vectors $v^1,\dots,v^{n-1}$ of $(\SS^\vee)^n$ are
in {\em general position}
if every $(n-1)\times (n-1)$ minor of the $n\times (n-1)$
matrix $M$ with columns $v^1,\dots,v^{n-1}$ is thin and non-zero
($\in (\SS^\vee)^*$).
Similarly, we say that $n-1$ hyperplanes of $\SS^n$ are
in {\em general position} if the vectors of parameters
of these hyperplanes are in general position.
\begin{theorem}[Geometric form of Cramer theorem]\label{th-cramer-geom}
Let $\SS$ be a semiring with a thin set
$\SS^{\vee}$, allowing strong balance elimination
(Definition~\ref{def-tropextension}). Assume
that $(\SS^\vee)^*$ is the set of invertible elements of $\SS$. Then
\begin{itemize}
\item[Primal.]
Any $n-1$ vectors of $(\SS^{\vee})^{n}$ in general position 
are contained in a unique hyperplane.
\item[Dual.] Any $n-1$ hyperplanes of $(\SS^{\vee})^{n}$ in general position 
contain a non-zero vector which is unique
up to an invertible scalar multiple. 
\end{itemize}
\end{theorem}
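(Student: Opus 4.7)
Both parts of the theorem reduce to the following algebraic claim: given an $(n-1) \times n$ matrix $A$ with entries in $\SS^\vee$ whose $n$ maximal minors $\det A_{|k)}$, $k \in [n]$, all lie in $(\SS^\vee)^*$, the homogeneous balance $Ax \bal \zero$ admits a non-zero thin solution, unique up to an invertible scalar. For the primal I would take $A$ to be the transpose of the $n \times (n-1)$ matrix whose columns are $v^1, \dots, v^{n-1}$: a hyperplane of parameter $\paramh$ contains every $v^k$ exactly when $A\paramh \bal \zero$. For the dual I would take $A$ to be the $(n-1) \times n$ matrix whose rows are the given parameter vectors; a thin $x$ lies in every hyperplane precisely when $Ax \bal \zero$. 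Since minors are transposition-invariant, general position of the data translates in both cases into invertibility of the $n$ maximal minors of $A$.

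The uniqueness assertion is almost immediate from Corollary~\ref{cor-cramer}: the Cramer vector $\hat x$ with $\hat x_k := (\ominus \unit)^{n-k}\det A_{|k)}$ has every entry in $(\SS^\vee)^*$ by general position, so it is thin with at least one invertible entry, and the corollary asserts that every thin solution of $Ax \bal \zero$ is a thin multiple of $\hat x$. By the standing hypothesis that $(\SS^\vee)^*$ coincides with the invertible elements of $\SS$, any non-zero thin scalar is itself invertible, which yields uniqueness up to an invertible multiple in both parts.

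For existence I plan to exhibit a thin solution directly by normalizing $x_n := \unit$ and writing $y = (x_1,\dots,x_{n-1})^\top$, which converts $Ax \bal \zero$ into the square affine system $A_{|n)}\, y \bal \ominus A_{\cdot n}$. Since $\det A_{|n)}$ is invertible by general position, and since a short Laplace-expansion computation identifies the entries of $A_{|n)}\adj(\ominus A_{\cdot n})$ with the remaining $n-1$ maximal minors of $A$, up to signs --- hence all in $(\SS^\vee)^*$, in particular thin --- the second part of Theorem~\ref{th-cramer} applies and delivers a unique thin solution $y$. Appending $x_n = \unit$ produces the desired non-zero thin solution of $Ax \bal \zero$.

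The main point requiring care is the Laplace-expansion identity that connects $A_{|n)}\adj A_{\cdot n}$ with the remaining maximal minors of $A$, since on it hinges the verification that the thinness hypothesis of Theorem~\ref{th-cramer}(2) is genuinely met; everything else is a transparent geometric repackaging of Corollary~\ref{cor-cramer} and Theorem~\ref{th-cramer}.
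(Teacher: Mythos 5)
Your argument follows essentially the same route as the paper: both reduce the primal and dual statements to the homogeneous Cramer result of Corollary~\ref{cor-cramer}, applied (up to transposition and reindexing) to the $(n-1)\times n$ matrix built from the data. You are somewhat more explicit than the paper about the existence half---normalizing a coordinate to pass to an $(n-1)\times(n-1)$ affine system, invoking Theorem~\ref{th-cramer}(2), and noting the Laplace-expansion identity that makes $A_{|n)}\adj(\ominus A_{\cdot n})$ thin---whereas the paper compresses all of this into ``the conclusion follows from the latter corollary.''
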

\begin{proof}
We prove the primal statement
(the dual statement follows along the same lines).
Assume that the vectors $v^1,\dots,v^{n-1}$ are included in the hyperplane
$H$ of~\eqref{e-def-genh}.
Then the vector $\paramh$ of parameters of this hyperplane,
thought of as a row vector, satisfies $\paramh M\balance \zero$
where $M$ is as above.
Up to a transposition, and to the replacement of $n$ by $n+1$, this
system is of the type considered in Corollary~\ref{cor-cramer},
and the conclusion follows from the latter corollary.
\end{proof}
It follows from Examples~\ref{ex-htrop} and~\ref{ex-hsign} that 
Theorems~\ref{th-1} and~\ref{th-3} stated
in the introduction can be re-obtained
by specializing the primal form in Theorem~\ref{th-cramer-geom}
to $\SS=\To_2$ or $\SS=\smax$.

Similarly, Theorem~\ref{newTGMa} admits the following
geometric interpretation. The derivation is straightforward.
\begin{theorem}[Singular matrices]\label{th-hom-geom}
Let $\TT$ be a semiring allowing weak balance elimination 
(Definition~\ref{def-tropextension}) and
\conv\ (Definition~\ref{monotone-def}),
 and satisfying Properties~\ref{p-temp-0}--\ref{p-temp-3}.
Then 
\begin{itemize}
\item[Primal.]
A collection of $n$ vectors $v^1,\dots,v^n$ of $\TT^n$ is contained
in a hyperplane if and only if the determinant of the matrix
$(v^1,\dots,v^n)$ is balanced;
\item[Dual.]
A collection of $n$ hyperplanes of $\TT^n$
\[ H^j=\{x\in (\TT^\vee)^n\mid \bigoplus_{i\in[n]}a^{j}_i x_i \balance \zero\},
\qquad j\in[n] \enspace,
\]
contains a non-zero
vector if and only if the determinant of the matrix
$(a_i^j)_{i,j\in [n]}$ is balanced.
\qed
\end{itemize}
\end{theorem}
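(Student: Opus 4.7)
The plan is to translate both statements into the homogeneous balance system $Ax \bal \zero$ of Theorem~\ref{newTGMa}, relying on the fact that the determinant of a matrix is invariant under transposition. Throughout, I will use that $\TT$ satisfies the hypotheses of Theorem~\ref{newTGMa}, and in particular Property~\ref{p-temp-0}, so that $(\TT^\vee)^* = \TT \setminus \TT^\circ$ coincides with the set of invertible elements; this is what makes ``non-zero thin'' and ``invertible'' interchangeable in the arguments below.

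For the dual statement, let $A = (a^j_i)_{j,i\in[n]} \in \MM_n(\TT)$ be the matrix whose $j$th row encodes the parameters of the hyperplane $H^j$. By the very definition~\eqref{e-def-genh} of a hyperplane, a vector $x\in(\TT^\vee)^n$ belongs to $\bigcap_{j\in[n]} H^j$ if and only if $Ax \bal \zero$. Thus the intersection contains a non-zero thin vector if and only if there exists $x \in (\TT^\vee)^n \setminus \{\zero\}$ with $Ax \bal \zero$, which by Theorem~\ref{newTGMa} is equivalent to $\det A \bal \zero$.

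For the primal statement, let $V \in \MM_n(\TT)$ be the matrix with columns $v^1,\dots,v^n$. Saying that $v^1,\dots,v^n$ lie in a common hyperplane means that there exists a non-zero $\paramh \in (\TT^\vee)^n$ such that $\bigoplus_i \paramh_i v^j_i \bal \zero$ for every $j \in [n]$, i.e., that the row vector $\paramh$ satisfies $\paramh\, V \bal \zero$. Transposing, this is the existence of $\paramh^\top \in (\TT^\vee)^n \setminus \{\zero\}$ with $V^\top \paramh^\top \bal \zero$. Applying Theorem~\ref{newTGMa} to $A := V^\top$, such a $\paramh$ exists if and only if $\det(V^\top) \bal \zero$; since the determinant formula in Definition~\ref{de:detS} is symmetric in rows and columns (the map $\sigma \mapsto \sigma^{-1}$ is a sign-preserving involution of $\allperm_n$), we have $\det V^\top = \det V$, so the condition becomes $\det V \bal \zero$, as required.

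I do not expect any real obstacle here: the statement is a clean geometric repackaging of Theorem~\ref{newTGMa}, and the only point deserving explicit mention is the row/column symmetry of the determinant, which lets the primal formulation (row-vector equation $\paramh V \bal \zero$) be reduced to the column-vector setting in which Theorem~\ref{newTGMa} is stated. The hypotheses of Theorem~\ref{newTGMa} (weak balance elimination, convergence of monotone algorithms, and Properties~\ref{p-temp-0}--\ref{p-temp-3}) are assumed in the statement, so there is nothing further to verify.
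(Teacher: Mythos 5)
Your proof is correct and follows exactly the route the paper intends: the paper declares the derivation from Theorem~\ref{newTGMa} ``straightforward'' and does not write it out, so your translation of the primal and dual statements into $Ax\bal\zero$ (using $\det V^\top=\det V$ for the primal) is precisely the expected argument.
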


When $\SS$ or $\TT$ are equal to $\smax$, the dual
statements in Theorems~\ref{th-cramer-geom} and~\ref{th-hom-geom} 
turn out to have a geometric interpretation which 
can be stated elementarily, without introducing
the symmetrized tropical semiring.

This interpretation relies on the notion of 
{\em sign-transformation} of a signed 
hyperplane. Such a transformation is specified by a sign
pattern $\epsilon\in \{\pm 1\}^n$, it corresponds, in loose terms,
to putting variables $x_i$ such that $\epsilon_i=-1$ on the other side of the equality. Formally, the sign-transformation of pattern $\epsilon$
transforms the signed-hyperplane $\signed{H}$ to
\[
\signed{H}(\epsilon) = \{x\in \rmax^{n} \mid \max_{\substack{i\in I, \epsilon_i = 1 \text{ or}\\ i\in J,\;\epsilon_i =-1}}
(a_i + x_i) =
\max_{\substack{j\in J, \epsilon_j = 1 \text{ or}\\ j\in I,\;\epsilon_j =-1}}
(a_j + x_j) 
\} \enspace .
\]
Figure~\ref{fig-transform} gives an illustration of this notion.
\begin{figure}[htbp]
\begin{picture}(0,0)%
\includegraphics{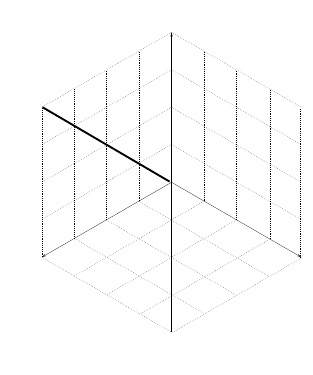}%
\end{picture}%
\setlength{\unitlength}{592sp}%
\begingroup\makeatletter\ifx\SetFigFont\undefined%
\gdef\SetFigFont#1#2#3#4#5{%
  \reset@font\fontsize{#1}{#2pt}%
  \fontfamily{#3}\fontseries{#4}\fontshape{#5}%
  \selectfont}%
\fi\endgroup%
\begin{picture}(9930,11946)(811,-11023)
\put(5926,164){\makebox(0,0)[lb]{\smash{{\SetFigFont{10}{12.0}{\rmdefault}{\mddefault}{\updefault}{\color[rgb]{0,0,0}$x_3$}%
}}}}
\put(10726,-7636){\makebox(0,0)[lb]{\smash{{\SetFigFont{10}{12.0}{\rmdefault}{\mddefault}{\updefault}{\color[rgb]{0,0,0}$x_2$}%
}}}}
\put(1651,-10711){\makebox(0,0)[lb]{\smash{{\SetFigFont{10}{12.0}{\rmdefault}{\mddefault}{\updefault}{\color[rgb]{0,0,0}$x_1=\max(x_2,x_3)$}%
}}}}
\put(826,-7411){\makebox(0,0)[lb]{\smash{{\SetFigFont{10}{12.0}{\rmdefault}{\mddefault}{\updefault}{\color[rgb]{0,0,0}$x_1$}%
}}}}
\end{picture}%
\ \ \ \ \ \ \ \  
\begin{picture}(0,0)%
\includegraphics{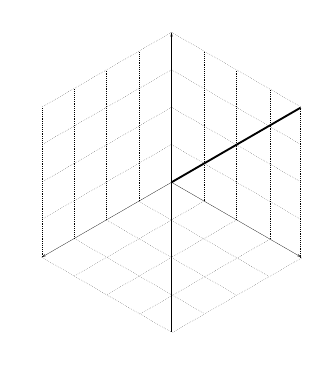}%
\end{picture}%
\setlength{\unitlength}{592sp}%
\begingroup\makeatletter\ifx\SetFigFont\undefined%
\gdef\SetFigFont#1#2#3#4#5{%
  \reset@font\fontsize{#1}{#2pt}%
  \fontfamily{#3}\fontseries{#4}\fontshape{#5}%
  \selectfont}%
\fi\endgroup%
\begin{picture}(9930,11946)(811,-11023)
\put(5926,164){\makebox(0,0)[lb]{\smash{{\SetFigFont{10}{12.0}{\rmdefault}{\mddefault}{\updefault}{\color[rgb]{0,0,0}$x_3$}%
}}}}
\put(10726,-7636){\makebox(0,0)[lb]{\smash{{\SetFigFont{10}{12.0}{\rmdefault}{\mddefault}{\updefault}{\color[rgb]{0,0,0}$x_2$}%
}}}}
\put(1651,-10711){\makebox(0,0)[lb]{\smash{{\SetFigFont{10}{12.0}{\rmdefault}{\mddefault}{\updefault}{\color[rgb]{0,0,0}$x_2=\max(x_1,x_3)$}%
}}}}
\put(826,-7411){\makebox(0,0)[lb]{\smash{{\SetFigFont{10}{12.0}{\rmdefault}{\mddefault}{\updefault}{\color[rgb]{0,0,0}$x_1$}%
}}}}
\end{picture}%
\ \ \ \ \ \ \ \  
\begin{picture}(0,0)%
\includegraphics{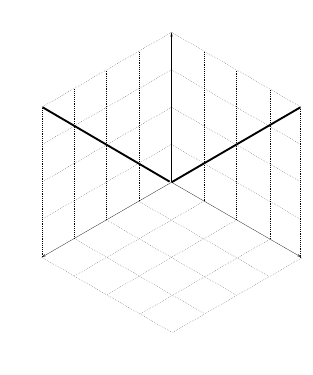}%
\end{picture}%
\setlength{\unitlength}{592sp}%
\begingroup\makeatletter\ifx\SetFigFont\undefined%
\gdef\SetFigFont#1#2#3#4#5{%
  \reset@font\fontsize{#1}{#2pt}%
  \fontfamily{#3}\fontseries{#4}\fontshape{#5}%
  \selectfont}%
\fi\endgroup%
\begin{picture}(9930,11946)(811,-11023)
\put(5926,164){\makebox(0,0)[lb]{\smash{{\SetFigFont{10}{12.0}{\rmdefault}{\mddefault}{\updefault}{\color[rgb]{0,0,0}$x_3$}%
}}}}
\put(10726,-7636){\makebox(0,0)[lb]{\smash{{\SetFigFont{10}{12.0}{\rmdefault}{\mddefault}{\updefault}{\color[rgb]{0,0,0}$x_2$}%
}}}}
\put(1651,-10711){\makebox(0,0)[lb]{\smash{{\SetFigFont{10}{12.0}{\rmdefault}{\mddefault}{\updefault}{\color[rgb]{0,0,0}$x_3=\max(x_1,x_2)$}%
}}}}
\put(826,-7411){\makebox(0,0)[lb]{\smash{{\SetFigFont{10}{12.0}{\rmdefault}{\mddefault}{\updefault}{\color[rgb]{0,0,0}$x_1$}%
}}}}
\end{picture}%
\caption{Sign-transformation of a signed hyperplane}\label{fig-transform}
\end{figure}
The following theorem follows readily from the dual statement
in Theorem~\ref{th-cramer-geom}, it can also be derived from
\cite[Th.~6.1]{Plus}.
\begin{theorem}\label{thnew-signt}
Given $n-1$ signed tropical hyperplanes $H_1\signh,\ldots,H_{n-1}\signh$,
in general position, there is a unique sign pattern $\epsilon$
such that the transformed hyperplanes $H_1\signh(\epsilon),\ldots,H_{n-1}\signh(\epsilon)$, meet at a non-zero
vector. Moreover, such a vector is unique up to an additive constant.
\end{theorem}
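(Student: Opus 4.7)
The plan is to derive Theorem~\ref{thnew-signt} by specializing the dual form of Theorem~\ref{th-cramer-geom} to $\SS=\smax$ and then translating the balance relation over $\smax$ into the equality defining the sign-transformed hyperplanes.

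First, via Example~\ref{ex-hsign}, each signed tropical hyperplane $H_j\signh$ with partition $[n]=I_j\cup J_j$ and moduli $a^j_i\in\Re$ corresponds to a hyperplane $H_j\subset(\smax^\vee)^n$ in the sense of~\eqref{e-def-genh}, with parameter vector $\alpha^j\in(\smax^\vee)^n$ whose $i$-th entry lies in $\smax^\oplus$ if $i\in I_j$ and in $\smax^\ominus$ if $i\in J_j$, with modulus $a^j_i$. The sign-general-position hypothesis asserts that every $(n-1)\times(n-1)$ tropical Cramer minor taken from these $\alpha^j$'s is sign-nonsingular and of non-zero modulus, i.e., lies in $(\smax^\vee)^*$; since the invertible elements of $\smax$ are exactly $(\smax^\vee)^*$, this is precisely the general-position hypothesis of Theorem~\ref{th-cramer-geom}.

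Second, the dual part of Theorem~\ref{th-cramer-geom} yields a non-zero vector $x\in(\smax^\vee)^n$, unique up to an invertible scalar multiple, satisfying $\bigoplus_i\alpha^j_i x_i\balance\zero$ for each $j\in[n-1]$. Corollary~\ref{cor-cramer} identifies $x$, up to a scalar, with the vector of alternating Cramer minors of the parameter matrix; each such minor is invertible by hypothesis, so every coordinate $x_i$ has a well-defined sign $\epsilon_i\in\{+1,-1\}$ and modulus $y_i:=|x_i|\in\Re$. Unwinding the definition of $\bal$ in $\smax$, the balance $\bigoplus_i\alpha^j_i x_i\balance\zero$ is equivalent to the statement that the maximum of $a^j_i+y_i$ over the indices $i$ with $\sgn(\alpha^j_i)\,\epsilon_i=+1$ coincides with the maximum of $a^j_i+y_i$ over the indices $i$ with $\sgn(\alpha^j_i)\,\epsilon_i=-1$. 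Since $\sgn(\alpha^j_i)=+1$ precisely when $i\in I_j$, the two index sets are exactly $\{i\in I_j:\epsilon_i=+1\}\cup\{i\in J_j:\epsilon_i=-1\}$ and its complement, so the condition is precisely the defining equality of $H_j\signh(\epsilon)$. Hence $y\in\bigcap_j H_j\signh(\epsilon)$, and $y$ is non-zero because $x\neq\zero$.

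Finally, for uniqueness, suppose $\epsilon'$ and a non-zero $y'\in\rmax^n$ satisfy $y'\in\bigcap_j H_j\signh(\epsilon')$. Setting $x'_i:=\epsilon'_i\odot y'_i$ and reversing the unwinding above produces a non-zero $x'\in(\smax^\vee)^n$ with $\bigoplus_i\alpha^j_i x'_i\balance\zero$ for every $j$. The dual uniqueness clause of Theorem~\ref{th-cramer-geom} gives $x'=\lambda\odot x$ for some invertible $\lambda\in\smax^\vee$ of sign $\eta\in\{+1,-1\}$, so $\epsilon'=\eta\epsilon$; but globally flipping $\epsilon$ merely swaps the two sides of the defining equality of $H_j\signh(\epsilon)$ without changing the set, so $\epsilon$ and $-\epsilon$ produce the same transformed hyperplanes and the sign pattern is unique, while $y'=|\lambda|\odot y$ shows $y$ is unique up to an additive constant. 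The only delicate step in this plan is the sign-by-sign unwinding in the third step, which is a routine case analysis of the product and balance laws in $\smax$.
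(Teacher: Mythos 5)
Your proof is correct and follows essentially the same route as the paper: identify each signed hyperplane $H_j\signh$ with a hyperplane $H_j$ over $\smax$, observe that a thin vector $x$ lies in $H_j$ precisely when $|x|$ lies in $H_j\signh(\sgn x)$, and then invoke the dual form of Theorem~\ref{th-cramer-geom}. The paper states this in three lines; your write-up spells out the unwinding of the balance relation and the $\pm\epsilon$ ambiguity, which the paper leaves implicit, but the underlying argument is the same.
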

\begin{proof}
Let $H$ be the hyperplane defined by~\eqref{e-def-genh}, 
and let $H\signh= H\cap \rmax^n$ as in Example~\ref{ex-hsign}.
It is easily seen that
a vector $x\in (\smax^\vee)^n$ belongs to $H$ if and only if the vector
$|x|$ belongs to the transformed signed tropical hyperplane 
$H\signh(\epsilon)$ where $\epsilon$ is the sign vector of $x$.
Then the theorem follows from
the dual form of Theorem~\ref{th-cramer-geom}.
\end{proof}
\begin{figure}[htbp]
\begin{picture}(0,0)%
\includegraphics{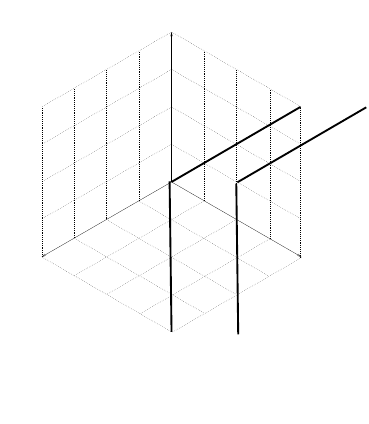}%
\end{picture}%
\setlength{\unitlength}{592sp}%
\begingroup\makeatletter\ifx\SetFigFont\undefined%
\gdef\SetFigFont#1#2#3#4#5{%
  \reset@font\fontsize{#1}{#2pt}%
  \fontfamily{#3}\fontseries{#4}\fontshape{#5}%
  \selectfont}%
\fi\endgroup%
\begin{picture}(11776,13521)(811,-12598)
\put(5926,164){\makebox(0,0)[lb]{\smash{{\SetFigFont{10}{12.0}{\rmdefault}{\mddefault}{\updefault}{\color[rgb]{0,0,0}$x_3$}%
}}}}
\put(10726,-7636){\makebox(0,0)[lb]{\smash{{\SetFigFont{10}{12.0}{\rmdefault}{\mddefault}{\updefault}{\color[rgb]{0,0,0}$x_2$}%
}}}}
\put(1651,-10711){\makebox(0,0)[lb]{\smash{{\SetFigFont{10}{12.0}{\rmdefault}{\mddefault}{\updefault}{\color[rgb]{0,0,0}$x_2=\max(x_1,x_3)$}%
}}}}
\put(4426,-12286){\makebox(0,0)[lb]{\smash{{\SetFigFont{10}{12.0}{\rmdefault}{\mddefault}{\updefault}{\color[rgb]{0,0,0}$x_2-2=\max(x_1,x_3-1)$}%
}}}}
\put(826,-7411){\makebox(0,0)[lb]{\smash{{\SetFigFont{10}{12.0}{\rmdefault}{\mddefault}{\updefault}{\color[rgb]{0,0,0}$x_1$}%
}}}}
\end{picture}%
\ \ \ \ \ \ \ \ \ \ \ \  
\begin{picture}(0,0)%
\includegraphics{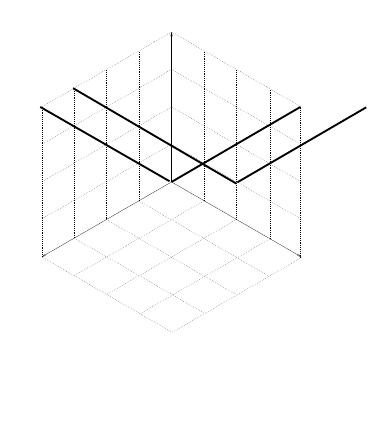}%
\end{picture}%
\setlength{\unitlength}{592sp}%
\begingroup\makeatletter\ifx\SetFigFont\undefined%
\gdef\SetFigFont#1#2#3#4#5{%
  \reset@font\fontsize{#1}{#2pt}%
  \fontfamily{#3}\fontseries{#4}\fontshape{#5}%
  \selectfont}%
\fi\endgroup%
\begin{picture}(11776,13521)(811,-12598)
\put(5926,164){\makebox(0,0)[lb]{\smash{{\SetFigFont{10}{12.0}{\rmdefault}{\mddefault}{\updefault}{\color[rgb]{0,0,0}$x_3$}%
}}}}
\put(10726,-7636){\makebox(0,0)[lb]{\smash{{\SetFigFont{10}{12.0}{\rmdefault}{\mddefault}{\updefault}{\color[rgb]{0,0,0}$x_2$}%
}}}}
\put(1651,-10711){\makebox(0,0)[lb]{\smash{{\SetFigFont{10}{12.0}{\rmdefault}{\mddefault}{\updefault}{\color[rgb]{0,0,0}$x_3=\max(x_1,x_2)$}%
}}}}
\put(4426,-12286){\makebox(0,0)[lb]{\smash{{\SetFigFont{10}{12.0}{\rmdefault}{\mddefault}{\updefault}{\color[rgb]{0,0,0}$x_3-1=\max(x_1,x_2-2)$}%
}}}}
\put(826,-7411){\makebox(0,0)[lb]{\smash{{\SetFigFont{10}{12.0}{\rmdefault}{\mddefault}{\updefault}{\color[rgb]{0,0,0}$x_1$}%
}}}}
\end{picture}%

\ \ \ \ 
\caption{Illustration of the dual form of the Cramer theorem in the symmetrized tropical semiring (Theorem~\ref{thnew-signt})}
\end{figure}
The interpretation of the dual form of Theorem~\ref{th-hom-geom},
which could also be derived from
the result of~\cite[Th.~6.5]{Plus} proved in~\cite[Chap.~3, Th.~9.0.1]{gaubert92a}, can be stated as follows. 
\begin{theorem}\label{thnew-5}
Given $n$ signed hyperplanes $H_1\signh,\dots,H_n\signh$, there exists
a sign pattern $\epsilon$ such that the transformed hyperplanes
$H_1\signh(\epsilon),\dots,H_n\signh(\epsilon)$
contain a common non-zero vector 
if and only if the matrix having as rows the vectors
of parameters of $H_1\signh,\dots,H_n\signh$ has a balanced determinant.
\end{theorem}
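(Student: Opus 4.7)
The plan is to mimic the derivation of Theorem~\ref{thnew-signt} from the dual form of Theorem~\ref{th-cramer-geom}, replacing the use of that Cramer theorem by the dual form of the singular-matrix result Theorem~\ref{th-hom-geom}, specialized to $\TT = \smax$. The paper has already verified that $\smax$ satisfies every hypothesis of Theorem~\ref{th-hom-geom} (weak balance elimination, convergence of monotone algorithms, and Properties~\ref{p-temp-0}--\ref{p-temp-3}), so this specialization is admissible without further work.

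Concretely, denote by $\paramh^j\in (\smax^\vee)^n$ the vector of parameters of $H_j\signh$, and by $A$ the $n\times n$ matrix having $\paramh^j$ as its $j$-th row. Attached to each $H_j\signh$ I introduce the hyperplane of $\smax^n$ in the sense of~\eqref{e-def-genh},
\[
H_j := \set{x\in(\smax^\vee)^n}{\bigoplus_{i\in [n]} \paramh^j_i\, x_i \balance \zero}.
\]
The dual form of Theorem~\ref{th-hom-geom} gives directly: $\bigcap_{j\in [n]} H_j$ contains a non-zero vector if and only if $\det A \balance \zero$. It therefore remains to match the existence of a non-zero element in $\bigcap_j H_j$ with the existence of some $\epsilon\in\{\pm 1\}^n$ such that $\bigcap_j H_j\signh(\epsilon)$ contains a non-zero element of $\rmax^n$.

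This matching rests on the elementary observation already used in the proof of Theorem~\ref{thnew-signt}: under the identification $\rmax\cong \smax^\oplus$, a thin vector $x\in(\smax^\vee)^n$ belongs to $H_j$ if and only if $|x|$ belongs to the sign-transformed signed tropical hyperplane $H_j\signh(\epsilon)$, where $\epsilon\in\{\pm 1\}^n$ is any sign vector whose entries coincide with the signs of the non-zero coordinates of $x$. Indeed, expanding the balance $\bigoplus_i \paramh^j_i x_i \balance \zero$ in $\smax$ translates exactly into the equality defining $H_j\signh(\epsilon)$, once the partition of $[n]$ into ``positive'' and ``negative'' indices is adjusted by multiplying the sign of $\paramh^j_i$ by $\epsilon_i$. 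From this, the forward direction is obtained by taking $\epsilon = \sgn(x)$ (extended arbitrarily on the support of zero entries) and $y = |x|$, while the reverse direction is obtained from given $(\epsilon,y)$ by setting $x_i := \epsilon_i y_i$.

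The only real subtlety, and hence the main place to be careful, is the handling of coordinates with $x_i=\zero$ or $y_i=\zero$: the contribution of such an index to the underlying balance is $\zero$ regardless of the choice of $\epsilon_i$, so the extension of $\sgn(x)$ to $\{\pm 1\}^n$ is immaterial and both directions of the correspondence go through. All the deeper content --- in particular the existence, when $\det A\balance \zero$, of a non-zero thin solution of the homogeneous system $Ax\balance \zero$ --- is already encapsulated in Theorem~\ref{th-hom-geom} and requires nothing further.
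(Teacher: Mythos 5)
Your proposal is correct and follows the same route as the paper, which simply says ``argue as in the proof of Theorem~\ref{thnew-signt}'' --- i.e., replace the dual form of Theorem~\ref{th-cramer-geom} by the dual form of Theorem~\ref{th-hom-geom} and reuse the correspondence $x\in H_j \iff |x|\in H_j\signh(\epsilon)$ with $\epsilon$ the sign vector of $x$. Your explicit remark about the immateriality of $\epsilon_i$ at zero coordinates is a point the paper glosses over but does not change the argument.
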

\begin{proof}
Argue as in the proof of Theorem~\ref{thnew-signt}.
\end{proof}

\section{Computing all Cramer Permanents: tropical Jacobi versus transportation approach} \label{S_AC}

\subsection{Computing all Cramer permanents by the tropical Jacobi algorithm}\label{sec-perall}
The present approach via the tropical Jacobi algorithms
leads to an algorithm to compute all the Cramer permanents.

\begin{corollary}[Computing all the Cramer permanents]\label{compute-cramer}
Let $A\in \MM_n(\rmax)$ and $b\in\rmax^n$. Assume
that $\per A\neq \zero$. Then the vector
$A\adj b$, the entries of which are the $n$ Cramer permanents of the system with matrix $A$ and right-hand side $b$,
can be computed by solving a single optimal assignment problem,
followed by a multiple origins-single destination shortest path problem.
\end{corollary}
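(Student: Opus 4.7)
The plan is to reduce everything to a single invocation of the identity of Lemma~\ref{lem-meta-yoeli}, which in $\MM_n(\rmax)$ (where the modulus is the identity and $\det=\per$) takes the clean form
\[
A\adj \;=\; (\per A)\,(D^{-1}N)^{*}\,D^{-1}
\]
for any Jacobi-decomposition $A=D\oplus N$ of a matrix with dominant diagonal. This turns the entries of $A\adj$ into maximum path weights in a weighted digraph, which is precisely what a shortest-path algorithm computes.

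First, I would use a \emph{single} Hungarian-algorithm run on $A$ to solve the optimal assignment problem. By Proposition~\ref{prop-hungarian} this yields both the value $\per A$ and potentials $u,v$ satisfying $A_{ij}\preceq u_iv_j$ with equality on the optimal support. Corollary~\ref{cor-but} translates these potentials into monomial matrices $D_1,D_2$ and a permutation matrix $\Sigma$ such that $B:=\Sigma D_1 A D_2$ has the normalized dominant diagonal form $B_{ii}=\unit$, $B_{ij}\preceq\unit$, $\per B=\unit$. Using the multiplicativity of $\mathrm{adj}$ on monomial factors (Lemmas~\ref{llem-util07} and~\ref{lhabitude}), this scaling lets one express $A\adj b$ as $\lambda\cdot D_2\,B\adj\,\Sigma D_1 b$, where $\lambda$ is an explicit scalar assembled from the potentials.

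Second, Yoeli's theorem (Theorem~\ref{th-yoeli}) applies to $B$ and gives $B\adj=B^{*}$, the Kleene star of $B$. The target vector thus becomes $y:=B^{*}c$ with $c:=\Sigma D_1 b$, whose $i$-th entry
\[
y_i \;=\; \bigoplus_{j\in[n]} B^{*}_{ij}\, c_j
\]
is the maximum, over paths $i\to\cdots\to j$ in the digraph $G(B)$, of the path-weight multiplied by the endpoint reward $c_j$. To evaluate all $y_i$ simultaneously, I would adjoin to $G(B)$ a super-sink $\ast$ together with arcs $j\to\ast$ of weight $c_j$ for each $j\in[n]$; then $y_i$ equals the maximum weight of a path from $i$ to $\ast$ in the augmented digraph. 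By Lemma~\ref{lem-sec-cir}, all circuits of the augmented digraph have weight $\preceq\unit$, so after negating the weights and swapping max for min this becomes a standard multiple-origin, single-destination shortest-path problem, solvable in one pass. The final coordinatewise rescaling by $D_2$ and the scalar $\lambda$ recovers $A\adj b$.

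The main obstacle is not any hard analytical step but a careful bookkeeping one: it must be verified that the Hungarian normalization and the subsequent Kleene-star computation together touch $A$ exactly once each, that the monomial corrections $D_1,D_2,\Sigma$ (and the scalar $\lambda$) are all computed as by-products of the assignment phase so that the post-processing is $O(n)$, and that the resulting vector is indeed $A\adj b$, whose entries are, by cofactor expansion along column $i$, the Cramer permanents $\per A^{(i)}$ (where $A^{(i)}$ is obtained from $A$ by replacing the $i$-th column by $b$).
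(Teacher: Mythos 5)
Your proposal is correct and follows essentially the same route as the paper's own proof: a single optimal-assignment (Hungarian) run provides a permutation and a diagonal normalization after which Yoeli's theorem (Theorem~\ref{th-yoeli}) gives $A\adj = A^*$, and the vector $A^*b$ is then computed as a multiple-origins, single-destination shortest-path problem, with a final monomial rescaling recovering $A\adj b$. The only difference is cosmetic: the paper permutes and scales rows only (so the shortest-path step uses Bellman--Ford), whereas you invoke the full Butkovi\v{c} normalization of Corollary~\ref{cor-but}, including the column scaling, which additionally forces the arc weights to be nonpositive (so Dijkstra would also apply); this is a refinement, not a different argument, and your citation of Lemma~\ref{lem-sec-cir} for the ``circuits have weight $\preceq\unit$'' claim could simply be replaced by Theorem~\ref{th-yoeli} itself.
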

\begin{proof}
Let $\sigma$ denote an optimal permutation for the matrix $A$.
After permuting the rows of $A$ and $b$, we may assume that this
permutation is the identity. Dividing every row of $A$ and $b$ by $A_{ii}$, we may assume that $A_{ii}=\unit$ for all $i\in [n]$.
Then using Yoeli's theorem we get $A\adj b=A^*b$. 
Computing the latter vector is equivalent to solving 
a shortest path problem from all origins to a single destination.
\end{proof}
\begin{remark}
The Hungarian algorithm of Kuhn runs in time $O(n(m+n\log n))$,
where $m$ is the number of finite entries of the matrix $A$. 
The subsequent shortest path problem can be solved 
for instance by the Ford-Bellman algorithm, in time
$O(mn)$. Hence, we arrive at the strongly polynomial bound
$O(n(m+n\log n))\leq O(n^3)$, for the time needed to compute all
Cramer permanents. Alternative (non-strongly polynomial) optimal assignment
algorithms may be used~\cite{Assignmentprobs},  leading to incomparable bounds.
\end{remark}

\subsection{The transportation approach of~\cite{RGST}}\label{sec-transport}
Richter-Gebert, Sturmfels, and Theobald developed in~\cite{RGST} a different
approach, based on earlier results of Sturmfels and Zelevinsky~\cite{SZ}. 
It provides an alternative method in which all the Cramer determinants
are obtained by solving a single transportation problem.
A beauty of this approach is that it directly
works with homogeneous coordinates, preserving
the symmetry which is broken by the Jacobi approach,
of an ``affine'' nature.
So, we next revisit the method of~\cite{RGST}, in order
to compare the results obtained in this way with the present ones. In passing, we shall
derive some refinements of results in~\cite{RGST} concerning
the case in which the data are not in general position. 
First, we observe that one of the results of~\cite{RGST} can be recovered
as a corollary of the present elimination approach.

\begin{definition}
Let $A\in \MM_{n-1,n}(\rmax)$. The {\em tropical Cramer permanent\/} $\per A_{|k)}$ is the permanent 
of the matrix obtained from $A$ by deleting the $k$'th column.
\end{definition}

\begin{theorem}[Compare with~\cite{RGST}, Corollary~5.4]
Assume that $A\in \MM_{n-1,n}(\rmax)$ is such that at least one of the tropical
permanents $\per A_{|k)}$ is finite. Then the vector $x=(x_k)$ with
$x_k=\per A_{|k)}$ is such that in the expression
\[
Ax=\bigoplus_{k\in[n]} A_{\cdot k} x_k 
\]
the maximum is attained at least twice in every row. Moreover, if all
the tropical Cramer permanents $\per A_{|k)}$ are non-singular, then the vector
$x$ having the latter property is unique up to an additive constant.
\end{theorem}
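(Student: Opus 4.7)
The plan is to deduce this result by embedding the problem into the bi-valued tropical semiring $\Ti$ via the morphism $\imath:\rmax\to\Ti$, and then applying Corollary~\ref{th-new-homogeneous}. Concretely, I would view the $(n-1)\times n$ matrix $A$ as having entries $\imath(A_{ij})\in\imath(\rmax)\subset\Ti$; these entries are automatically \thin{} in $\Ti$. Since $\mu\circ\imath$ is the identity on $\rmax$, the modulus $|\det A_{|k)}|$ computed in $\Ti$ equals $\per|A_{|k)}|=\per A_{|k)}$, so the vector $\hat x\in\Ti^n$ defined by $\hat x_k:=\det A_{|k)}$ has $|\hat x|=x$ with $x_k=\per A_{|k)}$.

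For the existence part, Corollary~\ref{th-new-homogeneous} (applied with $n$ replaced by $n-1$) asserts that the \thin{} vector $\imath(|\hat x|)=\imath(x)\in\Ti^n$ satisfies $A\imath(x)\bal\zero$. By construction of $\Ti$, a sum of \thin{} elements of $\imath(\rmax)$ is balanced in $\Ti$ if and only if the corresponding max-plus sum attains its maximum at least twice. Hence the relation $(A\imath(x))_i\in\Ti^\circ$ for every row $i$ translates exactly to the statement that in $Ax=\bigoplus_k A_{\cdot k}x_k$ the maximum is attained at least twice in every row.

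For the uniqueness part, the non-singularity assumption on every Cramer permanent $\per A_{|k)}$ means that for each $k$ the optimal assignment problem defining $\det A_{|k)}$ has a unique optimal permutation; equivalently, the sum of \thin{} terms giving $\det A_{|k)}$ in $\Ti$ has a unique maximal term, so $\hat x_k\in\Ti^\vee$. Thus $\hat x$ is a \thin{} vector of $\Ti^n$, and it is non-zero because some $\per A_{|k)}$ is finite. The uniqueness clause of Corollary~\ref{th-new-homogeneous} then says that any \thin{} solution of $Az\bal\zero$ is a \thin{} multiple of $\hat x$. Since thin vectors in $\Ti^n$ are precisely vectors of the form $\imath(y)$ with $y\in\rmax^n$, and thin scalar multiples in $\Ti$ correspond to additive constants in $\rmax$, this is exactly the claimed uniqueness up to an additive constant.

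The only delicate point is checking that the notion of ``max attained twice in each row'' of $Ay$ (for $y\in\rmax^n$, with the convention that the max over an empty set is $-\infty$) coincides with the balance relation $A\imath(y)\bal\zero$ in $\Ti^{n-1}$, including the degenerate case where some entries of $y$ are $-\infty$; this follows from the explicit description of $\Ti^\circ$ as $(\{1^\circ\}\times\mathbb{R})\cup\{\zero\}$ together with the rule $\imath(-\infty)=\zero$, so $\zero$ counts correctly as a ``missing'' term rather than as contributing a finite value. Once this identification is made, the whole statement is a direct consequence of our general Cramer existence and uniqueness results specialized to $\Ti$.
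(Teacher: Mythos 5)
Your proof is correct and follows exactly the paper's own approach: the paper proves this theorem by observing that it is the special case of Corollary~\ref{th-new-homogeneous} in which the matrix $A$ is thin (i.e., lies in $\imath(\rmax)\subset\Ti$), and you have simply spelled out the translation dictionary between $\Ti$-balance, thinness, and ``maximum attained at least twice'' that the paper leaves implicit.
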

\begin{proof}
This is a special case of Theorem~\ref{th-new-homogeneous} in which the matrix $A$ is \thinp.
\end{proof}
Corollary~5.4 of~\cite{RGST} gives an explicit construction of the solution $x_k$ when the tropical Cramer permanents are non-singular. Then it proceeds
by showing 
that the solution of 
a certain transportation problem is unique.
Here the uniqueness of the solution
of the tropical equations is obtained by the elimination argument
used in the proof of Theorem~\ref{th-cramer}. 
We now
present the method of~\cite{RGST} in some detail.

Let $T_{n-1,n}$ denote the transportation polytope 
consisting of all  nonnegative $(n-1)\times n$ matrices $y=(y_{ij})$ such that 
\begin{align*}
\sum_{j\in[n]} y_{ij} = n, \quad i\in[n-1],\qquad 
\sum_{i\in[n-1]} y_{ij} = n-1,  \quad  j\in[n] \enspace .
\end{align*}
For $k\in [n]$ define $\Pi^k_{n-1,n}$ to be the Birkhoff
polytope obtained as the convex hull of the $(n-1)\times n$ matrices 
with $0,1$-entries, representing matchings between $[n-1]$ and
$[n]\setminus \{k\}$. Define the Minkowski sum:
\[
\Pi_{n-1,n}:= \sum_{k\in [n]} \Pi^k_{n-1,n} \enspace .
\]
Here (partial) matchings refer to the complete bipartite graph $K_{n-1,n}$
with $n-1$ nodes in one class (corresponding to the rows of $y$)
and $n$ nodes in the other class (corresponding to the columns of $y$).
Every vertex of $\Pi_{n-1,n}$ can be written
as 
\(
y=y^1+\cdots+y^n ,
\) 
where for all $k\in [n]$, $y^k\in \MM_{n-1,n}(\{0,1\})$ 
represents a matching between $[n-1]$ and $[n]\setminus \{k\}$,
meaning that the set of edges $\set{(i,j)}{y^k_{ij}=1}$ constitutes
a matching between  $[n-1]$ and $[n]\setminus \{k\}$.

More generally,  Sturmfels and Zelevinsky~\cite{SZ}
considered the Newton polytope
$\Pi_{m,n}$ of the product of maximal minors of 
any $m\times n$ matrix such that
$m\leq n$. The entries of this matrix are thought of as pairwise distinct indeterminates. The former polytope is obtained by taking $m=n-1$. They showed
that $T_{n-1,n}= \Pi_{n-1,n}$, see~\cite[Th.~2.8]{SZ},
but that $\Pi_{m,n}$ is no longer a transportation polytope when $m<n-1$. 
They also showed that the vertices of $\Pi_{n-1,n}$ are in bijective correspondence
with combinatorial objects called \NEW{linkage trees}. A linkage tree is 
a tree with set of nodes $[n]$, the edges of which are bijectively labeled
by the integers $1,\dots,n-1$. Given a linkage tree, we associate
to every $k\in [n]$ the matching between $[n-1]$ and $[n]\setminus\{k\}$,
 such that
$j\in[n]\setminus\{k\}$ is matched to the unique $i\in[n-1]$ labeling the edge
adjacent to $j$ in the path connecting $j$ to $k$ in this tree. 
Let $y^k$ denote the matrix
representing this matching. Then the vertex
of $\Pi_{n-1,n}$ that corresponds to this linkage tree is
$y^1+\dots +y^n$. 

We now associate to every $(n-1)\times n$
matrix $y$ a subgraph $G(y)$ of $K_{n-1,n}$, consisting of the edges $(i,j)$ such that $y_{ij}\neq 0$. If $y$ is a vertex of $\Pi_{n-1,n}$,
then the previous characterization in terms
of linkage trees implies
that $G(y)$ is a spanning tree of $K_{n-1,n}$.
 
To simplify the exposition, we shall assume first that
$c_{ij}\in {\Re}$, $i\in[n-1]$, $j\in[n]$.
Following the idea
of Richter-Gebert, Sturmfels, and Theobald~\cite{RGST},
we consider the {\em transportation problem} $\cP$
\begin{align*}
\max \sum_{i\in[n-1],\; j\in[n]} c_{ij}y_{ij} \;; \qquad 
{y}=(y_{ij})\in T_{n-1,n}
\tag{$\cP$}
\end{align*}
and its dual
\begin{align*}
\min \Bigl( n(\sum_{i\in[n-1]} u_i)+ (n-1)(\sum_{j\in[n]} v_j)\Bigr) \;; \qquad {u}=(u_i)\in \Re^{n-1}, 
{v}=(v_j)\in \Re^{n}, \\
c_{ij}\le u_i+v_j,\quad i\in[n-1],\ j\in[n] 
\qquad 
\tag{$\cD$}
\enspace .
\end{align*}
The values of these problems will be denoted by 
$\val\cP$ and  $\val \cD$, respectively.

Recall the {\em complementary slackness condition\/}: a 
primal feasible solution $y$ and a dual feasible solution
$({ u},{ v})$ are both optimal if and only if $y_{ij}(u_i+v_j-c_{ij})=0$ holds for all $i\in[n-1]$ and $j\in[n]$.

The case in which $c_{ij}=-\infty$ holds for some
$i,j$ can be dealt with by adopting the convention
that $(-\infty)\times 0=0$ in the expression
of the primal objective function. Equivalently, 
it can be considered as an ordinary linear programming problem by
adding the constraints $y_{ij}=0$ for all $(i,j)$ such that
$c_{ij}=-\infty$, and ignoring all $(i,j)$ such that $c_{ij}=-\infty$
in the formulation of the dual problem and in the complementary slackness conditions. 

We now give a slight refinement
of a result of~\cite{RGST},
allowing one to interpret the optimal dual variable in terms
of tropical Cramer determinants.
Corollary~5.4 of~\cite{RGST} only deals with the case where the matrix $C$ is generic, whereas Theorem~\ref{T_RGST} shows that this assumption is not needed for the optimal solution of the dual problem $(\cD)$ to be unique, up to 
a transformation by an additive constant.  Genericity is only needed for the uniqueness of the optimal solution of the primal problem, which we do not use here.

\begin{theorem}[Compare with~\cite{RGST}, Corollary~5.4] \label{T_RGST}
The primal problem $(\cP)$ is feasible if and only if all the tropical Cramer permanents of the matrix $C$ are finite. When this is the case, the 
optimal solution $(u,v)\in \Re^{n-1} \times \Re^n$
of the dual transportation problem $(\cD)$ is unique up to 
a modification of the vector $(u,-v)$ by an additive constant,
and 
\begin{align}
\label{e-per}
\per C_{|k)} = \sum_{i\in[n-1]} u_i + \sum_{j\in [n]\setminus \{k\}} v_j 
\enspace,
\qquad \text{ for all }k\in[n] \enspace .
\end{align}
\end{theorem}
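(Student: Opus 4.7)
The plan is to exploit the Sturmfels--Zelevinsky identification $T_{n-1,n}=\Pi_{n-1,n}=\sum_{k\in[n]}\Pi^k_{n-1,n}$ recalled above, combined with LP duality and complementary slackness on a carefully chosen vertex of the transportation polytope.

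\emph{Step 1: computation of $\val(\cP)$ and the feasibility dichotomy.} The support function of a Minkowski sum of polytopes is the sum of the support functions, and the support function of the Birkhoff polytope $\Pi^k_{n-1,n}$ evaluated at the cost vector $c$ is precisely the tropical Cramer permanent $\per C_{|k)}$ (the maximum weight of a matching between $[n-1]$ and $[n]\setminus\{k\}$). Hence
$$\val(\cP)\;=\;\sum_{k\in[n]}\per C_{|k)}.$$
With the convention $(-\infty)\cdot 0=0$, primal feasibility with finite objective is equivalent to the right-hand side being finite, hence to every $\per C_{|k)}$ being finite: one direction is immediate from $\val(\cP)<+\infty$, while if all permanents are finite one sums optimal Cramer matchings $y^{\sigma_k}$ to produce a feasible $y=\sum_k y^{\sigma_k}\in T_{n-1,n}$.

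\emph{Step 2: derivation of formula~\eqref{e-per}.} Fix a dual-feasible $(u,v)$. For any $k\in[n]$ and any bijection $\sigma\colon[n-1]\to[n]\setminus\{k\}$, summing the inequalities $u_i+v_{\sigma(i)}\geq c_{i\sigma(i)}$ over $i$ gives $\sum_i u_i+\sum_{j\neq k}v_j\geq \sum_i c_{i\sigma(i)}$, and maximizing over $\sigma$ yields $\sum_i u_i+\sum_{j\neq k}v_j\geq \per C_{|k)}$. Summing the resulting $n$ inequalities over $k$ gives
$$n\sum_{i\in[n-1]}u_i+(n-1)\sum_{j\in[n]}v_j\;\geq\;\sum_{k\in[n]}\per C_{|k)}\;=\;\val(\cP).$$
If $(u,v)$ is dual-optimal then LP strong duality forces equality in the total, and hence in each of the $n$ individual inequalities, which is precisely formula~\eqref{e-per}.

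\emph{Step 3: uniqueness.} I choose a vertex $y^{*}$ of the optimal face of $(\cP)$; by the classification of Sturmfels--Zelevinsky it has the form $y^{\mathcal{T}}=\sum_{k\in[n]}y^k$ for some linkage tree $\mathcal{T}$ on $[n]$ whose $n-1$ edges are labeled by $[n-1]$. A direct inspection of the construction of $y^k$ from $\mathcal{T}$ shows that the bipartite support of $y^{\mathcal{T}}$ in $K_{n-1,n}$ is obtained by subdividing each edge $\{j,j'\}$ of $\mathcal{T}$ with its row label $i$, i.e.\ by replacing it with the two bipartite edges $(i,j)$ and $(i,j')$; this graph is a tree on $(n-1)+n$ vertices with $2(n-1)$ edges, hence a spanning tree of $K_{n-1,n}$. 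By complementary slackness, any dual optimum $(u,v)$ satisfies $u_i+v_j=c_{ij}$ on this spanning tree, and because a tree is connected these equalities determine $(u,v)$ uniquely up to the one-parameter family of shifts $(u,v)\mapsto (u+c,v-c)$, which is exactly the modification of $(u,-v)$ by an additive constant claimed in the statement.

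The main obstacle lies in Step 3: one must verify that every pair $(i,j)$ with $i$ labeling an edge of $\mathcal{T}$ incident to $j$ does appear with strictly positive weight in $y^{\mathcal{T}}$, so that the support is exactly the subdivided tree rather than a proper subgraph. This follows from the explicit description of $y^k$ in terms of paths to $k$ in $\mathcal{T}$, since taking $k=j'$ (the other endpoint of the labeled edge) produces a contribution of $+1$ to the $(i,j)$ entry.
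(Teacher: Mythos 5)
Your proposal is correct, and it takes a route that is partly the same as the paper's and partly genuinely different. The uniqueness argument (your Step 3) is essentially the paper's: choose a vertex of $T_{n-1,n}=\Pi_{n-1,n}$, observe that its support $G(y)$ is a spanning tree of $K_{n-1,n}$, and use complementary slackness plus connectedness to pin down $(u,v)$ up to the one-parameter shift; you merely spell out the subdivision structure of the support in more detail than the paper does. The derivation of formula~\eqref{e-per}, however, differs. The paper proves it pointwise: after establishing the upper bound $\per C_{|k)}\le\sum_i u_i+\sum_{j\ne k}v_j$ from dual feasibility, it exhibits a tight bijection $\sigma^k$ for each $k$ by invoking complementary slackness on the summand $y^k$ of an optimal vertex $y=\sum_k y^k$. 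You instead prove the identity $\val(\cP)=\sum_{k}\per C_{|k)}$ up front via the support function of the Minkowski sum $\Pi_{n-1,n}=\sum_k\Pi^k_{n-1,n}$, then sum the $n$ upper bounds over $k$ and let strong duality force every one of them to be tight simultaneously. This aggregation argument is clean and avoids having to track which matching in the decomposition witnesses which $k$; the paper's constructive argument, by contrast, makes the witnessing bijection $\sigma^k$ explicit, which is closer to what one would need algorithmically. Both are valid. One small remark: like the paper, you should note at the end that the arguments carry over when some $c_{ij}=-\infty$ by restricting to the face $\{y_{ij}=0 : c_{ij}=-\infty\}$ of $T_{n-1,n}$; your feasibility discussion in Step 1 handles the degenerate case where some $\per C_{|k)}=-\infty$, but Steps 2 and 3 implicitly assume all entries finite.
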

\begin{proof}
We first consider the case in which all $c_{ij}$ are finite. 
As the initial step, we show the announced uniqueness
result for the optimal solution $(u,v)$ of the dual problem.

Let $y$ denote an optimal solution of the primal problem,
which we choose to be a vertex of $T_{n-1,n}=\Pi_{n-1,n}$,
so that $G(y)$ is a spanning tree of $K_{n-1,n}$. Using 
the complementary slackness condition, we have 
\begin{align}
c_{ij}=u_i+v_j\qquad \text{for all } (i,j)\in G(y)
\enspace .\label{e-slack-prop}
\end{align}
Using these relations and the fact that a spanning
tree is connected, we see that all the values
of the variables $u_i$ and $v_j$, $i\in [n-1]$ and $j\in [n]$, 
are uniquely determined by the value of a single variable
$u_\ell, \ell \in [n-1]$ (or dually, by the value of a single variable $v_k, k\in [n]$). Moreover, an increase
of $u_\ell$ by a constant increases every other variable $u_i$ by the same
constant and decreases every variable $v_j$ by the same constant. This
establishes the announced uniqueness result.

Since $c_{ij}\leq u_i + v_j$ holds
for all $i\in [n-1]$ and $j\in [n]$, we deduce that for
all $k\in [n]$ and for all bijections $\sigma$ from $[n-1]$ to $[n]\setminus \{k\}$ 
\begin{align}
\sum_{i\in[n-1]} c_{i\sigma(i)}\leq \sum_{i\in[n-1]} u_i + \sum_{j\in [n]\setminus \{k\}} v_j \enspace .
\label{e-tight}
\end{align}
Considering the maximum over all such bijections $\sigma$,
we deduce that $\per C_{|k)}$ is bounded from above by the right-hand side
of the latter inequality.

Now, using again the fact $T_{n-1,n}=\Pi_{n-1,n}$, we can write
$y=y^1+\dots+y^n$ where every $y^k$ represents a matching
between $[n-1]$ and $[n]\setminus\{k\}$, to which
we associate a bijection $\sigma^k $
from $[n-1]$ to $[n]\setminus\{k\}$.
By the complementary slackness condition we have $c_{i\sigma^k(i)}= u_i+v_{\sigma^k(i)}$ for all $i\in [n-1]$. It follows that $\sigma=\sigma^k$ achieves
the equality in~\eqref{e-tight}, showing that $\per C_{|k)}$ is given
by the right-hand side of~\eqref{e-tight}.

We now deal with the case in which some coefficients $c_{ij}$ can take
the $-\infty$ value.
The reward function $y\mapsto \sum_{i\in[n-1],\,j\in[n]}c_{ij}y_{ij}$
over $T_{n-1,n}$, with the convention $(-\infty)\times 0=0$, now
takes its value in $\Re\cup\{-\infty\}$. It is 
upper semicontinuous and concave, so it attains
its maximum at a vertex of $T_{n-1,n}=\Pi_{n-1,n}$. 
It follows that
the value of the primal problem is finite if and only if
every tropical Cramer permanent is finite. 
Then the remaining arguments of the proof above are easily checked
to carry over, by working with the modified linear programming
formulation, in which the constraints $y_{ij}=0$ for all $(i,j)$ such that
$c_{ij}=-\infty$, are added. 
\end{proof}

\section{Computing determinants}\label{sec-compute-det}
To compute determinants over $\Ti$ or $\smax$ we need
to recall how tropical singularity can be checked.
Let $A\in \MM_n(\rmax)$. We observed in Proposition~\ref{prop-hungarian}
that, as soon as $\per A\neq\zero$,
the Hungarian algorithm 
gives scalars $u_i,v_j\in \Re$, for $i,j\in [n]$, 
such that
\[
a_{ij}\leq u_i  v_j \enspace ,\qquad \text{and}\qquad 
\per A=\prod_i u_i \prod_j v_j \enspace .
\]
The optimal permutations $\sigma$
are characterized by the condition that $a_{i\sigma(i)}=u_iv_{\sigma(i)}$,
for all $i\in [n]$.
After multiplying $A$ by a permutation matrix,
we may always assume that the identity is a solution of the
optimal assignment problem.
Then we define the digraph $G$ with nodes $1,\dots,n$, and an arc from
$i$ to $j$ whenever $a_{ij}= u_i v_j$.
Butkovi\v{c} proved two results which can be formulated
equivalently as follows.
\begin{theorem}[See~{\cite{butkovip94} and~\cite{butkovic95}}]\label{thbut}
Let $A\in \MM_n(\rmax)$, and assume that $\per A\neq\zero$.
Then checking whether the 
optimal assignment problem has at least two optimal solutions reduces
to finding a cycle in the digraph $G$, whereas checking whether
it has at least two optimal solutions of a different parity reduces
to finding an even cycle in $G$.
\end{theorem}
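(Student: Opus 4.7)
\medskip
\noindent\textbf{Proof proposal.}
My plan is to reformulate both statements as equivalences between the existence of specific optimal permutations of $A$ and the existence of specific cycles in $G$, after which the ``reduction'' becomes a direct translation through the cycle decomposition of permutations.

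First I would record the basic characterization of optimal permutations in terms of $G$. Using the Hungarian scaling from Proposition~\ref{prop-hungarian}, for every $\sigma \in \allperm_n$ we have
\[
\prod_{i\in[n]} a_{i\sigma(i)} \leq \prod_{i\in[n]} u_i v_{\sigma(i)} = \prod_{i\in[n]} u_i \prod_{j\in[n]} v_j = \per A,
\]
and the inequality is an equality iff $a_{i\sigma(i)} = u_i v_{\sigma(i)}$ for all $i$, i.e., iff the arc $(i,\sigma(i))$ belongs to $G$ for every $i$. Since we have permuted rows so that the identity is optimal, every vertex $i$ of $G$ carries a self-loop $(i,i)$. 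Thus the set of optimal permutations is in bijection with the set of ``arc-disjoint'' cycle covers of $[n]$ by cycles of $G$ (self-loops counting as length-one cycles).

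Next I would translate the two questions through the cycle decomposition. For the first statement: $\sigma$ is an optimal permutation different from the identity iff its cycle decomposition contains at least one cycle $c$ of length $\geq 2$, which, by the previous paragraph, is a cycle of $G$. Conversely, given any cycle $c = (i_0,\ldots,i_{k-1},i_0)$ of $G$ of length $k \geq 2$, the permutation acting as $c$ on $\{i_0,\ldots,i_{k-1}\}$ and as identity (via the available self-loops) elsewhere is optimal and distinct from the identity. Hence the assignment problem has at least two optimal solutions iff $G$ has a cycle of length $\geq 2$. For the second statement, recall $\sgn(\sigma) = \prod_c (-1)^{\ell(c)-1}$ over the cycles of $\sigma$, so $\sgn(\sigma) = -1$ iff the cycle decomposition of $\sigma$ contains an odd number of even-length cycles; in particular $\sigma$ has parity different from the identity iff it contains at least one even cycle. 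Since the identity is optimal with positive sign, the existence of two optimal permutations of different parity is equivalent to the existence of an optimal permutation of negative sign, which, applying the same ``fill with self-loops'' construction as above to an even cycle of $G$, is equivalent to the existence of an even (length) cycle in $G$.

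The main technical point is really only to confirm that extending a non-trivial cycle of $G$ to a full optimal permutation by adjoining self-loops is always possible and preserves optimality; this is immediate since the self-loops lie in $G$ and remain disjoint from the given cycle. The combinatorics of the sign then reduces the problem as stated. Finally one notes that checking the existence of a cycle of length $\geq 2$ in the directed graph $G$ is straightforward (e.g., depth-first search after removing self-loops), while the detection of an even cycle in a directed graph is the harder problem alluded to in~\cite{butkovip94,butkovic95}; the reduction claimed here does not address that complexity, only the equivalence of the two decision problems.
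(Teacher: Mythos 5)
Your proof is correct, and it follows exactly the approach that the paper relies on (though the paper cites~\cite{butkovip94,butkovic95} rather than supplying a proof): the identification of optimal permutations with cycle covers of $G$, together with the ``complete a single cycle of $G$ by self-loops'' construction, is precisely what the paper itself invokes in the proof of the corollary immediately following the theorem. The one point worth stating explicitly, which you do handle implicitly, is that ``finding a cycle in $G$'' must be read as finding a cycle of length $\geq 2$, since after normalization every vertex carries a self-loop.
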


By exploiting the proof technique of Theorem~\ref{thbut} we
obtain the following result.

\begin{corollary}
Assume, $A\in \MM_n(\Ti)$ or  $A\in \MM_n(\smax)$. Then the determinant $\det A$ can be computed in polynomial time.
\end{corollary}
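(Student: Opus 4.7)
The plan is to reduce the computation of $\det A$, in both $\Ti$ and $\smax$, to the computation of its modulus $|\det A|$ together with polynomial-time cycle-detection problems in an auxiliary digraph, exploiting the tropical singularity criterion of Butkovi\v c recalled in Theorem~\ref{thbut}.

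First I would compute $M := |\det A|=\per|A|$ by the Hungarian algorithm, which runs in strongly polynomial time and moreover returns dual variables $(u_i),(v_j)$ witnessing $|A_{ij}|\leq u_iv_j$ together with one optimal permutation $\sigma_0$, as in Proposition~\ref{prop-hungarian}. If $M=\zero$ then $\det A=\zero$ and we are done. Otherwise, up to multiplying $A$ by the permutation matrix of $\sigma_0^{-1}$ (which multiplies $\det A$ only by $\sgn(\sigma_0)$), we may assume $\sigma_0=\mathrm{id}$, and we form the tightness digraph $G$ on vertex set $[n]$ whose arcs are the pairs $(i,j)$ with $|A_{ij}|=u_iv_j$. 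Any other permutation attaining modulus $M$ in the expansion of $\det A$ differs from the identity by a disjoint union of simple cycles of $G$ of length at least two.

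In the bi-valued semiring $\Ti$ the sign information collapses, and $\det A$ is a thin element of modulus $M$ iff the identity is the unique optimal permutation; by the first statement of Theorem~\ref{thbut} this amounts to deciding whether $G$ contains a cycle of length $\geq 2$, which is done in linear time by strongly connected component decomposition. If such a cycle exists, $\det A=(\ooi^\circ,M)$; otherwise $\det A=\imath(M)$.

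In the symmetrized semiring $\smax$ one must additionally determine the sign of each competing optimal term. Let $s_0:=\sgn(\mathrm{id})\prod_i\sigma(A_{ii})$ be the sign of the identity term, where $\sigma(\cdot)\in\{\oplus,\ominus\}$ extracts the sign of a thin entry. If some $A_{ii}$ is balanced, then $s_0$ is ghost and $\det A$ is a ghost of modulus $M$. Otherwise $s_0\in\{\oplus,\ominus\}$, and for each simple cycle $c=(i_1,\ldots,i_k,i_1)$ of $G$ of length $k\geq 2$ we attach the local ratio $s_c=(-1)^{k-1}\prod_{\ell}\sigma(A_{i_\ell i_{\ell+1}})\sigma(A_{i_\ell i_\ell})^{-1}$ (well-defined once $c$ avoids any balanced entry). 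Then $\det A$ is a thin element of sign $s_0$ iff (a) no simple cycle of $G$ passes through a balanced entry, and (b) $s_c=+1$ for every simple cycle $c$ using only thin entries. Condition (a) is decided in polynomial time by a reachability test between the endpoints of each balanced arc. Condition (b), after labeling each thin arc $(i,j)$ by $w(i,j)=\sigma(A_{ij})\sigma(A_{ii})^{-1}\in\{\pm1\}$, reduces to the absence of a simple cycle with $(-1)^k\prod_{(i,j)\in c}w(i,j)=+1$ in the subgraph of thin arcs. This is a signed-cycle detection problem polynomial-time equivalent, through a two-layer parity-doubling construction on vertex set $[n]\times\{0,1\}$, to the even directed cycle problem, which is polynomial by the results of Robertson--Seymour--Thomas and McCuaig; it also reduces, in the all-thin case, to Butkovi\v c's second statement in Theorem~\ref{thbut}. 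Hence $\det A$ is computable in polynomial time in both semirings.

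The main obstacle lies in the $\smax$ case: while Butkovi\v c's theorem handles parity of permutations in the unsigned case, for $\smax$ one must combine the parity of a permutation with the signs of the entries it uses, and invoke the non-trivial polynomiality of even-cycle detection in directed graphs. For $\Ti$, by contrast, nothing beyond Theorem~\ref{thbut} and standard graph algorithms is needed.
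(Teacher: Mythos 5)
Your treatment of $\Ti$ has a gap: the claim that $\det A$ is thin of modulus $M$ iff the identity is the unique optimal permutation for $|A|$ omits the case where some diagonal entry $A_{ii}$ is itself balanced, in which case the identity term $\bigodot_i A_{ii}$ is already a ghost and $\det A\in\Ti^\circ$ even though the assignment problem has a unique solution. (For $n=1$ and $A=(0^\circ)$ the tightness graph has no cycle of length $\geq 2$, yet $\det A=0^\circ$.) The paper inserts the check ``if one of the diagonal coefficients belongs to $\Ti^\circ$, then $\det A=(\per B)^\circ$'' before looking for cycles; you perform exactly this check in the $\smax$ case, so the omission looks like an oversight and is easily repaired.

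For $\smax$, your route genuinely differs from the paper's. After eliminating the balanced entries that lie off every cycle, the paper writes $A=A^+\ominus A^-$ and uses the block identity
\[
\det A=\left|\begin{array}{cc}A^+&A^-\\ I&I\end{array}\right|
\]
to reduce the computation to the determinant of a $2n\times 2n$ matrix over $\rmax$, to which Theorem~\ref{thbut} applies unchanged. You instead stay on the $n\times n$ tightness graph, assign a sign $w(i,j)=\sigma(A_{ij})\sigma(A_{ii})^{-1}\in\{\pm1\}$ to each thin arc, and reduce to a signed simple-cycle detection; your single-cycle analysis is sound (a conflicting optimal permutation always exhibits a conflicting single cycle among its disjoint cycles). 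Both approaches ultimately rest on polynomial even directed cycle detection, but the paper's is more modular (it recycles Theorem~\ref{thbut} rather than redoing the cycle-sign bookkeeping), while yours avoids doubling the matrix dimension. One remark on the reduction: a two-layer parity construction tracks parity along walks, not simple cycles, so the clean reduction is rather by arc subdivision (after negating the labels, subdivide each $+1$-arc into two), which maps your bad simple cycles to even simple cycles and yields precisely the even directed cycle problem that Butkovi\v{c}'s second statement in Theorem~\ref{thbut} already reduces to in the all-thin case.
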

\begin{proof}
Assume first that $A\in \MM_n(\Ti)$.
We set $B=|A|$, meaning that  $B_{ij}=|A_{ij}|$ for all $i,j$,
and  compute $\per B$ together with
a permutation $\sigma$ solving the optimal assignment problem
for the matrix $B$. If $\per B=\zero$,
then $\per A=\zero$, so we next assume that $\per B\neq \zero$.
After multiplying
$A$ by the inverse of the matrix of the optimal permutation $\sigma$, we may 
assume that this permutation is the identity.
If one of the diagonal coefficients
belongs to $\Ti^\circ$, then we conclude that $\det A=(\per B)^{\circ}$. 
Otherwise, we define the digraph $G$ as above, starting from the matrix
$B$ instead of $A$. If $G$ has a cycle, then this cycle can be
completed by loops (cycles of length one) to obtain an optimal permutation
for $B$ distinct from the identity and so $\det A=(\per B)^\circ$.
Otherwise, $\det A=\per B$.

To compute $\det A$ when $A\in \MM_n(\smax)$, we consider
firstly the case $A\in \MM_n(\rmax)$, where $\rmax$ is thought
of as a subsemiring of $\smax$. We define again $B=|A|$ and assume that
the identity is an optimal permutation for the matrix $B$
and we define $G$ as above. Then $\det A=\per B$
if $G$ has no even cycle. Otherwise $\det A=(\per B)^\circ$ since
any even cycle can be completed by loops to get an optimal
permutation for $B$ of odd parity.

We assume finally that $A\in \MM_n(\smax)$ and make the same assumptions and constructions. If
one diagonal entry of $A$ is in $\smax^\circ$, we conclude that $\det A=(\per B)^\circ$. Otherwise, for every $(i,j)$ in $G$ such that 
$a_{ij}\in \smax^{\circ}$, we check whether $(i,j)$ belongs to a cycle of $G$ (not necessarily an even one). If this is the case, we conclude that $\det A =(\per B)^{\circ}$. Otherwise, the
elements $a_{ij}$ such that $a_{ij}\in \smax^{\circ}$ do not contribute
to the optimal permutations of the matrix $B$, and we may replace
them by $\zero$ without changing the value of $\det A$. Then
we may write $A=A^+\ominus A^-$ where $A^+,A^-\in \MM_n(\rmax)$.
Recall that for all matrices $C,D$ with entries in a commutative
ring the following block formula for the determinant holds
\[ \left| \begin{array}{cc}C & D\\ I & I
\end{array}\right|
=\det{(C-D)} \enspace ,
\]
where $I$ is the identity matrix.
Moreover, when viewing the entries of $C$ and $D$ as independent indeterminates and expanding the expressions at left and at right of the latter
equality, we see that the same monomials appear (each with multiplicity one)
on each side of the equality. Therefore, the equality is valid over an
arbitrary symmetrized semiring. 
In particular,
\[
\det A =\left| \begin{array}{cc}A^+ & A^-\\ I & I
\end{array}\right| \enspace,
\]
which reduces the computation of $\det A$ to the computation
of the determinant of a matrix with entries in $\rmax$, which
is already solved thanks to Theorem~\ref{thbut}.
\end{proof}

\paragraph{Acknowledgment}
The authors thank the referee for the very careful reading and for the suggestions.
\bibliographystyle{alpha}
\def\cprime{$'$} \def\cprime{$'$}

\end{document}